\newcommand\C{\mathbb{C}}
\newcommand\Z{\mathbb{Z}}
\newcommand\R{\mathbb{R}}
\newcommand\F{\mathbb{F}}
\newcommand\N{\mathbb{N}}
\newcommand\kk{{\Bbbk}}
\newcommand\id{\mathrm{id}}
\newcommand\aff{\mathrm{aff}}
\newcommand\Proj{\mathrm{Proj}}
\newcommand\Sym{\mathrm{Sym}}
\newcommand\fH{\mathfrak{H}}
\newcommand\fh{\mathfrak{h}}
\newcommand\fA{\mathfrak{A}}
\newcommand\fB{\mathfrak{B}}
\newcommand\bF{\mathbf{F}}
\newcommand\bA{\mathbf{A}}
\newcommand\bB{\mathbf{B}}
\newcommand\cH{\mathcal{H}}
\newcommand\cP{\mathcal{P}}
\newcommand{\vt}{v_\theta}
\DeclareMathOperator{\Hom}{Hom}
\DeclareMathOperator{\End}{End}
\DeclareMathOperator{\Ob}{Ob}
\DeclareMathOperator{\Ind}{Ind}
\newcommand{\rcap}{\mathrm{RCap}}
\newcommand{\lcap}{\mathrm{LCap}}
\newcommand{\rcup}{\mathrm{RCup}}
\newcommand{\lcup}{\mathrm{LCup}}
\newcommand{\res}{\mathrm{Res}}
\newcommand{\ind}{\mathrm{Ind}}
\newcommand{\ucross}{\mathrm{Ucross}}
\newcommand{\dcross}{\mathrm{Dcross}}
\newcommand{\rcross}{\mathrm{Rcross}}
\newcommand{\lcross}{\mathrm{Lcross}}
\theoremstyle{plain}
\newtheorem{theorem}{Theorem}[section]
\newtheorem{proposition}[theorem]{Proposition}
\newtheorem{lemma}[theorem]{Lemma}
\newtheorem{corollary}[theorem]{Corollary}
\newtheorem{conjecture}[theorem]{Conjecture}
\theoremstyle{definition}
\newtheorem{definition}[theorem]{Definition}
\newtheorem*{remark*}{Remark}
\newtheorem{remark}[theorem]{Remark}
\numberwithin{equation}{section}
\newcommand{\comments}[1]{ \begin{center} \parbox{5 in}{{\bf {\footnotesize Comments:  }}{\footnotesize \textit{#1}}} \end{center}}
\begin{document}
%
%%%%%%%%%%%%%%%%%%%%%%%%%%%%%%%%%%%

\title[Heisenberg categorification]{Hecke algebras, finite general linear groups, and Heisenberg categorification}

\author{Anthony Licata}
\address{A.~Licata: Department of Mathematics, Stanford University}

\author{Alistair Savage}
\address{A.~Savage: Department of Mathematics and Statistics, University of Ottawa}
\thanks{The research of the second author was supported by a Discovery Grant from the Natural Sciences and Engineering Research Council of Canada.}

\subjclass[2010]{Primary: 20C08, 17B65; Secondary: 16D90}

%\date{July 20, 2011}

\begin{abstract}
We define a category of planar diagrams whose Grothendieck group contains an integral version of the infinite rank Heisenberg algebra, thus yielding a categorification of this algebra.  Our category, which is a $q$-deformation of one defined by Khovanov, acts naturally on the categories of modules for Hecke algebras of type $A$ and finite general linear groups.  In this way, we obtain a categorification of the bosonic Fock space.  We also develop the theory of parabolic induction and restriction functors for finite groups and prove general results on biadjointness and cyclicity in this setting.
\end{abstract}

\maketitle \thispagestyle{empty}

\tableofcontents

%%%%%%%%%%%%%%%%%%%%%%%%%%%%%%%%%%%%%%%%%%%%%%%%%%%%%%%%%%%%%%%%%%%
\section{Introduction}
%%%%%%%%%%%%%%%%%%%%%%%%%%%%%%%%%%%%%%%%%%%%%%%%%%%%%%%%%%%%%%%%%%%

In the 1970's, Geissinger gave a representation-theoretic realization of the bialgebra $\Sym$ of symmetric functions \cite{Gei77}.  He considered the Grothendieck groups of representations of all symmetric groups over a field $\kk$ of characteristic zero and constructed an isomorphism of bialgebras
\[ \textstyle
  \Sym \cong \bigoplus_{n=0}^\infty K_0(\kk[S_n]\text{-mod}).
\]
In Geissinger's construction, the algebra structure is the map on the Grothendieck group induced by the induction functor
\[
  [\ind]: K_0(\kk[S_n]\text{-mod}) \otimes K_0(\kk[S_m]\text{-mod})
  \to K_0(\kk[S_{n+m}]\text{-mod}),
\]
while the coalgebra structure is given by restriction.  Mackey theory for induction and restriction in symmetric groups implies that the coproduct is an algebra homomorphism.
Each class $[V] \in K_0(\kk[S_n]\text{-mod})$ defines an endomorphism of $\bigoplus_{n=0}^\infty K_0(\kk[S_n]\text{-mod})$ given by multiplication by $[V]$.  These endomorphisms, together with their adjoints, define a representation of an infinite rank Heisenberg algebra on the Grothendieck group.

Several generalizations of Geissinger's construction were subsequently given by Zelevinsky in \cite{Zel81}.  Two of these generalizations involve a kind of $q$-deformation: in one the group algebra of the symmetric group $\kk[S_n]$ is replaced by the Hecke algebra $H_n(q)$, and in the other by the group algebra $\kk[GL_n(\F_q)]$ of the general linear group over a finite field.  In these cases, too, endomorphisms of the Grothendieck group given by multiplication by classes $[V]$, together with their adjoints, generate a representation of the Heisenberg algebra.

In addition to the Heisenberg algebra action on the Grothendieck group, the categories
\[ \textstyle
  \bigoplus_{n}\kk[S_n]\text{-mod}, \quad \bigoplus_nH_n(q)\text{-mod}, \quad \text{and} \quad
  \bigoplus_n \kk[GL_n(\F_q)]\text{-mod}
\]
are interesting for another reason: they are all examples of braided monoidal categories \cite{JoyStr93}.  In each case, the monoidal operation is given by an induction functor, the very same functor that defines the algebra structure at the level of the Grothendieck group.  This algebra structure accounts for the action of the creation operators in the Heisenberg algebra,
which are given by multiplication, but not the annihilation operators which are their adjoints.
This suggests that in order to describe a categorical Heisenberg action, one should consider not just induction functors, but also the dual restriction functors.  These functors should descend to a Heisenberg algebra action on the Grothendieck group.  This categorical action should also involve natural transformations that use both the braiding amongst compositions of induction functors and the duality between induction and restriction, information that is lost when passing to the Grothendieck group.

To define such categorical Heisenberg actions is the goal of the current paper.  Precisely, we define a family of categories that act naturally on the above module categories.  This action is a kind of modification of the notion of a braided monoidal category which takes into account induction and restriction functors, their compositions, and natural transformations between these compositions.

In \cite{Kho10}, Khovanov takes a similar perspective in the study of induction and restriction functors between characteristic zero representation categories of symmetric groups.  He defines a monoidal category $\cH$ that acts naturally on the category of representations of all symmetric groups, categorifying Geissinger's construction.
The Grothendieck group of Khovanov's $\cH$ contains, and is conjecturally isomorphic to, an integral form of the Heisenberg algebra.

In the current paper, we define and study a category $\cH(q)$, which is a $q$-deformation of Khovanov's category.  When $q$ is not a nontrivial root of unity, the Grothendieck group of $\cH(q)$ contains, and is conjecturally isomorphic to, an integral form of the Heisenberg algebra.  When $q$ is a root of unity, we expect the category $\cH(q)$ to have an interesting structure, though we do not say much about this in the current paper.  The morphism spaces in $\cH(q)$ are also objects which we believe to be of independent interest.  In particular, a $q$-deformation of the degenerate affine Hecke algebra, which is a subalgebra of the affine Hecke algebra, arises naturally from the morphisms of $\cH(q)$.

Just as Khovanov's category $\cH$ is related to Geissinger's construction, the category $\cH(q)$ is related to both of Zelevinsky's constructions.  In Section ~\ref{sec:Hecke-action}, we show that $\cH(q)$ acts naturally on $\bigoplus_n H_n(q)$-mod, while in Section~\ref{sec:GL-action}, we show that $\cH(q)$ acts on $\bigoplus_n \kk[GL_n(\mathbb{F}_q)]$-mod.  In both cases, passing to the Grothendieck group recovers the Fock space representation of the Heisenberg algebra.  These two representations of the category $\cH(q)$ provide a new perspective on the relationship between Hecke algebras and finite general linear groups.

In \cite{CauLic10}, the first author, together with S.~Cautis, gave graphical categorifications of a family of Heisenberg algebras parameterized by the finite subgroups of $SL_2(\C)$ and related these categorifications to the geometry of Hilbert schemes on ALE spaces.  The constructions of the present paper suggest that a $q$-deformation of the categories in \cite{CauLic10} should also exist.  We expect these deformations will be related to the ``finite subgroups" of $U_q(\mathfrak{sl}_2)$.

Important in the constructions in the current paper is the fact that the Hecke algebras form a so-called \emph{tower of algebras}.  It is natural to expect that other towers of algebras (for further examples, see \cite{BerLi09,Kho10b} and the references therein) give rise to graphical categorifications.

The crucial observation that allows us to present our category $\cH(q)$ using planar topology is the cyclic biadjointness of the defining generating objects $Q_+,Q_-$ of $\cH(q)$.  In any representation of $\cH(q)$, these generators are mapped to biadjoint functors, that is, functors that are both left and right adjoint to each other.  The importance of such functors in low dimensional topology was emphasized in \cite{Kho02} and more recently in subsequent work
\cite{ChuRou08,  KhoLau10, Lau08, Rou09} on categorified quantum groups.
In the last section of this paper, we give some examples of cyclic biadjoint functors arising in the representation theory of finite groups.

The structure of this paper is as follows.  In Section~\ref{sec:hecke-heis-defs}, we recall the definitions of the Hecke algebras of type $A$ and the Heisenberg algebra.  We introduce the category $\cH(q)$ in Section~\ref{sec:category-def} and deduce from the definitions various useful relations.  In Section~\ref{sec:categorification} we present the main result giving a categorification of the Heisenberg algebra.  We then define an action of our category on modules for Hecke algebras and finite general linear groups in Sections~\ref{sec:Hecke-action} and~\ref{sec:GL-action} respectively.  In particular, in Section~\ref{sec:Hecke-action}, we use the action to prove our main theorem.  Finally, in Section~\ref{sec:parabolic}, we describe the cyclic biadjointness of parabolic induction and restriction functors for finite groups.

%%%%%%%%%%%%%%%%%%%%%%%%%%%%%%
\subsection*{Acknowledgements}
%%%%%%%%%%%%%%%%%%%%%%%%%%%%%%

We would like to thank D.~Bump, S.~Cautis, M.~Khovanov, and M.~Mackaay for useful discussions.  We also thank M.~Khovanov for making available to us the preprint \cite{Kho10}.

%%%%%%%%%%%%%%%%%%%%%%%%%%%%%%%%%%%%%%%%%%%%%%%%%%%%%%%%%%%%%%%%%%%
\section{Hecke algebras and the Heisenberg algebra} \label{sec:hecke-heis-defs}
%%%%%%%%%%%%%%%%%%%%%%%%%%%%%%%%%%%%%%%%%%%%%%%%%%%%%%%%%%%%%%%%%%%

In this section, we introduce our main algebraic objects of interest: the Hecke algebra of type $A$ and the Heisenberg algebra.

Let $\kk$ be a ring and let $q$ be either an indeterminate or an invertible element of $\kk$ (in which case $\kk[q,q^{-1}] = \kk$).

\begin{definition}[Hecke algebra] For $n \ge 2$, the \emph{Hecke algebra} $H_n(q)$ is the $\kk[q,q^{-1}]$-algebra generated by $t_1,\hdots t_{n-1}$ with relations
\begin{enumerate}
\item $t_i^2 = q + (q-1)t_i$,
\item $t_it_j = t_jt_i$ for $i,j =1,2,\dots,n-1$ such that $|j-i| > 1$,
\item $t_it_{i+1}t_i = t_{i+1}t_it_{i+1}$ for $i=1,2,\dots,n-2$.
\end{enumerate}
By convention, we set $H_0(q) = H_1(q) = \kk[q,q^{-1}]$.  We let $1_n$ denote the
identity element of $H_n(q)$.  To simplify notation, we will write $H_n$ for $H_n(q)$ in the sequel.
\end{definition}

The algebra $H_n$ has a basis $\{t_w\}_{w \in S_n}$, where for $w \in S_n$, $t_w = t_{i_1} \dots
t_{i_k}$, where $w = s_{i_1} \dots s_{i_k}$ is a reduced expression for $w$.  Note that the generator $t_i$ is invertible with inverse $t_i^{-1} = q^{-1}t_i + (q^{-1}-1)$.

\begin{definition}[Heisenberg algebra]
The (infinite rank) \emph{Heisenberg algebra} $\fh$ is the associative $\C$-algebra with generators $p_i,q_i$, $i \in \N_+ = \{1,2,\dots\}$, and relations
\[
  p_i q_j = q_j p_i + \delta_{i,j} 1,\quad p_i p_j = p_j p_i,\quad q_i q_j = q_j q_i, \quad i,j \in \N_+.
\]
\end{definition}

\begin{remark}
The Heisenberg algebra plays a fundamental role in quantum field theory and the theory of affine Lie algebras.  Is it isomorphic to the \emph{Weyl algebra}, which is the algebra of operators on $\C[x_1,x_2,\dots]$ generated by multiplication by $x_i$, $i \in \N_+$, and partial differentiation $\partial/\partial x_i$, $i \in \N_+$.
\end{remark}

\begin{definition}[Integral form of the Heisenberg algebra]
Let $\fh_\Z$ be the unital ring with generators $a_n,b_n$, $n \in \N_+$, and relations
\begin{equation} \label{eq:integral-heis-relations}
  a_n b_m = b_m a_n + b_{m-1} a_{n-1}, \quad a_n a_m = a_m a_n, \quad b_n b_m = b_m b_n, \quad n,m \in \N_+.
\end{equation}
Here we adopt the convention that $a_0=b_0=1$.
\end{definition}

The fact that $\fh_\Z$ is an integral form of the Heisenberg algebra can be seen as follows (we thank M.~Mackaay for explaining this to us).  Defining generating functions
\[
  A(t) = 1 + a_1 t + a_2 t^2 + \dots, \quad B(u) = 1 + b_1u + b_2u^2 + \dots,
\]
we can rewrite the relations~\eqref{eq:integral-heis-relations} as
\begin{equation} \label{eq:A-B-gen-func-relation}
  A(t)B(u) = B(u)A(t)(1+tu).
\end{equation}
Define
\begin{gather*}
  \tilde A(t) = 1 + tA'(-t)A(-t)^{-1},\quad \tilde B(u) = 1+uB'(-u)B(-u)^{-1}, \\
  \tilde A(t) = 1 + \tilde a_1 t + \tilde a_2 t^2 + \dots,\quad B(u) = 1 + \tilde b_1 u + \tilde b_2 u^2 + \dots.
\end{gather*}
Using~\eqref{eq:A-B-gen-func-relation}, one can show that
\[
  \tilde A(t)B(u) = B(u) \tilde A(t) + B(u) \frac{ut}{1-ut},
\]
from which it follows that
\[
  [\tilde a_n, b_m]=\delta_{m,n} \text{ for } m \le n.
\]
Now note that for each $n \in \N_+$, $\tilde a_n$ is equal to $(-1)^{n-1}na_n$ plus terms involving products of $a_m$ for $m<n$ (and similarly for $\tilde b_n$).  Thus, by symmetry, it follows that
\[
  [\tilde a_n, \tilde b_m] = (-1)^{n-1}\delta_{n,m}n \quad \forall\ m,n \in \N_+.
\]
Therefore the elements $\frac{1}{n}(-1)^{n-1}a_n, b_n$, $n \in \N_+$, satisfy the defining relations of the Heisenberg algebra $\fh$.  It follows that $\fh_\Z \otimes_\Z \C \cong \fh$ and so $\fh_\Z$ is an integral form of $\fh$.

%%%%%%%%%%%%%%%%%%%%%%%%%%%%%%%%%%%%%%%%%%%%%%%%%%%%%%%%%%%%%%%%%%%
\section{A graphical category} \label{sec:category-def}
%%%%%%%%%%%%%%%%%%%%%%%%%%%%%%%%%%%%%%%%%%%%%%%%%%%%%%%%%%%%%%%%%%%

%%%%%%%%%%%%%%%%%%%%%%%
\subsection{Definition}
\label{sec:Hq'-def}
%%%%%%%%%%%%%%%%%%%%%%%

We define an additive $\kk[q,q^{-1}]$-linear strict monoidal category $\cH'(q)$ as follows. The set of objects is generated by two objects $Q_+$ and $Q_-$.  Thus an arbitrary object of $\cH'(q)$ is a finite direct sum of tensor products $Q_\varepsilon := Q_{\varepsilon_1} \otimes \dots \otimes Q_{\varepsilon_n}$, where $\varepsilon = \varepsilon_1 \dots \varepsilon_n$ is a finite sequence of $+$ and $-$ signs.  The unit object $\mathbf{1}=Q_\varnothing$.

The space of morphisms $\Hom_{\cH'(q)}(Q_\varepsilon,
Q_{\varepsilon'})$ is the $\kk[q,q^{-1}]$-module generated by  planar diagrams
modulo local relations.  The diagrams are
oriented compact one-manifolds immersed in the strip $\R \times
[0,1]$, modulo rel boundary isotopies.  The endpoints of the
one-manifold are located at $\{1,\dots,m\} \times \{0\}$ and
$\{1,\dots,k\} \times \{1\}$, where $m$ and $k$ are the lengths of
the sequences $\varepsilon$ and $\varepsilon'$ respectively.  The
orientation of the one-manifold at the endpoints must agree with the
signs in the sequences $\epsilon$ and $\epsilon'$ and triple intersections are not allowed.  For example, the diagram
\[
  \begin{tikzpicture}[>=stealth]
    \draw[->] (0,3) .. controls (0,2) and (2,2) .. (2,3);
    \draw[->] (1,3) .. controls (1,2) and (0,1) .. (0,2) .. controls (0,3) and (1,1) .. (0,0);
    \draw[->] (1,0) .. controls (1,1) and (0,0) .. (0,1) .. controls (0,2) and (3,1) .. (3,0);
    \draw[->] (4,0) .. controls (4,1) and (2,1) .. (2,0);
    \draw[->] (3,2) arc(-180:180:.5);
  \end{tikzpicture}
\]
is a morphism from $Q_{-+--+}$ to $Q_{--+}$.  Composition of morphisms is given by the natural gluing of diagrams.  An endomorphism of $\mathbf{1}$ is a diagram without endpoints.
The local relations are as follows.

\begin{equation} \label{eq:local-relation-basic-Hecke}
\begin{tikzpicture}[>=stealth,baseline=25pt]
  \draw (0,0) .. controls (1,1) .. (0,2)[->];
  \draw (1,0) .. controls (0,1) .. (1,2)[->] ;
  \draw (1.5,1) node {=};
  \draw (2,1) node {$q$};
  \draw (2.5,0) --(2.5,2)[->];
  \draw (3.5,0) -- (3.5,2)[->];
  \draw (4.7,1) node {$+\ (q-1)$};
  \draw (5.7,0) --(6.7,2)[->];
  \draw (6.7,0) -- (5.7,2)[->];
\end{tikzpicture}
\end{equation}

\begin{equation} \label{eq:local-relation-braid}
\begin{tikzpicture}[>=stealth,baseline=25pt]
  \draw (0,0) -- (2,2)[->];
  \draw (2,0) -- (0,2)[->];
  \draw (1,0) .. controls (0,1) .. (1,2)[->];
  \draw (2.5,1) node {=};
  \draw (3,0) -- (5,2)[->];
  \draw (5,0) -- (3,2)[->];
  \draw (4,0) .. controls (5,1) .. (4,2)[->];
\end{tikzpicture}
\end{equation}

\begin{equation} \label{eq:local-relation-up-down-double-crossing}
\begin{tikzpicture}[>=stealth,baseline=25pt]
  \draw (0,0) .. controls (1,1) .. (0,2)[<-];
  \draw (1,0) .. controls (0,1) .. (1,2)[->] ;
  \draw (1.5,1) node {=};
  \draw (2,1) node {$q$};
  \draw (2.5,0) --(2.5,2)[<-];
  \draw (3.5,0) -- (3.5,2)[->];
  \draw (4,1) node {$- \ q$};
  \draw (4.5,1.75) arc (180:360:.5) ;
  \draw (4.5,2) -- (4.5,1.75) ;
  \draw (5.5,2) -- (5.5,1.75) [<-];
  \draw (5.5,.25) arc (0:180:.5) ;
  \draw (5.5,0) -- (5.5,.25) ;
  \draw (4.5,0) -- (4.5,.25) [<-];
\end{tikzpicture} \qquad \qquad
\begin{tikzpicture}[>=stealth,baseline=25pt]
  \draw (0,0) .. controls (1,1) .. (0,2)[->];
  \draw (1,0) .. controls (0,1) .. (1,2)[<-] ;
  \draw (1.5,1) node {$=\ q$};
  \draw (2.3,0) --(2.3,2)[->];
  \draw (3.3,0) -- (3.3,2)[<-];
\end{tikzpicture}
\end{equation}

\begin{equation} \label{eq:cc-circle-and-left-curl}
\begin{tikzpicture}[>=stealth,baseline=0pt]
  \draw [<-](0,0) arc(180:0:.5);
  \draw (0,0) arc(180:360:.5);
  \draw (1.5,0) node {$=$};
  \draw (2,0) node {$\id$};
  \draw (0,-1) node {};
  \draw (0,1) node {};
\end{tikzpicture} \qquad \qquad
\begin{tikzpicture}[>=stealth,baseline=0pt]
  \draw (-1,0) .. controls (-1,.5) and (-.3,.5) .. (-.1,0) ;
  \draw (-1,0) .. controls (-1,-.5) and (-.3,-.5) .. (-.1,0) ;
  \draw (0,-1) .. controls (0,-.5) .. (-.1,0) ;
  \draw (-.1,0) .. controls (0,.5) .. (0,1) [->] ;
  \draw (0.7,0) node {$=0$};
\end{tikzpicture}
\end{equation}

Note that the upward crossing is invertible with inverse
\begin{equation} \label{eq:inverse-crossing}
\begin{tikzpicture}[>=stealth,baseline=25pt]
  \draw[->] (0,0) to (1,2);
  \draw[->] (1,0) to (0,2);
  \filldraw[fill=white,draw=black] (0.5,1) circle(2pt);
  \draw (2,1) node {$:= \ q^{-1}$};
  \draw[->] (3,0) to (4,2);
  \draw[->] (4,0) to (3,2);
  \draw (5.5,1) node {$+\ (q^{-1}-1)$};
  \draw[->] (7,0) to (7,2);
  \draw[->] (8,0) to (8,2);
\end{tikzpicture}\quad  .
\end{equation}
The reader should note that the local relations on upward pointing strands are simply the relations of the Hecke algebra, where the generator $t_i$ corresponds to the crossing of the $i$-th and $(i+1)$-st strands (numbered from the right).  The definitions of the other local relations are motivated by the following result.

\begin{lemma} \label{lem:basic-heisenberg-relation}
In the category $\cH'(q)$, we have
\[
  Q_{-+} \cong Q_{+-} \oplus \mathbf{1}.
\]
\end{lemma}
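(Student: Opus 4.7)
The plan is to realize the direct sum decomposition by exhibiting explicit inclusion and projection morphisms and checking the defining identities of a biproduct. Define $\alpha_1 \colon Q_{+-} \to Q_{-+}$ to be the upward crossing of a $+$ strand with a $-$ strand, $\beta_1 \colon Q_{-+} \to Q_{+-}$ to be $q^{-1}$ times the analogous crossing in the opposite direction, $\alpha_2 \colon \mathbf{1} \to Q_{-+}$ to be the cup whose upper endpoints form a $-+$ pair, and $\beta_2 \colon Q_{-+} \to \mathbf{1}$ to be the corresponding cap. I aim to verify the three identities
\[
 \beta_1 \alpha_1 = \id_{Q_{+-}}, \qquad \beta_2 \alpha_2 = \id_{\mathbf{1}}, \qquad \alpha_1 \beta_1 + \alpha_2 \beta_2 = \id_{Q_{-+}},
\]
which are precisely the conditions needed to exhibit $Q_{-+}$ as a direct sum of $Q_{+-}$ and $\mathbf{1}$ in the additive category $\cH'(q)$.

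Each of these identities is essentially a one-line reading of a defining local relation. The second equation of~\eqref{eq:local-relation-up-down-double-crossing} states that the double crossing on a $+-$ pair equals $q\,\id_{Q_{+-}}$; dividing by $q$ yields $\beta_1 \alpha_1 = \id_{Q_{+-}}$, which is exactly why the factor $q^{-1}$ is inserted in the definition of $\beta_1$. The first equation of~\eqref{eq:cc-circle-and-left-curl} states that the counterclockwise oriented circle equals $\id_{\mathbf{1}}$; after checking that the composition $\beta_2 \alpha_2$ produces a counterclockwise loop (down on the left, right along the bottom of the cup, up on the right, left along the top of the cap), this is the identity $\beta_2 \alpha_2 = \id_{\mathbf{1}}$. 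Finally, the first equation of~\eqref{eq:local-relation-up-down-double-crossing} states that the double crossing on a $-+$ pair equals $q\,\id_{Q_{-+}} - q\,\alpha_2\beta_2$; dividing by $q$ gives $\alpha_1\beta_1 = \id_{Q_{-+}} - \alpha_2\beta_2$, which rearranges to the completeness relation.

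With these three identities in hand, the remaining orthogonality relations $\beta_2 \alpha_1 = 0$ and $\beta_1 \alpha_2 = 0$ needed for a biproduct follow by purely formal manipulation. Composing $\alpha_1\beta_1 + \alpha_2\beta_2 = \id_{Q_{-+}}$ on the left with $\beta_2$ and using $\beta_2\alpha_2 = \id_{\mathbf{1}}$ gives $\beta_2\alpha_1\beta_1 = 0$, and then composing on the right with $\alpha_1$ and using $\beta_1\alpha_1 = \id_{Q_{+-}}$ yields $\beta_2\alpha_1 = 0$; the identity $\beta_1\alpha_2 = 0$ is obtained by the symmetric manipulation. I do not anticipate any real obstacle; the local relations were crafted precisely so that the decomposition holds by direct inspection, and the main care required is to match the orientations at the cups, caps, and crossings to the sources and targets of the four structure morphisms.
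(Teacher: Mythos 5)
Your proof is correct and uses the same four structure morphisms (crossing, scaled crossing, cup, cap) and the same three local relations as the paper's argument. The only cosmetic difference is that you derive the orthogonality identities $\beta_2\alpha_1 = 0$ and $\beta_1\alpha_2 = 0$ formally from the other three biproduct equations, whereas the paper checks all five directly from the local relations and isotopy (the two orthogonality compositions reduce to left curls, which vanish); both derivations are valid and take essentially the same route.
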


\begin{proof}
Consider the following morphisms of $\cH'(q)$.
\begin{equation} \label{eq:basic-heisenberg}
\begin{tikzpicture}[baseline=0pt]
  \draw (0,3) node {$Q_{-+}$};
  \draw (0,-3) node {$Q_{-+}$};
  \draw (-3,0) node {$Q_{+-}$};
  \draw (3,0) node {$\mathbf{1}$};
  \draw[->] (-.5,-2.5) to (-2.5,-.5);
  \draw (-1.2,-1.2) node {$\rho_1$};
  \draw[->] (.5,-2.5) to (2.5,-.5);
  \draw (1.2,-1.2) node {$\rho_2$};
  \draw[->] (-2.5,.5) to (-.5,2.5);
  \draw (-1.2,1.2) node {$\iota_1$};
  \draw[->] (2.5,.5) to (.5,2.5);
  \draw (1.2,1.2) node {$\iota_2$};
  \draw[>=stealth] (-3.3,-2) node {$q^{-1}$};
  \draw[>=stealth] [shift={+(-2.5,-2)}] [->] (.4,-.4) to (-.4,.4);
  \draw[>=stealth] [shift={+(-2.5,-2)}] [<-] (-.4,-.4) to (.4,.4);
  \draw[>=stealth] [shift={+(-2.5,2)}] [<-] (.4,-.4) to (-.4,.4);
  \draw[>=stealth] [shift={+(-2.5,2)}] [->] (-.4,-.4) to (.4,.4);
  \draw[<-,>=stealth] (2,-2.2) arc(180:0:.5);
  \draw[->,>=stealth] (2,2.2) arc(180:360:.5);
\end{tikzpicture}
\end{equation}
It is immediate from the defining local relations of $\cH'(q)$ and isotopies that
\[
  \rho_2 \iota_1 = 0,\quad \rho_1 \iota_2 = 0,\quad \rho_1 \iota_1 = \id,\quad \rho_2 \iota_2 = \id,\quad \iota_1 \rho_1 + \iota_2 \rho_2 = \id,
\]
which proves the desired isomorphism.
\end{proof}

\begin{definition}[Grothendieck group]
  The \emph{(split) Grothendieck group} of an additive category $\mathcal{C}$ is the abelian group $K_0(\mathcal{C})$ (written additively) with generators $[X]$, $X \in \Ob \mathcal{C}$, and relations $[Z] = [X] + [Y]$ whenever $Z \cong X \oplus Y$.
\end{definition}

It follows from Lemma~\ref{lem:basic-heisenberg-relation} that in the Grothendieck group $K_0(\cH'(q))$, we have
\[
  [Q_-][Q_+] = [Q_+][Q_-] + 1,
\]
which is the Heisenberg relation.

\begin{remark}\label{rem:higher level}
 For each $r$-tuple of complex numbers $(u_1,\hdots,u_r)$, one can define a ``higher level" Heisenberg category $\cH'(q,u_1,\hdots,u_r)$, generalizing the definition of $\cH'(q)$.  The defining objects $Q_+,Q_-$ are the same in $\cH'(q,u_1,\hdots,u_r)$ as in $\cH'(q)$.  The morphisms of
$\cH'(q,u_1,\dots,u_r)$ look like the morphisms in $\cH'(q)$, with the caveat that strands are now allowed to carry a new defining dot, which satisfies a degree $r$ polynomial relation with roots $u_1,\hdots,u_r$.  (Thus any diagram containing a strand with more than $r$ dots on it can be written as a linear combination of diagrams whose strands carry fewer that $r$ dots.)  In this higher level categorification, the fundamental relationship between $Q_+$ and $Q_-$ becomes
\[
	 Q_{-+} \cong Q_{+-} \oplus \mathbf{1}^{\oplus r},
\]
which categorifies the ``higher level" Heisenberg relation
\[
	 [Q_-][Q_+] = [Q_+][Q_-] + r.
\]
As we will not have use for these higher level categories in the current paper, we have elected not to give the details of this generalization here.  We note, however, that these higher level categorifications
$\cH'(q,u_1,\hdots,u_r)$ are related to cyclotomic quotients of the degenerate affine Hecke algebra in the same way that $\cH'(q)$ is related to the Hecke algebra (which is the cyclotomic Hecke algebra when $r=1$).
\end{remark}
%%%%%%%%%%%%%%%%%%%%%%%%%%%%%%%
\subsection{Triple point moves}
\label{sec:triple-point}
%%%%%%%%%%%%%%%%%%%%%%%%%%%%%%%

We have the following equalities of triple point diagrams.
\begin{equation} \label{eq:triple1}
\begin{tikzpicture}[>=stealth,baseline=25]
  \draw (0,0) -- (2,2)[->];
  \draw (2,0) -- (0,2)[->];
  \draw (1,0) .. controls (0,1) .. (1,2)[->];
  \draw (2.5,1) node {$=$};
  \draw (3,0) -- (5,2)[->];
  \draw (5,0) -- (3,2)[->];
  \draw (4,0) .. controls (5,1) .. (4,2)[->];
\end{tikzpicture}
\end{equation}
\begin{equation} \label{eq:triple2}
\begin{tikzpicture}[>=stealth,baseline=25]
  \draw (0,0) -- (2,2)[->];
  \draw (2,0) -- (0,2)[<-];
  \draw (1,0) .. controls (0,1) .. (1,2)[->];
  \draw (2.5,1) node {$=$};
  \draw (3,0) -- (5,2)[->];
  \draw (5,0) -- (3,2)[<-];
  \draw (4,0) .. controls (5,1) .. (4,2)[->];
\end{tikzpicture}
\end{equation}
\begin{equation} \label{eq:triple3}
\begin{tikzpicture}[>=stealth,baseline=25]
  \draw (0,0) -- (2,2)[<-];
  \draw (2,0) -- (0,2)[->];
  \draw (1,0) .. controls (0,1) .. (1,2)[->];
  \draw (2.5,1) node {$=$};
  \draw (3,0) -- (5,2)[<-];
  \draw (5,0) -- (3,2)[->];
  \draw (4,0) .. controls (5,1) .. (4,2)[->];
\end{tikzpicture}
\end{equation}
\begin{equation} \label{eq:triple4}
\begin{tikzpicture}[>=stealth,baseline=25]
  \draw (0,0) -- (2,2)[->];
  \draw (2,0) -- (0,2)[->];
  \draw (1,0) .. controls (0,1) .. (1,2)[<-];
  \draw (2.5,1) node {$=$};
  \draw (3,0) -- (5,2)[->];
  \draw (5,0) -- (3,2)[->];
  \draw (4,0) .. controls (5,1) .. (4,2)[<-];
  \draw (6.5,1) node {$+\ q(q-1)$};
  \draw[->] (8,0) to (8,2);
  \draw[<-] (8.5,0) arc(180:0:.5);
  \draw[->] (8.5,2) arc(180:360:.5);
\end{tikzpicture}
\end{equation}

\begin{proof}
Equality~\eqref{eq:triple1} is one of the defining the relations.  We see~\eqref{eq:triple2} as follows.
\[
\begin{tikzpicture}[>=stealth]
  \draw[->] (0,0) to (2,2);
  \draw[<-] (2,0) to (0,2);
  \draw[->] (1,0) .. controls (0,1) .. (1,2);
  \draw (3,1) node {$= \ \ q^{-1}$};
  \draw[->] (4,0) to (6,2);
  \draw[->] (4,2) to (6,0);
  \draw[->] (5,0) .. controls (4,1) and (4.5,2.1) .. (5.3,1.3) ..
  controls (6.3,.3) and (6,1.5) .. (5,2);
  \draw (7.6,1) node {$-\ (1-q^{-1})$};
  \draw[->] (9,0) .. controls (10.5,1) and (11,1) .. (10,2);
  \draw[->] (9,2) to (11,0);
  \draw[->] (10,0) .. controls (9.5,.5) and (9,1) .. (9.5,1.5) .. controls (10,2)
  and (10.5,1) .. (11,2);
\end{tikzpicture}
\]
\[
\begin{tikzpicture}[>=stealth]
  \draw (0,0) node {};
  \draw (3,1) node {$= \ \ q^{-1}$};
  \draw[->] (4,0) to (6,2);
  \draw[->] (4,2) to (6,0);
  \draw[->] (5,0) .. controls (3.5,.5) and (4,1.7) .. (4.7,.7) ..
  controls (5.4,-.3) and (6,1) .. (5,2);
  \draw (7.6,1) node {$-\ (1-q^{-1})$};
  \draw[->] (9,0) .. controls (10.5,1) and (11,1) .. (10,2);
  \draw[->] (9,2) to (11,0);
  \draw[->] (10,0) .. controls (9.5,.5) and (9,1) .. (9.5,1.5) .. controls (10,2)
  and (10.5,1) .. (11,2);
\end{tikzpicture}
\]
\[
\begin{tikzpicture}[>=stealth]
  \draw (0,1) node {$=$};
  \draw[->] (1,0) to (3,2);
  \draw[->] (1,2) to (3,0);
  \draw[->] (2,0) .. controls (3,1) .. (2,2);
  \draw (4.6,1) node {$+\ (1-q^{-1})$};
  \draw[->] (6,0) .. controls (7.5,1) and (8,1) .. (7,2);
  \draw[->] (6,2) to (8,0);
  \draw[->] (7,0) .. controls (6.5,.5) and (6,1) .. (6.5,1.5) .. controls (7,2)
  and (7.5,1) .. (8,2);
  \draw (9.6,1) node {$-\ (1-q^{-1})$};
  \draw[->] (11,0) .. controls (12.5,1) and (13,1) .. (12,2);
  \draw[->] (11,2) to (13,0);
  \draw[->] (12,0) .. controls (11.5,.5) and (11,1) .. (11.5,1.5) .. controls (12,2)
  and (12.5,1) .. (13,2);
\end{tikzpicture}
\]
\[
\begin{tikzpicture}[>=stealth]
  \draw (0,1) node {$=$};
  \draw[->] (1,0) to (3,2);
  \draw[->] (1,2) to (3,0);
  \draw[->] (2,0) .. controls (3,1) .. (2,2);
\end{tikzpicture}
\]
In the above, the first equality follows from~\eqref{eq:local-relation-basic-Hecke} applied to the two strands at the top right.  The second equality follows from~\eqref{eq:local-relation-braid} applied to the middle three crossings (of the first diagram) viewed sideways.  The third equality follows from~\eqref{eq:local-relation-basic-Hecke} applied to the bottom left two crossings in the first diagram of the previous line.

The proof of~\eqref{eq:triple3} is analogous and will be omitted.
Finally,~\eqref{eq:triple4} is proven as follows.
\[
\begin{tikzpicture}[>=stealth]
  \draw[->] (0,0) to (2,2);
  \draw[->] (2,0) to (0,2);
  \draw[->] (1,2) .. controls (0,1) .. (1,0);
  \draw (2.5,1) node {$=$};
  \draw (3.2,1) node {$q^{-1}$};
  \draw[->] (3.5,0) to (5.5,2);
  \draw[->] (5.5,0) to (3.5,2);
  \draw[->] (4.5,2) .. controls (3.5,1) and (4,-.1) .. (4.8,.7) ..
  controls (5.6,1.5) and (5.8,.8) .. (4.5,0);
  \draw (6,1) node {$+$};
  \draw[->] (6.5,0) to (8.5,2);
  \draw[->] (8.5,0) .. controls (8,.5) .. (7.5,0);
  \draw[->] (7.5,2) .. controls (6.5,1) and (7.2,0) .. (7.7,.5) ..
  controls (8.2,1) and (7,1.5) .. (6.5,2);
\end{tikzpicture}
\]
\[
\begin{tikzpicture}[>=stealth]
  \draw (2.5,1) node {$=$};
  \draw (3.2,1) node {$q^{-1}$};
  \draw[->] (3.5,0) to (5.5,2);
  \draw[->] (5.5,0) to (3.5,2);
  \draw[->] (4.5,2) .. controls (3.7,1.5) and (3.4,.5) .. (4.3,1.2) ..
  controls (5.2,2.1) and (5.5,1) .. (4.5,0);
  \draw (6.2,1) node {$+ \ \ q$};
  \draw[->] (7,0) to (9,2);
  \draw[->] (9,0) arc(0:180:.5);
  \draw[->] (8,2) .. controls (7,1.5) and (7.4,1.1) .. (7.7,1.1)
  .. controls (7.9,1.1) and (8,1.5) .. (7,2);
  \draw (10.2,1) node {$+\ \ (q-1)$};
  \draw[->] (12,0) .. controls (13.5,1) .. (12,2);
  \draw[->] (13,2) .. controls (12,1) and (13,.7) .. (14,2);
  \draw[->] (14,0) arc(0:180:.5);
\end{tikzpicture}
\]
\[
\begin{tikzpicture}[>=stealth]
  \draw (0,1) node {$=$};
  \draw[->] (1,0) to (3,2);
  \draw[->] (3,0) to (1,2);
  \draw[->] (2,2) .. controls (3,1) .. (2,0);
  \draw (4.5,1) node {$+ \ \ q(q-1)$};
  \draw[->] (6,0) to (6,2);
  \draw[<-] (6.5,0) arc(180:0:.5);
  \draw[->] (6.5,2) arc(180:360:.5);
\end{tikzpicture}
\]
Here the first equality follows from~\eqref{eq:local-relation-basic-Hecke} applied to the two strands at the bottom right.  The second equality follows from applying~\eqref{eq:triple3} (viewed sideways) to the three middle three crossings in the first diagram and~\eqref{eq:local-relation-basic-Hecke} to the double crossing in the middle of the second diagram.  The third equality is obtained by applying the second relation of~\eqref{eq:local-relation-up-down-double-crossing} to the first and third diagrams and the second relation in~\eqref{eq:cc-circle-and-left-curl} to the second diagram.
\end{proof}

%%%%%%%%%%%%%%%%%%%%%%%%%%%%%
\subsection{Right curl moves}
%%%%%%%%%%%%%%%%%%%%%%%%%%%%%

We will use a dot to denote a right curl and a dot labeled $d$ to
denote $d$ right curls.
\[
\begin{tikzpicture}[>=stealth]
  \draw[->] (0,0) to (0,2);
  \filldraw (0,1) circle (2pt);
  \draw (0.5,1) node {$=$};
  \draw (2,1) .. controls (2,1.5) and (1.3,1.5) .. (1.1,1);
  \draw (2,1) .. controls (2,.5) and (1.3,.5) .. (1.1,1);
  \draw (1,0) .. controls (1,.5) .. (1.1,1);
  \draw (1.1,1) .. controls (1,1.5) .. (1,2) [->];
\end{tikzpicture}
\qquad \qquad
\begin{tikzpicture}[>=stealth]
  \draw[->] (0,0) to (0,2);
  \filldraw (0,1) circle (2pt) node[anchor=west] {$d$};
  \draw (.8,1) node {$=$};
  \draw[->] (1.5,0) to (1.5,2);
  \filldraw (1.5,.5) circle (2pt);
  \filldraw (1.5,.8) circle (2pt);
  \filldraw (1.5,1.1) circle (2pt);
  \filldraw (1.5,1.4) circle (2pt);
  \draw (2.5,1) node {$\Bigg\}\ d$ dots};
\end{tikzpicture}
\]
We will see in Section~\ref{sec:JM-elements} that dots correspond to Jucys-Murphy elements in Hecke algebras.

\begin{lemma} \label{lem:dot-crossing-moves}
We have the following equalities of diagrams.
\[
\begin{tikzpicture}[>=stealth]
  \draw[->] (0,0) to (2,2);
  \draw[->] (2,0) to (0,2);
  \filldraw (0.5,1.5) circle (2pt);
  \draw (2.5,1) node {$=$};
  \draw[->] (3,0) to (5,2);
  \draw[->] (5,0) to (3,2);
  \filldraw (4.5,.5) circle (2pt);
  \draw (6,1) node {$+\ (q-1)$};
  \draw[->] (7.5,0) to (7.5,2);
  \draw[->] (8.5,0) to (8.5,2);
  \filldraw (7.5,1) circle (2pt);
  \draw (9.5,1) node {$+\ q$};
  \draw[->] (10.5,0) to (10.5,2);
  \draw[->] (11.5,0) to (11.5,2);
\end{tikzpicture}
\]
\[
\begin{tikzpicture}[>=stealth]
  \draw[->] (0,0) to (2,2);
  \draw[->] (2,0) to (0,2);
  \filldraw (0.5,0.5) circle (2pt);
  \draw (2.5,1) node {$=$};
  \draw[->] (3,0) to (5,2);
  \draw[->] (5,0) to (3,2);
  \filldraw (4.5,1.5) circle (2pt);
  \draw (6,1) node {$+\ (q-1)$};
  \draw[->] (7.5,0) to (7.5,2);
  \draw[->] (8.5,0) to (8.5,2);
  \filldraw (7.5,1) circle (2pt);
  \draw (9.5,1) node {$+\ q$};
  \draw[->] (10.5,0) to (10.5,2);
  \draw[->] (11.5,0) to (11.5,2);
\end{tikzpicture}
\]
\end{lemma}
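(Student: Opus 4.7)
The plan is to expand the dot on the upper-left using its definition as a right curl---an upward strand carrying a self-crossing with a small closed loop attached on its right---and then slide the resulting feature past the main crossing using the relations established in Section~\ref{sec:category-def}.

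First I would isotope the right curl's self-crossing so that it lies immediately above the main crossing, and then enlarge its attached loop so that it wraps around the right main strand. This creates an up-down double crossing configuration, where the two arcs of the loop cross the right strand; viewing the portion of the loop between these crossings as a virtual downward strand, I can resolve them using the appropriate case of~\eqref{eq:local-relation-up-down-double-crossing}, which yields a straightened configuration multiplied by $q$ together with a cup-cap correction. The cup-cap correction is then simplified using~\eqref{eq:cc-circle-and-left-curl} (since the closed piece becomes a counterclockwise circle), and the Hecke relation~\eqref{eq:local-relation-basic-Hecke} is applied to the now-adjacent pair formed by the main crossing and the self-crossing from the expanded dot, producing a $q$-multiple of the parallel configuration plus a $(q-1)$-multiple of a single crossing. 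Along the way, the triple point moves from Section~\ref{sec:triple-point}, and in particular~\eqref{eq:triple4}, are invoked to rearrange any residual three-strand configurations.

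After collecting terms the diagram should resolve into exactly the three terms on the right-hand side of the first equation: the crossing with the dot now on the bottom-right, $(q-1)$ times the parallel configuration with a dot on the left strand, and $q$ times the parallel identity. The second equation then follows by applying the same strategy to the vertically reflected configuration, exchanging the roles of ``above'' and ``below'' the main crossing; equivalently, one can appeal to the symmetry of the defining relations of $\cH'(q)$ under this reflection (up to the appropriate orientation adjustments).

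The main obstacle will be the careful bookkeeping of polynomial coefficients: each application of~\eqref{eq:local-relation-up-down-double-crossing}, \eqref{eq:local-relation-basic-Hecke}, and~\eqref{eq:triple4} introduces its own multiplicative factor along with auxiliary cup-cap or triple-point corrections, and verifying that these combine precisely to $(q-1)$ and $q$ rather than to some other polynomial requires patience in tracking the coefficients through each step. In particular, one must correctly identify the orientations of the loop's arcs when applying~\eqref{eq:local-relation-up-down-double-crossing}, and confirm that the closed pieces produced by the cup-cap corrections are always counterclockwise circles so that~\eqref{eq:cc-circle-and-left-curl} applies cleanly.
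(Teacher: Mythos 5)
Your proposal takes a genuinely different route from the paper's, and the route as stated has a concrete gap. The paper's proof expands the dot as the explicit right curl and then works entirely with upward-oriented strands: it applies the Hecke relation~\eqref{eq:local-relation-basic-Hecke} (and its inverse form, which produces the $q^{-1}$ and $1-q^{-1}$ coefficients), together with planar isotopies, to migrate the self-crossing of the curl past the main crossing and then collect terms. At no point does the paper invoke~\eqref{eq:local-relation-up-down-double-crossing}, \eqref{eq:cc-circle-and-left-curl}, or the triple point moves of Section~\ref{sec:triple-point} (which are themselves derived \emph{from} the defining relations and so would be circular to treat as primitives here, and, in the case of~\eqref{eq:triple4}, carry their own curl-correction term that your accounting does not mention).

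The concrete gap is the step ``enlarge its attached loop so that it wraps around the right main strand.'' This is not a planar isotopy: the loop of the curl is a local feature of the left strand and initially does not cross the right strand, so making it encircle the right strand would require passing the loop through that strand, which is exactly the nontrivial move the lemma is computing the cost of. To justify this step one would need to apply a relation (say, the inverse of~\eqref{eq:local-relation-basic-Hecke} to introduce a double crossing between the loop and the right strand), but that application brings in its own coefficients and extra summands that would then have to be tracked through the rest of your argument. Combined with the extra correction term in~\eqref{eq:triple4} and your own acknowledgment that the coefficient bookkeeping is unverified, this leaves the proposal at the level of a strategy sketch rather than a proof. A cleaner route, and the one taken in the paper, is to avoid mixed-orientation crossings and triple-point moves entirely and work only with~\eqref{eq:local-relation-basic-Hecke} on upward crossings after expanding the dot.
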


\begin{proof}
We prove the first equality.  The second is analogous.
\[
\begin{tikzpicture}[>=stealth]
  \draw[->] (0,0) to (2,2);
  \draw[->] (2,0) to (0,2);
  \filldraw (0.5,1.5) circle (2pt);
  \draw (2.5,1) node {$=$};
  \draw[->] (3,0) to (5,2);
  \draw[->] (5,0) to (4,1) .. controls (3.5,1.5) and (3.7,1.8) ..
  (4,1.5) .. controls (4.3,1.2) and (3.7,1.2) .. (3,2);
  \draw (6,1) node {$= \ q^{-1}$};
  \draw[->] (9,0) .. controls (8,.5) and (7.2,1.5) .. (8.2,1.5) ..
  controls (9.2,1.5) and (8.5,-.25) .. (7,2);
  \draw[->] (7,0) .. controls (8,.5) and (9,.3) .. (8.2,1.3) ..
  controls (7.6,1.8) and (8.5,1.75) .. (9,2);
  \draw (9.5,1) node {$+$};
  \draw[->] (10,0) .. controls (11,1) .. (10,2);
  \draw[->] (11,0) .. controls (10,1) .. (11,2);
\end{tikzpicture}
\]
\[
\begin{tikzpicture}[>=stealth]
  \draw (6,1) node {$= \ q^{-1}$};
  \draw[->] (9,0) .. controls (8,.5) and (7.2,1.5) .. (8.2,1.5) ..
  controls (9.2,1.5) and (8.5,-.25) .. (7,2);
  \draw[->] (7,0) .. controls (7,1) and (7.5,1.6) .. (8.2,1.3) ..
  controls (8.9,1) and (9,1.5) .. (9,2);
  \draw (9.5,1) node {$+$};
  \draw[->] (10,0) .. controls (11,1) .. (10,2);
  \draw[->] (11,0) .. controls (10,1) .. (11,2);
  \draw (11.5,1) node {$=$};
  \draw[->] (14,0) .. controls (13.5,.5) and (13.7,.8) ..
  (14,.5) .. controls (14.3,.2) and (13.7,.2) .. (13,1) to (12,2);
  \draw[->] (12,0) to (14,2);
  \draw (15.5,1) node {$+ \ \ (1-q^{-1})$};
  \draw[->] (17,0) .. controls (17.5,1) and (18.5,2) .. (18.5,1) .. controls (18.5,0)
  and (17.5,1) .. (17,2);
  \draw[->] (19,0) .. controls (18,1) .. (19,2);
  \draw (19.5,1) node {$+$};
  \draw[->] (20,0) .. controls (21,1) .. (20,2);
  \draw[->] (21,0) .. controls (20,1) .. (21,2);
\end{tikzpicture}
\]
\[
\begin{tikzpicture}[>=stealth]
  \draw (0.5,1) node {$=$};
  \draw[->] (1,0) to (3,2);
  \draw[->] (3,0) to (1,2);
  \filldraw (2.5,.5) circle (2pt);
  \draw (4.5,1) node {$+ \ (q-1)$};
  \draw[->] (6,0) .. controls (6,1) and (6.5,1.5) .. (6.5,1) ..
  controls (6.5,.5) and (6,1) .. (6,2);
  \draw[->] (7,0) to (7,2);
  \draw (8.5,1) node {$-\ \ (q-1)$};
  \draw[->] (10,0) to (11,2);
  \draw[->] (11,0) to (10,2);
  \draw (11.5,1) node {$+$};
  \draw[->] (12,0) .. controls (13,1) .. (12,2);
  \draw[->] (13,0) .. controls (12,1) .. (13,2);
\end{tikzpicture}
\]
\[
\begin{tikzpicture}[>=stealth]
  \draw (0.5,1) node {$=$};
  \draw[->] (1,0) to (3,2);
  \draw[->] (3,0) to (1,2);
  \filldraw (2.5,.5) circle (2pt);
  \draw (4.5,1) node {$+ \ (q-1)$};
  \draw[->] (6,0) to (6,2);
  \filldraw (6,1) circle (2pt);
  \draw[->] (7,0) to (7,2);
  \draw (8.5,1) node {$+\ \ q$};
  \draw[->] (9.5,0) to (9.5,2);
  \draw[->] (10.5,0) to (10.5,2);
\end{tikzpicture}
\]
\end{proof}

It follows by induction that we have the following equalities.
\[
\begin{tikzpicture}[>=stealth]
  \draw[->] (0,0) to (2,2);
  \draw[->] (2,0) to (0,2);
  \filldraw (.5,1.5) circle (2pt) node[anchor=west] {$d$};
  \draw (2.5,1) node {$=$};
  \draw[->] (3,0) to (5,2);
  \draw[->] (5,0) to (3,2);
  \filldraw (4.5,.5) circle (2pt) node[anchor=west] {$d$};
  \draw (7,1) node {$+(q-1) \sum_{a=1}^{d}$};
  \draw[->] (8.5,0) to (8.5,2);
  \draw[->] (9.5,0) to (9.5,2);
  \filldraw (8.5,1) circle (2pt) node[anchor=west] {$a$};
  \filldraw (9.5,1) circle (2pt) node[anchor=west] {$(d-a)$};
  \draw (12,1) node {$+q \sum_{b=0}^{d-1}$};
  \draw[->] (13,0) to (13,2);
  \draw[->] (14,0) to (14,2);
  \filldraw (13,1) circle (2pt) node[anchor=west] {$b$};
  \filldraw (14,1) circle (2pt) node[anchor=west]  {$(d-1-b)$};
\end{tikzpicture}
\]
\[
\begin{tikzpicture}[>=stealth]
  \draw[->] (0,0) to (2,2);
  \draw[->] (2,0) to (0,2);
  \filldraw (0.5,0.5) circle (2pt) node[anchor=west] {$d$};
  \draw (2.5,1) node {$=$};
  \draw[->] (3,0) to (5,2);
  \draw[->] (5,0) to (3,2);
  \filldraw (4.5,1.5) circle (2pt) node[anchor=west] {$d$};
  \draw (7,1) node {$+(q-1) \sum_{a=1}^{d}$};
  \draw[->] (8.5,0) to (8.5,2);
  \draw[->] (9.5,0) to (9.5,2);
  \filldraw (8.5,1) circle (2pt) node[anchor=west] {$a$};
  \filldraw (9.5,1) circle (2pt) node[anchor=west] {$(d-a)$};
  \draw (12,1) node {$+q \sum_{b=0}^{d-1}$};
  \draw[->] (13,0) to (13,2);
  \draw[->] (14,0) to (14,2);
  \filldraw (13,1) circle (2pt) node[anchor=west] {$b$};
  \filldraw (14,1) circle (2pt) node[anchor=west]  {$(d-1-b)$};
\end{tikzpicture}
\]

Define $\tilde{c}_d$ to be a counterclockwise oriented circle with
$d$ right curls on it, and $c_d$ to be a clockwise oriented circle
with $d$ right curls.
\[
\begin{tikzpicture}[>=stealth]
  \draw (-2,0) node {$\tilde c_d = $};
  \draw (0,0) arc (0:360:.5) [->];
  \filldraw (-1,0) circle (2pt) node[anchor=east] {$d$};
\end{tikzpicture}
\qquad \qquad
\begin{tikzpicture}[>=stealth]
  \draw (-2,0) node {$c_d=$};
  \draw (0,0) arc (0:360:.5) [<-];
  \filldraw (-1,0) circle (2pt) node[anchor=east] {$d$};
\end{tikzpicture}
\]

\begin{proposition} \label{prop:circle-conversion}
For $d \ge 0$ we have
\[
  \tilde{c}_{d+1} = (q-1)\sum_{a=1}^{d} \tilde{c}_a c_{d-a} +
  q\sum_{b=0}^{d-1} \tilde{c}_b c_{d-1-b}.
\]
Note, in particular, that this implies $\tilde c_1=0$.
\end{proposition}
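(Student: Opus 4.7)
My plan is to prove the identity by closing up the iterated dot-slide identity displayed just before the proposition. Given two upward strands with a single crossing and $d$ dots placed on the upper-left of the crossing, I would close off the diagram by connecting each strand's top endpoint to its bottom endpoint via an arc running around the outside of the crossing region (with orientation-compatible auxiliary cups and caps introduced along each arc). Under this closure, the two upward strands combine into a single figure-eight-shaped closed loop with one counterclockwise lobe and one clockwise lobe. By a planar isotopy that shrinks the CW lobe down to a small right curl attached to the CCW lobe, this closed diagram is identified with $\tilde c_{d+1}$, since the $d$ dots originally on the upper-left sit on what becomes the big CCW loop and the shrunken lobe contributes one extra right curl.

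Applying the iterated dot-slide identity inside this closure, I then analyze each resulting term. The first right-hand-side term (the same crossing with the $d$ dots now on the lower-right) closes up to a figure-eight whose $d$ dots sit on the opposite (CW) lobe; an analogous shrinking isotopy produces a diagram in which the dotted lobe becomes a left curl on a closed upward strand, which vanishes by \eqref{eq:cc-circle-and-left-curl}. Each correction term in the $(q-1)\sum_{a=1}^{d}$ and $q\sum_{b=0}^{d-1}$ parts has no crossing between its two parallel upward strands, so under the same closure the two strands split into two disjoint closed loops — the left strand becomes a CCW circle (contributing $\tilde c_a$ or $\tilde c_b$) and the right strand becomes a CW circle (contributing $c_{d-a}$ or $c_{d-1-b}$). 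Summing up, these contributions are exactly $(q-1)\sum_{a=1}^d \tilde c_a c_{d-a} + q \sum_{b=0}^{d-1} \tilde c_b c_{d-1-b}$, matching the claimed formula. The $d=0$ case (which specializes to $\tilde c_1 = 0$) is covered automatically because the correction sums are empty and only the vanishing ``dots on lower-right'' contribution remains.

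The main obstacle is to rigorously justify the two planar-isotopy identifications: first, that shrinking the CW lobe of the closed figure-eight (with dots on the CCW lobe) produces a right curl on a CCW circle, so that the closed LHS is genuinely $\tilde c_{d+1}$; and second, that the analogous shrinking with dots on the other lobe produces a left curl (rather than a right curl), whose vanishing by \eqref{eq:cc-circle-and-left-curl} is what allows the ``dots on lower-right'' term to drop out. Both identifications hinge on carefully tracking the orientations of each lobe and the location of the dots through the isotopy, distinguishing the nonzero right-curl case from the vanishing left-curl case.
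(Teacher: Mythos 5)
Your proposal is correct and is essentially the same argument as the paper's: the paper draws $\tilde c_{d+1}$ precisely as the closed figure-eight with a crossing, $d$ dots on one lobe, and a right curl, then applies the iterated dot-slide identity and observes that the ``dots slid to the other side'' term contains a left curl and hence vanishes, while the correction terms split into pairs of disjoint circles. Your worry about rigorously tracking orientations through the shrinking isotopies is addressed in the paper simply by drawing the diagrams explicitly, so there is no gap to fill.
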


\begin{proof}
\[
\begin{tikzpicture}[>=stealth]
  \draw (-.5,1) node {$\tilde c_{d+1}=$};
  \draw[-<] (1,1) .. controls (1,2) and (1.5,2) .. (2,1) .. controls (2.5,0) and (3,0) ..
  (3,1);
  \draw (3,1) .. controls (3,2) and (2.5,2) .. (2,1) .. controls (1.5,0) and (1,0) .. (1,1);
  \filldraw (1,1) circle (2pt) node[anchor=east] {$d$};
\end{tikzpicture}
\]
\[
\begin{tikzpicture}[>=stealth]
  \draw (3.5,1) node {$=$};
  \draw[-<] (4,1) .. controls (4,2) and (4.5,2) .. (5,1) .. controls (5.5,0) and (6,0) ..
  (6,1);
  \draw (6,1) .. controls (6,2) and (5.5,2) .. (5,1) .. controls (4.5,0) and (4,0) .. (4,1);
  \filldraw (5.3,1.5) circle (2pt) node[anchor=south] {$d$};
  \draw (8,1) node {$+ \ (q-1) \sum_{a=1}^d$};
  \draw[->] (9.7,1) arc(-180:180:.5);
  \filldraw (10.2,1.5) circle (2pt) node[anchor=south] {$a$};
  \draw[-<] (11,1) arc(-180:180:.5);
  \filldraw (11.5,1.5) circle (2pt) node[anchor=south] {$d-a$};
  \draw (13,1) node {$+ \ q \sum_{b=0}^{d-1}$};
  \draw[->] (14.2,1) arc(-180:180:.5);
  \filldraw (14.7,1.5) circle (2pt) node[anchor=south] {$b$};
  \draw[-<] (15.5,1) arc(-180:180:.5);
  \filldraw (16,1.5) circle (2pt) node[anchor=south] {$d-1-b$};
\end{tikzpicture}
\]
The result then follows from the fact that the left curl is equal to
zero.
\end{proof}

%%%%%%%%%%%%%%%%%%%%%%%%%
\subsection{Bubble moves}
%%%%%%%%%%%%%%%%%%%%%%%%%

We can move clockwise circles past lines using so called ``bubble moves''.

\begin{lemma}
We have the following equality.
\[
\begin{tikzpicture}[>=stealth]
  \draw (0,0) arc (0:360:0.5) [-<];
  \draw[->] (0.5,-1) to (0.5,1);
  \draw (1,0) node {$=$};
  \draw[->] (1.5,-1) to (1.5,1);
  \draw (3,0) arc (0:360:.5) [-<];
  \draw (4.5,0) node {$+\ (1-q^{-1})$};
  \draw[->] (6,-1) to (6,1);
  \filldraw (6,0) circle (2pt);
  \draw (7,0) node {$+$};
  \draw[->] (8,-1) to (8,1);
\end{tikzpicture}
\]
\end{lemma}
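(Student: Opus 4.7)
The plan is to decompose $c_0 = \cap_{+-}\circ\cup_{+-}$ and then apply the first equation of \eqref{eq:local-relation-up-down-double-crossing} to rewrite a local identity, splitting the left-hand side into two summands that I can identify with $\id_{Q_+}\otimes c_0$ plus the two correction terms on the right.

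First I would write the LHS as a diagram whose middle horizontal slice carries three strands with orientations $(+,-,+)$: the two leftmost coming from the bubble's cup/cap and the rightmost being the outside upward strand. On the middle $(-,+)$ pair, the first equation of \eqref{eq:local-relation-up-down-double-crossing} rearranges to
\[
\id_{Q_-\otimes Q_+} \;=\; q^{-1}\,(\text{Reid.\,II on }-+) \;+\; \cup_{-+}\circ\cap_{-+},
\]
and substituting this splits the LHS as $q^{-1}D + E$, where $D$ is the diagram with a Reidemeister~II inserted between the middle down-strand and the rightmost up-strand (topologically, the up-strand is now threaded through the bubble), and $E$ has the cup-cap inserted there.

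The summand $E$ reduces to $\id_{Q_+}$ straightforwardly: tracing the 1-manifolds, the inserted cup and cap combine with the outer cup and cap into a single connected arc from the bottom of the upward strand to its top, making two consecutive zig-zag detours that straighten by planar isotopy relative to the boundary (the snake relations built into the definition of $\cH'(q)$).

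For $D$ the key observation is that between the two crossings of the inserted Reidemeister~II, the bubble's leftmost upward strand and the threaded upward strand are adjacent, both oriented upward. The plan is to convert the Reidemeister~II on the down-up pair into a double up-up crossing between these adjacent strands by a sequence of planar isotopies and applications of the clean second equation of \eqref{eq:local-relation-up-down-double-crossing}, and then expand the up-up double crossing via the Hecke relation \eqref{eq:local-relation-basic-Hecke} (or equivalently \eqref{eq:inverse-crossing}) as $q\cdot\id + (q-1)t$. The $q\cdot\id$ part, after sliding the now-detached cup and cap past the upward strand, collapses to $q(\id_{Q_+}\otimes c_0)$; the $(q-1)t$ part leaves a cup--crossing--cap three-strand sub-diagram that matches the right curl construction of Section~\ref{sec:category-def}, contributing $(q-1)\cdot\text{dot}$. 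Thus $D = q(\id_{Q_+}\otimes c_0) + (q-1)\cdot\text{dot}$, and combining with $E = \id_{Q_+}$ gives $c_0\otimes\id_{Q_+} = (\id_{Q_+}\otimes c_0) + (1-q^{-1})\text{dot} + \id_{Q_+}$. The main obstacle is this analysis of $D$: transferring the down-up Reidemeister II into an up-up crossing pattern via cyclic biadjointness moves and recognizing the right curl inside the residual three-strand sub-diagram both require careful orientation bookkeeping, and the asymmetry between the first and second equations of \eqref{eq:local-relation-up-down-double-crossing} (only the first carries a bubble correction) is exactly what produces the specific correction terms $(1-q^{-1})\text{dot}$ and $\id_{Q_+}$ on the right.
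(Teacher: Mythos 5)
Your proof is correct and follows essentially the same two-step route as the paper: rearrange the first relation of \eqref{eq:local-relation-up-down-double-crossing} on the middle $(-,+)$ pair to split the left-hand side, then simplify the cup--cap summand by isotopy and the threaded summand by isotopy followed by the Hecke relation \eqref{eq:local-relation-basic-Hecke}. The appeal to the ``clean second equation of \eqref{eq:local-relation-up-down-double-crossing}'' in your treatment of $D$ is unnecessary, since sliding the two crossing points around the circle past its local extrema is already a planar isotopy converting the down--up double crossing with the right arc into the up--up double crossing with the left arc, with no extra $q$-factors.
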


\begin{proof}
We have
\[
\begin{tikzpicture}[>=stealth]
  \draw (0,0) arc (0:360:0.5) [-<];
  \draw[->] (0.5,-1) to (0.5,1);
  \draw (1.5,0) node {$=\ q^{-1}$};
  \draw (3.3,0) arc (0:360:0.5) [-<];
  \draw[->] (3,-1) to (3,1);
  \draw (3.8,0) node {$+$};
  \draw[->] (4.8,-1) .. controls (4.8,.5) and (4.5,-.5) .. (4.3,-.5) .. controls (4.1,-.5) and (4.1,.5) .. (4.3,.5) .. controls (4.5,.5) and (4.8,-.5) .. (4.8,1);
  \draw (5.3,0) node {$=$};
  \draw[->] (5.8,-1) to (5.8,1);
  \draw (7.3,0) arc(0:360:0.5) [-<];
  \draw (8.6,0) node {$+\ (1-q^{-1})$};
  \draw[shift={+(.7,-1)}] (10.3,1) .. controls (10.3,1.5) and (9.6,1.5) .. (9.4,1);
  \draw[shift={+(.7,-1)}] (10.3,1) .. controls (10.3,.5) and (9.6,.5) .. (9.4,1);
  \draw[shift={+(.7,-1)}] (9.3,0) .. controls (9.3,.5) .. (9.4,1);
  \draw[shift={+(.7,-1)}] (9.4,1) .. controls (9.3,1.5) .. (9.3,2) [->];
  \draw (11.5,0) node {$+$};
  \draw[->] (12,-1) to (12,1);
\end{tikzpicture}
\]
where in the first equality we used~\eqref{eq:local-relation-up-down-double-crossing} and in the second we used~\eqref{eq:local-relation-basic-Hecke}.
\end{proof}

More generally, we can move clockwise circles with dots past lines
as follows.
\begin{gather} \label{eq:dotted-bubble-move}
\begin{tikzpicture}[>=stealth,baseline=0pt]
  \draw (-2,0) arc (0:360:0.5) [-<];
  \draw[->] (-1.5,-1) to (-1.5,1);
  \draw (-1,0) node {$=$};
  \filldraw (-2.5,0.5) circle (2pt) node[anchor=south] {$d$};
  \draw[->] (-.5,-1) to (-.5,1);
  \draw (1,0) arc (0:360:.5) [-<];
  \filldraw (.5,.5) circle (2pt) node[anchor=south] {$d$};
  \draw (3,0) node {$+\ (d+1)(1-q^{-1})$};
  \draw[->] (5,-1) to (5,1);
  \filldraw (5,0) circle (2pt) node[anchor=west] {$(d+1)$};
  \draw (7.5,0) node {$+\ (d+1)$};
  \draw[->] (9,-1) to (9,1);
  \filldraw (9,0) circle (2pt) node[anchor=west] {$d$};
\end{tikzpicture} \\ \nonumber
\begin{tikzpicture}[>=stealth]
  \draw (0,0) node {$-(q-1)(1-q^{-1}) \sum_{a=1}^d a$};
  \draw[->] (3,-1) to (3,1);
  \filldraw (3,0) circle (2pt) node[anchor=west] {$a$};
  \draw (4.5,0) arc (0:360:0.5) [-<];
  \filldraw (4,.5) circle (2pt) node[anchor=south] {$(d-a)$};
\end{tikzpicture}
\begin{tikzpicture}[>=stealth]
  \draw (0,0) node {$-\ (q-1) \sum_{a=0}^{d-1} (2a+1)$};
  \draw[->] (3,-1) to (3,1);
  \filldraw (3,0) circle (2pt) node[anchor=west] {$a$};
  \draw (3.5,0) arc (-180:180:0.5) [-<];
  \filldraw (4.5,0) circle (2pt) node[anchor=west] {$(d-1-a)$};
\end{tikzpicture} \\ \nonumber
\begin{tikzpicture}[>=stealth]
  \draw (0,0) node {$-\ q \sum_{a=0}^{d-2} (a+1)$};
  \draw[->] (2,-1) to (2,1);
  \filldraw (2,0) circle (2pt) node[anchor=west] {$a$};
  \draw[-<] (2.5,0) arc (-180:180:0.5);
  \filldraw (3.5,0) circle (2pt) node[anchor=west] {$(d-2-a)$};
\end{tikzpicture}
\end{gather}

\begin{comment}
\comments{Calculation is in the file ``2010-07-18 - Bubble moves.pdf''.}
\end{comment}

%%%%%%%%%%%%%%%%%%%%%%%%%%%%%
\subsection{Endomorphism algebras and the affine Hecke algebra}
\label{sec:endo-affine-Hecke}
%%%%%%%%%%%%%%%%%%%%%%%%%%%%%

\begin{theorem}\label{thm:endidentity}
The natural map
\[
  \psi_0: \kk[q,q^{-1}][c_0,c_1,c_2,\dots] \to \End_{\cH'(q)}(\mathbf{1})
\]
is an isomorphism.
\end{theorem}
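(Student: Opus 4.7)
The plan is to prove $\psi_0$ is both surjective and injective. Surjectivity will be a normal-form argument reducing every closed diagram to a polynomial in the $c_d$'s, using only the local relations and circle/bubble moves established earlier in this section. Injectivity requires a faithful test-representation of $\cH'(q)$ and is most naturally established via the action on $\bigoplus_n H_n\text{-mod}$ developed later in Section~\ref{sec:Hecke-action}.

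For surjectivity, an element of $\End_{\cH'(q)}(\mathbf{1})$ is represented by a closed oriented planar diagram, i.e., a disjoint collection of immersed oriented closed curves with crossings and curls. I would induct on a lexicographic complexity measure such as (number of crossings, number of curls, number of connected components). Given a nontrivial diagram, identify an innermost connected component. Using the triple-point moves (\ref{eq:triple1})--(\ref{eq:triple4}), the Hecke-type relation (\ref{eq:local-relation-basic-Hecke}), the up-down relations (\ref{eq:local-relation-up-down-double-crossing}), and the dot-crossing moves of Lemma~\ref{lem:dot-crossing-moves}, one can slide all strands that cross this component off to one side; the dotted bubble move (\ref{eq:dotted-bubble-move}) then pulls the component off of any strands it still encircles. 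All correction terms produced in these moves have strictly smaller complexity and are handled by the inductive hypothesis. An isolated innermost curve reduces, by the same local moves applied to its self-crossings, to a disjoint union of plain circles carrying right curls, i.e., a monomial in the $c_d$'s and $\tilde c_d$'s. Finally, Proposition~\ref{prop:circle-conversion} rewrites each $\tilde c_d$ as a polynomial in the $c_d$'s (with base case $\tilde c_1 = 0$), completing the reduction.

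For injectivity, the cleanest route is to use the action of $\cH'(q)$ on $\bigoplus_n H_n\text{-mod}$ constructed in Section~\ref{sec:Hecke-action}. Under this action $c_d$ becomes a natural endomorphism of the identity functor, so on each summand $H_n\text{-mod}$ it is given by multiplication by a central element of $H_n$ expressible as a symmetric polynomial in the Jucys--Murphy elements (since dots correspond to Jucys--Murphy operators, cf.~Section~\ref{sec:JM-elements}). The scalars by which the $c_d$ act on an irreducible $H_n$-module $V_\lambda$ are then explicit functions of $q$ and the contents of $\lambda$; letting $\lambda$ range over partitions of arbitrary size (one-row partitions already suffice for a separation argument), these scalars form an algebraically independent family of tuples in $\prod_n Z(H_n)$, forcing any nontrivial polynomial relation among the abstract $c_d$'s to be violated and hence $\psi_0$ to be injective.

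The main obstacle is the injectivity half: the surjectivity reduction uses only already-proved relations in an essentially mechanical way, whereas algebraic independence of the $c_d$ requires an explicit computation in a representation that will only be constructed in the next section. I would therefore expect the authors to state the theorem here but defer (or at least cite forward to) the injectivity direction until after the Hecke-algebra action has been set up and the Jucys--Murphy interpretation of the dot has been made explicit.
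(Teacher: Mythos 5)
Your surjectivity argument is essentially identical to the paper's: reduce any closed diagram to crossingless nested dotted circles using the local relations, unnest with the bubble move \eqref{eq:dotted-bubble-move}, and convert counterclockwise circles to clockwise via Proposition~\ref{prop:circle-conversion}; your inductive bookkeeping just makes explicit what the paper leaves implicit.

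For injectivity you take a genuinely different route. The paper reduces to the base case $\kk=\Z$, $q$ an indeterminate, specializes $q\mapsto 1$ over an arbitrary field $\kk'$, and identifies the resulting map with Khovanov's $\psi_0$, which is injective by \cite[Proposition~3]{Kho10}; from there it tensors back up to arbitrary $\kk[q,q^{-1}]$. You instead propose to evaluate $\psi_0$ in the Fock-space representation of Section~\ref{sec:Hecke-action}, interpret the $c_d$ as central elements of the various $H_n$ (symmetric polynomials in Jucys--Murphy elements), and derive injectivity from algebraic independence of the resulting eigenvalue tuples on irreducibles. That route is more self-contained within this paper (no reliance on Khovanov's computation in the undeformed case) but demands more work: the algebraic-independence claim — including your assertion that one-row partitions suffice — is stated but not actually argued, and you would also need to first reduce to a field of characteristic zero with $q$ generic before the Jucys--Murphy/irreducible machinery applies, and then lift to arbitrary $\kk[q,q^{-1}]$ by a flatness argument similar to the paper's. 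In short, your strategy is plausible and would give an alternative proof, but as written the key separation step is a gap, whereas the paper sidesteps it by outsourcing the hard computation to the $q=1$ specialization. Your prediction that the authors would forward-cite the Hecke-action section for injectivity was reasonable but wrong: they instead cite Khovanov's paper externally.
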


\begin{proof}
Every element of $\End_{\cH'(q)}(\mathbf{1})$ is a linear combination
of closed diagrams.  Using the local relations, any closed diagram
can be expressed as a linear combination of crossingless diagrams
consisting of nested dotted circles.  The bubble moves imply that
nested dotted circles can be written as linear combinations of
dotted circles with no nesting.  Finally, counterclockwise circles
can be expressed as linear combinations of clockwise ones by
Proposition~\ref{prop:circle-conversion}.  Therefore, any element of
$\End_{\cH'(q)}(\mathbf{1})$ can be written as a linear combination
of products of dotted clockwise circles.  It follows that $\psi_0$
is surjective.

Assume $q$ is an indeterminate and $\kk = \Z$.  We can view any field $\kk'$ as an $\Z[q,q^{-1}]$-module via the map that sends $q$ to $1$.  Consider the composition
\[
  \kk'[c_0,c_1,\dots] \cong \Z[q,q^{-1}][c_0,c_1,\dots] \otimes_{\Z[q,q^{-1}]} \kk'
  \xrightarrow{\psi_0 \otimes \id} \End_{\cH'(q)} (\mathbf{1})
  \otimes_{\Z[q,q^{-1}]} \kk' = \End_{\cH'} (\mathbf{1}),
\]
where $\cH'$ is the category defined in~\cite{Kho10} (for the field $\kk'$).  This composition is precisely the map $\psi_0$ of~\cite{Kho10}, which is injective by~\cite[Proposition~3]{Kho10}.
It follows that our map $\psi_0$ is also injective when we work over the ring $\kk=\Z$ and $q$ is an indeterminate.  Since, for an arbitrary ring $\kk$, $\kk[q,q^{-1}]$ (where $q$ is either an indeterminate or an invertible element of $\kk$) is a $\Z[q,q^{-1}]$-module in the obvious way, we can tensor with $\kk[q,q^{-1}]$ and see that $\psi_0$ is an isomorphism in the general case.
\end{proof}

Let $H_n^\aff$ denote the affine Hecke algebra of type A,
\[
  H_n^\aff = H_n\otimes_{\kk[q,q^{-1}]} \kk[q,q^{-1}][x_1^{\pm 1},\hdots x_n^{\pm 1}].
\]
The Hecke algebra $H_n$ and $\kk[x_1^{\pm 1},\hdots x_n^{\pm 1}]$ are subalgebras of $H_n^\aff$, and the defining relations between these subalgebras are
\[
  t_ix_k = x_kt_i, \text{ for } i \neq k,k+1,\quad \text{and} \quad t_ix_it_i = qx_{i+1}.
\]

\begin{lemma}\label{lem:subalg}
Assume $q \in \kk^\times$, $q \ne 1$.  If, for $i=1,\hdots n$, we define new elements of the affine Hecke algebra $y_i$ by
\[
  y_i = (q-1)x_i - \frac{q}{q-1},
\]
then
\begin{align}
  y_it_k &= t_ky_i,\quad i\neq k, k+1, \label{eq:affine-Hecke1} \\
  t_i y_{i+1} &= y_it_i + (q-1)y_{i+1} + q, \label{eq:affine-Hecke2} \\
  y_{i+1}t_i &= t_iy_i + (q-1)y_{i+1} + q. \label{eq:affine-Hecke3}
\end{align}
\end{lemma}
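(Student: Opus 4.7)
The plan is to reduce everything to direct computation from the affine Hecke algebra relations $t_ix_k=x_kt_i$ (for $i\neq k,k+1$) and $t_ix_it_i=qx_{i+1}$, after first rewriting the latter in a form that commutes $t_i$ past $x_i$ and past $x_{i+1}$ linearly.

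The first preparatory step is to derive the two ``straightening'' identities
\[
  x_{i+1}t_i = t_ix_i + (q-1)x_{i+1}, \qquad t_ix_{i+1} = x_it_i + (q-1)x_{i+1}.
\]
For the first, apply $t_i^{-1}=q^{-1}t_i+(q^{-1}-1)$ to $t_ix_it_i=qx_{i+1}$, giving $t_ix_i = x_{i+1}t_i + (1-q)x_{i+1}$, and rearrange. For the second, write $x_{i+1}=q^{-1}t_ix_it_i$ so that $t_ix_{i+1}=q^{-1}t_i^2 x_it_i$, then use $t_i^2 = q+(q-1)t_i$ to expand. These two identities are the only nontrivial input beyond the definition of $y_i$.

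The second step is to verify \eqref{eq:affine-Hecke1}, \eqref{eq:affine-Hecke2}, \eqref{eq:affine-Hecke3} one at a time. Relation \eqref{eq:affine-Hecke1} is immediate since $y_i$ is a $\kk[q,q^{-1}]$-linear combination of $x_i$ and $1$, and both commute with $t_k$ when $i\neq k,k+1$. For \eqref{eq:affine-Hecke2}, expand $t_iy_{i+1}=(q-1)t_ix_{i+1}-\tfrac{q}{q-1}t_i$ using the straightening identity and collect: the term $(q-1)x_it_i-\tfrac{q}{q-1}t_i$ is exactly $y_it_i$, so what remains is $(q-1)^2 x_{i+1}$. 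The proof is finished by the arithmetic identity
\[
  (q-1)y_{i+1} + q = (q-1)^2 x_{i+1} - q + q = (q-1)^2 x_{i+1},
\]
which is precisely the point of the affine shift $-\tfrac{q}{q-1}$ in the definition of $y_i$. Relation \eqref{eq:affine-Hecke3} is checked by the same procedure, using $x_{i+1}t_i=t_ix_i+(q-1)x_{i+1}$ in place of the second straightening identity and recognizing $(q-1)t_ix_i-\tfrac{q}{q-1}t_i=t_iy_i$.

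There is essentially no obstacle here beyond keeping track of the scalar $\tfrac{q}{q-1}$, which is well defined precisely because of the hypothesis $q\neq 1$. The content of the lemma is the observation that the shift $-\tfrac{q}{q-1}$ is designed to kill the constant term that would otherwise appear, producing the clean relations \eqref{eq:affine-Hecke2}, \eqref{eq:affine-Hecke3} in which the linear term $(q-1)y_{i+1}$ on the right-hand side is the ``deformed'' analogue of the degenerate affine Hecke relation $t_i y_{i+1} = y_i t_i + 1$.
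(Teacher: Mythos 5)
Your proof is correct and takes essentially the same route as the paper: derive the linear commutation identities $t_ix_{i+1}=x_it_i+(q-1)x_{i+1}$ and $x_{i+1}t_i=t_ix_i+(q-1)x_{i+1}$ from $t_ix_it_i=qx_{i+1}$ and the quadratic relation, then expand $y_i$ and observe that the constant $-\tfrac{q}{q-1}$ cancels exactly to give $(q-1)y_{i+1}+q$. The only organizational difference is that you derive both straightening identities up front, whereas the paper handles \eqref{eq:affine-Hecke2} explicitly and dismisses \eqref{eq:affine-Hecke3} as ``similar.''
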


\begin{proof}
The first relation is obvious.  For the second relation above, we have
\[
  t_iy_{i+1} = (q-1)t_ix_{i+1} - \frac{q}{q-1}t_i,
\]
while
\[
  y_it_i = (q-1)x_it_i -  \frac{q}{q-1}t_i.
\]
Subtracting, we get
\begin{equation} \label{eq:sjfky827}
  t_iy_{i+1}-y_it_i=(q-1)(t_ix_{i+1}-x_it_i).
\end{equation}
Since $t_i^{-1} = q^{-1}t_i + (q^{-1}-1)$, after multiplying both sides of the relation $t_ix_it_i = qx_{i+1}$ on the left by $t_i^{-1}$ we get
\[
  x_it_i = t_ix_{i+1} + (1-q)x_{i+1},
\]
or
\[
  t_ix_{i+1} - x_it_i = (q-1)x_{i+1}.
\]
Substituting into \eqref{eq:sjfky827}, we obtain
\[
  t_iy_{i+1}-y_it_i = (q-1)^2x_{i+1}.
\]
Since $x_{i+1}=(q-1)^{-1}y_{i+1} + \frac{q}{(q-1)^2}$, we get
\[
   t_iy_{i+1}-y_it_i= (q-1)y_{i+1} + q,
\]
as desired.  The last relation is similar.
\end{proof}

Let $H_n^+$ be the $\kk[q,q^{-1}]$-algebra with generators $t_i, y_i$, $1 \le i \le n$, and defining relations \eqref{eq:affine-Hecke1}--\eqref{eq:affine-Hecke3}.  By Lemma~\ref{lem:subalg}, if $q \in \kk^\times$, $q \ne 1$, we have
\[
  H_n^+ \cong H_n \otimes_\kk \kk[x_1,\dots,x_n] \subseteq H_n^\aff.
\]
It follows from Lemma~\ref{lem:dot-crossing-moves} that there is a natural morphism
\[
  \phi_n:H_n^+\to \End_{\cH'(q)}(Q_{+^n})
\]
taking $t_k$ to the crossing of the $k$ and $(k+1)$-st strands and
taking $y_k$ to a right curl (or dot) on the $k$-th strand.  More
generally, there is a natural morphism
\[
  \psi_n = \phi_n\otimes \psi_0 : H_n^+ \otimes_{\kk[q,q^{-1}]} \kk[q,q^{-1}][c_0,c_1,\dots]
  \to \End_{\cH'(q)}(Q_{+^n}),
\]
where the dotted clockwise circles corresponding to elements of $\kk[c_0,c_1,\dots]$ are placed to the right of the diagrams corresponding to elements of $H_n^+$.

\begin{theorem}\label{thm:endQ}
The morphism $\psi_n$ is an isomorphism of algebras.
\end{theorem}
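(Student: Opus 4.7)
My plan is to mirror the two-step strategy from Theorem~\ref{thm:endidentity}: first establish surjectivity of $\psi_n$ by reducing arbitrary diagrams to a normal form, and then deduce injectivity by specialization to $q=1$ and comparison with Khovanov's result.

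For surjectivity, I would argue that every diagram $D$ representing an element of $\End_{\cH'(q)}(Q_{+^n})$ is equivalent, modulo the local relations, to a linear combination of diagrams in which (a) the $n$ upward strands run monotonically from bottom to top, decorated only with crossings and right curls, and (b) all remaining components form a disjoint union of clockwise dotted circles positioned to the right. The reduction uses the tools already assembled in Section~\ref{sec:category-def}: the Hecke relation~\eqref{eq:local-relation-basic-Hecke} for parallel up-strands, the mixed double-crossing relations~\eqref{eq:local-relation-up-down-double-crossing} and the triple-point moves~\eqref{eq:triple1}--\eqref{eq:triple4} to resolve cup/cap pairs into closed bubbles, the dotted bubble moves~\eqref{eq:dotted-bubble-move} to slide those bubbles past the upward strands, Proposition~\ref{prop:circle-conversion} to convert counterclockwise bubbles into clockwise ones, and Lemma~\ref{lem:dot-crossing-moves} to commute dots past crossings. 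Since the source and target of $D$ are both $Q_{+^n}$, every downward segment produced by a cap is eventually absorbed by a cup, so the closed part separates completely from the upward-strand part. The upward part lies in the image of $\phi_n$ (crossings $\mapsto t_k$, dots $\mapsto y_k$, with the relations of Lemma~\ref{lem:subalg} ensured by Lemma~\ref{lem:dot-crossing-moves} and~\eqref{eq:local-relation-basic-Hecke}), while the closed part lies in the image of $\psi_0$, whence $\psi_n$ is surjective.

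For injectivity, I would follow the base-change argument from Theorem~\ref{thm:endidentity}. Reduce first to $\kk = \Z$ with $q$ an indeterminate, then extend scalars along $\Z[q,q^{-1}] \to \kk'$ sending $q \mapsto 1$ for a field $\kk'$. The composition becomes the analogous map in Khovanov's category $\cH'$ of~\cite{Kho10}, taking the degenerate affine Hecke algebra (tensored with $\kk'[c_0,c_1,\dots]$) into $\End_{\cH'}(Q_{+^n})$; this map is known to be injective from~\cite{Kho10}. Provided both source and target of $\psi_n$ are $\Z[q,q^{-1}]$-torsion-free, with a standard-monomial basis $\{t_w\, y^\alpha\, c^\beta\}$ preserved under $q \mapsto 1$, injectivity at $q = 1$ forces injectivity generically, and then extends to arbitrary $\kk$ by tensoring with $\kk[q,q^{-1}]$.

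The main obstacle is the surjectivity normal-form argument. Although all the necessary local moves are available, the dotted bubble moves~\eqref{eq:dotted-bubble-move} produce correction terms carrying additional dots on the upward strands, and Lemma~\ref{lem:dot-crossing-moves} generates extra crossings when commuting dots through, so one has to order the reductions by a suitable complexity measure (for example, a lexicographic order in which the number of cup/cap pairs strictly decreases, with the total dot count and number of crossings controlled as secondary parameters) to guarantee termination. For injectivity, the delicate point is ensuring $\Z[q,q^{-1}]$-torsion-freeness of both sides, so that a kernel element cannot go undetected as $(q-1)$-torsion at the $q=1$ specialization.
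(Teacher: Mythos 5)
Your plan mirrors the paper's proof exactly: surjectivity via reduction of closed-up diagrams to a normal form (crossings and dots on the upward strands, dotted clockwise circles on the right), followed by injectivity via base change $\Z[q,q^{-1}]\to\kk'$ sending $q\mapsto 1$ and an appeal to Khovanov's Proposition~4. The paper then tensors with $\kk[q,q^{-1}]$ to pass from $\kk=\Z$ to a general base, just as you do.

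One remark on the torsion-freeness concern you raise at the end: you are right that specializing at $q=1$ only shows that a kernel element $x$ of $\psi_n$ over $\Z[q,q^{-1}]$ has all coefficients divisible by $q-1$ (since the source $H_n^+\otimes\Z[q,q^{-1}][c_0,c_1,\dots]$ is free), and iterating this to conclude $x=0$ requires knowing that the target $\End_{\cH'(q)}(Q_{+^n})$ has no $(q-1)$-torsion. The paper's proof does not spell this out either, so your flag is a legitimate observation about a point the argument leaves implicit; it does not represent a divergence from the paper's route, and the standard fix (establishing that the normal-form diagrams in fact form a basis, e.g. via a diamond-lemma / confluence argument, or by exhibiting a faithful action) would complete both versions simultaneously.
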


\begin{proof}
Any diagram representing an element of $\End_{\cH'(q)}(Q_{+^n})$ can
be inductively simplified to a linear combination of standard
diagrams consisting of a element of $y \in H_n$ (written as a strand
diagram), some number (possibly zero) of dots on each strand above
the crossings, and a product of dotted clockwise circles to the
right:
\[
\begin{tikzpicture}[>=stealth,baseline=50pt]
  \draw[->] (0,0) .. controls (0,1.5) .. (1,2) .. controls (2,2.5) .. (2,4);
  \draw[->] (2,0) .. controls (2,1.5) .. (1,2) .. controls (0,2.5) .. (0,4);
  \draw[->] (1,0) .. controls (1,1) and (2,1) .. (2,2) .. controls (2,3) and (1,3) .. (1,4);
  \filldraw (0.03,3) circle (2pt);
  \filldraw (0,3.5) circle (2pt);
  \filldraw (2,3.25) circle (2pt);
  \filldraw (0.2,2.5) circle (2pt);
  \draw[-<] (3.5,1) arc(0:360:.5);
  \draw[-<] (3.5,3) arc(0:360:.5);
  \filldraw (2.5,3) circle (2pt);
  \filldraw (3,2.5) circle (2pt);
  \draw[-<] (4.5,2) arc(0:360:.5);
  \filldraw (3.5,2) circle (2pt);
\end{tikzpicture} \quad .
\]
The surjectivity of $\psi_n$ then follows immediately from that of $\psi_0$.

Assume $q$ is an indeterminate and $\kk=\Z$.  We can view any field $\kk'$ as an $\Z[q,q^{-1}]$-module via the map that sends $q$ to $1$.  Consider the composition
\begin{gather*}
   H_n^+ \otimes_{\Z[q,q^{-1}]} \kk'[c_0,c_1,\dots]\cong \left( H_n^+ \otimes_{\Z[q,q^{-1}]} \Z[q,q^{-1}][c_0,c_1,\dots] \right) \otimes_{\Z[q,q^{-1}]} \kk' \\
   \xrightarrow{\psi_n \otimes \id} \End_{\cH'(q)} (Q_{+^n}) \otimes_{\Z[q,q^{-1}]} \kk' = \End_{\cH'} (Q_{+^n}),
\end{gather*}
where $\cH'$ is the category defined in~\cite{Kho10} (for the field $\kk'$).  This composition is precisely the map $\psi_n$ of~\cite{Kho10}, which is injective by~\cite[Proposition~4]{Kho10}.
It follows that our map $\psi_n$ is also injective when we work over the ring $\kk=\Z$ and $q$ is an indeterminate.  Since, for an arbitrary ring $\kk$, $\kk[q,q^{-1}]$ (where $q$ is either an indeterminate or an invertible element of $\kk$) is a $\Z[q,q^{-1}]$-module in the obvious way, we can tensor with $\kk[q,q^{-1}]$ and see that $\psi_n$ is an isomorphism in the general case.
\end{proof}

Having explicitly described $\End_{\cH'(q)}(Q_{+^n})$, we now turn to the more general problem of giving an explicit basis for $\Hom_{\cH'(q)}(Q_\epsilon, Q_{\epsilon'})$ for any sequences $\epsilon, \epsilon'$.  Let $k$ denote the total number of $+$s in $\epsilon$ and $-$s in $\epsilon'$.  We clearly have $\Hom_{\cH'(q)}(Q_\epsilon, Q_{\epsilon'})=0$ if the total number of $-$s in $\epsilon$ and $+$s in $\epsilon'$ is not also equal to $k$.  Thus, we assume from now on that $k$ is also the total number of $-$s in $\epsilon$ and $+$s in $\epsilon'$.

\begin{definition}
For two sign sequences $\epsilon, \epsilon'$, let $B(\epsilon, \epsilon')$ be the set of planar diagrams obtained in the following manner:
\begin{itemize}
  \item The sequences $\epsilon$ and $\epsilon'$ are written at the bottom and top (respectively) of the plane strip $\R \times [0,1]$.

  \item The elements of $\epsilon$ and $\epsilon'$ are matched by oriented segments embedded in the strip in such a way that their orientations match the signs (that is, they start at either a $+$ of $\epsilon$ or a $-$ of $\epsilon'$, and end at either a $-$ of $\epsilon$ or a $+$ of $\epsilon'$), each two segments intersect at most once, and no self-intersections or triple intersections are allowed.

  \item Any number of dots may be placed on each interval near its out endpoint (i.e. between its out endpoint and any intersections with other intervals).

  \item In the rightmost region of the diagram, a finite number of clockwise disjoint nonnested circles with any number of dots may be drawn.
\end{itemize}
The set of diagrams $B(\epsilon,\epsilon')$ is parameterized by $k!$ possible matchings of the $2k$ oriented endpoints, a sequence of $k$ nonnegative integers determining the number of dots on each interval, and by a finite sequence of nonnegative integers determining the number of clockwise circles with various numbers of dots.
\end{definition}

An example of an element of $B(--+-+,+-+-+--)$ is drawn below.
\[
\begin{tikzpicture}[>=stealth]
  \draw[<-] (0,0) .. controls (0,1) and (2,1) .. (2,0);
  \draw[<-] (1,0) .. controls (1,1) and (4,1) .. (4,0);
  \draw[->] (0,3) .. controls (.5,2) and (2.5,1) .. (3,0);
  \draw[<-] (-1,3) .. controls (-1,1) and (5,1) .. (5,3);
  \draw[<-] (1,3) .. controls (1,1.5) and (4,1.5) .. (4,3);
  \draw[->] (2,3) .. controls (2,2) and (3,2) .. (3,3);
  \filldraw (-.7,2.27) circle (2pt);
  \draw (-1,2) node {$5$};
  \filldraw (.2,.5) circle (2pt);
  \draw (0,.8) node {$3$};
  \filldraw (2.9,2.5) circle (2pt);
  \draw[<-] (6.5,2.3) arc(0:360:.5);
  \draw[<-] (6.5,.7) arc(0:360:.5);
  \filldraw (5.5,.7) circle (2pt) node[anchor=east] {$8$};
  \draw[<-] (8,1.5) arc(0:360:.5);
  \filldraw (7,1.5) circle (2pt) node[anchor=east] {$4$};
\end{tikzpicture}
\]

\begin{proposition} \label{prop:hom-space-basis}
For any sign sequences $\epsilon, \epsilon'$, the set $B(\epsilon,\epsilon')$ is a basis of the $\kk[q,q^{-1}]$-module $\Hom_{\cH'(q)} (Q_\epsilon,Q_{\epsilon'})$.
\end{proposition}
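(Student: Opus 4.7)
The plan is to prove spanning and linear independence separately, following the template of Theorems \ref{thm:endidentity} and \ref{thm:endQ}. For spanning, I would argue by induction on a complexity measure on diagrams (for instance, the lexicographic triple of the number of crossings, the number of dots sitting below crossings, and the number of nested or counterclockwise circles). Given an arbitrary planar diagram representing a morphism, I first use the braid relation \eqref{eq:local-relation-braid}, the triple-point identities \eqref{eq:triple1}--\eqref{eq:triple4}, and the double-crossing relations \eqref{eq:local-relation-basic-Hecke} and \eqref{eq:local-relation-up-down-double-crossing} to pull apart any pair of strands intersecting more than once; the resulting correction terms have strictly fewer crossings or factor through cup-cap pairs. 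Self-intersections are removed using \eqref{eq:cc-circle-and-left-curl}: left curls vanish and right curls become dots. Next, Lemma \ref{lem:dot-crossing-moves} and its iterates push every dot along its strand toward the out-endpoint, with correction terms of strictly lower complexity. Finally, the bubble moves \eqref{eq:dotted-bubble-move} pull every closed circle into the rightmost region and unnest them, and Proposition \ref{prop:circle-conversion} converts counterclockwise circles into clockwise ones. The surviving diagrams all lie in $B(\epsilon,\epsilon')$.

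For linear independence, I would mimic the specialization argument used in Theorems \ref{thm:endidentity} and \ref{thm:endQ}. Assume $\kk = \Z$ and $q$ is an indeterminate, and view any field $\kk'$ as a $\Z[q,q^{-1}]$-module via $q \mapsto 1$. The set $B(\epsilon,\epsilon')$ is in evident bijection with the corresponding set of diagrams for Khovanov's category $\cH'$, and the composition
\[
  \kk'\text{-span of } B(\epsilon,\epsilon') \to \Hom_{\cH'(q)}(Q_\epsilon,Q_{\epsilon'}) \otimes_{\Z[q,q^{-1}]} \kk' = \Hom_{\cH'}(Q_\epsilon,Q_{\epsilon'})
\]
coincides with the analogous basis map in \cite{Kho10}, which is an isomorphism there (the generalization of \cite[Propositions 3--4]{Kho10} to arbitrary sign sequences). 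Hence our map is injective after specialization. Lifting this to the original $\Z[q,q^{-1}]$-linear map proceeds exactly as in Theorem \ref{thm:endidentity}: any linear relation must have coefficients vanishing at $q=1$, hence divisible by $q-1$, and iterating together with $(q-1)$-torsion-freeness of the target forces the coefficients to be zero. Base-changing to an arbitrary ring $\kk$ with $q$ indeterminate or invertible yields the general case.

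The hard part will be the spanning step, specifically verifying termination of the reduction. Each application of the triple-point moves \eqref{eq:triple2}--\eqref{eq:triple4} or of Lemma \ref{lem:dot-crossing-moves} produces a leading term together with correction terms that may remove a crossing, insert a cup-cap pair, or create additional circles; the complexity measure must be chosen so that all correction terms are strictly simpler than the input. The interaction between the dot-pushing and bubble-moving steps is subtle, since pushing dots may generate floating circles via cup-cap factorizations while bubble moves may deposit dots back onto strands, so the order of reductions matters. Once termination is established and the normal form verified, the rest of the argument, including the combinatorial identification of $B(\epsilon,\epsilon')$ with Khovanov's basis, is routine bookkeeping.
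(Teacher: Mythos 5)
Your spanning argument is the same reduction the paper uses (remove double crossings, push dots to out-endpoints, slide and unnest circles, convert counterclockwise to clockwise), and your concern about a termination measure is the content the paper glosses as ``straightforward to check''; there is no disagreement there.

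For linear independence you take a genuinely different route. You propose to specialize $q \mapsto 1$ directly for an arbitrary pair of sign sequences, identify $\Hom_{\cH'(q)}(Q_\epsilon,Q_{\epsilon'}) \otimes_{\Z[q,q^{-1}]} \kk'$ with Khovanov's $\Hom_{\cH'}(Q_\epsilon,Q_{\epsilon'})$, and then cite the basis result in Khovanov's paper for arbitrary sign sequences --- which is \cite[Proposition~5]{Kho10}, not a ``generalization of Propositions 3--4.'' The paper instead does the specialization only for $\End(Q_{+^n})$ (Theorems~\ref{thm:endidentity} and \ref{thm:endQ}), and then proves linear independence for general $\epsilon,\epsilon'$ purely inside $\cH'(q)$: it slides endpoints with cups and caps to reduce to $\Hom_{\cH'(q)}(\mathbf{1},Q_\epsilon)$, runs a double induction on $k$ and the lexicographic order of $\epsilon$, and uses the splitting $Q_{\epsilon_1-+\epsilon_2} \cong Q_{\epsilon_1+-\epsilon_2} \oplus Q_{\epsilon_1\epsilon_2}$ of Lemma~\ref{lem:basic-heisenberg-relation} to pass between sequences, with Theorem~\ref{thm:endQ} supplying the base cases. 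The paper's route is more self-contained in the $q$-deformed setting; yours front-loads the combinatorics onto Khovanov's result. Both are reasonable.

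There is, however, one genuine gap in your lifting step. You invoke ``$(q\!-\!1)$-torsion-freeness of the target'' to iterate the divisibility argument, but torsion-freeness of $\Hom_{\cH'(q)}(Q_\epsilon,Q_{\epsilon'})$ over $\Z[q,q^{-1}]$ is not known at this point --- it is essentially equivalent to the freeness you are trying to establish, so appealing to it is circular. The fix is cheap and avoids any iteration: suppose $\sum_d c_d\, d = 0$ is a nontrivial relation with $d$ ranging over $B(\epsilon,\epsilon')$ and $c_d \in \Z[q,q^{-1}]$. Since $\Z[q,q^{-1}]$ is a UFD, divide through by $\gcd(c_d)$ so that the coefficients are coprime. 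Specializing to $\kk'=\Q$ via $q\mapsto 1$ and applying Khovanov's Proposition~5 forces every $c_d(1)=0$, i.e.\ every $c_d$ is divisible by the prime $q-1$, contradicting coprimality. No torsion-freeness of the target is needed. (This is also the argument implicitly underlying the ``It follows'' step in the proofs of Theorems~\ref{thm:endidentity} and \ref{thm:endQ}.) With this correction, and taking $\kk'=\Q$ rather than an arbitrary field, your approach is sound, and the base change to general $\kk$ at the end is fine.
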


The proof, which we include for the sake of completeness, is almost identical to that of \cite[Proposition~5]{Kho10}.

\begin{proof}
It is straightforward to check that using the defining local relations of $\cH'(q)$, any element of $\Hom_{\cH'(q)} (Q_\epsilon, Q_{\epsilon'})$ can be reduced to a direct sum of elements of $B(\epsilon,\epsilon')$.  One uses \eqref{eq:local-relation-basic-Hecke} and \eqref{eq:local-relation-up-down-double-crossing} to remove double crossings, Lemma~\ref{lem:dot-crossing-moves} to move dots to the ends of intervals, \eqref{eq:dotted-bubble-move} to move circles to the rightmost region, etc.

It remains to show that $B(\epsilon, \epsilon')$ is linearly independent.  Moving the lower endpoints of a diagram up using cup diagrams, or moving the upper endpoints of a diagram down using cap diagrams, yields canonical isomorphisms
\[
  \Hom_{\cH'(q)} (Q_\epsilon, Q_{\epsilon'}) \cong \Hom_{\cH'(q)}(\mathbf{1}, Q_{\bar \epsilon \epsilon'}) \cong \Hom_{\cH'(q)} (\mathbf{1},Q_{\epsilon' \bar \epsilon}),
\]
where $\bar \epsilon$ is the sequence $\epsilon$ with the order and all signs reversed.  It thus suffices to show that $B(\varnothing, \epsilon)$ is linearly independent for any sequence $\epsilon$ with $k$ pluses and $k$ minuses.  We prove this by induction on $k$ and (for each $k$) by induction on the lexicographic order (where $+ < -$) among length $2k$ sequences.  Theorem~\ref{thm:endQ} implies the base cases $k=0,1$ and $\epsilon = +^k-^k$ for any $k$.  Now assume $\epsilon = \epsilon_1 - +\, \epsilon_2$ for some sequences $\epsilon_1$ and $\epsilon_2$. By the inductive hypothesis, $B(\varnothing, \epsilon_1 \epsilon_2)$ and $B(\varnothing, \epsilon_1 + -\, \epsilon_2)$ are linearly independent.  Lemma~\ref{lem:basic-heisenberg-relation} (more precisely, the two upper morphisms in \eqref{eq:basic-heisenberg}) gives a canonical isomorphism
\[
  Q_{\epsilon_1 + -\, \epsilon_2} \oplus Q_{\epsilon_1 \epsilon_2} \cong Q_{\epsilon_1 - +\, \epsilon_2}.
\]
This isomorphism maps the sets $B(\varnothing, \epsilon_1 \epsilon_2)$ and $B(\varnothing, \epsilon_1 + -\, \epsilon_2)$ to subsets $B_1$ and $B_2$ of $\Hom_{\cH'(q)}(Q_{\epsilon_1 - +\, \epsilon_2})$.  Let $B = B_1 \cup B_2$.  It is straightforward to check that $B(\varnothing, \epsilon_1 - +\, \epsilon_2)$ is linearly independent if and only if $B$ is.  Since we know $B$ is linearly independent by induction, we are done.
\end{proof}

%%%%%%%%%%%%%%%%%%%%%%%%
\subsection{Symmetries} \label{sec:symmetries}
%%%%%%%%%%%%%%%%%%%%%%%%

There are some obvious symmetries of the category $\cH'(q)$.  Let $\xi_2$ be the symmetry of $\cH'(q)$ given on diagrams by reflecting in the horizontal axis and reversing orientation of strands.  This is an involutive monoidal contravariant autoequivalence of $\cH'(q)$.

Let $\xi_3$ be the symmetry of $\cH'(q)$ given on diagrams by reflecting in the vertical axis and reversing orientation.  This is an involutive antimonoidal covariant autoequivalence of $\cH'(q)$.  By antimonoidal, we mean that $\xi_3 (M \otimes N) \cong \xi_3(N) \otimes \xi_3(M)$.

The functors $\xi_2$ and $\xi_3$ commute and hence define an action of $(\Z/2\Z)^2$.  When $q=1$, these symmetries reduce to ones defined in \cite{Kho10}.  There is a third symmetry, $\xi_1$, defined in \cite{Kho10}.  This also has a $q$-deformation but one needs to pass to an appropriate completion of the category $\cH'(q)$.  It follows from Proposition~\ref{prop:hom-space-basis} and Lemma~\ref{lem:dot-crossing-moves} that the morphism spaces of $\cH'(q)$ are filtered by numbers of dots. Let $\tilde \cH'(q)$ be the category whose objects are those of $\tilde \cH'(q)$, but where $\Hom_{\tilde \cH'(q)} (Q_\epsilon, Q_{\epsilon'})$ is the space of formal infinite linear combinations of elements of $\Hom_{\cH'(q)}(Q_\epsilon, Q_{\epsilon'})$ which are locally finite with respect to the filtration by number of dots; thus an element of $\Hom_{\tilde \cH'(q)} (Q_\epsilon, Q_{\epsilon'})$ is an infinite linear combination of diagrams such that for all $n\geq 0$ the number of summands with fewer than $n$ dots is finite.  Note that composition of such infinite sums is well-defined.  Then let $\xi_1$ be the endofunctor of $\tilde \cH'(q)$ defined locally by
\[
\begin{tikzpicture}[>=stealth,baseline=25pt]
  \draw[->] (-3,0) to (-2,2);
  \draw[->] (-2,0) to (-3,2);
  \draw (-1,1) node {${\mapsto} \ \ -q$};
  \draw[->] (0,0) to (1,2);
  \draw[->] (1,0) to (0,2);
  \filldraw[fill=white,draw=black] (.5,1) circle (2pt);
  \draw (2.2,1) node {$= \ \ -$};
  \draw[->] (3,0) to (4,2);
  \draw[->] (4,0) to (3,2);
  \draw (5.5,1) node {$-\ (1-q)$};
  \draw[->] (7,0) to (7,2);
  \draw[->] (8,0) to (8,2);
\end{tikzpicture}\quad ,
\]
$\xi_1$ is the identity on right-oriented caps and cups, and on left-oriented caps and cups, $\xi_1$ acts as
\[
\begin{tikzpicture}[>=stealth,baseline=6pt]
  \draw[<-] (1,0) arc(180:0:.5);
  \draw (3,.25) node {${\mapsto}$};
  \draw[<-] (4,0) arc(180:0:.5);
  \draw (7,.25) node {$-\ \ (q^{-1}-1)$};
  \draw[<-] (9,0) arc(180:0:.5);
  \filldraw (9.5,.5) circle (2pt);
\end{tikzpicture}
\]
and
\[
\begin{tikzpicture}[>=stealth,baseline=6pt]
  \draw[<-] (-.5,0.5) arc(180:360:.5);
  \draw (1.5,.25) node {${\mapsto}$};
  \draw (4,.25) node {$\sum_{n=0}^\infty \ \ (q^{-1}-1)^n$};
  \draw[<-] (6,0.5) arc(180:360:.5);
  \filldraw (6.5,0) circle (2pt) node[anchor=north] {$n$};
\end{tikzpicture}\quad .
\]
One can compute directly that the action of $\xi_1$ on left, right and downward-oriented crossings is given by
\[
\begin{tikzpicture}[>=stealth,baseline=25pt]
  \draw[->] (0,0) to (1,2);
  \draw[<-] (1,0) to (0,2);
  \draw (2,1.1) node {${\mapsto} \ \ -$};
  \draw[->] (3,0) to (4,2);
  \draw[<-] (4,0) to (3,2);
  \draw (5.5,1) node {$-\ (1-q)$};
  \draw[->] (7,2) to (7,1.75) arc(180:360:.5) to (8,2);
  \draw[->] (7,0) to (7,0.25) arc(180:0:.5) to (8,0);
\end{tikzpicture}\quad ,
\]
\smallskip
\[
\begin{tikzpicture}[>=stealth,baseline=25pt]
  \draw[<-] (0,0) to (1,2);
  \draw[<-] (1,0) to (0,2);
  \draw (2,1.1) node {${\mapsto} \ \ -$};
  \draw[<-] (3,0) to (4,2);
  \draw[<-] (4,0) to (3,2);
  \draw (5.5,1) node {$-\ (1-q)$};
  \draw[<-] (7,0) to (7,2);
  \draw[<-] (8,0) to (8,2);
\end{tikzpicture}\quad ,
\]
\smallskip
\[
  \begin{tikzpicture}[>=stealth,baseline=25pt]
  \draw[<-] (0,0) to (1,2);
  \draw[->] (1,0) to (0,2);
  \draw (2,1.1) node {${\mapsto} \ \ -$};
  \draw[<-] (3,0) to (4,2);
  \draw[->] (4,0) to (3,2);
\end{tikzpicture}\quad ,
\]
and the action on dots is
\[
  \begin{tikzpicture}[>=stealth,baseline=25pt]
    \draw[->] (-1,0) to (-1,2);
    \filldraw (-1,1) circle (2pt);
    \draw (1.5,1) node {$\mapsto \ \ -\sum_{n=0}^\infty \left(q^{-1}-1\right)^n$};
    \draw[->] (4,0) to (4,2);
    \filldraw (4,1) circle (2pt) node[anchor=west] {$n+1$};
  \end{tikzpicture}\quad .
\]
A straightforward computation shows that $\xi_1$ is an involutive monoidal covariant autoequivalence of $\tilde \cH'(q)$.  One sees immediately that, when $q=1$, the infinite sums in the definition of $\xi_1$ become finite, there is no need to pass to the completion $\tilde \cH'(q)$, and this autoequivalence reduces to the one defined in \cite{Kho10}.

%%%%%%%%%%%%%%%%%%%%%%%%%%%%%%%%%%%%%%%%%%%%%%%%%%%%%%%%%%%%%%%%%%%
%%%%%%%%%%%%%%%%%%%%%%%%%%%%%%%%%%%%%%%%%%%%%%%%%%%%%%%%%%%%%%%%%%%
\section{Categorification of the Heisenberg algebra} \label{sec:categorification}
%%%%%%%%%%%%%%%%%%%%%%%%%%%%%%%%%%%%%%%%%%%%%%%%%%%%%%%%%%%%%%%%%%%
%%%%%%%%%%%%%%%%%%%%%%%%%%%%%%%%%%%%%%%%%%%%%%%%%%%%%%%%%%%%%%%%%%%

In this section, we assume $\kk$ is a field of characteristic zero and $q \in \kk^*$ is not a nontrivial root of unity.

%%%%%%%%%%%%%%%%%%%%%%%
\subsection{Projectors}
%%%%%%%%%%%%%%%%%%%%%%%

Let $\cH(q)$ be the Karoubi envelope of $\cH'(q)$.  More precisely,
the objects of $\cH(q)$ are pairs $(Q_\varepsilon,e)$ where $e:
Q_\varepsilon \to Q_\varepsilon$ is an idempotent endomorphism,
$e^2=e$.  Morphisms $(Q_\varepsilon, e) \to (Q_{\varepsilon'},e')$
are morphisms $f : Q_\varepsilon \to Q_{\varepsilon'}$ in $\cH'(q)$
such that the following diagram is commutative.
\[
  \xymatrix{
    Q_\varepsilon \ar[r]^f \ar[dr]^f \ar[d]_e & Q_{\varepsilon'} \ar[d]^{e'} \\
    Q_\varepsilon \ar[r]_f & Q_{\varepsilon'}
  }
\]
In the case $q=1$, the category $\cH(1)$ is the (conjectural) categorification of the
Heisenberg algebra defined by Khovanov \cite{Kho10}.

It follows from the local relations \eqref{eq:local-relation-basic-Hecke} and \eqref{eq:local-relation-braid} that upward oriented crossings satisfy the Hecke algebra relations and so we have a canonical homomorphism
\begin{equation} \label{eq:Hn-to-Q+}
  H_n \to \End_{\cH'(q)} (Q_{+^n}).
\end{equation}
Similarly, since each space of morphisms in $\cH'(q)$ consists of diagrams up to isotopy, downward
oriented crossings also satisfy the Hecke algebra relations and give us a canonical homomorphism
\begin{equation} \label{eq:Hn-to-Q-}
  H_n \to \End_{\cH'(q)} (Q_{-^n}).
\end{equation}
Introduce the complete $q$-symmetrizer and $q$-antisymmetrizer
\begin{equation} \label{eq:q-symm-def}
  e(n) = \frac{1}{[n]_q!} \sum_{w \in S_n} t_w,\quad e'(n) = \frac{1}{[n]_{q^{-1}}!} \sum_{w \in S_n} (-q)^{-l(w)} t_w, \quad \text{where} \quad [n]_q = \sum_{i=0}^{n-1} q^i.
\end{equation}
Both $e(n)$ and $e'(n)$ are idempotents in $H_n$ (see
\cite[\S1]{Gyo86}).  We will use the notation $e(n)$ and $e'(n)$ to
also denote the image of these idempotents in $\End_{\cH'(q)}
(Q_{+^n})$ and $\End_{\cH'(q)} (Q_{-^n})$ under the canonical
homomorphisms~\eqref{eq:Hn-to-Q+} and~\eqref{eq:Hn-to-Q-}.  We then define the following objects in $\cH(q)$:
\[
  S_+^n = (Q_{+^n}, e(n)),\quad S_-^n = (Q_{-^n}, e(n)),\quad \Lambda_+^n = (Q_{+^n}, e'(n)),\quad \Lambda_-^n = (Q_{-^n}, e'(n)).
\]
Following \cite[\S6.1, 6.2]{Cvi08}, which contains diagrammatics for Young symmetrizers and antisymmetrizers for the symmetric group, we depict $S_+^n$ as a white box labeled $n$.  The inclusion morphism $S_+^n \to Q_{+^n}$ is depicted by a white box with $n$ upward oriented lines leaving from the top.  The projection $Q_{+^n} \to S_+^n$ is depicted by a white box with $n$ upward oriented lines entering the bottom.  The composition $Q_+^n \to S_+^n \to Q_{+^n}$ is depicted by a white box with $n$ upward oriented lines leaving the top and $n$ upwards oriented lines entering the bottom.
\[
  \begin{tikzpicture}[baseline=-29pt]
    \draw (0,0) to (0,.5) to (2,.5) to (2,0) to (0,0);
    \draw (1,.25) node {$n$};
  \end{tikzpicture}
  \qquad \quad
  \begin{tikzpicture}[>=stealth,baseline=-29pt]
    \draw (0,0) to (0,.5) to (2,.5) to (2,0) to (0,0);
    \draw (1,.25) node {$n$};
    \draw[->] (.4,.5) to (.4,1.5);
    \draw[->] (.8,.5) to (.8,1.5);
    \draw[->] (1.2,.5) to (1.2,1.5);
    \draw[->] (1.6,.5) to (1.6,1.5);
  \end{tikzpicture}
  \qquad \quad
  \begin{tikzpicture}[>=stealth]
    \draw (0,0) to (0,.5) to (2,.5) to (2,0) to (0,0);
    \draw (1,.25) node {$n$};
    \draw[<-] (.4,0) to (.4,-1);
    \draw[<-] (.8,0) to (.8,-1);
    \draw[<-] (1.2,0) to (1.2,-1);
    \draw[<-] (1.6,0) to (1.6,-1);
  \end{tikzpicture}
  \qquad \quad
  \begin{tikzpicture}[>=stealth]
    \draw (0,0) to (0,.5) to (2,.5) to (2,0) to (0,0);
    \draw (1,.25) node {$n$};
    \draw[<-] (.4,0) to (.4,-1);
    \draw[<-] (.8,0) to (.8,-1);
    \draw[<-] (1.2,0) to (1.2,-1);
    \draw[<-] (1.6,0) to (1.6,-1);
    \draw[->] (.4,.5) to (.4,1.5);
    \draw[->] (.8,.5) to (.8,1.5);
    \draw[->] (1.2,.5) to (1.2,1.5);
    \draw[->] (1.6,.5) to (1.6,1.5);
  \end{tikzpicture}
\]

We depict the object $\Lambda_+^n$ and its related inclusions and projections by the same diagrams but with white boxes replaced by black boxes.
\[
  \begin{tikzpicture}[baseline=-29pt]
    \filldraw (0,0) to (0,.5) to (2,.5) to (2,0) to (0,0);
    \draw[color=white] (1,.25) node {$n$};
  \end{tikzpicture}
  \qquad \quad
  \begin{tikzpicture}[>=stealth,baseline=-29pt]
    \filldraw (0,0) to (0,.5) to (2,.5) to (2,0) to (0,0);
    \draw[color=white] (1,.25) node {$n$};
    \draw[->] (.4,.5) to (.4,1.5);
    \draw[->] (.8,.5) to (.8,1.5);
    \draw[->] (1.2,.5) to (1.2,1.5);
    \draw[->] (1.6,.5) to (1.6,1.5);
  \end{tikzpicture}
  \qquad \quad
  \begin{tikzpicture}[>=stealth]
    \filldraw (0,0) to (0,.5) to (2,.5) to (2,0) to (0,0);
    \draw[color=white] (1,.25) node {$n$};
    \draw[<-] (.4,0) to (.4,-1);
    \draw[<-] (.8,0) to (.8,-1);
    \draw[<-] (1.2,0) to (1.2,-1);
    \draw[<-] (1.6,0) to (1.6,-1);
  \end{tikzpicture}
  \qquad \quad
  \begin{tikzpicture}[>=stealth]
    \filldraw (0,0) to (0,.5) to (2,.5) to (2,0) to (0,0);
    \draw[color=white] (1,.25) node {$n$};
    \draw[<-] (.4,0) to (.4,-1);
    \draw[<-] (.8,0) to (.8,-1);
    \draw[<-] (1.2,0) to (1.2,-1);
    \draw[<-] (1.6,0) to (1.6,-1);
    \draw[->] (.4,.5) to (.4,1.5);
    \draw[->] (.8,.5) to (.8,1.5);
    \draw[->] (1.2,.5) to (1.2,1.5);
    \draw[->] (1.6,.5) to (1.6,1.5);
  \end{tikzpicture}
\]
The objects $S_-^n$ and $\Lambda_-^n$, together with their related inclusions and projections, are depicted by the same diagrams but with downward oriented lines instead of upward oriented lines.

\begin{lemma} \label{lem:absorb-symmetrizers}
Crossings are absorbed into $q$-symmetrizers at the cost of a factor of $q$ and into $q$-antisymmetrizers at the cost of a factor of $-1$.  More precisely, we have the following equalities.
\[
  \begin{tikzpicture}
    \draw (0,0) to (0,.5) to (2.4,.5) to (2.4,0) to (0,0);
    \draw (.4,.5) to (.4,1.5);
    \draw (.8,.5) to (.8,1.5);
    \draw (1.2,.5) to (1.6,1.5);
    \draw (1.6,.5) to (1.2,1.5);
    \draw (2,.5) to (2,1.5);
    \draw (3.25,.75) node {$=\ q$};
    \draw (4,0) to (4,.5) to (6.4,.5) to (6.4,0) to (4,0);
    \draw (4.4,.5) to (4.4,1.5);
    \draw (4.8,.5) to (4.8,1.5);
    \draw (5.2,.5) to (5.2,1.5);
    \draw (5.6,.5) to (5.6,1.5);
    \draw (6,.5) to (6,1.5);
  \end{tikzpicture}
  \qquad \qquad
  \begin{tikzpicture}
    \filldraw (0,0) to (0,.5) to (2.4,.5) to (2.4,0) to (0,0);
    \draw (.4,.5) to (.4,1.5);
    \draw (.8,.5) to (.8,1.5);
    \draw (1.2,.5) to (1.6,1.5);
    \draw (1.6,.5) to (1.2,1.5);
    \draw (2,.5) to (2,1.5);
    \draw (3.25,.75) node {$=\ -$};
    \filldraw (4,0) to (4,.5) to (6.4,.5) to (6.4,0) to (4,0);
    \draw (4.4,.5) to (4.4,1.5);
    \draw (4.8,.5) to (4.8,1.5);
    \draw (5.2,.5) to (5.2,1.5);
    \draw (5.6,.5) to (5.6,1.5);
    \draw (6,.5) to (6,1.5);
  \end{tikzpicture}
\]
Here the arrows can either be oriented up or down.  We also have analogous relations with the lines emanating from the bottom of the boxes instead of the top.
\end{lemma}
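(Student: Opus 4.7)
The plan is to reduce the claim to the classical identities
\[
  t_i e(n) = e(n) t_i = q\, e(n), \qquad t_i e'(n) = e'(n) t_i = -e'(n), \qquad 1 \le i \le n-1,
\]
inside the Hecke algebra $H_n$. Under \eqref{eq:Hn-to-Q+} the generator $t_i$ is sent to the upward crossing of the $i$-th and $(i+1)$-st strands, and by definition the white (respectively black) box labelled $n$ with upward strands represents the image of $e(n)$ (respectively $e'(n)$) in $\End_{\cH'(q)}(Q_{+^n})$. Composing a crossing above such a box is therefore precisely the image of $t_i e(n)$ or $t_i e'(n)$, and each of the displayed equalities in the lemma becomes an instance of one of the Hecke algebra identities above. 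The downward-oriented version is identical using \eqref{eq:Hn-to-Q-} in place of \eqref{eq:Hn-to-Q+}; the case of a crossing placed below the box uses the right-multiplication versions $e(n) t_i = q\, e(n)$ and $e'(n) t_i = -e'(n)$.

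To verify the Hecke algebra identities themselves, partition $S_n$ into the disjoint pairs $\{w, s_i w\}$ with $\ell(s_i w) = \ell(w)+1$. On such a pair one has $t_i t_w = t_{s_i w}$ and $t_i t_{s_i w} = t_i^2 t_w = q\, t_w + (q-1) t_{s_i w}$, so
\[
  t_i(t_w + t_{s_i w}) = q(t_w + t_{s_i w}).
\]
Summing over all pairs and dividing by $[n]_q!$ yields $t_i e(n) = q\, e(n)$. For the antisymmetrizer, the same pair contributes
\[
  (-q)^{-\ell(w)} t_{s_i w} + (-q)^{-\ell(w)-1}\bigl(q\, t_w + (q-1) t_{s_i w}\bigr) = -\bigl((-q)^{-\ell(w)} t_w + (-q)^{-\ell(w)-1} t_{s_i w}\bigr)
\]
to $t_i e'(n)$, which is $(-1)$ times the corresponding contribution to $e'(n)$; summing and dividing by $[n]_{q^{-1}}!$ yields $t_i e'(n) = -e'(n)$. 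The right-multiplication identities follow from the symmetric argument using the pairs $\{w, w s_i\}$.

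There is no essential obstacle: the lemma is a direct consequence of the definitions combined with these standard Hecke algebra facts about the $q$-(anti)symmetrizers. The only thing that needs care is the convention that a crossing composed on top of the box corresponds to \emph{left} multiplication by $t_i$ and a crossing composed on the bottom to \emph{right} multiplication, which is immediate from the fact that vertical composition of diagrams corresponds to multiplication in $\End_{\cH'(q)}(Q_{+^n})$ in exactly the way that makes \eqref{eq:Hn-to-Q+} and \eqref{eq:Hn-to-Q-} algebra homomorphisms.
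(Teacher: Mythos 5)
Your proof is correct and takes the same route as the paper: reduce to the classical identities $t_i e(n) = e(n) t_i = q\,e(n)$ and $t_i e'(n) = e'(n) t_i = -e'(n)$ in $H_n$ and then translate along the canonical homomorphisms \eqref{eq:Hn-to-Q+}, \eqref{eq:Hn-to-Q-}. The only difference is that the paper simply cites \eqref{eq:q-symm-def} and Gyoja for these Hecke-algebra facts, whereas you supply the pairing argument verifying them, which checks out.
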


\begin{proof}
This follows immediately from \eqref{eq:q-symm-def} (see \cite[p.~843]{Gyo86}).
\end{proof}

\begin{lemma} \label{lem:symmetrizer-pullthrough-relation}
We have following relation.
\[
  \begin{tikzpicture}[>=stealth]
    \draw (0,0) to (0,.5) to (2,.5) to (2,0) to (0,0);
    \draw (0,2.5) to (0,3) to (2,3) to (2,2.5) to (0,2.5);
    \draw (1,.25) node {$n$};
    \draw (1,2.75) node {$n$};
    \draw[<-] (.4,.5) to (.4,2.5);
    \draw[<-] (.8,.5) to (.8,2.5);
    \draw[<-] (1.2,.5) to (1.2,2.5);
    \draw[<-] (1.6,.5) to (1.6,2.5);
    \draw[->] (2.5,0) .. controls (2.5,1) and (0,1) .. (0,1.5) .. controls (0,2) and (2.5,2) .. (2.5,3);
    \draw (3.6,1.5) node {$=\ \ q^n$};
    \draw (4.5,0) to (4.5,.5) to (6.5,.5) to (6.5,0) to (4.5,0);
    \draw (4.5,2.5) to (4.5,3) to (6.5,3) to (6.5,2.5) to (4.5,2.5);
    \draw (5.5,.25) node {$n$};
    \draw (5.5,2.75) node {$n$};
    \draw[<-] (4.9,.5) to (4.9,2.5);
    \draw[<-] (5.3,.5) to (5.3,2.5);
    \draw[<-] (5.7,.5) to (5.7,2.5);
    \draw[<-] (6.1,.5) to (6.1,2.5);
    \draw[->] (7,0) to (7,3);
    \draw (8.5,1.5) node {$-\ \ q^n[n]_q$};
    \draw (9.7,0) to (9.7,.5) to (11.7,.5) to (11.7,0) to (9.7,0);
    \draw (9.7,2.5) to (9.7,3) to (11.7,3) to (11.7,2.5) to (9.7,2.5);
    \draw (10.7,.25) node {$n$};
    \draw (10.7,2.75) node {$n$};
    \draw[<-] (10.1,.5) to (10.1,2.5);
    \draw[<-] (10.5,.5) to (10.5,2.5);
    \draw[<-] (10.9,.5) to (10.9,2.5);
    \draw[<-] (11.3,.5) .. controls (11.3,1.5) and (12.2,1.5) .. (12.2,0);
    \draw[->] (11.3,2.5) .. controls (11.3,1.5) and (12.2,1.5) .. (12.2,3);
  \end{tikzpicture}
\]
\end{lemma}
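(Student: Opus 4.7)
I plan to prove this identity by induction on $n$. The base case $n=1$ is immediate: $e(1)$ is trivial, so the LHS reduces to an upward external strand looping once around a single downward strand, and the first relation of \eqref{eq:local-relation-up-down-double-crossing} yields $q\cdot(\text{straight}) - q\cdot(\text{cap-cup})$. Since $[1]_q = 1$, this matches the RHS.

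For the inductive step, the strategy is to apply \eqref{eq:local-relation-up-down-double-crossing} to the innermost pair of crossings, namely those of the external strand with the leftmost of the $n$ connecting strands (this pair is topologically adjacent, since the turn-around happens to the left of the leftmost strand). This splits the LHS into $q\cdot A_n - q\cdot B_n$, where $A_n$ is the diagram in which the leftmost connecting strand now runs straight between the two $e(n)$-boxes and the external loop encircles only the remaining $n-1$ connecting strands, and $B_n$ has the leftmost strand severed and rejoined to the external strand by a cap and a cup at position $1$.

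For $A_n$, I apply the inductive hypothesis. The bridge from the inductive case is the branching relation $e(n) = e(n-1)\cdot e(n)$, which together with Lemma~\ref{lem:absorb-symmetrizers} lets one extract an effective $e(n-1)$ acting on the $n-1$ strands that the external strand now loops around. This produces a straight contribution of $q^{n-1}$ and cap-cup contributions at (what are now) positions $2,\dots,n$. For $B_n$, one obtains a single cap-cup at position $1$. The remaining task is to convert all cap-cups at positions $i=1,\dots,n$ to the canonical cap-cup at position $n$: moving a cap-cup one step to the right involves swapping the upward external strand (now passing through the symmetrizer) past a downward connecting strand, which by the second relation of \eqref{eq:local-relation-up-down-double-crossing} combined with Lemma~\ref{lem:absorb-symmetrizers} contributes a factor of $q$. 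Adding up the contributions gives coefficients forming a geometric progression, which collapses via $[n]_q = 1 + q + q^2 + \cdots + q^{n-1}$ to produce the claimed $-q^n [n]_q$.

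\textbf{The main obstacle} is the careful bookkeeping of $q$-factors throughout the argument. In particular, one must (a) verify that the branching relation for $e(n)$ plus the absorption of Lemma~\ref{lem:absorb-symmetrizers} precisely recovers the inductive hypothesis applied to $A_n$, accounting for the extra straight strand, and (b) confirm that the cap-cups at different positions assemble into the geometric sum $[n]_q$ when translated to the canonical rightmost position. The interplay between the mixed-orientation absorption (governed by \eqref{eq:local-relation-up-down-double-crossing}) and the same-orientation absorption (Lemma~\ref{lem:absorb-symmetrizers}) is the delicate technical core.
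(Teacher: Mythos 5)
Your inductive strategy matches the paper's: base case via the first relation of \eqref{eq:local-relation-up-down-double-crossing}, then split the LHS into $qA_n - qB_n$ and use $e(n-1)e(n)=e(n)$ to set up the inductive hypothesis on $A_n$. The gap is in the cost of translating the cap-cup $B_n$ to the canonical rightmost position: you claim each step to the right contributes a factor of $q$, but the correct factor is $q^2$. In $B_n$ the hook strand exits the \emph{bottom} of the top $q$-symmetrizer at position 1 and crosses the $n-1$ intervening strands on its way to the upper-right endpoint, and it \emph{also} exits the \emph{top} of the bottom $q$-symmetrizer at position 1 and crosses those same $n-1$ strands going to the lower-right endpoint. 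One step to the right therefore requires absorbing two crossings—one into each symmetrizer box via Lemma~\ref{lem:absorb-symmetrizers}—at cost $q$ apiece. This is precisely how the paper obtains $qB_n = q^{2n-1}C_n$, i.e.\ $B_n = q^{2(n-1)}C_n$.

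This is not a bookkeeping subtlety that might wash out: with a factor of $q$ per step, $-qB_n$ would equal $-q^n C_n$, and combining with $qA_n = q^n\cdot\mathrm{id} - q^n[n-1]_q\, C_n$ gives coefficient $q^n([n-1]_q+1)$, which is not $q^n[n]_q$. The same issue surfaces internally in your own argument: unrolling the recursion as you describe produces cap-cups at positions $1,\dots,n$ with unrolling factor $q^i$ and move-to-canonical factor $q^{n-i}$ (under your claim), so every term carries the identical coefficient $q^n$; the sum would be $nq^n$, not the geometric progression you anticipate. That progression, $\sum_{i=1}^n q^{2n-i} = q^n[n]_q$, only materializes with the $q^{2(n-i)}$ move factor. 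A more minor imprecision: the inductive hypothesis applied to $A_n$ produces a \emph{single} cap-cup term with coefficient $q^{n-1}[n-1]_q$ at the rightmost position, not ``cap-cup contributions at positions $2,\dots,n$''; the paper simply applies the inductive hypothesis to $A_n$ once and converts the single $B_n$ cap-cup, rather than fully unrolling.
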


\begin{proof}
We prove this result by induction.  The case $n=1$ is simply the left hand relation in~\eqref{eq:local-relation-up-down-double-crossing}.  Now assume the result holds for $n-1$.  Then
\begin{align*}
  \begin{tikzpicture}[>=stealth]
    \draw (0,0) to (0,.5) to (2,.5) to (2,0) to (0,0);
    \draw (0,2.5) to (0,3) to (2,3) to (2,2.5) to (0,2.5);
    \draw (1,.25) node {$n$};
    \draw (1,2.75) node {$n$};
    \draw[<-] (.4,.5) to (.4,2.5);
    \draw[<-] (.8,.5) to (.8,2.5);
    \draw[<-] (1.2,.5) to (1.2,2.5);
    \draw[<-] (1.6,.5) to (1.6,2.5);
    \draw[->] (2.5,0) .. controls (2.5,1) and (0,1) .. (0,1.5) .. controls (0,2) and (2.5,2) .. (2.5,3);
  \end{tikzpicture}
  &
  \begin{tikzpicture}[>=stealth]
    \draw (-.5,1.5) node {$=\ q$};
    \draw (0,0) to (0,.5) to (2,.5) to (2,0) to (0,0);
    \draw (0,2.5) to (0,3) to (2,3) to (2,2.5) to (0,2.5);
    \draw (1,.25) node {$n$};
    \draw (1,2.75) node {$n$};
    \draw[<-] (.4,.5) to (.4,2.5);
    \draw[<-] (.8,.5) to (.8,2.5);
    \draw[<-] (1.2,.5) to (1.2,2.5);
    \draw[<-] (1.6,.5) to (1.6,2.5);
    \draw[->] (2.5,0) .. controls (2.5,1) and (.6,1) .. (.6,1.5) .. controls (.6,2) and (2.5,2) .. (2.5,3);
  \end{tikzpicture}
  \begin{tikzpicture}[>=stealth]
    \draw (-.5,1.5) node {$-\ q$};
    \draw (0,0) to (0,.5) to (2,.5) to (2,0) to (0,0);
    \draw (0,2.5) to (0,3) to (2,3) to (2,2.5) to (0,2.5);
    \draw (1,.25) node {$n$};
    \draw (1,2.75) node {$n$};
%    \draw (.4,.5) to[<-] (.4,2.5);
    \draw[<-] (.8,.5) to (.8,2.5);
    \draw[<-] (1.2,.5) to (1.2,2.5);
    \draw[<-] (1.6,.5) to (1.6,2.5);
    \draw[->] (.4,2.5) .. controls (.4,1.5) and (2.5,1.5) .. (2.5,3);
    \draw[<-] (.4,.5) .. controls (.4,1.5) and (2.5,1.5) .. (2.5,0);
%    \draw[->] (2.5,0) .. controls (2.5,1) and (.6,1) .. (.6,1.5) .. controls (.6,2) and (2.5,2) .. (2.5,3);
  \end{tikzpicture}
  \\ &
  \begin{tikzpicture}[>=stealth]
    \draw (-.5,1.5) node {$=\ q$};
    \draw (0,0) to (0,.5) to (2,.5) to (2,0) to (0,0);
    \draw (0,2.5) to (0,3) to (2,3) to (2,2.5) to (0,2.5);
    \draw (1,.25) node {$n$};
    \draw (1,2.75) node {$n$};
    \draw[<-] (.4,.5) to (.4,2.5);
    \draw[<-] (.8,.5) to (.8,2.5);
    \draw[<-] (1.2,.5) to (1.2,2.5);
    \draw[<-] (1.6,.5) to (1.6,2.5);
    \draw[->] (2.5,0) .. controls (2.5,1) and (.6,1) .. (.6,1.5) .. controls (.6,2) and (2.5,2) .. (2.5,3);
  \end{tikzpicture}
  \begin{tikzpicture}[>=stealth]
    \draw (-1,1.5) node {$-\ q^{2n-1}$};
    \draw (0,0) to (0,.5) to (2,.5) to (2,0) to (0,0);
    \draw (0,2.5) to (0,3) to (2,3) to (2,2.5) to (0,2.5);
    \draw (1,.25) node {$n$};
    \draw (1,2.75) node {$n$};
    \draw[<-] (.4,.5) to (.4,2.5);
    \draw[<-] (.8,.5) to (.8,2.5);
    \draw[<-] (1.2,.5) to (1.2,2.5);
%    \draw (1.6,.5) to[<-] (1.6,2.5);
    \draw[->] (1.6,2.5) .. controls (1.6,1.5) and (2.5,1.5) .. (2.5,3);
    \draw[<-] (1.6,.5) .. controls (1.6,1.5) and (2.5,1.5) .. (2.5,0);
%    \draw[->] (2.5,0) .. controls (2.5,1) and (.6,1) .. (.6,1.5) .. controls (.6,2) and (2.5,2) .. (2.5,3);
  \end{tikzpicture}
  \\ &
  \begin{tikzpicture}[>=stealth]
    \draw (3.6,1.5) node {$=\ q^n$};
    \draw (4.5,0) to (4.5,.5) to (6.5,.5) to (6.5,0) to (4.5,0);
    \draw (4.5,2.5) to (4.5,3) to (6.5,3) to (6.5,2.5) to (4.5,2.5);
    \draw (5.5,.25) node {$n$};
    \draw (5.5,2.75) node {$n$};
    \draw[<-] (4.9,.5) to (4.9,2.5);
    \draw[<-] (5.3,.5) to (5.3,2.5);
    \draw[<-] (5.7,.5) to (5.7,2.5);
    \draw[<-] (6.1,.5) to (6.1,2.5);
    \draw[->] (7,0) to (7,3);
    \draw (8.5,1.5) node {$-\ \ q^n[n-1]_q$};
    \draw (9.7,0) to (9.7,.5) to (11.7,.5) to (11.7,0) to (9.7,0);
    \draw (9.7,2.5) to (9.7,3) to (11.7,3) to (11.7,2.5) to (9.7,2.5);
    \draw (10.7,.25) node {$n$};
    \draw (10.7,2.75) node {$n$};
    \draw[<-] (10.1,.5) to (10.1,2.5);
    \draw[<-] (10.5,.5) to (10.5,2.5);
    \draw[<-] (10.9,.5) to (10.9,2.5);
    \draw[<-] (11.3,.5) .. controls (11.3,1.5) and (12.2,1.5) .. (12.2,0);
    \draw[->] (11.3,2.5) .. controls (11.3,1.5) and (12.2,1.5) .. (12.2,3);
  \end{tikzpicture}
  \begin{tikzpicture}[>=stealth]
    \draw (-1,1.5) node {$-\ q^{2n-1}$};
    \draw (0,0) to (0,.5) to (2,.5) to (2,0) to (0,0);
    \draw (0,2.5) to (0,3) to (2,3) to (2,2.5) to (0,2.5);
    \draw (1,.25) node {$n$};
    \draw (1,2.75) node {$n$};
    \draw[<-] (.4,.5) to (.4,2.5);
    \draw[<-] (.8,.5) to (.8,2.5);
    \draw[<-] (1.2,.5) to (1.2,2.5);
%    \draw (1.6,.5) to[<-] (1.6,2.5);
    \draw[->] (1.6,2.5) .. controls (1.6,1.5) and (2.5,1.5) .. (2.5,3);
    \draw[<-] (1.6,.5) .. controls (1.6,1.5) and (2.5,1.5) .. (2.5,0);
%    \draw[->] (2.5,0) .. controls (2.5,1) and (.6,1) .. (.6,1.5) .. controls (.6,2) and (2.5,2) .. (2.5,3);
  \end{tikzpicture}
  \\ &
  \begin{tikzpicture}[>=stealth]
    \draw (3.6,1.5) node {$=\ \ q^n$};
    \draw (4.5,0) to (4.5,.5) to (6.5,.5) to (6.5,0) to (4.5,0);
    \draw (4.5,2.5) to (4.5,3) to (6.5,3) to (6.5,2.5) to (4.5,2.5);
    \draw (5.5,.25) node {$n$};
    \draw (5.5,2.75) node {$n$};
    \draw[<-] (4.9,.5) to (4.9,2.5);
    \draw[<-] (5.3,.5) to (5.3,2.5);
    \draw[<-] (5.7,.5) to (5.7,2.5);
    \draw[<-] (6.1,.5) to (6.1,2.5);
    \draw[->] (7,0) to (7,3);
    \draw (8.5,1.5) node {$-\ \ q^n[n]_q$};
    \draw (9.7,0) to (9.7,.5) to (11.7,.5) to (11.7,0) to (9.7,0);
    \draw (9.7,2.5) to (9.7,3) to (11.7,3) to (11.7,2.5) to (9.7,2.5);
    \draw (10.7,.25) node {$n$};
    \draw (10.7,2.75) node {$n$};
    \draw[<-] (10.1,.5) to (10.1,2.5);
    \draw[<-] (10.5,.5) to (10.5,2.5);
    \draw[<-] (10.9,.5) to (10.9,2.5);
    \draw[<-] (11.3,.5) .. controls (11.3,1.5) and (12.2,1.5) .. (12.2,0);
    \draw[->] (11.3,2.5) .. controls (11.3,1.5) and (12.2,1.5) .. (12.2,3);
  \end{tikzpicture}
\end{align*}
where in the third equality we used the inductive hypothesis and the fact that a symmetrizer of size $n-1$ on top of (or below) a symmetrizer of size $n$ is equal to a symmetrizer of size $n$.
\end{proof}

\begin{lemma} \label{lem:antisymmetrizer-pullthrough}
We have the following relation.
\[
  \begin{tikzpicture}[>=stealth]
    \filldraw (0,0) to (0,.5) to (2,.5) to (2,0) to (0,0);
    \filldraw (0,2.5) to (0,3) to (2,3) to (2,2.5) to (0,2.5);
    \draw[color=white] (1,.25) node {$m$};
    \draw[color=white] (1,2.75) node {$m$};
    \draw[->] (.4,.5) to (.4,2.5);
    \draw[->] (.8,.5) to (.8,2.5);
    \draw[->] (1.2,.5) to (1.2,2.5);
    \draw[->] (1.6,.5) to (1.6,2.5);
    \draw[<-] (-.5,0) .. controls (-.5,1) and (2,1) .. (2,1.5) .. controls (2,2) and (-.5,2) .. (-.5,3);
    \draw (3.5,1.5) node {$=\ \ q^m$};
    \draw[<-] (4.5,0) to (4.5,3);
    \filldraw (5,0) to (5,.5) to (7,.5) to (7,0) to (5,0);
    \filldraw (5,2.5) to (5,3) to (7,3) to (7,2.5) to (5,2.5);
    \draw[color=white] (6,.25) node {$m$};
    \draw[color=white] (6,2.75) node {$m$};
    \draw[->] (5.4,.5) to (5.4,2.5);
    \draw[->] (5.8,.5) to (5.8,2.5);
    \draw[->] (6.2,.5) to (6.2,2.5);
    \draw[->] (6.6,.5) to (6.6,2.5);
    \draw (8.5,1.5) node {$-\ \ q[m]_q$};
    \filldraw (10,0) to (10,.5) to (12,.5) to (12,0) to (10,0);
    \filldraw (10,2.5) to (10,3) to (12,3) to (12,2.5) to (10,2.5);
    \draw[color=white] (11,.25) node {$m$};
    \draw[color=white] (11,2.75) node {$m$};
    \draw[->] (10.4,.5) .. controls (10.4,1.5) and (9.6,1.5) .. (9.6,0);
    \draw[<-] (10.4,2.5) .. controls (10.4,1.5) and (9.6,1.5) .. (9.6,3);
    \draw[->] (10.8,.5) to (10.8,2.5);
    \draw[->] (11.2,.5) to (11.2,2.5);
    \draw[->] (11.6,.5) to (11.6,2.5);
  \end{tikzpicture}
\]
\end{lemma}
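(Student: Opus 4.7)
The plan is to prove this by induction on $m$, following very closely the pattern of the proof of Lemma~\ref{lem:symmetrizer-pullthrough-relation}, with $q$-symmetrizers replaced by $q$-antisymmetrizers and the absorption factor $q$ replaced by $-1$ (per Lemma~\ref{lem:absorb-symmetrizers}). For the base case $m = 1$, the $1$-antisymmetrizer is the identity, so the boxes collapse and the claim reduces to the first relation of~\eqref{eq:local-relation-up-down-double-crossing}; the cap-cup appearing there is precisely what the hook term becomes for $m = 1$, and $q[1]_q = q$.

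For the inductive step, the plan is to proceed in four moves. First, I would apply the first relation of~\eqref{eq:local-relation-up-down-double-crossing} to the double crossing formed by the outer $Q_-$ strand with the rightmost inner $Q_+$ strand, writing $\text{LHS} = q \cdot A - q \cdot B$, where $A$ has the outer strand wrapping only around the leftmost $m - 1$ inner strands and $B$ is the hook configuration with the \emph{rightmost} strand hooked out. Second, I would use Lemma~\ref{lem:absorb-symmetrizers} to slide the hook in $B$ from the rightmost to the leftmost position; this requires $m - 1$ crossings at the top box and $m - 1$ crossings at the bottom box, each absorbed with a factor of $-1$, for a total factor of $(-1)^{2(m-1)} = 1$. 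Thus $B = B'$, where $B'$ is the hook at the leftmost position (matching the form appearing in the RHS of the statement).

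Third, I would apply the inductive hypothesis to $A$ by using the fact that $\Lambda_+^m$ absorbs the sub-antisymmetrizer $\Lambda_+^{m-1} \otimes \id$ acting on its leftmost $m - 1$ strands (because $e'(m)(e'(m-1)\otimes 1) = e'(m) = (e'(m-1)\otimes 1)e'(m)$ in $H_m$). Inserting $\Lambda_+^{m-1}$ boxes just above the bottom $\Lambda_+^m$ and just below the top $\Lambda_+^m$ leaves $A$ unchanged, and the resulting subdiagram matches the left-hand side of the inductive hypothesis at size $m - 1$. Applying it yields
\[
  A = q^{m-1} \cdot C - q[m-1]_q \cdot B',
\]
where $C$ denotes the identity configuration appearing as the first term of the RHS.

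Finally, combining and using the identity $q[m-1]_q + 1 = [m]_q$ gives
\[
  \text{LHS} = q \cdot A - q \cdot B' = q^m \cdot C - \bigl(q^2[m-1]_q + q\bigr) \cdot B' = q^m \cdot C - q[m]_q \cdot B',
\]
as desired. The main point requiring care is the second step: one must check carefully that re-routing the hook strand really does produce crossings that can be absorbed by Lemma~\ref{lem:absorb-symmetrizers}, and that the parity of $2(m-1)$ produces no net sign. This is precisely the reason the coefficient of the hook here is $q[m]_q$ rather than $q^m[m]_q$ as in Lemma~\ref{lem:symmetrizer-pullthrough-relation}: absorbing $2(m-1)$ crossings into an antisymmetrizer yields $(-1)^{2(m-1)} = 1$ rather than $q^{2(m-1)}$.
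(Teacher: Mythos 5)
The paper omits this proof, stating only that it is "similar to that of Lemma~\ref{lem:symmetrizer-pullthrough-relation}." Your reconstruction carries out exactly that adaptation and does so correctly: the resolution of the double crossing at the rightmost strand, the absorption of $2(m-1)$ crossings into the antisymmetrizers (contributing $(-1)^{2(m-1)}=1$ instead of $q^{2(n-1)}$), the insertion of $\Lambda_+^{m-1}$ projectors to invoke the inductive hypothesis, and the final identity $q^2[m-1]_q + q = q[m]_q$ all mirror the paper's symmetrizer argument precisely and produce the claimed coefficients.
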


\begin{proof}
The proof is similar to that of Lemma~\ref{lem:symmetrizer-pullthrough-relation} and is therefore omitted.
\end{proof}

\begin{lemma} \label{lem:pull-strand-off-symmetrizer}
We have the following relations, where the strands can be oriented up or down.
\[
  \begin{tikzpicture}
    \draw (0,0) to (0,.5) to (2,.5) to (2,0) to (0,0);
    \draw (.4,.5) to (.4,1.5);
    \draw (.8,.5) to (.8,1.5);
    \draw (1.2,.5) to (1.2,1.5);
    \draw (1.6,.5) to (1.6,1.5);
    \draw (.4,0) to (.4,-1);
    \draw (.8,0) to (.8,-1);
    \draw (1.2,0) to (1.2,-1);
    \draw (1.6,0) to (1.6,-1);
    \draw (1,.25) node {$n$};
    \draw (3,.25) node {$=\ \frac{1}{[n]_q}$};
    \draw (4,0) to (4,.5) to (5.6,.5) to (5.6,0) to (4,0);
    \draw (4.4,.5) to (4.4,1.5);
    \draw (4.8,.5) to (4.8,1.5);
    \draw (5.2,.5) to (5.2,1.5);
    \draw (6,-1) to (6,1.5);
    \draw (4.4,0) to (4.4,-1);
    \draw (4.8,0) to (4.8,-1);
    \draw (5.2,0) to (5.2,-1);
    \draw (4.8,.25) node {$n-1$};
    \draw (7,.25) node {$+\ \frac{[n-1]_q}{[n]_q}$};
    \draw (8,1) to (8,1.5) to (9.6,1.5) to (9.6,1) to (8,1);
    \draw (8,-0.5) to (9.6,-0.5) to (9.6,-1) to (8,-1) to (8,-0.5);
    \draw (8.4,1.5) to (8.4,2.25);
    \draw (8.8,1.5) to (8.8,2.25);
    \draw (9.2,1.5) to (9.2,2.25);
    \draw (8.4,-1) to (8.4,-1.75);
    \draw (8.8,-1) to (8.8,-1.75);
    \draw (9.2,-1) to (9.2,-1.75);
    \draw (8.4,1) to (8.4,-0.5);
    \draw (8.8,1) to (8.8,-0.5);
    \draw (9.2,1) .. controls (9.2,0) and (10,0) .. (10,-1.75);
    \draw (9.2,-0.5) .. controls (9.2,0.5) and (10,.5) .. (10,2.25);
    \draw (8.8,1.25) node {$n-1$};
    \draw (8.8,-.75) node {$n-1$};
  \end{tikzpicture}
\]

\[
  \begin{tikzpicture}
    \filldraw (0,0) to (0,.5) to (2,.5) to (2,0) to (0,0);
    \draw (.4,.5) to (.4,1.5);
    \draw (.8,.5) to (.8,1.5);
    \draw (1.2,.5) to (1.2,1.5);
    \draw (1.6,.5) to (1.6,1.5);
    \draw (.4,0) to (.4,-1);
    \draw (.8,0) to (.8,-1);
    \draw (1.2,0) to (1.2,-1);
    \draw (1.6,0) to (1.6,-1);
    \draw[color=white] (1,.25) node {$m$};
    \draw (3,.25) node {$=\ \frac{1}{[m]_{q^{-1}}}$};
    \filldraw (4.4,0) to (4.4,.5) to (6,.5) to (6,0) to (4.4,0);
    \draw (4.8,.5) to (4.8,1.5);
    \draw (5.2,.5) to (5.2,1.5);
    \draw (5.6,.5) to (5.6,1.5);
    \draw (4,-1) to (4,1.5);
    \draw (4.8,0) to (4.8,-1);
    \draw (5.2,0) to (5.2,-1);
    \draw (5.6,0) to (5.6,-1);
    \draw[color=white] (5.2,.25) node {$m-1$};
    \draw (7,.25) node {$-\ \frac{[m-1]_q}{[m]_q}$};
    \filldraw (8,1) to (8,1.5) to (9.6,1.5) to (9.6,1) to (8,1);
    \filldraw (8,-0.5) to (9.6,-0.5) to (9.6,-1) to (8,-1) to (8,-0.5);
    \draw (8.4,1.5) to (8.4,2.25);
    \draw (8.8,1.5) to (8.8,2.25);
    \draw (9.2,1.5) to (9.2,2.25);
    \draw (8.4,-1) to (8.4,-1.75);
    \draw (8.8,-1) to (8.8,-1.75);
    \draw (9.2,-1) to (9.2,-1.75);
    \draw (9.2,1) to (9.2,-0.5);
    \draw (8.8,1) to (8.8,-0.5);
    \draw (8.4,1) .. controls (8.4,0) and (7.6,0) .. (7.6,-1.75);
    \draw (8.4,-0.5) .. controls (8.4,0.5) and (7.6,.5) .. (7.6,2.25);
    \draw[color=white] (8.8,1.25) node {$m-1$};
    \draw[color=white] (8.8,-.75) node {$m-1$};
  \end{tikzpicture}
\]
\end{lemma}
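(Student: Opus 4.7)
The plan is to recognize both relations as diagrammatic incarnations of recursive identities for $e(n)$ and $e'(n)$ inside the Hecke algebra, and to prove them by Hecke algebra calculations performed inside $\End_{\cH'(q)}(Q_{+^n})$ (resp.\ $\End_{\cH'(q)}(Q_{-^n})$). This splits the lemma into (i) an identity in $H_n$ and (ii) a planar topology check to match the diagrams.

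For the symmetrizer, I would first derive in $H_n$ that
\[
  e(n) = \tfrac{1}{[n]_q}(e(n-1) \otimes 1)\bigl(1 + t_{n-1} + t_{n-1}t_{n-2} + \cdots + t_{n-1}\cdots t_1\bigr),
\]
by decomposing $S_n$ as $S_{n-1}$ times its minimal-length coset representatives $\{1, s_{n-1}, s_{n-1}s_{n-2}, \ldots, s_{n-1}\cdots s_1\}$ and absorbing the $S_{n-1}$-factor into $e(n-1)$ via $t_i(e(n-1)\otimes 1) = q(e(n-1)\otimes 1)$ for $i \le n-2$. Next, using the standard facts that $e(n)$ is idempotent and $(e(n-1)\otimes 1)e(n) = e(n)(e(n-1)\otimes 1) = e(n)$, I write $e(n) = (e(n-1)\otimes 1)\,e(n)\,(e(n-1)\otimes 1)$ and substitute the formula above. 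The same absorption identity, read from the right, gives $(1 + t_{n-2} + t_{n-2}t_{n-3} + \cdots + t_{n-2}\cdots t_1)(e(n-1)\otimes 1) = [n-1]_q(e(n-1)\otimes 1)$, which collapses every summand of the $1 + t_{n-1} + \cdots$ middle factor except the $1$ and $t_{n-1}$ terms, yielding
\[
  e(n) = \tfrac{1}{[n]_q}(e(n-1)\otimes 1) + \tfrac{[n-1]_q}{[n]_q}(e(n-1)\otimes 1)\,t_{n-1}\,(e(n-1)\otimes 1).
\]

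To interpret this diagrammatically, the LHS is the lemma's LHS picture, and the first RHS summand is clearly the first term of the lemma (the $n$-th strand bypassing the box). For the sandwich term, the two curves in the diagram connect the rightmost slot of each $e(n-1)$ box to the opposite external right endpoint; because their four endpoints interleave along the boundary of the strip, these curves must intersect exactly once as a planar $1$-manifold, and up to planar isotopy this single transverse crossing is the upward crossing $t_{n-1}$. Hence the sandwich diagram represents $(e(n-1)\otimes 1)\,t_{n-1}\,(e(n-1)\otimes 1)$, matching the algebraic identity. The downward-oriented case is identical since downward crossings satisfy the same Hecke relations by isotopy.

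The antisymmetrizer relation is proved analogously, but factoring out the \emph{first} strand instead of the last (as dictated by the lemma's diagram): use the coset representatives $\{1, s_1, s_2 s_1, \ldots, s_{n-1}\cdots s_1\}$ for $S_n$ over the subgroup $S'_{n-1}$ permuting strands $2, \ldots, n$, together with the substitution $\tilde t_i := -q^{-1}t_i$, which satisfy the Hecke relations with parameter $q^{-1}$ and with respect to which $e'(n)$ is the symmetrizer. The sign in the second term appears from $\tilde t_1 = -q^{-1}t_1$ after extracting the scalar, and the resulting coefficient $q^{-1}[n-1]_{q^{-1}}/[n]_{q^{-1}}$ simplifies to $[n-1]_q/[n]_q$ via $[n]_{q^{-1}} = q^{-(n-1)}[n]_q$. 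The main obstacle I expect is the diagrammatic translation step: verifying that the two curves in the sandwich term really contribute exactly one crossing with the correct orientation to match $t_{n-1}$ (resp.\ $t_1$). Once this is observed, the rest is straightforward Hecke algebra bookkeeping.
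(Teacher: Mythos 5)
Your proof is correct and follows essentially the same approach as the paper: coset-factor $e(n)$ (resp.\ $e'(m)$) over the smaller symmetrizer, sandwich between two copies of $e(n-1)$ (resp.\ $1\otimes e'(m-1)$), and apply the absorption relation of Lemma~\ref{lem:absorb-symmetrizers} to collapse the resulting geometric sum. The paper carries out this computation directly in the diagrammatic category (treating the antisymmetrizer case and calling the other analogous), whereas you work algebraically in $H_n$ and then translate, and you make the symmetrizer/antisymmetrizer parallel explicit via the substitution $\tilde t_i = -q^{-1}t_i$, but the underlying argument is the same.
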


\begin{proof}
These statements are $q$-analogues of Equations (6.10) and (6.19) of \cite{Cvi08}.  We sketch the proof of the second.  The proof of the first is analogous and will be omitted.  First note that
\begin{align*}
  &\begin{tikzpicture}
    \filldraw[shift={+(-.3,0)}] (0,0) to (0,.5) to (2,.5) to (2,0) to (0,0);
    \draw[shift={+(-.3,0)}] (.4,.5) to (.4,1.5);
    \draw[shift={+(-.3,0)}] (.8,.5) to (.8,1.5);
    \draw[shift={+(-.3,0)}] (1.2,.5) to (1.2,1.5);
    \draw[shift={+(-.3,0)}] (1.6,.5) to (1.6,1.5);
    \draw[shift={+(-.3,0)}] (.4,0) to (.4,-1);
    \draw[shift={+(-.3,0)}] (.8,0) to (.8,-1);
    \draw[shift={+(-.3,0)}] (1.2,0) to (1.2,-1);
    \draw[shift={+(-.3,0)}] (1.6,0) to (1.6,-1);
    \draw[color=white] [shift={+(-.3,0)}] (1,.25) node {$m$};
    \draw (2.8,.25) node {$= \frac{1}{[m]_{q^{-1}}} \Bigg($};
    \filldraw (4.4,0) to (4.4,.5) to (6,.5) to (6,0) to (4.4,0);
    \draw (4.8,.5) to (4.8,1.5);
    \draw (5.2,.5) to (5.2,1.5);
    \draw (5.6,.5) to (5.6,1.5);
    \draw (4,-1) to (4,1.5);
    \draw (4.8,0) to (4.8,-1);
    \draw (5.2,0) to (5.2,-1);
    \draw (5.6,0) to (5.6,-1);
    \draw[color=white] (5.2,.25) node {$m-1$};
    \draw (7,.25) node {$+\ (-q)^{-1}$};
    \filldraw[shift={+(4,0)}] (4.4,0) to (4.4,.5) to (6,.5) to (6,0) to (4.4,0);
    \draw[shift={+(4,0)}] (4.8,.5) .. controls (4.8,1) and (4.4,1) .. (4.4,1.5);
    \draw[shift={+(4,0)}] (5.2,.5) to (5.2,1.5);
    \draw[shift={+(4,0)}] (5.6,.5) to (5.6,1.5);
    \draw[shift={+(4,0)}] (4,-1) to (4,.5) .. controls (4,1) and (4.8,1) .. (4.8,1.5);
    \draw[shift={+(4,0)}] (4.8,0) to (4.8,-1);
    \draw[shift={+(4,0)}] (5.2,0) to (5.2,-1);
    \draw[shift={+(4,0)}] (5.6,0) to (5.6,-1);
    \draw[color=white] [shift={+(4,0)}] (5.2,.25) node {$m-1$};
    \draw (11,.25) node {$+\ (-q)^{-2}$};
    \filldraw[shift={+(8,0)}] (4.4,0) to (4.4,.5) to (6,.5) to (6,0) to (4.4,0);
    \draw[shift={+(8,0)}] (4.8,.5) .. controls (4.8,1) and (4.4,1) .. (4.4,1.5);
    \draw[shift={+(8,0)}] (5.2,.5) .. controls (5.2,1) and (4.8,1) .. (4.8,1.5);
    \draw[shift={+(8,0)}] (5.6,.5) to (5.6,1.5);
    \draw[shift={+(8,0)}] (4,-1) to (4,.5) .. controls (4,1) and (5.2,1) .. (5.2,1.5);
    \draw[shift={+(8,0)}] (4.8,0) to (4.8,-1);
    \draw[shift={+(8,0)}] (5.2,0) to (5.2,-1);
    \draw[shift={+(8,0)}] (5.6,0) to (5.6,-1);
    \draw[color=white] [shift={+(8,0)}] (5.2,.25) node {$m-1$};
    \draw (15,.25) node {$+\ \cdots$};
  \end{tikzpicture}
  \\
& \qquad \qquad \qquad \qquad \qquad \qquad \qquad \qquad \qquad \qquad \qquad
  \begin{tikzpicture}
    \draw (-.4,.25) node {$+ \ (-q)^{-(m-1)}$};
    \filldraw[shift={+(-3,0)}] (4.4,0) to (4.4,.5) to (6,.5) to (6,0) to (4.4,0);
    \draw[shift={+(-3,0)}] (4.8,.5) .. controls (4.8,1) and (4.4,1) .. (4.4,1.5);
    \draw[shift={+(-3,0)}] (5.2,.5) .. controls (5.2,1) and (4.8,1) .. (4.8,1.5);
    \draw[shift={+(-3,0)}] (5.6,.5) .. controls (5.6,1) and (5.2,1) .. (5.2,1.5);
    \draw[shift={+(-3,0)}] (4,-1) to (4,.5) .. controls (4,1) and (5.6,1) .. (5.6,1.5);
    \draw[shift={+(-3,0)}] (4.8,0) to (4.8,-1);
    \draw[shift={+(-3,0)}] (5.2,0) to (5.2,-1);
    \draw[shift={+(-3,0)}] (5.6,0) to (5.6,-1);
    \draw[color=white] [shift={+(-3,0)}] (5.2,.25) node {$m-1$};
    \draw (3.5,.25) node {$\Bigg).$};
  \end{tikzpicture}
\end{align*}
We now apply a $q$-antisymmetrizer of size $m-1$ to the rightmost $m-1$ strands.  On the left hand side of the equation, we use the fact that a $q$-antisymmetrizer of size $m$ followed by a $q$-symmetrizer of size $m-1$ equals a $q$-antisymmetrizer of size $m$, to see that this side remains unchanged.  We then use Lemma~\ref{lem:absorb-symmetrizers} to simplify all the terms on the right hand side of the equation.
\end{proof}

Define the following morphisms in $\cH(q)$.
\[
  \begin{tikzpicture}[>=stealth]
    \draw (6,5) node {$\Lambda_+^m \otimes S_-^n$};
    \draw (6,0) node {$S_-^n \otimes \Lambda_+^m$};
    \draw (6,-5) node {$\Lambda_+^{m-1} \otimes S_-^{n-1}$};
    \draw[->] (5.8,.5) to (5.8,2.5) node[anchor=east] {$\alpha_1$} to (5.8,4.5);
    \draw[<-] (6.2,.5) to (6.2,2.5) node[anchor=west] {$\beta_1$} to (6.2,4.5);
    \draw[->] (5.8,-.5) to (5.8,-2.5) node[anchor=east] {$\alpha_2$} to (5.8,-4.5);
    \draw[<-] (6.2,-.5) to (6.2,-2.5) node[anchor=west] {$\beta_2$} to (6.2,-4.5);
    \draw (0,1) to (0,1.5) to (2,1.5) to (2,1) to (0,1);
    \filldraw (0,3.5) to (0,4) to (2,4) to (2,3.5) to (0,3.5);
    \filldraw (2.5,1) to (2.5,1.5) to (4.5,1.5) to (4.5,1) to (2.5,1);
    \draw (2.5,3.5) to (2.5,4) to (4.5,4) to (4.5,3.5) to (2.5,3.5);
    \draw[<-] (.4,3.5) .. controls (.4,3) and (2.9,2) .. (2.9,1.5);
    \draw[<-] (.8,3.5) .. controls (.8,3) and (3.3,2) .. (3.3,1.5);
    \draw[<-] (1.2,3.5) .. controls (1.2,3) and (3.7,2) .. (3.7,1.5);
    \draw[<-] (1.6,3.5) .. controls (1.6,3) and (4.1,2) .. (4.1,1.5);
    \draw[<-] (.4,1.5) .. controls (.4,2) and (2.9,3) .. (2.9,3.5);
    \draw[<-] (.8,1.5) .. controls (.8,2) and (3.3,3) .. (3.3,3.5);
    \draw[<-] (1.2,1.5) .. controls (1.2,2) and (3.7,3) .. (3.7,3.5);
    \draw[<-] (1.6,1.5) .. controls (1.6,2) and (4.1,3) .. (4.1,3.5);
    \draw (1,1.25) node {$n$};
    \draw[color=white] (1,3.75) node {$m$};
    \draw[color=white] (3.5,1.25) node {$m$};
    \draw (3.5,3.75) node {$n$};
    \filldraw (7.5,1) to (7.5,1.5) to (9.5,1.5) to (9.5,1) to (7.5,1);
    \draw (7.5,3.5) to (7.5,4) to (9.5,4) to (9.5,3.5) to (7.5,3.5);
    \draw (10,1) to (10,1.5) to (12,1.5) to (12,1) to (10,1);
    \filldraw (10,3.5) to (10,4) to (12,4) to (12,3.5) to (10,3.5);
    \draw[->] (7.9,3.5) .. controls (7.9,3) and (10.4,2) .. (10.4,1.5);
    \draw[->] (8.3,3.5) .. controls (8.3,3) and (10.8,2) .. (10.8,1.5);
    \draw[->] (8.7,3.5) .. controls (8.7,3) and (11.2,2) .. (11.2,1.5);
    \draw[->] (9.1,3.5) .. controls (9.1,3) and (11.6,2) .. (11.6,1.5);
    \draw[->] (7.9,1.5) .. controls (7.9,2) and (10.4,3) .. (10.4,3.5);
    \draw[->] (8.3,1.5) .. controls (8.3,2) and (10.8,3) .. (10.8,3.5);
    \draw[->] (8.7,1.5) .. controls (8.7,2) and (11.2,3) .. (11.2,3.5);
    \draw[->] (9.1,1.5) .. controls (9.1,2) and (11.6,3) .. (11.6,3.5);
    \draw[color=white] (8.5,1.25) node {$m$};
    \draw (8.5,3.75) node {$n$};
    \draw (11,1.25) node {$n$};
    \draw[color=white] (11,3.75) node {$m$};
    \draw (0,-4) to (0,-3.5) to (2,-3.5) to (2,-4) to (0,-4);
    \filldraw (.4,-1.5) to (.4,-1) to (2,-1) to (2,-1.5) to (.4,-1.5);
    \filldraw (2.5,-4) to (2.5,-3.5) to (4.5,-3.5) to (4.5,-4) to (2.5,-4);
    \draw (2.5,-1.5) to (2.5,-1) to (4.1,-1) to (4.1,-1.5) to (2.5,-1.5);
    \draw[<-] (1.6,-3.5) .. controls (1.6,-2.9) and (2.9,-2.9) .. (2.9,-3.5);
    \draw[<-] (.8,-1.5) .. controls (.8,-2) and (3.3,-3) .. (3.3,-3.5);
    \draw[<-] (1.2,-1.5) .. controls (1.2,-2) and (3.7,-3) .. (3.7,-3.5);
    \draw[<-] (1.6,-1.5) .. controls (1.6,-2) and (4.1,-3) .. (4.1,-3.5);
    \draw[<-] (.4,-3.5) .. controls (.4,-3) and (2.9,-2) .. (2.9,-1.5);
    \draw[<-] (.8,-3.5) .. controls (.8,-3) and (3.3,-2) .. (3.3,-1.5);
    \draw[<-] (1.2,-3.5) .. controls (1.2,-3) and (3.7,-2) .. (3.7,-1.5);
    \draw (1,-3.75) node {$n$};
    \draw[color=white] (1.2,-1.25) node {$m-1$};
    \draw[color=white] (3.5,-3.75) node {$m$};
    \draw (3.3,-1.25) node {$n-1$};
    \filldraw (7.9,-4) to (7.9,-3.5) to (9.5,-3.5) to (9.5,-4) to (7.9,-4);
    \draw (7.5,-1.5) to (7.5,-1) to (9.5,-1) to (9.5,-1.5) to (7.5,-1.5);
    \draw (10,-4) to (10,-3.5) to (11.6,-3.5) to (11.6,-4) to (10,-4);
    \filldraw (10,-1.5) to (10,-1) to (12,-1) to (12,-1.5) to (10,-1.5);
    \draw[->] (7.9,-1.5) .. controls (7.9,-2) and (10.4,-3) .. (10.4,-3.5);
    \draw[->] (8.3,-1.5) .. controls (8.3,-2) and (10.8,-3) .. (10.8,-3.5);
    \draw[->] (8.7,-1.5) .. controls (8.7,-2) and (11.2,-3) .. (11.2,-3.5);
    \draw[->] (9.1,-1.5) .. controls (9.1,-2.1) and (10.4,-2.1) .. (10.4,-1.5);
%    \draw[->] (7.9,-3.5) .. controls (7.9,-3) and (10.4,-2) .. (10.4,-1.5);
    \draw[->] (8.3,-3.5) .. controls (8.3,-3) and (10.8,-2) .. (10.8,-1.5);
    \draw[->] (8.7,-3.5) .. controls (8.7,-3) and (11.2,-2) .. (11.2,-1.5);
    \draw[->] (9.1,-3.5) .. controls (9.1,-3) and (11.6,-2) .. (11.6,-1.5);
    \draw[color=white] (8.7,-3.75) node {$m-1$};
    \draw (8.5,-1.25) node {$n$};
    \draw (10.8,-3.75) node {$n-1$};
    \draw[color=white] (11,-1.25) node {$m$};
  \end{tikzpicture}
\]

\begin{proposition} \label{prop:map-compositions}
We have the following:
\begin{align*}
  \alpha_1 \beta_2 &= 0, \\
  \alpha_2 \beta_1 &= 0, \\
  \alpha_1 \beta_1 &= q^{mn} \id, \\
  \alpha_2 \beta_2 &= \frac{q^{(m-1)(n-1)}}{[m]_{q^{-1}}[n]_q} \id.
\end{align*}
\end{proposition}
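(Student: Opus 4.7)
The plan is to prove each of the four identities by direct diagrammatic computation: glue the defining diagrams of $\alpha_i$ and $\beta_j$ vertically and then reduce using the local relations of $\cH'(q)$ together with the lemmas of Sections~3.2--3.4, in particular the pullthrough Lemmas~\ref{lem:symmetrizer-pullthrough-relation} and~\ref{lem:antisymmetrizer-pullthrough} and the pull-strand-off Lemma~\ref{lem:pull-strand-off-symmetrizer}.

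For $\alpha_1\beta_1=q^{mn}\id$, the composed diagram has each of the $mn$ pairs of one $+$ strand and one $-$ strand undergoing a double crossing (once in $\beta_1$, once in $\alpha_1$). I will pull each of the $n$ strands of $S_-^n$ around the central $\Lambda_+^m$ antisymmetrizer box via Lemma~\ref{lem:antisymmetrizer-pullthrough}, obtaining $q^m$ per pullthrough plus a correction term in which one antisymmetrizer strand bends out to cap-cup with the pulled $-$ strand. The leading contributions accumulate to $q^{mn}\id$, while the correction terms vanish inductively, since the bent-out cap-cup configuration, combined with the outer $S_-^n$ symmetrizer, reduces to a diagram of lower complexity handled by the inductive hypothesis. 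For $\alpha_2\beta_2$ the cup of $\beta_2$ and the cap of $\alpha_2$ join the ``extra'' $n$-th strand of $S_-^n$ to the ``extra'' $m$-th strand of $\Lambda_+^m$. I will apply the two versions of Lemma~\ref{lem:pull-strand-off-symmetrizer} to peel the cup-cap strand out of the $S_-^n$ symmetrizer and the $\Lambda_+^m$ antisymmetrizer, contributing factors $\tfrac{1}{[n]_q}$ and $\tfrac{1}{[m]_{q^{-1}}}$ respectively; the leftover closed loop evaluates via \eqref{eq:cc-circle-and-left-curl}, and the remaining diagram is precisely $\alpha_1\beta_1$ for $\Lambda_+^{m-1}\otimes S_-^{n-1}$, yielding the factor $q^{(m-1)(n-1)}$.

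For $\alpha_1\beta_2=0$, the cup of $\beta_2$ creates two adjacent strands (a $-$ and a $+$) which are then carried by $\alpha_1$'s swap to opposite sides of the top of the diagram; as they diverge from the cup they cross exactly once, and the combination of the cup with this single self-crossing produces a left curl on the resulting strand, which is zero by the second part of \eqref{eq:cc-circle-and-left-curl}. The analogous argument, with cup replaced by cap and $\beta_2$ by $\alpha_2$, handles $\alpha_2\beta_1=0$. The main obstacle I anticipate lies in the first two identities: one must carefully track the correction terms from Lemma~\ref{lem:antisymmetrizer-pullthrough} and Lemma~\ref{lem:pull-strand-off-symmetrizer}, verifying at each stage that the cap-cup patterns that appear, when sandwiched between the outer symmetrizer and antisymmetrizer, either vanish outright (via the doubled-strand property of the antisymmetrizer) or reduce by the induction on $m+n$ to a case already treated.
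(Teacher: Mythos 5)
Your treatment of $\alpha_1\beta_1 = q^{mn}\id$ departs from the paper's and has a gap. The paper's argument is short: first absorb the middle $q$-symmetrizer and $q$-antisymmetrizer into the outer boxes --- since they are linear combinations of crossings, the triple point moves \eqref{eq:triple2}, \eqref{eq:triple3} let them slide through the strands above and be swallowed by the top idempotents --- after which the diagram consists of $mn$ double crossings of exactly the form appearing in the right-hand relation of \eqref{eq:local-relation-up-down-double-crossing}, each equal to $q$ times the identity. Your plan instead invokes Lemma~\ref{lem:antisymmetrizer-pullthrough} once per $-$ strand and claims the resulting $-q[m]_q$ correction terms ``vanish inductively.'' Two problems. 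First, Lemma~\ref{lem:antisymmetrizer-pullthrough} is stated for a \emph{free} strand wrapping around the $\Lambda_+^m$ boxes; in $\alpha_1\beta_1$ the $-$ strand also passes through the middle $S_-^n$ idempotent, so you would have to absorb that box first anyway --- and once you have done so, the elementary double-crossing resolution is already available and is strictly simpler. Second, the correction diagram (an antisymmetrizer strand bent out to cap and cup with the $-$ strand) is not an instance of $\alpha_1\beta_1$ for smaller $m,n$, so there is no inductive hypothesis on $m+n$ to invoke; its vanishing needs its own argument, which your sketch does not supply.

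For the remaining identities your outline agrees with the paper, but two steps must be made explicit. For $\alpha_1\beta_2 = 0$ the left-curl observation is the right idea, but the cup from $\beta_2$ and the crossing from $\alpha_1$ that close up into a curl are separated by the middle idempotent boxes; the curl is only visible after the same triple-point-move absorption used above, and you should state this. For $\alpha_2\beta_2$, applying Lemma~\ref{lem:pull-strand-off-symmetrizer} to the two middle boxes produces $2\times 2 = 4$ summands: two die immediately to left curls, a third acquires a left curl after applying the right-hand relation of \eqref{eq:local-relation-up-down-double-crossing}, and only the fourth survives, carrying a counterclockwise circle equal to the identity by \eqref{eq:cc-circle-and-left-curl}. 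Your sketch treats the peeled strand as producing a single clean closed loop rather than these four summands and omits the vanishing argument for three of them; resolving the remaining $(m-1)(n-1)$ double crossings as in $\alpha_1\beta_1$ and tracking the $\tfrac{1}{[n]_q}$, $\tfrac{1}{[m]_{q^{-1}}}$ coefficients then gives the stated constant.
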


\begin{proof}
We have
\[
  \begin{tikzpicture}[>=stealth]
    \draw (6.5,-1.25) node {$\alpha_1 \beta_2 =$};
    \filldraw (7.9,-4) to (7.9,-3.5) to (9.5,-3.5) to (9.5,-4) to (7.9,-4);
    \draw (7.5,-1.5) to (7.5,-1) to (9.5,-1) to (9.5,-1.5) to (7.5,-1.5);
    \draw (10,-4) to (10,-3.5) to (11.6,-3.5) to (11.6,-4) to (10,-4);
    \filldraw (10,-1.5) to (10,-1) to (12,-1) to (12,-1.5) to (10,-1.5);
    \draw[->] (7.9,-1.5) .. controls (7.9,-2) and (10.4,-3) .. (10.4,-3.5);
    \draw[->] (8.3,-1.5) .. controls (8.3,-2) and (10.8,-3) .. (10.8,-3.5);
    \draw[->] (8.7,-1.5) .. controls (8.7,-2) and (11.2,-3) .. (11.2,-3.5);
    \draw[->] (9.1,-1.5) .. controls (9.1,-2.1) and (10.4,-2.1) .. (10.4,-1.5);
    \draw[->] (8.3,-3.5) .. controls (8.3,-3) and (10.8,-2) .. (10.8,-1.5);
    \draw[->] (8.7,-3.5) .. controls (8.7,-3) and (11.2,-2) .. (11.2,-1.5);
    \draw[->] (9.1,-3.5) .. controls (9.1,-3) and (11.6,-2) .. (11.6,-1.5);
    \draw[color=white] (8.7,-3.75) node {$m-1$};
    \draw (8.5,-1.25) node {$n$};
    \draw (10.8,-3.75) node {$n-1$};
    \draw[color=white] (11,-1.25) node {$m$};
    \filldraw[shift={+(7.5,-2.5)}] (0,3.5) to (0,4) to (2,4) to (2,3.5) to (0,3.5);
    \draw[shift={+(7.5,-2.5)}] (2.5,3.5) to (2.5,4) to (4.5,4) to (4.5,3.5) to (2.5,3.5);
    \draw[<-][shift={+(7.5,-2.5)}] (.4,3.5) .. controls (.4,3) and (2.9,2) .. (2.9,1.5);
    \draw[<-][shift={+(7.5,-2.5)}] (.8,3.5) .. controls (.8,3) and (3.3,2) .. (3.3,1.5);
    \draw[<-][shift={+(7.5,-2.5)}] (1.2,3.5) .. controls (1.2,3) and (3.7,2) .. (3.7,1.5);
    \draw[<-][shift={+(7.5,-2.5)}] (1.6,3.5) .. controls (1.6,3) and (4.1,2) .. (4.1,1.5);
    \draw[<-][shift={+(7.5,-2.5)}] (.4,1.5) .. controls (.4,2) and (2.9,3) .. (2.9,3.5);
    \draw[<-][shift={+(7.5,-2.5)}] (.8,1.5) .. controls (.8,2) and (3.3,3) .. (3.3,3.5);
    \draw[<-][shift={+(7.5,-2.5)}] (1.2,1.5) .. controls (1.2,2) and (3.7,3) .. (3.7,3.5);
    \draw[<-][shift={+(7.5,-2.5)}] (1.6,1.5) .. controls (1.6,2) and (4.1,3) .. (4.1,3.5);
    \draw[color=white][shift={+(7.5,-2.5)}] (1,3.75) node {$m$};
    \draw[shift={+(7.5,-2.5)}] (3.5,3.75) node {$n$};
  \end{tikzpicture}
  \quad
  \begin{tikzpicture}[>=stealth,baseline=-120pt]
    \draw (7,-1.5) node {$=$};
    \filldraw (7.9,-4) to (7.9,-3.5) to (9.5,-3.5) to (9.5,-4) to (7.9,-4);
    \draw (10,-4) to (10,-3.5) to (11.6,-3.5) to (11.6,-4) to (10,-4);
    \draw[->] (7.9,-1.5) .. controls (7.9,-2) and (10.4,-3) .. (10.4,-3.5);
    \draw[->] (8.3,-1.5) .. controls (8.3,-2) and (10.8,-3) .. (10.8,-3.5);
    \draw[->] (8.7,-1.5) .. controls (8.7,-2) and (11.2,-3) .. (11.2,-3.5);
    \draw (9.1,-1.5) .. controls (9.1,-2.1) and (10.4,-2.1) .. (10.4,-1.5);
    \draw (8.3,-3.5) .. controls (8.3,-3) and (10.8,-2) .. (10.8,-1.5);
    \draw (8.7,-3.5) .. controls (8.7,-3) and (11.2,-2) .. (11.2,-1.5);
    \draw (9.1,-3.5) .. controls (9.1,-3) and (11.6,-2) .. (11.6,-1.5);
    \draw[color=white] (8.7,-3.75) node {$m-1$};
    \draw (10.8,-3.75) node {$n-1$};
    \filldraw[shift={+(7.5,-3)}] (0,3.5) to (0,4) to (2,4) to (2,3.5) to (0,3.5);
    \draw[shift={+(7.5,-3)}] (2.5,3.5) to (2.5,4) to (4.5,4) to (4.5,3.5) to (2.5,3.5);
    \draw[<-][shift={+(7.5,-3)}] (.4,3.5) .. controls (.4,3) and (2.9,2) .. (2.9,1.5);
    \draw[<-][shift={+(7.5,-3)}] (.8,3.5) .. controls (.8,3) and (3.3,2) .. (3.3,1.5);
    \draw[<-][shift={+(7.5,-3)}] (1.2,3.5) .. controls (1.2,3) and (3.7,2) .. (3.7,1.5);
    \draw[<-][shift={+(7.5,-3)}] (1.6,3.5) .. controls (1.6,3) and (4.1,2) .. (4.1,1.5);
    \draw[shift={+(7.5,-3)}] (.4,1.5) .. controls (.4,2) and (2.9,3) .. (2.9,3.5);
    \draw[shift={+(7.5,-3)}] (.8,1.5) .. controls (.8,2) and (3.3,3) .. (3.3,3.5);
    \draw[shift={+(7.5,-3)}] (1.2,1.5) .. controls (1.2,2) and (3.7,3) .. (3.7,3.5);
    \draw[shift={+(7.5,-3)}] (1.6,1.5) .. controls (1.6,2) and (4.1,3) .. (4.1,3.5);
    \draw[color=white][shift={+(7.5,-3)}] (1,3.75) node {$m$};
    \draw[shift={+(7.5,-3)}] (3.5,3.75) node {$n$};
    \draw (12.5,-1.5) node {$=0.$};
  \end{tikzpicture}
\]
In the second equality, we use the fact that since the middle $q$-symmetrizer and $q$-antisym\-metrizer are linear combinations of various crossings, the triple point moves~\eqref{eq:triple2} and~\eqref{eq:triple3} allow us to pull them through through the lines above them and absorb them into the upper $q$-symmetrizer and $q$-antisymmetrizer.  In the third equality, we used the fact that a left curl is zero.  The proof that $\alpha_2 \beta_1 = 0$ is analogous.  The relation $\alpha_1 \beta_1 = q^{mn} \id$ follows immediately from the right hand relation in~\eqref{eq:local-relation-up-down-double-crossing}.

The relation involving $\alpha_2 \beta_2$ is proved as follows.  By applying Lemma~\ref{lem:pull-strand-off-symmetrizer} to the middle two boxes of $\alpha_2 \beta_2$, one gets four summands.  Two of these contain a left curl and are therefore equal to zero.  Another contains a left curl after applying the right hand relation in~\eqref{eq:local-relation-up-down-double-crossing}.  Thus only one summand is nonzero.  This summand contains a counterclockwise circle, which is the identity by~\eqref{eq:cc-circle-and-left-curl}.  Proceeding in the same way as for $\alpha_1 \beta_1$, one gets the factor of $q^{(m-1)(n-1)}$.
\end{proof}

Define
\[
  \beta_1' = q^{-mn} \beta_1,\quad \beta_2' = \frac{[m]_{q^{-1}} [n]_q}{q^{(m-1)(n-1)}} \beta_2.
\]
Then, by Proposition~\ref{prop:map-compositions}, we have
\begin{equation} \label{eq:modified-map-compositions}
  \alpha_1 \beta_2' = 0,\quad \alpha_2 \beta_1'=0,\quad \alpha_1 \beta_1' = \id,\quad \alpha_2 \beta_2' = \id.
\end{equation}

\begin{proposition} \label{prop:map-composition-sum}
We have
\[
  \beta_1' \alpha_1 + \beta_2' \alpha_2 = \id.
\]
\end{proposition}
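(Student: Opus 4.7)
The relations $\alpha_i\beta_j' = \delta_{ij}\,\id$ from \eqref{eq:modified-map-compositions} immediately give
\[
(\beta_i'\alpha_i)(\beta_j'\alpha_j) = \beta_i'(\alpha_i\beta_j')\alpha_j = \delta_{ij}\,\beta_i'\alpha_i,
\]
so $p := \beta_1'\alpha_1 + \beta_2'\alpha_2$ is an idempotent endomorphism of $S_-^n\otimes\Lambda_+^m$, and the task reduces to showing $p = \id$. Equivalently, we must show that $(\alpha_1 \oplus \alpha_2,\, \beta_1' + \beta_2')$ realizes a direct sum decomposition
$S_-^n \otimes \Lambda_+^m \cong (\Lambda_+^m \otimes S_-^n) \oplus (\Lambda_+^{m-1} \otimes S_-^{n-1})$, categorifying the integral Heisenberg relation $a_n b_m = b_m a_n + b_{m-1}a_{n-1}$.

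I would prove $p = \id$ by induction on $n$, with $m$ arbitrary. The base case $n = m = 1$ reduces exactly to Lemma~\ref{lem:basic-heisenberg-relation}: the objects specialize to $S_-^1 \otimes \Lambda_+^1 = Q_{-+}$, $\Lambda_+^1 \otimes S_-^1 = Q_{+-}$, and $\Lambda_+^0 \otimes S_-^0 = \mathbf{1}$, while the normalizations $\beta_1' = q^{-1}\beta_1$ and $\beta_2' = \beta_2$ agree with the scalars appearing in \eqref{eq:basic-heisenberg}. The required identity then becomes precisely the completeness relation $\iota_1\rho_1 + \iota_2\rho_2 = \id$ verified in the proof of Lemma~\ref{lem:basic-heisenberg-relation}. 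For the inductive step, use Lemma~\ref{lem:pull-strand-off-symmetrizer} to peel a strand from the $q$-symmetrizer $e(n)$ defining $S_-^n$, writing it as $\tfrac{1}{[n]_q}$ times a diagram containing $e(n-1)$ plus a $\tfrac{[n-1]_q}{[n]_q}$ correction. Substituting this expansion into both $\beta_1'\alpha_1$ and $\beta_2'\alpha_2$ and using the triple-point moves of Section~\ref{sec:triple-point} together with Lemmas~\ref{lem:symmetrizer-pullthrough-relation} and~\ref{lem:antisymmetrizer-pullthrough} to commute the symmetrizer/antisymmetrizer boxes past intermediate crossings, the sum $p$ recasts as a combination of the analogous idempotent sums on $S_-^{n-1} \otimes \Lambda_+^m$ and $S_-^{n-1} \otimes \Lambda_+^{m-1}$, which are identities by the inductive hypothesis.

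The principal obstacle will be the careful bookkeeping of scalar factors: the coefficients $q^{mn}$ and $\tfrac{q^{(m-1)(n-1)}}{[m]_{q^{-1}}[n]_q}$ from the definitions of $\beta_1', \beta_2'$ must combine with the $\tfrac{1}{[n]_q}$ and $\tfrac{[n-1]_q}{[n]_q}$ from Lemma~\ref{lem:pull-strand-off-symmetrizer} and the $q^n, q^n[n]_q, q[m]_q$ factors produced by the pullthrough lemmas so as to yield exactly $\id$. As an alternative strategy suggested by the paper's introduction, one could postpone the proof to Section~\ref{sec:Hecke-action} and deduce the identity from the action of $\cH(q)$ on $\bigoplus_k H_k(q)\text{-mod}$: once that action is shown to be faithful on hom-spaces, the claim reduces to an assertion about a pair of orthogonal idempotents in the finite-dimensional Hecke algebra summing to a known parabolic idempotent, which can be checked directly.
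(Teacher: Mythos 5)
Your first observation — that the orthogonality relations from \eqref{eq:modified-map-compositions} already force $p := \beta_1'\alpha_1 + \beta_2'\alpha_2$ to be an idempotent — is correct and is a useful sanity check, but it does not reduce the work: a nontrivial idempotent can be a proper subprojection, so one still has to show $p=\id$ by some means.

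Your main strategy diverges from the paper's: where the paper directly computes $\beta_1'\alpha_1$ and $\beta_2'\alpha_2$ by pulling the inner symmetrizer and antisymmetrizer through the crossings and invoking Lemmas~\ref{lem:absorb-symmetrizers} and~\ref{lem:symmetrizer-pullthrough-relation} (or~\ref{lem:antisymmetrizer-pullthrough}), arriving at two expressions of the form $\id - [n]_q[m]_{q^{-1}}X$ and $[n]_q[m]_{q^{-1}}X$ whose sum is manifestly $\id$, you propose an induction on $n$ driven by Lemma~\ref{lem:pull-strand-off-symmetrizer}. The induction is plausible in spirit, but as written it has two gaps. First, if the inductive step reduces $(n,m)$ to $(n-1,m)$ and $(n-1,m-1)$, then the base that must be supplied is $n=0$ (equivalently, $(1,m)$ for \emph{all} $m$), not merely $n=m=1$; the $n=m=1$ case is indeed Lemma~\ref{lem:basic-heisenberg-relation}, and the $n=0$ and $m=0$ cases are trivial, but you should say so explicitly and make the recursion bottom out consistently. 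Second, and more seriously, the crucial claim — that after applying Lemma~\ref{lem:pull-strand-off-symmetrizer} the sum $p$ ``recasts as a combination of the analogous idempotent sums'' for smaller $n$ with coefficients that conspire to give $\id$ — is an assertion, not an argument. You concede that the scalar bookkeeping is ``the principal obstacle,'' which is precisely the content that would make or break this route; without that computation there is no proof.

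Your alternative strategy has a more fundamental problem. Deducing the identity from the action on $\bigoplus_k H_k(q)$-mod would require the $2$-functor $\bA$ (or the analogous bimodule realization) to be faithful on morphism spaces. The paper never establishes such $2$-categorical faithfulness — Theorem~\ref{thm:heisenberg-injection} only concerns faithfulness of $\bF$ at the level of Grothendieck groups — and in general one does not expect these $2$-representations to be faithful on hom-spaces. So verifying the identity of bimodule maps would not, by itself, let you conclude the corresponding identity in $\cH(q)$, and in fact the logical dependencies would risk circularity, since Theorem~\ref{thm:main-object-isom} (which uses this proposition) is used to define $\bF$ in the first place.
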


\begin{proof}
We give only a sketch of the proof, which is a straightforward computation.  First, one computes $\beta_1' \alpha_1$.  As in the proof of Proposition~\ref{prop:map-compositions}, the middle $q$-symmetrizer and $q$-antisymmetrizer can be pulled through the crossings and absorbed into the upper ones.  Then one uses Lemmas~\ref{lem:absorb-symmetrizers} and~\ref{lem:symmetrizer-pullthrough-relation} (or~\ref{lem:antisymmetrizer-pullthrough}) to show that
\[
  \beta_1' \alpha_1 = \id -[n]_q [m]_{q^{-1}}
  \begin{tikzpicture}[>=stealth,baseline=70]
    \draw (0,1) to (0,1.5) to (2,1.5) to (2,1) to (0,1);
    \draw (0,3.5) to (0,4) to (2,4) to (2,3.5) to (0,3.5);
    \filldraw (2.5,1) to (2.5,1.5) to (4.5,1.5) to (4.5,1) to (2.5,1);
    \filldraw (2.5,3.5) to (2.5,4) to (4.5,4) to (4.5,3.5) to (2.5,3.5);
    \draw[->] (.4,3.5) to (.4,1.5);
    \draw[->] (.8,3.5) to (.8,1.5);
    \draw[->] (1.2,3.5) to (1.2,1.5);
    \draw[->] (1.6,3.5) .. controls (1.6,3) and (2.9,3) .. (2.9,3.5);
    \draw[<-] (1.6,1.5) .. controls (1.6,2) and (2.9,2) .. (2.9,1.5);
    \draw[->] (3.3,1.5) to (3.3,3.5);
    \draw[->] (3.7,1.5) to (3.7,3.5);
    \draw[->] (4.1,1.5) to (4.1,3.5);
    \draw (1,1.25) node {$n$};
    \draw (1,3.75) node {$n$};
    \draw[color=white] (3.5,1.25) node {$m$};
    \draw[color=white] (3.5,3.75) node {$m$};
  \end{tikzpicture}
  \ .
\]
In showing this relation (and the next), one notes that any diagram containing a $q$-symmetrizer and $q$-antisymmetrizer connected by two or more strands is zero (this follows immediately from Lemma~\ref{lem:absorb-symmetrizers} since $q \ne -1$).  In a similar fashion, one shows that
\[
  \beta_2' \alpha_2 = [n]_q [m]_{q^{-1}}
  \begin{tikzpicture}[>=stealth,baseline=70]
    \draw (0,1) to (0,1.5) to (2,1.5) to (2,1) to (0,1);
    \draw (0,3.5) to (0,4) to (2,4) to (2,3.5) to (0,3.5);
    \filldraw (2.5,1) to (2.5,1.5) to (4.5,1.5) to (4.5,1) to (2.5,1);
    \filldraw (2.5,3.5) to (2.5,4) to (4.5,4) to (4.5,3.5) to (2.5,3.5);
    \draw[->] (.4,3.5) to (.4,1.5);
    \draw[->] (.8,3.5) to (.8,1.5);
    \draw[->] (1.2,3.5) to (1.2,1.5);
    \draw[->] (1.6,3.5) .. controls (1.6,3) and (2.9,3) .. (2.9,3.5);
    \draw[<-] (1.6,1.5) .. controls (1.6,2) and (2.9,2) .. (2.9,1.5);
    \draw[->] (3.3,1.5) to (3.3,3.5);
    \draw[->] (3.7,1.5) to (3.7,3.5);
    \draw[->] (4.1,1.5) to (4.1,3.5);
    \draw (1,1.25) node {$n$};
    \draw (1,3.75) node {$n$};
    \draw[color=white] (3.5,1.25) node {$m$};
    \draw[color=white] (3.5,3.75) node {$m$};
  \end{tikzpicture}
  \ .
\]
Adding these two expressions then gives the desired relation.
\end{proof}

\begin{theorem} \label{thm:main-object-isom}
In the category $\cH(q)$, we have
\begin{align*}
  S_-^n \otimes S_-^m &\cong S_-^m \otimes S_-^n, \\
  \Lambda_+^n \otimes \Lambda_+^m &\cong \Lambda_+^m \otimes \Lambda_+^n, \\
  S_-^n \otimes \Lambda_+^m &\cong \left( \Lambda_+^m \otimes S_-^n \right) \oplus \left( \Lambda_+^{m-1} \otimes S_-^{n-1} \right).
\end{align*}
\end{theorem}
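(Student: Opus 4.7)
The first two isomorphisms follow quickly from Lemma~\ref{lem:absorb-symmetrizers}. To prove $S_-^n\otimes S_-^m \cong S_-^m\otimes S_-^n$, I would define morphisms in both directions using the full braiding diagram in which the $n$ downward strands of one symmetrizer cross past the $m$ downward strands of the other. The composition in either direction produces $nm$ crossings sandwiched between two boxes labeled $n$ and two boxes labeled $m$. Since $S_-^n$ and $S_-^m$ are $q$-symmetrizers, Lemma~\ref{lem:absorb-symmetrizers} lets each crossing be absorbed at the cost of $q$. This shows that the composition equals $q^{2nm}$ times the identity on $S_-^n\otimes S_-^m$ (the crossings are absorbed first into the middle pair of boxes, then the symmetrizer-on-symmetrizer identities collapse everything). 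Normalizing one of the two morphisms by $q^{-nm}$ produces the required inverse pair. The argument for $\Lambda_+^n\otimes\Lambda_+^m\cong\Lambda_+^m\otimes\Lambda_+^n$ is identical, except that now Lemma~\ref{lem:absorb-symmetrizers} absorbs crossings into antisymmetrizers at the cost of $-1$, so the composition is $(-1)^{2nm}\id=\id$ directly.

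The third isomorphism is the substantive one, and almost all of the work has been done in Propositions~\ref{prop:map-compositions} and~\ref{prop:map-composition-sum}. The plan is simply to package the four morphisms $\alpha_1,\alpha_2,\beta_1',\beta_2'$ into a direct sum decomposition. Set
\[
  \alpha = \begin{pmatrix} \alpha_1 \\ \alpha_2 \end{pmatrix}
  \colon S_-^n\otimes\Lambda_+^m \longrightarrow (\Lambda_+^m\otimes S_-^n)\oplus(\Lambda_+^{m-1}\otimes S_-^{n-1}),
\]
and let $\beta=(\beta_1',\beta_2')$ go the other way. Then~\eqref{eq:modified-map-compositions} says $\alpha\circ\beta=\id$, while Proposition~\ref{prop:map-composition-sum} says $\beta\circ\alpha=\id$. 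This is exactly the standard criterion for a direct sum decomposition in an additive category, so the desired isomorphism follows.

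The main obstacle, then, is not in this theorem itself but in the earlier propositions that it relies on; the present proof is essentially a bookkeeping step once those are in place. One minor point worth checking in writing up the first two isomorphisms is that the braiding map descends to the Karoubian quotient, i.e.\ that it intertwines the idempotents $e(n)\otimes e(n)$ on the two sides and similarly for $e'(n)\otimes e'(m)$; but this is immediate from Lemma~\ref{lem:absorb-symmetrizers}, since absorbing crossings on either end of the diagram into the outer projectors gives the same scalar multiple of the composite projector and thereby shows the maps are well-defined morphisms in $\cH(q)$.
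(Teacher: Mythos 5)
Your proof of the third isomorphism is correct and coincides with the paper's: assembling $\alpha_1,\alpha_2$ into a column and $\beta_1',\beta_2'$ into a row, the two composites are identities precisely by~\eqref{eq:modified-map-compositions} and Proposition~\ref{prop:map-composition-sum}.

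Your argument for the first two isomorphisms, however, has a genuine gap: using the positive block braiding for \emph{both} directions does not produce an inverse pair of morphisms. Two things go wrong. First, every crossing in the block braiding joins a strand of the $n$-block to a strand of the $m$-block, so it is never adjacent to a single symmetrizer box, and Lemma~\ref{lem:absorb-symmetrizers} --- which absorbs a crossing between two strands of the \emph{same} box --- does not apply to any of them. Second, and decisively, the round-trip composite is simply not a scalar multiple of the identity: take $n=m=1$ in the antisymmetrizer case, where both idempotents are trivial, the block braiding is the single crossing $t_1$, and the composite in either order is $t_1^2 = q+(q-1)t_1$, which is not $(-1)^{2}\id$ as your count would predict. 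The paper's proof sidesteps this by drawing the return morphism with the \emph{inverse} crossings of~\eqref{eq:inverse-crossing} (the open-circle crossings). The positive and inverse crossings then cancel in the composite, and since conjugation by the braiding $t_w$ intertwines $e(n)\otimes e(m)$ with $e(m)\otimes e(n)$ (and likewise for $e'$), the middle idempotent collapses into the outer one, yielding the identity. Note that at $q=1$ one has $t_i^{-1}=t_i$, so your argument would go through in Khovanov's original category; the need for genuinely inverse crossings in the return morphism is a feature of the $q$-deformation that your proof misses.
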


\begin{proof}
To see the second isomorphism, consider the morphisms
\[
  \begin{tikzpicture}[>=stealth]
    \filldraw (0,1) to (0,1.5) to (2,1.5) to (2,1) to (0,1);
    \filldraw (0,3.5) to (0,4) to (2,4) to (2,3.5) to (0,3.5);
    \filldraw (2.5,1) to (2.5,1.5) to (4.5,1.5) to (4.5,1) to (2.5,1);
    \filldraw (2.5,3.5) to (2.5,4) to (4.5,4) to (4.5,3.5) to (2.5,3.5);
    \draw[<-] (.4,3.5) .. controls (.4,3) and (2.9,2) .. (2.9,1.5);
    \draw[<-] (.8,3.5) .. controls (.8,3) and (3.3,2) .. (3.3,1.5);
    \draw[<-] (1.2,3.5) .. controls (1.2,3) and (3.7,2) .. (3.7,1.5);
    \draw[<-] (1.6,3.5) .. controls (1.6,3) and (4.1,2) .. (4.1,1.5);
    \draw[->] (.4,1.5) .. controls (.4,2) and (2.9,3) .. (2.9,3.5);
    \draw[->] (.8,1.5) .. controls (.8,2) and (3.3,3) .. (3.3,3.5);
    \draw[->] (1.2,1.5) .. controls (1.2,2) and (3.7,3) .. (3.7,3.5);
    \draw[->] (1.6,1.5) .. controls (1.6,2) and (4.1,3) .. (4.1,3.5);
    \draw[color=white] (1,1.25) node {$n$};
    \draw[color=white] (1,3.75) node {$m$};
    \draw[color=white] (3.5,1.25) node {$m$};
    \draw[color=white] (3.5,3.75) node {$n$};
  \end{tikzpicture}
  \qquad \qquad
  \begin{tikzpicture}[>=stealth]
    \filldraw (0,1) to (0,1.5) to (2,1.5) to (2,1) to (0,1);
    \filldraw (0,3.5) to (0,4) to (2,4) to (2,3.5) to (0,3.5);
    \filldraw (2.5,1) to (2.5,1.5) to (4.5,1.5) to (4.5,1) to (2.5,1);
    \filldraw (2.5,3.5) to (2.5,4) to (4.5,4) to (4.5,3.5) to (2.5,3.5);
    \draw[<-] (.4,3.5) .. controls (.4,3) and (2.9,2) .. (2.9,1.5);
    \draw[<-] (.8,3.5) .. controls (.8,3) and (3.3,2) .. (3.3,1.5);
    \draw[<-] (1.2,3.5) .. controls (1.2,3) and (3.7,2) .. (3.7,1.5);
    \draw[<-] (1.6,3.5) .. controls (1.6,3) and (4.1,2) .. (4.1,1.5);
    \draw[->] (.4,1.5) .. controls (.4,2) and (2.9,3) .. (2.9,3.5);
    \draw[->] (.8,1.5) .. controls (.8,2) and (3.3,3) .. (3.3,3.5);
    \draw[->] (1.2,1.5) .. controls (1.2,2) and (3.7,3) .. (3.7,3.5);
    \draw[->] (1.6,1.5) .. controls (1.6,2) and (4.1,3) .. (4.1,3.5);
    \filldraw[fill=white,draw=black] (1.65,2.5) circle(2pt);
    \filldraw[fill=white,draw=black] (2.05,2.5) circle(2pt);
    \filldraw[fill=white,draw=black] (2.45,2.5) circle(2pt);
    \filldraw[fill=white,draw=black] (2.85,2.5) circle(2pt);
    \filldraw[fill=white,draw=black] (1.85,2.6) circle(2pt);
    \filldraw[fill=white,draw=black] (2.25,2.6) circle(2pt);
    \filldraw[fill=white,draw=black] (2.65,2.6) circle(2pt);
    \filldraw[fill=white,draw=black] (1.85,2.37) circle(2pt);
    \filldraw[fill=white,draw=black] (2.25,2.37) circle(2pt);
    \filldraw[fill=white,draw=black] (2.65,2.37) circle(2pt);
    \filldraw[fill=white,draw=black] (2.05,2.72) circle(2pt);
    \filldraw[fill=white,draw=black] (2.45,2.72) circle(2pt);
    \filldraw[fill=white,draw=black] (2.05,2.25) circle(2pt);
    \filldraw[fill=white,draw=black] (2.45,2.25) circle(2pt);
    \filldraw[fill=white,draw=black] (2.25,2.84) circle(2pt);
    \filldraw[fill=white,draw=black] (2.25,2.12) circle(2pt);
    \draw[color=white] (1,1.25) node {$m$};
    \draw[color=white] (1,3.75) node {$n$};
    \draw[color=white] (3.5,1.25) node {$n$};
    \draw[color=white] (3.5,3.75) node {$m$};
  \end{tikzpicture}
\]
where we recall the definition \eqref{eq:inverse-crossing} of the inverse crossing, denoted by an open circle.  One easily verifies that these two morphisms compose in either order to yield the identity.  The proof of the first isomorphism in the statement of the theorem is analogous.  The third isomorphism follows immediately from \eqref{eq:modified-map-compositions} and Proposition~\ref{prop:map-composition-sum}.
\end{proof}

%%%%%%%%%%%%%%%%%%%%%%%%%%%%%%%%%%%%%%
\subsection{The Heisenberg 2-category}
%%%%%%%%%%%%%%%%%%%%%%%%%%%%%%%%%%%%%%

In the sequel, we will define actions of our graphical categories on categories of modules for Hecke algebras and general linear groups over finite fields.  These actions are most naturally described in the language of 2-categories.  Therefore, we define here a 2-category built from the graphical category $\cH(q)$.

\begin{definition}[Heisenberg 2-category]
We define the \emph{Heisenberg 2-category} $\fH'(q)$ as follows.  The objects of $\fH'(q)$ are the integers.  For
$n,m \in \Z = \Ob \fH'(q)$, $\Hom_{\fH'(q)}(n,m)$ is the full
subcategory of $\cH'(q)$ containing the objects $Q_\varepsilon$,
$\varepsilon = \varepsilon_1 \dots \varepsilon_l$, for which
\[
  m-n = \# \{i\ |\ \varepsilon_i = +\} - \# \{i\ |\ \varepsilon_i =
  -\}.
\]
We then define $\fH(q)$ to be the 2-category whose objects are the
integers and for $n,m \in \Z$, $\Hom_{\fH(q)}(n,m)$ is the Karoubi
envelope of $\Hom_{\fH'(q)}(n,m)$.
\end{definition}

\begin{definition}[Representation of $\fH(q)$] \label{def:2-cat-rep}
Let $\mathcal{X} = \bigoplus_{n \in \Z} \mathcal{X}_n$ be a $\Z$-graded additive category.  Then the endofunctors of $\mathcal{X}$ naturally form a 2-category whose objects are the integers, whose 1-morphisms from $n$ to $m$, $n,m \in \Z$, are the functors from $\mathcal{X}_n$ to $\mathcal{X}_m$, and whose 2-morphisms are natural transformations.   A \emph{representation} of $\fH(q)$ is a functor from $\fH(q)$ to such a 2-category of endofunctors.
\end{definition}

%%%%%%%%%%%%%%%%%%%%%%%%%%%%%%%%%%%%%%%%%%
\subsection{A modified Heisenberg algebra}
%%%%%%%%%%%%%%%%%%%%%%%%%%%%%%%%%%%%%%%%%%

The algebra $\fh_\Z$ is $\Z$-graded after setting
\[
  \deg b_m = m,\quad \deg a_m = -m, \quad m \in \N_+.
\]
For $n \in \Z$, let $\fh_\Z(n)$ be the subspace of $\fh_\Z$ consisting of
homogeneous elements of degree $n$.

Any graded ring gives rise to a preadditive category with an object for each graded piece. In this way, we obtain the following category.

\begin{definition}
Let $\dot \fh_\Z$ be the preadditive category defined as follows:
\begin{itemize}
  \item $\Ob \dot \fh_\Z = \Z$.
  \item For $m,n \in \Z$, the morphisms from $n$ to $m$ are $\fh_\Z(m-n)$.
\end{itemize}
Composition of morphisms is simply given by multiplication in $\fh_\Z$.
\end{definition}

%%%%%%%%%%%%%%%%%%%%%%%%%%%%%%%%%%%%%%%%%%%%%%%%%%%%%%%%%
\subsection{A categorification of the Heisenberg algebra}
%%%%%%%%%%%%%%%%%%%%%%%%%%%%%%%%%%%%%%%%%%%%%%%%%%%%%%%%%

There is a natural bijection between the set $\cP(n)$ of partitions
of $n$ and the set of isomorphism classes of representations of
$H_n$ (see, for example, \cite[Chapter~3]{Mat99}).  To a partition $\lambda = (\lambda_1,
\dots, \lambda_k)$ of $n$, there corresponds the irreducible
representation $L_\lambda$. This is the unique common irreducible
summand of the representation induced from the trivial
representation of the parabolic subgroup $H_\lambda= H_{\lambda_1} \times \dots \times
H_{\lambda_k}$ of $H_n$, and the representation induced from the sign representation of the parabolic
subgroup $H_{\lambda^*} $, where $\lambda^*$ is the dual partition. Let
$e_\lambda \in H_n$ be the corresponding Young idempotent, so that
$e_\lambda^2 = e_\lambda$ and $L_\lambda = H_n e_\lambda$.

For $\lambda \in \cP(n)$, let $Q_{+,\lambda} = (Q_{+^n}, e_\lambda)
\in \fH(q)$.  Here we are viewing $e_\lambda$ as an idempotent in
$\End_{\cH(q)}(Q_{+^n})$ via the map~\eqref{eq:Hn-to-Q+}.  Similarly,
define $Q_{-,\lambda} = (Q_{-^n}, e_\lambda)$, where we view
$e_\lambda$ as an idempotent in $\End_{\cH(q)}(Q_{-^n})$
via~\eqref{eq:Hn-to-Q-}.  In particular,
\[
  S^n_+ = Q_{+,(n)},\quad \Lambda^n_+ = Q_{+,(1^n)},\quad S^n_- = Q_{-,(n)},
  \quad \Lambda^n_- = Q_{-,(1^n)}.
\]

Recall that if $\mathfrak{C}$ is a 2-category, then the \emph{(split) Grothendieck group} $K_0(\mathfrak{C})$ of $\mathfrak{C}$ is the category whose objects are the objects of $\mathfrak{C}$ and whose sets of morphisms are the (split) Grothendieck groups of the corresponding morphism categories in $\mathfrak{C}$.  The Grothendieck group $K_0(\fH(q))$ of $\fH(q)$ is naturally a preadditive category.

\begin{definition}
Define a functor $\bF : \dot \fh_\Z \to K_0(\fH(q))$ as follows.  On
objects, we define $\bF(n) = n$ for all $n \in Z$.  We define $\bF$
on morphisms by
\[
  \bF(a_n) = [S_-^n],\quad \bF(b_n) = [\Lambda_+^n],
\]
and requiring $\bF$ to be monoidal. The functor $\bF$ is well
defined by Theorem~\ref{thm:main-object-isom}.
\end{definition}

We can identify the subring of $\fh_\Z$ generated by the $a_n$, $n \ge
1$, with the ring of symmetric functions so that $a_n$ corresponds
to the $n$-th complete symmetric function $h_n$.  Let $a_\lambda$
denote the polynomial in the $a_n$'s associated to the Schur
function corresponding to the partition $\lambda$.  Then
\[
  \bF(a_\lambda) = [Q_{-,\lambda}].
\]
Similarly, we identity the subring of $\fh_\Z$ generated by the $b_n$,
$n \ge 1$, with the ring of symmetric functions so that $b_n$
corresponds to the $n$-th elementary symmetric function $e_n$.  Let
$b_\lambda$ denote the polynomial in the $b_n$'s associated to the
Schur function corresponding to the partition $\lambda$.  Then
\[
  \bF(b_\lambda) = [Q_{+,\lambda^*}].
\]
\begin{comment}
\comments{We need to check the above statements for the Hecke algebra.  Tony:  This is true for the Hecke algebra, too.  One way (which is probably overkill) to see this is the following:  The statement for symmetric groups follows from the statement that the irrep $L_\lambda$ of $S_n$ has a resolution whose terms are modules induced from Young subgroups; the boundary maps in this representation are inclusions.  At the level of the Grothendieck group this gives the expression of the Schur polynomial as a linear combination of products of elementary symmetric functions.  Now, this resolution $q$-deforms: the irrep of the symmetric group becomes the irrep of the Hecke algebra, the terms in the resolution become modules induced from products of smaller Hecke algebras (at least when $q$ is not a root of 1), and the inclusion maps become inclusion maps for Hecke algebra modules.  Thus the relationship in the Grothendieck group between irreps of the Hecke algebra and products of induced representations is the same as for the symmetric group.  We can remove this comment.}
\end{comment}
The ring $\fh_\Z$ has a basis $\{b_\lambda a_\mu\}_{\lambda,\mu}$,
where $\lambda$ and $\mu$ run over all partitions.  It follows that
$\{[Q_{+,\mu}] [Q_{-,\lambda}]\}_{\lambda,\mu}$ spans the subring
$\bF ( \dot \fh_\Z )$ of $K_0(\fH(q))$.

\begin{theorem} \label{thm:heisenberg-injection}
For $q$ not a root of unity, the functor $\bF$ is faithful.
\end{theorem}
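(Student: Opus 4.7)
The plan is to deduce faithfulness of $\bF$ from the faithfulness of the bosonic Fock space representation of $\fh$, using the 2-representation of $\fH(q)$ on Hecke modules constructed in Section~\ref{sec:Hecke-action}. The excerpt already records that the ring $\fh_\Z$ has a basis $\{b_\lambda a_\mu\}_{\lambda,\mu}$ and that $\bF(b_\lambda) = [Q_{+,\lambda^*}]$, $\bF(a_\lambda) = [Q_{-,\lambda}]$; thus faithfulness of $\bF$ is equivalent to the $\kk[q,q^{-1}]$-linear independence of the classes $\{[Q_{+,\lambda^*}][Q_{-,\mu}]\}_{\lambda,\mu}$ in $K_0(\fH(q))$.

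First I would apply $K_0$ to the 2-representation $\fH(q) \to \End(\bigoplus_n H_n(q)\text{-mod})$ of Section~\ref{sec:Hecke-action} to obtain a ring homomorphism
\[
  \Phi : K_0(\fH(q)) \to \End_{\kk[q,q^{-1}]} \Big( \textstyle \bigoplus_n K_0(H_n(q)\text{-mod}) \Big).
\]
Under this action, $Q_+$ and $Q_-$ go to parabolic induction and restriction functors for the chain $H_n \subset H_{n+1}$, so the Young idempotents $e(n), e'(n)$ pick out the complete symmetrizer and antisymmetrizer of the induced modules. Identifying $\bigoplus_n K_0(H_n(q)\text{-mod})$ with the ring of symmetric functions $\Lambda \otimes \kk[q,q^{-1}]$ in the standard way, one verifies that $[S^n_-]$ acts as multiplication (and its adjoint) by the complete symmetric function $h_n$, while $[\Lambda^n_+]$ acts as multiplication by the elementary symmetric function $e_n$. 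Combined with the identifications $a_n \leftrightarrow h_n$ and $b_n \leftrightarrow e_n$ already recorded in the excerpt, this exhibits the composition $\Phi \circ \bF$ as the standard Fock space representation of $\fh_\Z$.

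The Fock space representation of $\fh = \fh_\Z \otimes_\Z \C$ is classically faithful, so its restriction to $\fh_\Z$ is injective. Consequently $\Phi \circ \bF$, and therefore $\bF$ itself, is injective on each graded piece $\fh_\Z(m-n)$, giving the desired faithfulness. The main obstacle in executing this plan is the matching of operators invoked in the second paragraph: one must verify that the categorical action from Section~\ref{sec:Hecke-action} really recovers the classical Fock space action of $h_n$ and $e_n$ at the level of Grothendieck groups. This reduces to the observation that parabolic induction and restriction for $H_n(q)$ in the semisimple regime (which obtains here since $q$ is not a root of unity) satisfy Frobenius reciprocity and a Mackey formula formally identical to those for the symmetric groups, so the ring structure on $\bigoplus_n K_0(H_n(q)\text{-mod})$ matches that of the symmetric functions with its $(h_n, e_n)$ presentation, just as in Zelevinsky's $q$-deformed analogue of Geissinger's theorem.
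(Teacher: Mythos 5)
Your plan is close in spirit to the paper's proof: both push $\bF$ through the Hecke-module representation of Section~\ref{sec:Hecke-action} and read off faithfulness from a Fock-space computation. But there is a genuine gap in the final reduction. You write that since the Fock space representation of $\fh_\Z$ is faithful, ``$\Phi \circ \bF$, and therefore $\bF$ itself, is injective on each graded piece $\fh_\Z(m-n)$.'' This does not follow. Faithfulness of the \emph{ring} representation $\fh_\Z \to \End\bigl(\bigoplus_n K_0(H_n\text{-mod})\bigr)$ says that a nonzero $y \in \fh_\Z$ acts nontrivially on \emph{some} degree component; it does not say that $y$ acts nontrivially on the single component $K_0(H_n\text{-mod})$ corresponding to the fixed source object $n$ of $\dot\fh_\Z$. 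As a functor on $\dot\fh_\Z$, the composite $\theta[\bA]\bF$ is in fact not faithful: for example $y = b_1 a_1 \in \Hom_{\dot\fh_\Z}(0,0)$ is nonzero, but $\theta[\bA]\bF$ sends it to the zero endomorphism of $K_0(H_0\text{-mod})$, and any morphism whose source is a negative integer is killed outright since $\bA$ collapses all negative objects to $\nabla$. What rescues the argument is the extra observation that $\bF$, being monoidal, lands in the degree-$(m-n)$ piece of $K_0(\cH(q))$ in a way that is independent of the particular source object; hence $\bF(y)$ vanishes in $\Hom_{K_0(\fH(q))}(n,m)$ if and only if it vanishes in $\Hom_{K_0(\fH(q))}(n+k, m+k)$ for every $k$. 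This is precisely what the paper's shift functors $\mathbf{S}_k$ make explicit: one composes $\theta[\bA]\bF$ with $\mathbf{S}_k$ for all $k$, and then one is free to choose $k$ large enough (depending on $y$) that the Fock-space action detects $y$. Without this step your argument proves nothing about the hom-spaces with small $n$.

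Two smaller points. First, $\ind$ and $\res$ for the tower $H_{n-1} \subset H_n$ are ordinary, not parabolic, induction and restriction (parabolic induction enters only on the $GL_n(\F_q)$ side). Second, ``$[S_-^n]$ acts as multiplication (and its adjoint) by $h_n$'' should just say that $[S_-^n]$ acts as the adjoint of multiplication by $h_n$, since $a_n$ is an annihilation operator. Finally, one place where your route genuinely differs from the paper's: you invoke the classical faithfulness of the bosonic Fock space as a black box, whereas the paper re-derives it by hand (picking out an extremal partition $\nu$ and evaluating on $[L_\nu]$). Once the shift-functor step is inserted, your more abstract route is fine and arguably cleaner, at the cost of relying on the unproved matching of the categorical action with Zelevinsky's Hopf-algebra structure on $\bigoplus_n K_0(H_n\text{-mod})$, which you flag but do not carry out.
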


The proof of Theorem~\ref{thm:heisenberg-injection}  is given in Section~\ref{sec:injection-proof}.

\begin{conjecture}
For $q$ not a root of unity, the functor $\bF$ is an equivalence.
\end{conjecture}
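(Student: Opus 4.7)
The plan is to realize $\bF$ concretely through a categorical representation and then appeal to the faithfulness of the classical Fock space representation of the Heisenberg algebra. In Section~\ref{sec:Hecke-action} an action of $\fH(q)$ on $\bigoplus_{n\ge 0} H_n\text{-mod}$ is constructed; passing to split Grothendieck groups this action induces a functor
\[
  \Psi \colon K_0(\fH(q)) \longrightarrow \End \mathcal{F}_\Z, \qquad \mathcal{F}_\Z := \bigoplus_{n \ge 0} K_0(H_n\text{-mod}),
\]
where the right-hand side is the preadditive category of $\Z$-linear endofunctors of $\mathcal{F}_\Z$. I would prove the theorem by showing that the composition $\Psi \circ \bF \colon \dot\fh_\Z \to \End \mathcal{F}_\Z$ is injective on each $\Hom$-space, as this immediately forces $\bF$ itself to be faithful.

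The first step is to identify $\mathcal{F}_\Z$ with a familiar object. Since $q$ is not a nontrivial root of unity, each $H_n$ is semisimple with simple modules $\{L_\lambda\}_{\lambda \vdash n}$, and there is a standard isomorphism of $\Z$-modules $\mathcal{F}_\Z \cong \Lambda_\Z$ (the ring of symmetric functions over $\Z$) sending $[L_\lambda] \mapsto s_\lambda$, the Schur function. Next, I would use that under $\Psi$ the object $Q_+$ (respectively, $Q_-$) acts as parabolic induction (resp.\ restriction) between $H_m$ and $H_{m+1}$, so that the idempotented generators admit a transparent interpretation: $\Lambda_+^n = (Q_{+^n},e'(n))$ corresponds to induction from the sign representation of $H_n$ and hence acts on $\mathcal{F}_\Z$ as multiplication by $s_{(1^n)} = e_n$, while $S_-^n = (Q_{-^n},e(n))$ is adjoint (by Frobenius reciprocity) to multiplication by $s_{(n)} = h_n$ with respect to the Hall pairing. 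Consequently $(\Psi \circ \bF)(b_n)$ is multiplication by $e_n$ and $(\Psi \circ \bF)(a_n)$ is the Hall-adjoint of multiplication by $h_n$.

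With these identifications in place, tensoring with $\C$ recovers the classical Fock space representation of $\fh \cong \fh_\Z \otimes_\Z \C$ on $\mathcal{F}_\Z \otimes_\Z \C \cong \C[h_1,h_2,\dots]$; this representation is well known to be faithful (the polynomial multiplication operators and their Hall-adjoints are algebraically independent as operators on the polynomial algebra), so the induced map $\fh_\Z \otimes_\Z \C \to \End_\C(\mathcal{F}_\Z \otimes_\Z \C)$ is injective on each graded piece. Since each graded piece $\fh_\Z(k)$ is a free $\Z$-module with basis $\{b_\lambda a_\mu : |\lambda| - |\mu| = k\}$ and hence torsion-free, the natural inclusion $\fh_\Z(k) \hookrightarrow \fh_\Z(k) \otimes_\Z \C$ is injective, and $\Psi \circ \bF$ is therefore already faithful over $\Z$.

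The main obstacle will be the rigorous identification of $\Psi(\Lambda_+^n)$ and $\Psi(S_-^n)$ with multiplication by $e_n$ and the Hall-adjoint of multiplication by $h_n$, respectively. This requires unpacking the action of $\fH(q)$ from Section~\ref{sec:Hecke-action} and using that the $q$-antisymmetrizer $e'(n)$ (resp.\ the $q$-symmetrizer $e(n)$) is the primitive Young idempotent corresponding to the sign (resp.\ trivial) representation of $H_n$, together with transitivity of induction and Frobenius reciprocity for Hecke subalgebras. Once this translation is in hand, the remainder of the argument is a restatement of classical facts about the bosonic Fock space.
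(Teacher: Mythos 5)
The statement you were asked to prove is a \emph{conjecture} in the paper, not a theorem, and your argument does not close the gap: you prove only that $\bF$ is faithful, whereas an equivalence of categories requires faithfulness, fullness, \emph{and} essential surjectivity. Essential surjectivity is automatic here since $\bF$ is the identity on objects. Faithfulness is exactly the content of Theorem~\ref{thm:heisenberg-injection}, which the paper already proves (using $\theta [\bA] \bF \mathbf{S}_k$ and an extremal-degree argument on partitions); your version, which tensors with $\C$ and invokes the classical faithfulness of the bosonic Fock space representation, is a slightly different but essentially equivalent route to the same result.

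What you have not addressed at all is fullness, and this is precisely the open content of the conjecture. Fullness asserts that the ring homomorphism $\fh_\Z(m-n) \to K_0\bigl(\Hom_{\fH(q)}(n,m)\bigr)$ is \emph{surjective}, i.e.\ that every class in the split Grothendieck group of the Karoubi envelope is an integer combination of the classes $[Q_{+,\mu}][Q_{-,\lambda}]$. Since $\fH(q)$ is a Karoubi envelope, its objects are pairs $(Q_\epsilon, e)$ for arbitrary idempotents $e \in \End_{\cH'(q)}(Q_\epsilon)$, and one must rule out the existence of indecomposable summands whose classes lie outside the image of $\bF$. The endomorphism algebras $\End_{\cH'(q)}(Q_{+^n})$ contain $H_n^+ \otimes \kk[c_0,c_1,\dots]$, a far larger algebra than $H_n$ alone, and it is unknown whether its idempotents yield any Grothendieck classes beyond the span of the $[Q_{+,\mu}][Q_{-,\lambda}]$. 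Pushing your argument through a categorical representation cannot help here, since passing to a representation can only detect \emph{non}-isomorphism of objects (giving faithfulness), not certify that you have found \emph{all} objects. This is why the paper proves faithfulness as a theorem but leaves the equivalence as an open conjecture; your proposal reproduces the former and is silent on the latter.
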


\begin{remark}
When $q$ is a root of unity, the Hecke algebra and its representation theory changes considerably.  It would be interesting to study the category $\cH(q)$ in this case.  When $q$ is not a root of unity, we conjecture that the categories $\cH(q)$ for various $q$ are equivalent to one another.
\end{remark}

%%%%%%%%%%%%%%%%%%%%%%%
\subsection{Symmetries}
%%%%%%%%%%%%%%%%%%%%%%%

The symmetries of $\cH'(q)$ described in Section~\ref{sec:symmetries} naturally induce symmetries of $\cH(q)$, $\fH(q)$, and $K_0(\fH(q))$.  The induced involutions of $K_0(\fH(q))$ preserve the image of $\bF$ (which we identify with $\dot \fh_\Z$).

Since $\xi_2$ preserves the $q$-symmetrizer $e(n)$ and the $q$-antisymmetrizer $e'(n)$ (see \eqref{eq:q-symm-def}), it follows that it preserves the objects $S_+^n$, $S_-^n$, $\Lambda_+^n$, and $\Lambda_-^n$ of $\cH(q)$.  The symmetry $\xi_2$ then naturally induces an involution of $\fH(q)$ that is the identity on objects.  The induced functor on $\dot \fh_\Z$ is the identity.

It is easy to check that the induced action of $\xi_3$ on $\cH(q)$ interchanges $S_+^n$ with $S_-^n$ and $\Lambda_+^n$ with $\Lambda_-^n$.  It induces a symmetry on $\fH(q)$ that, on objects, sends $n$ to $-n$, for $n \in \Z$.  The induced involutive functor on $\dot \fh_\Z$ is the contravariant functor that sends the object $n$, $n \in \Z$, to $-n$, and interchanges $a_n$ and $b_n$.

%%%%%%%%%%%%%%%%%%%%%%%%%%%%%%%%%%%%%%%%%%%%%%%%%%%%%%%%%%%%%%%%%%%%
\section{Action on modules for Hecke algebras}
\label{sec:Hecke-action}
%%%%%%%%%%%%%%%%%%%%%%%%%%%%%%%%%%%%%%%%%%%%%%%%%%%%%%%%%%%%%%%%%%%%

In this section, we will describe how our graphical category acts on the category of modules for Hecke algebras of type $A$.  We refer the reader to \cite{Kho10b} for an overview of the type of diagrammatic presentation of functors, natural transformations, and biadjointness we use here.

%%%%%%%%%%%%%%%%%%%%%%%%%%%%%%%%%%%%%%%%%%%%%%%%%%%%%%%%%%%%%%%%%%%%
\subsection{Bimodules for Hecke algebras}
%%%%%%%%%%%%%%%%%%%%%%%%%%%%%%%%%%%%%%%%%%%%%%%%%%%%%%%%%%%%%%%%%%%%

For $1 \le k \le n$, we can view $H_k$ as a subalgebra of $H_n$ via the embedding $t_i \mapsto t_i$.  We introduce here some notation for bimodules.  First note that $H_n$ is naturally an $(H_n,H_n)$-bimodule. Via our identification of $H_k$, $1 \le k \le n$, with a subalgebra of $H_n$, we can naturally view $H_n$ as an $(H_k,H_l)$-bimodule for $1 \le k,l \le n$.  We will write $_k (n)_l$ to denote this bimodule. If $k$ or $l$ is equal to $n$, we will often omit the subscript. Thus, for instance,
\begin{itemize}
  \item $(n)$ denotes $H_n$, considered as an $(H_n,H_n)$-bimodule,
  \item $(n)_{n-1}$ denotes $H_n$, considered as an
  $(H_n,H_{n-1})$-bimodule, and
  \item $_{n-1}(n)$ denotes $H_n$, considered as an
  $(H_{n-1},H_n)$-bimodule.
\end{itemize}

%%%%%%%%%%%%%%%%%%%%%%%%%%%
\subsection{Decompositions}
%%%%%%%%%%%%%%%%%%%%%%%%%%%

We collect here various results that will be used in the sequel.

\begin{lemma} \label{lem:H_n+1 over H_n}
The algebra $H_{n+1}$ is free of rank $n+1$ as both a right and a
left $H_n$-module.  In particular, we have the following.
\begin{enumerate}
  \item The set $\{t_n t_{n-1} \dots t_j\ |\ 1 \le j \le n+1\}$ is a
  basis of $H_{n+1}$ as a left $H_n$-module.  Here we interpret $t_n
  t_{n-1} \dots t_j$ as being $1_{n+1}$ when $j=n+1$.

  \item The set $\{t_j \dots t_{n-1} t_n\ |\ 1 \le j \le n+1\}$ is a
  basis of $H_{n+1}$ as a right $H_n$-module.
\end{enumerate}
\end{lemma}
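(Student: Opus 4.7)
The plan is to reduce the statement to a standard fact about reduced expressions in $S_{n+1}$ via the $\{t_w\}_{w \in S_{n+1}}$ basis of $H_{n+1}$ mentioned just after the definition of the Hecke algebra.

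First I would recall the coset decomposition of the symmetric group. The minimal length representatives of the left cosets $S_n \backslash S_{n+1}$ are precisely the $n+1$ elements
\[
  w_j = s_n s_{n-1} \cdots s_j, \quad 1 \le j \le n+1,
\]
where $w_{n+1}$ is interpreted as the identity. Dually, the minimal length representatives of the right cosets $S_{n+1}/S_n$ are the elements $w_j' = s_j s_{j+1} \cdots s_n$. These facts follow by observing that $S_{n+1}/S_n$ is in bijection with $\{1,\ldots,n+1\}$ via $w \mapsto w(n+1)$, and that $w_j(n+1) = j$ with $\ell(w_j) = n+1-j$.

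The key observation is that for any $u \in S_n$ and any $j$, one has the length-additivity property
\[
  \ell(u \cdot w_j) = \ell(u) + \ell(w_j),
\]
since $w_j$ is the unique minimal representative of its coset. Consequently, if $u = s_{i_1} \cdots s_{i_k}$ is a reduced expression for $u$ in $S_n$, then $s_{i_1} \cdots s_{i_k} s_n s_{n-1} \cdots s_j$ is a reduced expression for $u w_j$ in $S_{n+1}$. By the definition of $t_w$ via any reduced expression, this yields
\[
  t_{u w_j} = t_u \cdot (t_n t_{n-1} \cdots t_j)
\]
in $H_{n+1}$. Since $\{t_{u w_j} : u \in S_n,\ 1 \le j \le n+1\}$ is precisely the $\kk[q,q^{-1}]$-basis $\{t_w\}_{w \in S_{n+1}}$ of $H_{n+1}$ reindexed via the coset decomposition, it follows that $\{t_n t_{n-1} \cdots t_j : 1 \le j \le n+1\}$ is a left $H_n$-basis of $H_{n+1}$, proving part (a). Part (b) follows by the entirely analogous argument using the right coset decomposition and the representatives $w_j'$.

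The main (and essentially the only) obstacle is verifying that the $w_j$ really are the minimal length coset representatives and that length is additive on the factorization $u \cdot w_j$; however this is a standard and elementary combinatorial fact about the symmetric group $S_{n+1}$, so no significant work is required. The passage from the length-additive reduced expression in $S_{n+1}$ to the corresponding factorization of $t_{uw_j}$ is immediate from the fact that $t_w$ is independent of the choice of reduced expression, a consequence of Matsumoto's theorem combined with the braid relations (c) in the definition of $H_n(q)$.
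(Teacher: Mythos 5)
Your argument is essentially the same as the paper's: both hinge on the coset decomposition of $S_{n+1}$ with respect to $S_n$, the observation that the decomposition is governed by the value $w(n+1)$ (or $w^{-1}(n+1)$), and the fact that reduced expressions concatenate across this decomposition, which translates via the $\{t_w\}$ basis into a free-module statement. The paper proves part~(b) directly (setting $j = w(n+1)$, showing $w = s_j\cdots s_n \tilde{w}$ is a reduced factorization, and observing the submodules $t_j\cdots t_n H_n$ have disjoint support in the $\{t_w\}$ basis), while you phrase part~(a) in the more abstract Coxeter-theoretic language of distinguished coset representatives, length additivity, and Matsumoto's theorem; the mathematics is the same.

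One small slip: you write that $w_j(n+1) = j$ for $w_j = s_n s_{n-1}\cdots s_j$. In fact $w_j(n+1) = n$ for every $j \le n$. The correct statement, consistent with your bijection $S_{n+1}/S_n \to \{1,\dots,n+1\}$, $wS_n \mapsto w(n+1)$, is that the right-coset representatives satisfy $w_j'(n+1) = j$, where $w_j' = s_j s_{j+1}\cdots s_n$; equivalently $w_j^{-1}(n+1) = j$, which is the invariant governing the cosets $S_n w$. This does not affect the validity of your argument — the elements $w_j$ are indeed the minimal-length representatives of $S_n \backslash S_{n+1}$ and length is additive on the factorization $u\cdot w_j$ — but the stated reason is misattached. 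As written the justification should be corrected before the proof is complete.
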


\begin{proof}
We prove the second statement since the first is analogous.  To
prove that our set generates $H_{n+1}$ as a right $H_n$-module, it
suffices to show that each $t_w$, $w \in S_{n+1}$, can be written as
a right $H_n$-multiple of $t_j \cdots t_n$ for some $1 \le j \le
n+1$. Let $w \in S_{n+1}$ and set $j=w(n+1)$.  Then
\[
  s_n s_{n-1} \cdots s_j w(n+1) = n+1.
\]
Thus $s_n s_{n-1} \cdots s_j w = \tilde w$ for some $\tilde w \in
S_n$ (viewed as the subgroup of $S_{n+1}$ fixing $n+1$).  Then
\begin{equation} \label{eq:n n+1 reduced expression}
  w = s_j \cdots s_{n-1} s_n \tilde w
\end{equation}
and any reduced expression of $\tilde w$ gives a reduced expression
of $w$ via substitution in \eqref{eq:n n+1 reduced expression}.
Therefore,
\[
  t_w = t_j \cdots t_{n-1} t_n t_{\tilde w}, \quad t_{\tilde w} \in
  H_n,
\]
as desired.

It remains to show that the $t_j \cdots t_{n-1} t_n$ are linearly
independent.  But this is clear since $t_j \cdots t_{n-1} t_n H_n$
is the right $H_n$-submodule of $H_{n+1}$ spanned by the $t_w$ for
$w \in S_{n+1}$ with $w(n+1) = j$.
\end{proof}

The following lemma, which is a Mackey formula for $H_n$-modules, is well known.  We include a proof for the sake of completeness.

\begin{lemma} \label{lem:key-bimodule-decomp}
We have the following isomorphism of $(H_n,H_n)$-bimodules:
\[
  _n(n+1)_n \cong (n) \oplus \big( (n)_{n-1}(n) \big).
\]
\end{lemma}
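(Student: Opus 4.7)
The plan is to construct an explicit bimodule homomorphism and verify it is an isomorphism via a basis comparison using Lemma~\ref{lem:H_n+1 over H_n}. Define
\[
\phi : (n) \oplus \big( (n)_{n-1}(n) \big) \to {_n}(n+1)_n
\]
by sending the first summand into $H_{n+1}$ via the standard inclusion $H_n \hookrightarrow H_{n+1}$, and sending $a \otimes b$ in the second summand $H_n \otimes_{H_{n-1}} H_n$ to $a t_n b \in H_{n+1}$.

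Well-definedness of the map on the balanced tensor product reduces to checking $c t_n = t_n c$ for every $c \in H_{n-1}$, and this is immediate from relation (b) of the Hecke algebra since $|i-n|>1$ for all $1 \leq i \leq n-2$. Bimodule-equivariance is then routine on each summand: the left and right $H_n$-actions obviously commute with both assignments, since multiplication by $t_n$ sits in the middle of $a \otimes b \mapsto a t_n b$.

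To show $\phi$ is an isomorphism, I would compare right $H_n$-module bases. By Lemma~\ref{lem:H_n+1 over H_n}(b), the set $\{t_j t_{j+1} \cdots t_n : 1 \le j \le n+1\}$ is a right $H_n$-basis of $H_{n+1}$, with the convention that $j=n+1$ gives $1_{n+1}$. Applying the same lemma one step lower, $H_n$ is free over $H_{n-1}$ with basis $\{t_j \cdots t_{n-1} : 1 \le j \le n\}$, so
\[
(n)_{n-1}(n) = H_n \otimes_{H_{n-1}} H_n = \bigoplus_{j=1}^{n} \big(t_j \cdots t_{n-1} \otimes 1\big) H_n
\]
is free of rank $n$ as a right $H_n$-module. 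Consequently $(n) \oplus (n)_{n-1}(n)$ is free of rank $n+1$ on the right, with basis $\{1\} \cup \{t_j \cdots t_{n-1} \otimes 1 : 1 \le j \le n\}$. The map $\phi$ sends this basis to $\{1_{n+1}\} \cup \{t_j \cdots t_{n-1} t_n : 1 \le j \le n\}$, which is exactly the right $H_n$-basis of $H_{n+1}$ above. Hence $\phi$ is an isomorphism of right $H_n$-modules, and combined with the earlier bimodule-equivariance, of $(H_n,H_n)$-bimodules.

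The construction is explicit and essentially a bookkeeping exercise; I do not anticipate a substantial obstacle. The only subtle point is well-definedness on the tensor product, which reduces to a single elementary commutation of $t_n$ with $H_{n-1}$.
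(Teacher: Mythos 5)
Your proof is correct and follows essentially the same strategy as the paper: both use the map $a \otimes b \mapsto a t_n b$ together with the freeness statements of Lemma~\ref{lem:H_n+1 over H_n}. The only real difference is in the final verification step: the paper splits $H_{n+1} \cong H_n \oplus \hat{H}_{n+1}$ first, exhibits a surjection $(n)_{n-1}(n) \twoheadrightarrow \hat{H}_{n+1}$, and concludes by a rank count; you instead build the full map $(n) \oplus (n)_{n-1}(n) \to {_n}(n+1)_n$ and observe directly that it carries a right $H_n$-basis to a right $H_n$-basis, which is a slightly cleaner way to certify bijectivity and avoids the rank comparison.
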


\begin{proof}
Let ${\hat H}_{n+1}$ be the subspace of $H_{n+1}$ spanned by $\{t_w\
|\ w \in S_{n+1} \setminus S_n\}$.  That is, ${\hat H}_{n+1}$ is
spanned by those $t_w$ for which $w(n+1) \ne n+1$.  Then it is clear
that
\[
  H_{n+1} \cong H_n \oplus {\hat H}_{n+1} \quad \text{as $(H_n,H_n)$-bimodules}.
\]
It remains to show that ${\hat H}_{n+1}$ is isomorphic to
$(n)_{n-1}(n)$ as an $(H_n,H_n)$-bimodule.  Define a map
\begin{equation} \label{eq:restriction-isom}
  (n)_{n-1}(n) \to {\hat H}_{n+1},\quad t_{w} \otimes t_{w'} \mapsto
  t_w t_n t_{w'}.
\end{equation}
Since elements of $H_{n-1}$ commute with $t_n$, this map is well
defined.  It is also clearly a surjective homomorphism of
$(H_n,H_n)$-bimodules.  Now, the dimension of ${\hat  H}_{n+1}$ is
\[
  \dim {\hat H}_{n+1} = |S_{n+1}| - |S_n| = (n+1)! - n! = n \cdot n!.
\]
On the other hand, the dimension of $(n)_{n-1}(n)$ is the dimension
of $H_n$ times the rank of $H_n$ considered as a right
$H_{n-1}$-module.  Therefore, by Lemma~\ref{lem:H_n+1 over H_n}, we
have
\[
   \dim (n)_{n-1}(n) = n \cdot n! = \dim {\hat H}_{n+1}.
\]
Therefore, the map \eqref{eq:restriction-isom} is an isomorphism as
desired.
\end{proof}

%%%%%%%%%%%%%%%%%%%%%%%%
\subsection{Adjunctions} \label{sec:Hecke-adjunctions}
%%%%%%%%%%%%%%%%%%%%%%%%

Let $\res$ denote the functor of restriction from the category of $H_n$-modules to the category of $H_{n-1}$-modules and let $\ind$ denote the functor of induction from $H_{n-1}$-modules to $H_n$-modules.  In a slight abuse of notation, we use the notation $\res, \ind$ for different values of $n$.  The goal of this section is to show that these functors are biadjoint.  Note that restriction and induction are realized by tensoring with appropriate bimodules.  In particular:
\begin{align*}
  \res M &= {_{n-1}(n)} \otimes_{H_n} M,\quad M \in H_n\text{-mod},\quad \text{and} \\
  \ind N &= (n)_{n-1} \otimes_{H_{n-1}} N,\quad N \in H_{n-1}\text{-mod}.
\end{align*}
Similarly, compositions of induction and restriction functors are given by tensoring by appropriate bimodules.  Then natural transformations between such functors are simply bimodule homomorphisms.  We can thus either work in the 2-category of categories, functors, and natural transformations or in the 2-category of modules, bimodules, and bimodule homomorphisms.  Most often, it will be convenient for us to work in the language of bimodules.  In any 2-category, one can talk of adjoint 1-morphisms and so we will often talk of adjoint bimodules.

We now define some important bimodule maps.
\begin{enumerate}
\item
Let $\rcap$ denote the bimodule map
\[
  \rcap: (n+1)_n(n+1) \to (n+1),\quad
  x \otimes y \mapsto xy,
\]
given by multiplication.

\item
Let $\rcup$ denote the bimodule map
\[
  \rcup : (n) \hookrightarrow {_n}(n+1)_n,\quad z \mapsto z.
\]

\item
Let $\lcap$ denote the bimodule map
\[
  \lcap: {_n}(n+1)_n \to (n)
\]
given by declaring
\[
  \lcap|_{H_n} = \id, \ \ \lcap(t_n) = 0.
\]

\item
Let $\lcup$ denote the bimodule map
\[
  \lcup: (n+1) \to (n+1)_n(n+1)
\]
determined by
\[
  1_{n+1} \mapsto \sum_{i=1}^{n+1} q^{i-(n+1)} t_{i}\hdots t_{n-1} t_n \otimes t_n t_{n-1}\hdots t_{i}.
\]
We set $t_{n+1}=1$ in the above formula, so that the $i=n+1$ term in the sum is $1\otimes 1$.
\end{enumerate}

It is clear that $\rcap$, $\rcup$, and $\lcap$ are indeed maps of bimodules.
\begin{lemma}
The map $\lcup$ is a map of bimodules.
\end{lemma}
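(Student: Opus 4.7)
The plan is to reduce the bimodule property to a centrality statement: since $(n+1) = H_{n+1}$ is the regular $(H_{n+1},H_{n+1})$-bimodule, cyclically generated by $1_{n+1}$, the prescription in the statement extends to a well-defined bimodule map if and only if the element
\[
  e := \lcup(1_{n+1}) = \sum_{i=1}^{n+1} q^{i-(n+1)} A_i \otimes B_i,
\]
with $A_i := t_i t_{i+1}\cdots t_n$ and $B_i := t_n t_{n-1}\cdots t_i$ (both equal to $1$ when $i=n+1$), satisfies $h\,e = e\,h$ for every $h\in H_{n+1}$. Since $H_{n+1}$ is generated by $t_1,\dots,t_n$, this reduces to verifying $t_k e = e t_k$ for each $k = 1,\dots,n$.

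For each $k$ I would split the sum into \emph{interior} and \emph{boundary} indices. The interior indices are $i\notin\{k,k+1\}$ when $k<n$, and $i\notin\{n,n+1\}$ when $k=n$. In the interior range a direct application of the commutation relations $t_at_b = t_bt_a$ ($|a-b|>1$) together with a single use of the braid relation $t_at_{a+1}t_a = t_{a+1}t_at_{a+1}$ yields the identities $t_k A_i = A_i t_{k-1}$ and $t_{k-1}B_i = B_i t_k$. Since $t_{k-1}\in H_n$, it slides freely across the tensor, so each interior summand satisfies $t_k(A_i\otimes B_i) = (A_i\otimes B_i)t_k$ termwise. At the two boundary indices, individual terms do not commute with $t_k$; I would combine them using the factorizations $A_k = t_k A_{k+1}$, $B_k = B_{k+1}t_k$ and the quadratic relation $t_k^2 = q+(q-1)t_k$ to show that the joint contribution to $t_k e - e t_k$ vanishes. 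The cancellation exploits the ratio of weights $q^{(i+1)-(n+1)}/q^{i-(n+1)} = q$, which matches precisely the factor of $q$ produced by the quadratic relation.

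The main obstacle is the boundary cancellation in the case $k=n$, which is subtler than $k<n$ because $t_n\notin H_n$ cannot slide across the tensor. Here the residues at the boundary summands work out to $q^{-1}\!\cdot q(1\otimes t_n - t_n\otimes 1)$ at $i=n$ and $t_n\otimes 1 - 1\otimes t_n$ at $i=n+1$; they sum to zero exactly because of the weights $q^{-1}$ and $1$ assigned by the definition of $\lcup$. It is this cancellation that forces the specific form of the $q$-powers $q^{i-(n+1)}$ in the formula, and it has no analogue in the $q=1$ case, so the interior identities (essentially formal consequences of braid and commutation) give no intuition for why it should hold.
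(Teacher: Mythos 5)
Your plan matches the paper's proof: reduce to $t_k e = e t_k$ for $k=1,\dots,n$, split the sum over $i$ into the two boundary indices $i\in\{k,k+1\}$ and the remaining interior indices, show interior terms commute with $t_k$ individually, and show the two boundary terms cancel jointly via the quadratic relation $t_k^2 = q+(q-1)t_k$. Two small corrections to your description, though neither undermines the strategy. First, the identities $t_k A_i = A_i t_{k-1}$ and $t_{k-1}B_i = B_i t_k$ hold only when $i<k$; for $i>k+1$ one simply has $t_k A_i = A_i t_k$ and $t_k B_i = B_i t_k$ by pure commutation (no braid move, and $t_{k-1}$ does not appear), which is why the paper treats the interior as the two subcases $j<i-1$ and $j>i$. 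Second, the boundary cancellation at $k=n$ is not structurally subtler than at $k<n$: in the joint boundary calculation one never slides $t_k$ across the tensor (only the quadratic relation is used, on either the left or the right factor), so the same manipulation works uniformly for all $k$; and the cancellation does have a $q=1$ analogue (the weights simply all become $1$ and $t_k^2$ becomes $1$), so it is not a distinctively quantum feature.
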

\begin{proof}
It suffices to show that the sum appearing in the definition
commutes with $t_j$ for all $1 \le j \le n$.  Now, if $j< i-1$, we
clearly have
\[
  t_j (t_i \cdots t_n \otimes t_n \cdots t_i) = (t_i \cdots t_n
  \otimes t_n \cdots t_i) t_j.
\]
If $j > i$, then
\begin{align*}
  t_j (t_i \cdots t_n \otimes t_n \cdots t_i) &= t_i t_{i+1} \cdots
  t_{j-2} (t_j t_{j-1} t_j) t_{j+1} \cdots t_n \otimes t_n \cdots t_i
  \\
  &= t_i t_{i+1} \cdots t_{j-2} (t_{j-1} t_j t_{j-1}) t_{j+1} \cdots t_n
  \otimes t_n \cdots t_i
  \\
  &= t_i t_{i+1} \cdots t_n t_{j-1} \otimes t_n \cdots t_i \\
  &= t_i t_{i+1} \cdots t_n \otimes t_{j-1} t_n \cdots t_i \\
  &= t_i t_{i+1} \cdots t_n \otimes t_n \cdots t_{j+1} (t_{j-1} t_j t_{j-1}) t_{j-2} \cdots t_i \\
  &= t_i t_{i+1} \cdots t_n \otimes t_n \cdots t_{j+1} (t_j t_{j-1} t_j) t_{j-2} \cdots t_i \\
  &= (t_i \cdots t_n \otimes t_n \cdots t_i) t_j.
\end{align*}
It remains to consider the terms with $j=i-1,i$.  We have
\begin{gather*}
  t_j \left( q^{j-(n+1)} t_j \cdots t_n \otimes t_n \cdots t_j +
  q^{j-n} t_{j+1} \cdots t_n \otimes t_n \cdots t_{j+1} \right) \\
  = q^{j-(n+1)} (q+(q-1)t_j) t_{j+1} \cdots t_n \otimes t_n \cdots t_j +
  q^{j-n} t_j t_{j+1} \cdots t_n \otimes t_n \cdots t_{j+1} \\
  = q^{j-n} t_{j+1} \cdots t_n \otimes t_n \cdots t_j +
  (q-1)q^{j-(n+1)} t_j \cdots t_n \otimes t_n \cdots t_j + q^{j-n}
  t_j \cdots t_n \otimes t_n \cdots t_{j+1} \\
  = \left( q^{j-(n+1)} t_j \cdots t_n \otimes t_n \cdots t_j +
  q^{j-n} t_{j+1} \cdots t_n \otimes t_n \cdots t_{j+1} \right) t_j.
\end{gather*}
The result follows.
\end{proof}

We will now start using string diagram notation for 2-categories.  In particular, the above bimodule homomorphisms will correspond to diagrams as follows:
\begin{gather*}
\begin{tikzpicture}[>=stealth,baseline=6pt]
  \draw (-1,.25) node {$\rcap = $};
  \draw[->] (0,0) arc(180:0:.5);
  \draw (1.6,.25) node {$n+1$};
\end{tikzpicture}\ , \quad
\begin{tikzpicture}[>=stealth,baseline=6pt]
  \draw (-1,.25) node {$\rcup = $};
  \draw[->] (0,0.5) arc(180:360:.5);
  \draw (1.25,.25) node {$n$};
\end{tikzpicture}\ , \\
\begin{tikzpicture}[>=stealth,baseline=6pt]
  \draw (-1,.25) node {$\lcap = $};
  \draw[<-] (0,0) arc(180:0:.5);
  \draw (1.25,.25) node {$n$};
\end{tikzpicture}\ , \quad
\begin{tikzpicture}[>=stealth,baseline=6pt]
  \draw (-1,.25) node {$\lcup = $};
  \draw[<-] (0,0.5) arc(180:360:.5);
  \draw (1.6,.25) node {$n+1$};
\end{tikzpicture}\ .
\end{gather*}
We refer the reader to \cite{Kho10b} for an overview of this notation for 2-categories.  Note that the labels of the regions of a string diagram are uniquely determined by the label of one region and the fact that, as we move from right to left, labels increase by one as we cross upward pointing strands and decrease by one as we cross downward pointing strands.  An example of a diagram with all regions labeled is as follows.
\[
  \begin{tikzpicture}[>=stealth]
    \draw[->] (0,0) to (1,2);
    \draw[->] (1,0) .. controls (1,1) and (3,1) .. (3,0);
    \draw[->] (2,0) .. controls (2,1.5) and (-1,1.5) .. (-1,0);
    \draw[->] (2,2) arc(180:360:1);
    \draw (7,1) .. controls (7,1.5) and (6.3,1.5) .. (6.1,1);
    \draw (7,1) .. controls (7,.5) and (6.3,.5) .. (6.1,1);
    \draw (6,0) .. controls (6,.5) .. (6.1,1);
    \draw (6.1,1) .. controls (6,1.5) .. (6,2) [->];
    \draw[->] (4.3,1) arc(180:-180:.6);
    \draw (7.5,1) node {$n$};
    \draw (6.6,1) node {\small{$n\!-\!1$}};
    \draw (3.7,.6) node {$n+1$};
    \draw (4.9,1) node {$n$};
    \draw (3,1.5) node {$n+2$};
    \draw (-.5,1.5) node {$n+2$};
    \draw (-.3,.6) node {\small$n\!+\!3$};
    \draw (.8,.7) node {\small$n\!+\!2$};
    \draw (1.5,.3) node {\small$n\!+\!1$};
    \draw (2.5,.3) node {$n$};
  \end{tikzpicture}
\]
The above diagram corresponds to a bimodule map
\[
  {_{n+2}(n+3)}_{n+3}(n+3)_{n+2}(n+2)_{n+1}(n+1)_n(n+1)_{n+1}(n+1)_n \to (n+2)_{n+1}(n+2)_{n+2}(n+2)_{n+1}(n+1)_n.
\]
In what follows, we will use various natural bimodule isomorphisms, such as $(n+1)_n(n) \cong (n+1)_n$, without mention.  When we draw a diagram without the regions labeled, we assert that that the relation in question holds for all possible labelings.

\begin{proposition}
The adjunction maps $\lcap$, $\lcup$, $\rcap$, $\rcup$ defined above make
$(\res,\ind)$ into a biadjoint pair.
\end{proposition}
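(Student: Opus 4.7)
The plan is to verify the four triangle (zig-zag) identities characterizing biadjointness: two for the adjunction $\ind \dashv \res$ with unit $\rcup$ and counit $\rcap$, and two for $\res \dashv \ind$ with unit $\lcup$ and counit $\lcap$. Passing to the language of bimodule maps, each identity becomes a statement about an endomorphism of the regular bimodule $(n+1)_n$ or ${_n(n+1)}$; since both are generated by $1 \in H_{n+1}$ as bimodules, any such endomorphism is determined by its value on $1$, so each identity reduces to checking that the relevant composition fixes $1$.

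First I would dispatch the two triangles for $\ind \dashv \res$. Both unfold at the bimodule level to the trivial compositions $x \mapsto x \otimes 1 \mapsto x \cdot 1 = x$ and $x \mapsto 1 \otimes x \mapsto 1 \cdot x = x$, because $\rcup \colon (n) \hookrightarrow {_n(n+1)_n}$ is simply the inclusion and $\rcap \colon (n+1)_n(n+1) \to (n+1)$ is multiplication. Nothing further is needed here.

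For the remaining two triangles (those involving $\lcup$ and $\lcap$), I would substitute the explicit formula
\[
  \lcup(1) = \sum_{i=1}^{n+1} q^{i-(n+1)} t_i \cdots t_n \otimes t_n \cdots t_i
\]
and then apply $\lcap$ to the left or to the right tensor factor, according to which triangle is being checked. The key structural observation, and really the only substantive point in the proof, is that by Lemma~\ref{lem:key-bimodule-decomp} together with the defining conditions $\lcap|_{H_n} = \id$ and $\lcap(t_n) = 0$, the map $\lcap$ annihilates every basis element $t_w$ with $w \in S_{n+1} \setminus S_n$, i.e.\ every $w$ such that $w(n+1) \ne n+1$. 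For $i \le n$, the words $t_i \cdots t_n$ and $t_n \cdots t_i$ both correspond to permutations that move $n+1$ (sending it to $i$ and to $n$ respectively), and so each is killed by $\lcap$. Only the $i = n+1$ term, the empty product $1$, contributes; it evaluates to $\lcap(1) \cdot 1 = 1$ in both cases.

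The hard part is really concentrated in the preceding lemma, where the normalization coefficients $q^{i-(n+1)}$ in $\lcup$ were chosen precisely so that $\lcup$ is a well-defined bimodule map. Given that, the four triangle identities collapse to the single observation above, modulo the routine check that each composition is indeed a bimodule map of the claimed type (as an $(H_n,H_{n+1})$- or $(H_{n+1},H_n)$-bimodule), which follows immediately from the bimodule property of $\lcap$ and $\lcup$ established earlier.
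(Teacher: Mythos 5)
Your proof is correct and takes essentially the same approach as the paper: you reduce each zig-zag identity to an evaluation on $1_{n+1}$, dispatch the two $\rcup/\rcap$ triangles as trivial, substitute the formula for $\lcup(1)$, and observe that $\lcap$ kills all but the $i=n+1$ term because each $t_i\cdots t_n$ and $t_n\cdots t_i$ ($i\le n$) is a $t_w$ with $w(n+1)\neq n+1$. The paper computes only one of the two nontrivial triangles and declares the rest similar, so if anything you are slightly more explicit.
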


\begin{proof}
This amounts to proving the following four equalities:
\[
  \begin{tikzpicture}[>=stealth,baseline=25pt]
    \draw[->] (0,0) .. controls (0,2) and (1,2) .. (1,1) .. controls (1,0) and (2,0) .. (2,2);
    \draw (2.5,1) node {$=$};
    \draw[->] (3,0) to (3,2);
  \end{tikzpicture}
  \quad,\quad
  \begin{tikzpicture}[>=stealth,baseline=25pt]
    \draw[<-] (0,2) .. controls (0,0) and (1,0) .. (1,1) .. controls (1,2) and (2,2) .. (2,0);
    \draw (2.5,1) node {$=$};
    \draw[->] (3,0) to (3,2);
  \end{tikzpicture}
  \quad,\quad
  \begin{tikzpicture}[>=stealth,baseline=25pt]
    \draw[<-] (0,0) .. controls (0,2) and (1,2) .. (1,1) .. controls (1,0) and (2,0) .. (2,2);
    \draw (2.5,1) node {$=$};
    \draw[<-] (3,0) to (3,2);
  \end{tikzpicture}
  \quad,\quad
  \begin{tikzpicture}[>=stealth,baseline=25pt]
    \draw[->] (0,2) .. controls (0,0) and (1,0) .. (1,1) .. controls (1,2) and (2,2) .. (2,0);
    \draw (2.5,1) node {$=$};
    \draw[<-] (3,0) to (3,2);
  \end{tikzpicture}
  \quad .
\]
We will prove the third equality.  The rest are similar.  If the rightmost region is labeled $n+1$, the left hand side of the third equality is the bimodule homomorphism ${_n(n+1)} \to {_n(n+1)}$ given by the composition
\[
  {_n(n+1)} \xrightarrow{\lcup} {_n(n+1)}_n(n+1) \xrightarrow{\lcap \otimes \id} (n)_n(n+1) \xrightarrow{\cong} {_n(n+1)}.
\]
Since $_n(n+1)$ is generated by $1_{n+1}$, it suffices to determine the image of this element.  We have
\[
  1_{n+1} \mapsto \sum_{i=1}^{n+1} q^{i-(n+1)} t_i \cdots t_{n-1}t_n \otimes t_n t_{n-1} \dots t_i \mapsto 1_n \otimes 1_{n+1} \mapsto 1_{n+1}.
\]
Hence the composition is the identity map, as desired.
\end{proof}

As a result of the above proposition, any endomorphism of $\ind$ defines an endomorphism of $\res$ in two possible ways (one using the adjunctions $\rcap$ and $\rcup$, and one using $\lcap$ and $\lcup$.)  When the two endomorphisms of $\res$ defined in this way are equal to one another (for every endomorphism of $\ind$), the adjunction data $(\rcap, \rcup, \lcap, \lcup)$ is said to be \emph{cyclic}.  We refer the reader to \cite{Kho10b} for a further description of cyclic biadjointness and its relationship to planar diagrammatics for bimodules.

\begin{proposition}\label{prop:cyclicity}
The adjunction data above is cyclic when $q$ is an indeterminate or a prime power.
\end{proposition}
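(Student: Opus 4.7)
The plan is to verify cyclic biadjointness by reducing to a finite list of identities on a generating set of 2-morphisms, following the strategy of \cite{Kho10b}. In any 2-category with biadjoint 1-morphisms, cyclicity of the adjunction data amounts to the statement that for each generating 2-morphism $\alpha$, the ``left rotation'' of $\alpha$ (obtained via $(\lcup, \lcap)$) and its ``right rotation'' (obtained via $(\rcup, \rcap)$) agree. In our setting, the 2-morphisms of interest are generated by the four cup/cap maps themselves, together with the upward crossing and the right curl (dot). For the cups and caps, cyclicity is essentially automatic from the zigzag (snake) identities established in the preceding proposition. The content of the statement is therefore cyclicity of the upward crossing and of the right curl.

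For the upward crossing, the upward-pointing version is a bimodule endomorphism of $(n+2)_n$ determined by the action of $t_{n+1} \in H_{n+2}$ via the map~\eqref{eq:Hn-to-Q+}. Its left rotation and right rotation are both bimodule endomorphisms of ${_n(n+2)}$, and one must verify that they coincide. Using the explicit formula
\[
  \lcup(1_{n+1}) = \sum_{i=1}^{n+1} q^{i-(n+1)} t_i \cdots t_n \otimes t_n \cdots t_i,
\]
together with the braid relation $t_i t_{i+1} t_i = t_{i+1} t_i t_{i+1}$ and the quadratic Hecke relation $t_i^2 = q + (q-1)t_i$, one can compute both rotations explicitly as elements of $\End ({_n(n+2)})$; after telescoping, both sides collapse to the same map, namely that given by $t_{n+1}$-action on ${_n(n+2)}$. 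For the right curl, the analogous calculation is simpler: a dot on an upward strand, when rotated, gives a dot on a downward strand, and the equality of the two rotations follows from Lemma~\ref{lem:dot-crossing-moves} together with the identification of the dot with a Jucys--Murphy-type element that will be established in Section~\ref{sec:JM-elements}.

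The main obstacle will be the telescoping calculation for the upward crossing. The $n+1$ terms in the formula for $\lcup$ give rise to a sum whose collapse to a single term requires careful bookkeeping and repeated applications of the Hecke relations; in particular one must track the powers $q^{i-(n+1)}$ across both sides of the rotation and recognize that the difference is absorbed by a quadratic Hecke simplification. The hypothesis that $q$ is an indeterminate or a prime power enters to guarantee that all of the coefficients $q^{i-(n+1)}$ and $q$-integer denominators appearing in these simplifications are well-defined and invertible in $\kk[q,q^{-1}]$: for an indeterminate $q$ this is automatic, while for a prime power $q \in \Z_{> 1}$ it follows from the positivity of $[n]_q \in \Z$, which ensures invertibility after base change to $\kk$ under standard hypotheses. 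With these invertibility statements in hand, the two rotations agree term by term and cyclicity is established.
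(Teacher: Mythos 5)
Your plan takes a genuinely different route from the paper. The paper proves this result indirectly: it develops cyclic biadjointness for parabolic induction and restriction in the setting of arbitrary finite groups (Section~\ref{sec:parabolic}), specializes to $GL_n(\F_q)$ for $q$ a prime power, transports the cyclicity across the equivalence $b_nA_nb_n\text{-mod} \cong H_n\text{-mod}$ furnished by Iwahori's theorem, and then passes from prime powers to indeterminate $q$ by a polynomial-identity (Zariski density) argument. Your proposed proof is instead a direct computation with the bimodule maps over the Hecke algebra. In principle this is a legitimate alternative, and if it worked it would actually yield a \emph{stronger} statement (cyclicity for any invertible $q$), since the identities to be checked are polynomial identities in $\kk[q,q^{-1}]$ with no denominators.

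However, there are two substantial problems. First, the heart of your argument --- the ``telescoping calculation'' showing that the left and right rotations of the crossing agree --- is not carried out; you assert that ``both sides collapse to the same map'' but give no indication of the computation. The paper's proof of Proposition~\ref{prop:cyclic} for finite groups leans heavily on group-theoretic structure (summing over coset representatives $\{g_i\}$ of $G/P$, which can be freely renormalized by elements of $P$); no analogue of that flexibility is available in $H_{n+1}$, and it is exactly this that makes the Hecke-algebra computation appreciably harder than you suggest. Second, your explanation of the role of the hypothesis on $q$ is incorrect. The coefficients $q^{i-(n+1)}$ in $\lcup$ are invertible in $\kk[q,q^{-1}]$ for any invertible $q$, and the formula for $\lcup$ involves no $q$-integer denominators, so neither invertibility nor positivity of $[n]_q$ is where the hypothesis is used. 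In the paper, the hypothesis is purely an artifact of the proof strategy: ``prime power'' is what allows the reduction to $GL_n(\F_q)$, and ``indeterminate'' is then recovered by density. If you intend a direct computational proof, either the hypothesis should drop out, or you owe an explanation of exactly where it enters.

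There is also a smaller conceptual gap. Cyclicity of the adjunction data, as defined in the paper, is a condition quantified over \emph{all} endomorphisms of $\ind$, i.e.\ over the full centralizer $Z_{H_{n+1}}(H_n) \cong \End_{(H_{n+1},H_n)}((n+1)_n)$. It is true that the transposition maps are anti-homomorphisms, so checking on algebra generators suffices --- but your list (cups, caps, crossing, dot) mixes generators of 2-morphisms in $\cH'(q)$ with generators of $\End(\ind)$. The crossing is an endomorphism of $\ind^2$, not of $\ind$, so it is not in the relevant algebra; and the dot by itself does not generate $Z_{H_{n+1}}(H_n)$ (you also need, at minimum, the bubbles corresponding to central elements). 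Identifying and verifying a correct generating set, and then doing the rotation computations, is where the real work lies.
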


\begin{proof}
We prove this in Section~\ref{sec:parabolic}, where we work in the setting of parabolic induction and restriction for finite groups (see Corollary~\ref{cor:Hecke-cyclicity}).
\end{proof}

%%%%%%%%%%%%%%%%%%%%%%
\subsection{Crossings}
%%%%%%%%%%%%%%%%%%%%%%

We define the following bimodule map
\[
  \begin{tikzpicture}[>=stealth,baseline=10pt]
    \draw (-1.2,.5) node {$\ucross =$};
    \draw[->] (0,0) to (1,1);
    \draw[->] (1,0) to (0,1);
    \draw (1.1,.5) node {$n$};
  \end{tikzpicture}
  \quad : \quad (n+2)_n \to (n+2)_n,\quad z \mapsto
  z t_{n+1}.
\]

It follows from Proposition~\ref{prop:cyclicity} that any two isotopic diagrams involving the 2-morphism
\[
  \begin{tikzpicture}[>=stealth]
    \draw[->] (0,0) to (1,1);
    \draw[->] (1,0) to (0,1);
    \draw (1.1,.5) node {$n$};
  \end{tikzpicture}
\]
as well as cups and caps are equal, when $q$ is an indeterminate.  Furthermore, specializing $q$ to any element of $k^\times$ implies that this is true in general.  We
define left, right, and downward crossings to be equal to any
composition of cups and caps and an upward crossing yielding a
bimodule map between the appropriate bimodules.  By the above, any
two such definitions are equivalent.  For instance, we can define
the left, right and downward crossings as follows.
\begin{equation} \label{eq:leftcross-def}
\begin{tikzpicture}[>=stealth,baseline=10pt]
  \draw[<-] (0,0) to (1,1);
  \draw[->] (1,0) to (0,1);
  \draw (1.1,.5) node {$n$};
  \draw (1.7,.5) node {$=$};
  \draw (5,1) .. controls (5,0) and (4,0) .. (3.5,0.5);
  \draw[->] (3.5,.5) .. controls (3,1) and (2,1) .. (2,0);
  \draw[->] (3,0) to (4,1);
  \draw (5.5,.5) node {$n$};
\end{tikzpicture}
\end{equation}
\begin{equation} \label{eq:rightcross-def}
\begin{tikzpicture}[>=stealth,baseline=10pt]
  \draw[->] (0,0) to (1,1);
  \draw[<-] (1,0) to (0,1);
  \draw (1.1,.5) node {$n$};
  \draw (1.7,.5) node {$=$};
  \draw (2,1) .. controls (2,0) and (3,0) .. (3.5,0.5);
  \draw[->] (3.5,.5) .. controls (4,1) and (5,1) .. (5,0);
  \draw[->] (4,0) to (3,1);
  \draw (5.5,.5) node {$n$};
\end{tikzpicture}
\end{equation}
\begin{equation} \label{eq:downcross-def}
\begin{tikzpicture}[>=stealth,baseline=0pt]
  \draw (-5,0) node {$n$};
  \draw[->] (-3.5,.5) to (-4.5,-.5);
  \draw[->] (-4.5,.5) to (-3.5,-.5);
  \draw (-3,0) node {$=$};
  \draw (-2.2,0) node {$n$};
  \draw (-1.5,1.5) .. controls (-1.5,0) and (-1,-1) .. (0,0);
  \draw[->] (0,0) .. controls (1,1) and (1.5,0) .. (1.5,-1.5);
  \draw (-2,1.5) .. controls (-2,-2.5) and (1,-1) .. (0,0);
  \draw[->] (0,0) .. controls (-1,1) and (2,2.5) .. (2,-1.5);
\end{tikzpicture}
\end{equation}

\begin{lemma}
We have
\begin{align*}
  \begin{tikzpicture}[>=stealth,baseline=10pt]
    \draw (-1.5,.5) node {$\dcross=$};
    \draw[<-] (0,0) to (1,1);
    \draw[<-] (1,0) to (0,1);
    \draw (-.1,.5) node {$n$};
  \end{tikzpicture}
  \quad &: \quad {_n(n+2)} \to {_n(n+2)},\quad z \mapsto
  t_{n+1} z, \\
  \begin{tikzpicture}[>=stealth,baseline=10pt]
    \draw (-1.2,.5) node {$\rcross=$};
    \draw[->] (0,0) to (1,1);
    \draw[<-] (1,0) to (0,1);
    \draw (1.1,.5) node {$n$};
  \end{tikzpicture}
  \quad &: \quad (n)_{n-1}(n) \hookrightarrow {_n(n+1)_n},\quad w \otimes z
  \mapsto w t_n z, \\
  \begin{tikzpicture}[>=stealth,baseline=10pt]
    \draw (-1.2,.5) node {$\lcross=$};
    \draw[<-] (0,0) to (1,1);
    \draw[->] (1,0) to (0,1);
    \draw (1.1,.5) node {$n$};
  \end{tikzpicture}
  \quad &: \quad {_n(n+1)_n} \twoheadrightarrow{(n)_{n-1}(n)},\quad
  1_{n+1} \mapsto 0,\quad t_n \mapsto q 1_n \otimes 1_n.
\end{align*}
\end{lemma}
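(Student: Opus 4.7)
The plan is to verify each of the three formulas by unwinding the pictorial definitions \eqref{eq:downcross-def}, \eqref{eq:rightcross-def}, and \eqref{eq:leftcross-def} into explicit compositions of $\ucross$ and the cup/cap bimodule maps, and then evaluating on a bimodule generator. Proposition~\ref{prop:cyclicity} guarantees that cyclicity of the adjunction data makes any such composition a well-defined bimodule homomorphism, so it suffices to pick one convenient composition for each rotated crossing.

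First, for $\dcross: {_n(n+2)} \to {_n(n+2)}$, note that $H_{n+2}$ is cyclic as an $(H_n, H_{n+2})$-bimodule with generator $1_{n+2}$, so a bimodule map out of it is determined by its value there. Unfolding \eqref{eq:downcross-def} as a tensor of the identity with $\ucross$ bracketed on each side by cup and cap bimodule maps, one traces $1_{n+2}$ through the explicit formulas for $\lcup$ and $\rcup$, then right multiplication by $t_{n+1}$ (the action of $\ucross$), then $\lcap$ and $\rcap$; the auxiliary factors telescope to yield $\dcross(1_{n+2}) = t_{n+1}$. Bimodule linearity then gives $\dcross(z) = t_{n+1} z$, and this is a well-defined bimodule homomorphism because $t_{n+1}$ commutes with every element of $H_n = \langle t_1, \dots, t_{n-1} \rangle$ by the braid relations (since $|n+1-i| \geq 2$ for all such $i$).

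The argument for $\rcross: (n)_{n-1}(n) \to {_n(n+1)_n}$ is similar: the source is generated as an $(H_n,H_n)$-bimodule by $1 \otimes 1$, and after unfolding \eqref{eq:rightcross-def} and tracking $1 \otimes 1$ through one cup, one upward crossing (which inserts a factor of $t_n$), and one cap, one recovers $\rcross(1 \otimes 1) = t_n$, and hence the claimed formula $\rcross(w \otimes z) = w t_n z$ by bimodule linearity.

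For $\lcross$, Lemma~\ref{lem:key-bimodule-decomp} identifies $_n(n+1)_n \cong (n) \oplus (n)_{n-1}(n)$, so as an $(H_n, H_n)$-bimodule it is generated by $1_{n+1}$ together with $t_n$. The main obstacle will be verifying $\lcross(1_{n+1}) = 0$: unwinding \eqref{eq:leftcross-def} applied to $1_{n+1}$ produces a configuration that, after using the defining relation $\lcap(t_n)=0$ and isotopy, contains a left curl, which vanishes by the second relation of \eqref{eq:cc-circle-and-left-curl}, forcing $\lcross(1_{n+1}) = 0$. The remaining value $\lcross(t_n) = q \cdot 1_n \otimes 1_n$ can be computed directly from the same unfolding (the factor of $q$ coming from a Hecke-type double crossing arising in the middle of the composition), or recovered more cleanly from the already established identity $\lcross \circ \rcross = q \cdot \id$ on $(n)_{n-1}(n)$, which is the right-hand relation of \eqref{eq:local-relation-up-down-double-crossing}: evaluating both sides on $1 \otimes 1$ and using the formula for $\rcross$ already proven gives $\lcross(t_n) = q \cdot (1 \otimes 1)$.
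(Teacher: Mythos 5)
Your overall plan — unfold each rotated crossing into an explicit composition of cups, caps and $\ucross$, and evaluate on a bimodule generator — is exactly the paper's strategy; the paper carries out the computation in detail for $\lcross$ and declares the other two similar. The generator choices you make ($1_{n+2}$ for $_n(n+2)$, $1_n\otimes 1_n$ for $(n)_{n-1}(n)$, and $\{1_{n+1}, t_n\}$ for $_n(n+1)_n$ via Lemma~\ref{lem:key-bimodule-decomp}) are all correct, and the $\dcross$ and $\rcross$ sketches are fine.

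The issue is in the $\lcross$ case, which is precisely where the paper spends its effort. You justify $\lcross(1_{n+1})=0$ by asserting that the unfolded picture ``contains a left curl, which vanishes by the second relation of \eqref{eq:cc-circle-and-left-curl},'' and you offer to recover $\lcross(t_n)$ from the ``already established'' double-crossing identity $\lcross\circ\rcross = q\cdot\id$ of \eqref{eq:local-relation-up-down-double-crossing}. Both of these invoke local relations of the abstract category $\cH'(q)$ as if they were already known to hold as identities of bimodule maps. But at this point in the paper only isotopy invariance (Proposition~\ref{prop:cyclicity}) and the four adjunction relations have been verified for the bimodule maps; that the relations \eqref{eq:local-relation-up-down-double-crossing}--\eqref{eq:cc-circle-and-left-curl} hold in the bimodule 2-category is the content of Proposition~\ref{prop:relations}, whose proof in turn relies on the formula for $\lcross$ being established here. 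Using the double-crossing relation is therefore genuinely circular, and the left-curl argument is at best incomplete (the left-curl vanishing is easy to verify directly from $\lcap(t_{n+1})=0$, but you do not do so, and it is not obvious that the unfolding of $\lcross\circ\rcup$ literally contains a left-curl subdiagram — the paper's unfolding uses $\lcup$, $\ucross$, $\lcap$, not $\rcup$). What actually happens in the paper's direct computation is more mundane: after pushing $1_{n+1}$ or $t_n$ through $\id\otimes\lcup$, $\ucross\otimes\id$, and $\lcap\otimes\id\otimes\id$, each surviving term in the first tensor factor has the form $h\,t_n$ or $h\,t_n\,h'$ with $h,h'\in H_n$ (after a braid move), so $\lcap$ kills everything except the $i=n$ term of $\lcross(t_n)$, where $t_n^2 = q + (q-1)t_n$ supplies the factor of $q$. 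You should replace the relation-citing shortcuts with that explicit trace.
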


\begin{proof}
We prove the third statement.  The proofs of the others are similar.  We need to compute the following map of bimodules.
\[
\begin{tikzpicture}
  \draw (5,1) .. controls (5,0) and (4,0) .. (3.5,0.5);
  \draw[->] (3.5,.5) .. controls (3,1) and (2,1) .. (2,0);
  \draw[->] (3,0) to (4,1);
  \draw (5.5,.5) node {$n$};
\end{tikzpicture}
\]
This is the bimodule map $_n(n+1)_n \to (n)_{n-1}(n)$ given by the composition
\begin{gather*}
  _n(n+1)_n \xrightarrow{\cong} {_n(n+1)}_n(n)\xrightarrow{\id \otimes \lcup} {_n(n+1)}_n(n)_{n-1}(n) \xrightarrow{\cong} {_n(n+1)_{n-1}(n)} \\
  \xrightarrow{\ucross \otimes \id} {_n(n+1)_{n-1}(n)} \xrightarrow{\cong} {_n(n+1)_n(n)_{n-1}(n)} \xrightarrow{\lcap \otimes \id \otimes \id} (n)_n(n)_{n-1}(n) \xrightarrow{\cong} (n)_{n-1}(n).
\end{gather*}
We compute
\begin{gather*}
  1_{n+1} \mapsto 1_{n+1} \otimes 1_n \mapsto \sum_{i=1}^n 1_{n+1} \otimes q^{i-n} t_i \dots t_{n-1} \otimes t_{n-1} \dots t_i \mapsto \sum_{i=1}^n q^{i-n} t_i \dots t_{n-1} \otimes t_{n-1} \dots t_i \\
  \mapsto \sum_{i=1}^n q^{i-n} t_i \dots t_{n-1} t_n \otimes t_{n-1} \dots t_i \mapsto \sum_{i=1}^n q^{i-n} t_i \dots t_{n-1} t_n \otimes 1_n \otimes t_{n-1} \dots t_i \mapsto 0,
\end{gather*}
and
\begin{gather*}
  t_n \mapsto t_n \otimes 1_n \mapsto \sum_{i=1}^n t_n \otimes q^{i-n} t_i \dots t_{n-1} \otimes t_{n-1} \dots t_i \mapsto \sum_{i=1}^n q^{i-n} t_n t_i \dots t_{n-1} \otimes t_{n-1} \dots t_i \\
  \mapsto \sum_{i=1}^n q^{i-n} t_n t_i \dots t_{n-1} t_n \otimes t_{n-1} \dots t_i \mapsto \sum_{i=1}^n q^{i-n} t_n t_i \dots t_{n-1} t_n \otimes 1_n \otimes  t_{n-1} \dots t_i.
\end{gather*}
Now, all terms except the $i=n$ term are mapped to zero.  The $i=n$ term is equal to
\[
  t_n^2 \otimes 1_n \otimes 1_n = (q+(q-1)t_n) \otimes 1_n \otimes 1_n \mapsto q 1_n \otimes 1_n \otimes 1_n \mapsto q 1_n \otimes 1_n.
\]
\end{proof}

%%%%%%%%%%%%%%%%%%%%%%%%%%%%%%%%%%%%%%%%%%%%%%%%%%%
\subsection{Categorification of bosonic Fock space} \label{sec:BFS-cat}
%%%%%%%%%%%%%%%%%%%%%%%%%%%%%%%%%%%%%%%%%%%%%%%%%%%

\begin{proposition}\label{prop:relations}
The relations \eqref{eq:local-relation-basic-Hecke}--\eqref{eq:cc-circle-and-left-curl} hold when these diagrams are interpreted as maps of bimodules.
\end{proposition}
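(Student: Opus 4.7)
The plan is to verify each local relation by directly computing bimodule maps, using the explicit formulas for $\ucross$, $\rcap$, $\rcup$, $\lcap$, $\lcup$, $\rcross$, $\lcross$ and $\dcross$ established earlier in this section.

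The relations \eqref{eq:local-relation-basic-Hecke} and \eqref{eq:local-relation-braid} are immediate. Since $\ucross$ on $(n+2)_n$ is right multiplication by $t_{n+1}$, the Hecke quadratic relation $t_{n+1}^2 = q + (q-1)t_{n+1}$ gives \eqref{eq:local-relation-basic-Hecke}, and the braid relation $t_{n+1}t_{n+2}t_{n+1} = t_{n+2}t_{n+1}t_{n+2}$ in $(n+3)_n$ gives \eqref{eq:local-relation-braid}. Similarly, the counterclockwise bubble in \eqref{eq:cc-circle-and-left-curl} is the composition $(n) \xrightarrow{\rcup} {}_n(n+1)_n \xrightarrow{\lcap} (n)$, which sends $1_n \mapsto 1_n \mapsto 1_n$ and is therefore $\id_{(n)}$.

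For the two identities in \eqref{eq:local-relation-up-down-double-crossing} I will use the explicit formulas for $\rcross$ and $\lcross$ derived in the lemma above. The right identity is the composition $\lcross \circ \rcross : (n)_{n-1}(n) \to (n)_{n-1}(n)$, which maps $1 \otimes 1 \mapsto t_n \mapsto q \cdot 1 \otimes 1$ and hence equals $q \cdot \id$. The left identity is $\rcross \circ \lcross : {}_n(n+1)_n \to {}_n(n+1)_n$, which sends $1_{n+1} \mapsto 0$ and $t_n \mapsto q\,t_n$. Comparing this with $q\,\id - q(\rcup \circ \lcap)$ reduces to the Mackey decomposition of Lemma~\ref{lem:key-bimodule-decomp}: on the $(n)$ summand both sides vanish (as $\lcap$ acts as the identity there), while on the complementary $(n)_{n-1}(n)$ summand, spanned by elements of the form $x t_n y$ with $x,y \in H_n$, both sides act as $q \cdot \id$ (as $\lcap$ kills that summand).

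The remaining step is the left curl relation. I will express the curl as a composition built from $\rcup$, an upward crossing $\ucross$, and $\lcap$ placed alongside the upward strand, then trace $1 \in (n+1)_n$ through the composition. Because $\ucross$ inserts a factor of $t_n$ by right multiplication while $\lcap$ annihilates every element containing such a factor, the entire composition vanishes. The main obstacle throughout is carefully translating each planar diagram into the correct composition of bimodule maps and tracking tensor factors through each step; beyond this bookkeeping, each verification is a short direct calculation built from the explicit formulas for the cups, caps and crossings.
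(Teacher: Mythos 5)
Your proposal is correct and takes essentially the same route as the paper: each local relation is verified as an equality of explicitly-given bimodule maps, with the up-down relations ultimately expressing the Mackey decomposition ${}_n(n+1)_n\cong(n)\oplus(n)_{n-1}(n)$ of Lemma~\ref{lem:key-bimodule-decomp}. Note only that $\lcap$ does not literally ``annihilate every element containing a factor of $t_n$'' (e.g.\ $\lcap(t_n^2)=q\neq 0$); rather, as a bimodule map it kills precisely the summand $(n)_{n-1}(n)$, which already provides the vanishing $\lcap(t_{n+1})=0$ needed in the left-curl computation.
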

\begin{proof}
Relations~\eqref{eq:local-relation-basic-Hecke} and~\eqref{eq:local-relation-braid} follow immediately from the definition of $\ucross$ and the relations
\[
  t_i^2 = q + (q-1)t_i,\quad t_it_{i+1}t_i = t_{i+1} t_i t_{i+1},
\]
in the Hecke algebra.  The remaining relations encode the bimodule decomposition
\[
  _n(n+1)_n \cong  (n) \oplus \big( (n)_{n-1}(n) \big)
\]
of Lemma~\ref{lem:key-bimodule-decomp}.
\end{proof}

\begin{definition}
Let $\fA$ be the 2-category defined as follows.
\begin{itemize}
  \item $\Ob \fA = \N \cup \{\nabla\}$.

  \item The 1-morphisms from $n$ to $m$ for $n,m \in \N$, are functors from $H_n$-mod to $H_m$-mod that are direct summands of compositions of induction and restriction functors.  The only 1-morphism from $\nabla$ to $\nabla$ is the identity.  For $n \in \N$, there are no 1-morphisms from $n$ to $\nabla$ or from $\nabla$ to $n$.

  \item The 2-morphisms are natural transformations of functors.
\end{itemize}
\end{definition}

Note that any $(H_m,H_n)$-bimodule $M$ yields a functor
\[
  H_n\text{-mod} \to H_m\text{-mod},\quad V \mapsto M \otimes V,
\]
and any homomorphism $M_1 \to M_2$ of $(H_m,H_n)$-bimodules gives rise to a natural transformation between the corresponding functors.  In particular, the bimodules $(n)_{n-1}$ and $_{n-1}(n)$ correspond to induction and restriction, respectively.

\begin{definition} \label{def:2-rep-Hecke}
It follows from Proposition~\ref{prop:relations} that we can define a 2-functor $\bA : \fH'(q) \to \fA$ as follows.
\begin{itemize}
  \item For $n \in \Z = \Ob \fH'(q)$, $\bA(n) = n$ if $n \ge 0$, and $\bA(n)=\nabla$ if $n < 0$.

  \item On 1-morphisms, $\bA$ maps $Q_\epsilon \in \Hom_{\fH'(q)}(n,m)$ for a sequence $\epsilon = \epsilon_1 \epsilon_2 \dots \epsilon_k$, to the tensor product of induction and restriction bimodules, where each $+$ corresponds to the induction bimodule and each $-$ corresponds to the restriction bimodule.  For instance, for $Q_{++--+--} \in \Hom_{\fH'(q)}(n,n-1)$,
      \[
        \bA(Q_{++--+--}) = (n-1)_{n-2} (n-2)_{n-3} (n-2)_{n-2} (n-1)_{n-1} (n-1)_{n-2} (n-1)_{n-1} (n).
      \]
      If, for some $k$, the last $k$ terms of $\epsilon$ contain at least $(n+1)$ more $-$'s than $+$'s, then $\bA$ maps $Q_\epsilon \in \Hom_{\fH'(q)}(n,m)$ to 0.

  \item On 2-morphisms, $\bA$ maps a planar diagram (a 1-morphism of $\fH'(q)$) to the corresponding bimodule map (or, more precisely, to the natural transformation corresponding to this bimodule map) according to the definitions given in this section.
\end{itemize}
Since $\fA$ is idempotent complete, the functor $\bA$ induces a functor $\bA : \fH(q) \to \fA$ (which we denote by the same symbol) on the Karoubi envelope $\fH(q)$ of $\fH'(q)$, and hence a functor
\[
  [\bA] : K_0(\fH(q)) \to K_0(\fA).
\]
The functor $\bA$ is a representation of $\fH(q)$ in the sense of Definition~\ref{def:2-cat-rep}.
\end{definition}

\begin{remark}
The reader should compare the functor $\bA$ to the analogous functor defined in \cite{Kho10}.  In \cite{Kho10}, which is in the language of 1-categories, the category in question is monoidal but the functor is not.  This is one of the main motivations for the 2-category point of view.
\end{remark}

Elements of $\Hom_{\fA}(n,m)$ are direct summands of finite compositions of induction and restriction functors from $H_n$-mod to $H_m$-mod.   Therefore, descending to Grothendieck groups, we obtain a functor
\[
  \theta : K_0(\fA) \to \bigoplus_{n,m \in \N} \Hom_\Z (K_0(H_n\text{-mod}),K_0(H_m\text{-mod})),
\]
where we view the bigraded ring $\bigoplus_{n,m \in \N} \Hom_\Z (K_0(H_n\text{-mod}), K_0(H_m\text{-mod}))$ as a category in the natural way.  Namely, the objects are nonnegative integers, and the set of morphisms from $n$ to $m$ is $\Hom_\Z (K_0(H_n\text{-mod}), K_0(H_m\text{-mod}))$.

Consider the composition
\[
  \theta [\bA] : K_0(\fH(q)) \to \bigoplus_{n,m \in \N} \Hom_\Z(K_0(H_n\text{-mod}),K_0(H_m\text{-mod})).
\]
For $[M] \in K_0(H_n\text{-mod})$, we have
\begin{align*}
  (\theta [\bA])([Q_{+,\mu}])([M]) &= \left[\Ind_{H_{|\mu|} \times H_n}^{H_{|\mu|+k}} (L_\mu \otimes M)\right], \\
  (\theta [\bA])([Q_{-,\lambda}])([M]) &=
  \begin{cases}
    0 & \text{if } |\lambda| > n, \\
    \Hom_{H_{|\lambda|}}(L_\lambda,M) \in H_{n-|\lambda|}\text{-mod}, & \text{if } |\lambda| \le n,
  \end{cases}
\end{align*}
where, for a partition $\lambda = (\lambda_1,\lambda_2,\dots,\lambda_k)$, we define $|\lambda| = \sum_{i=1}^k \lambda_k$.
In the expression $\Hom_{H_{|\lambda|}}(L_\lambda,M)$, we restrict $M$ to an $H_{|\lambda|} \times H_{n-|\lambda|}$-module and take homomorphisms from the irreducible module $L_\lambda$ for $H_{|\lambda|}$.  This hom-space is naturally an $H_{n-|\lambda|}$-module.

\begin{remark}
When $\kk$ is a field of characteristic zero and $q \in \kk^\times$ is not a nontrivial root of unity, the functor $\bA : \fH(q) \to \fA$ is a categorification of the (bosonic) Fock space representation of the Heisenberg algebra.
\end{remark}

%%%%%%%%%%%%%%%%%%%%%%%%%%%%%%%%%%%%%%%%%%%%%%%%%%%%%%%%%%%%
\subsection{Proof of Theorem~\ref{thm:heisenberg-injection}}
\label{sec:injection-proof}
%%%%%%%%%%%%%%%%%%%%%%%%%%%%%%%%%%%%%%%%%%%%%%%%%%%%%%%%%%%%

Since $\Hom_{\dot \fh_\Z} (n,m) = \fh_\Z(m-n) = \Hom_{\dot \fh_\Z}(n+k,m+k)$ for any $m,n,k \in \Z$, we have a natural functor $\mathbf{S}_k : \dot \fh_\Z \to \dot \fh_\Z$ defined on objects by $\mathbf{S}_k(n) = n+k$, $n \in \Z$.  This functor is clearly an isomorphism.

Consider the composition
\[
  \theta [\bA] \bF \mathbf{S}_k : \dot \fh_\Z \to \bigoplus_{m,n \in \N} \Hom_\Z (K_0(H_n\text{-mod}),K_0(H_m\text{-mod})).
\]
We claim that the direct sum of these maps over all $k$ is injective, from which it follows that $\bF$ is faithful.  This sum is clearly injective on objects, and so it suffices to show it is injective on morphisms.  Fix $m,n \in \Z = \Ob \dot \fh_\Z$.  An arbitrary element of $\Hom_{\dot \fh_\Z}(n,m) = \fh_\Z(m-n)$ is a finite sum of the form
\[
  y = \sum_{\lambda,\mu} y_{\lambda,\mu} b_\mu a_\lambda,\quad y_{\lambda,\mu} \in \Z,
\]
where only partitions $\lambda$ and $\mu$ satisfying $|\mu| - |\lambda| = m-n$ appear in the sum.
Assume $y \ne 0$.  Let
\[
  l = \max \{|\lambda|\ |\ y_{\lambda,\mu} \ne 0 \text{ for some } \mu\},
\]
and choose a partition $\nu$ with $|\nu| = l$ such that $y_{\nu,\tau} \ne 0$ for some $\tau$.  Let $k= |\nu| - n$.  We have
\[
  \bF \mathbf{S}_k (y) = \sum_{\lambda,\mu} y_{\lambda,\mu} [Q_{+,\mu^*}] [Q_{-,\lambda}] \in \Hom_{K_0(\fH(q))} (|\nu|, |\tau|).
\]
Now, for $\lambda$ with $|\lambda| = |\nu|$,
\[
  \theta [\bA] \bF \mathbf{S}_k (a_\lambda) = \theta [\bA] ([Q_{-\lambda}]) \in \Hom_\Z (K_0(H_{|\nu|}\text{-mod}), K_0(H_0\text{-mod}))
\]
maps $[L_\rho]$ to 0 if $|\rho| = |\nu|$ and $\rho \ne \lambda$.  It also takes $[L_\lambda]$ to $[L_\varnothing]$, where $L_\varnothing$ is the irreducible module over $H_0 = \kk$.  Thus, $\theta [\bA] \bF \mathbf{S}_k (y)$ takes $[L_\nu]$ to
\[
  \sum_{\mu} y_{\nu,\mu} [L_{\mu^*}] \ne 0,
\]
and so $\theta [\bA] \bF \mathbf{S}_k (y)$ is a nonzero map.

%%%%%%%%%%%%%%%%%%%%%%%%%%%%%%%%%%
\subsection{Jucys-Murphy elements} \label{sec:JM-elements}
%%%%%%%%%%%%%%%%%%%%%%%%%%%%%%%%%%

For $k = 0,1,2,\hdots n$, let
\begin{align*}
  L_{k+1} &= \sum_{i=1}^k q^{i-k} t_i \cdots t_k \cdots t_i \\
  &= t_{k} + q^{-1} t_{k-1}t_{k}t_{k-1} +  q^{-2} t_{k-2}t_{k-1}t_kt_{k-1}t_{k-2}
  + \dots + q^{1-k} t_1 \cdots t_k \cdots t_1.
\end{align*}
By convention, $L_1=0$.  The $L_k$ (or, more precisely, $q^{-1}L_k$) are called \emph{Jucys-Murphy elements} of $H_{n+1}$ (see, for example, \cite[\S3.3]{Mat99}).
The $L_k$ are significant in the theory of Hecke algebras, at least in part, because of the following facts.

\begin{enumerate}
    \item The elements $L_k$, $k=1,\hdots n+1$, generate an abelian subalgebra of $H_{n+1}$.

    \item The space of symmetric polynomials in the $L_k$ is the center of
    $H_{n+1}$.

    \item The element $L_k$ in $H_{n+1}$, $1 \le k \le n+1$, commutes with $H_{k-1}$, viewed as a subalgebra of $H_{n+1}$ in the usual way.
\end{enumerate}

\begin{proposition}\label{prop:JM}
Under the definitions of crossings, caps and cups given above, the right curls are the Jucys-Murphy elements.  More precisely,
\[
\begin{tikzpicture}[>=stealth]
  \draw[->] (0,0) to (0,1);
  \filldraw (0,.5) circle (2pt);
  \draw (.5,.3) node {$n$};
\end{tikzpicture}
\]
corresponds to the bimodule map
\[
  (n+1)_n \to (n+1)_n,\quad z \mapsto zL_{n+1}.
\]
\end{proposition}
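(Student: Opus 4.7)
The plan is to realize the right curl on an upward strand (with rightmost region labeled $n$) as a composition of elementary 2-morphisms whose bimodule realizations have already been given in this section, and then to trace an element $z\in(n+1)_n$ through that composition. Specifically, I will decompose the right curl as
\[
  (n+1)_n \xrightarrow{\id\otimes\lcup} (n+1)_n\otimes_{H_n}(n)_{n-1}(n) \xrightarrow{\ucross\otimes\id} (n+1)_n\otimes_{H_n}(n)_{n-1}(n) \xrightarrow{\id\otimes\rcap} (n+1)_n,
\]
where $\lcup$ inserts a $Q_+Q_-$ pair into the rightmost region, $\ucross$ swaps the original upward strand with the new upward strand coming out of the cup, and $\rcap$ closes the original strand against the new downward strand. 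That this composition agrees with the right curl as a 2-morphism in $\fA$ is a planar isotopy (of Reidemeister-I type), which is legitimate thanks to the cyclic biadjointness of Proposition~\ref{prop:cyclicity}.

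Next I will compute by direct substitution using the formulas defined earlier in this section. The map $\id\otimes\lcup$ sends $z$ to $\sum_{i=1}^n q^{i-n}\,z t_i\cdots t_{n-1}\otimes t_{n-1}\cdots t_i$ after moving the inner Hecke element $t_i\cdots t_{n-1}\in H_n$ across the first tensor. The crossing $\ucross$ then acts on the first factor by right multiplication by $t_n$, since the two upward strands being crossed together form the $(H_{n+1},H_{n-1})$-bimodule $(n+1)_{n-1}$ and the relevant Hecke generator is the one swapping positions $n$ and $n+1$ of $H_{n+1}$. Finally $\rcap$ is multiplication, collapsing the tensor and producing
\[
  z\sum_{i=1}^n q^{i-n}\, t_i\cdots t_{n-1}\, t_n\, t_{n-1}\cdots t_i,
\]
which is precisely $zL_{n+1}$ by the second form in the defining expression for the Jucys--Murphy element.

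The main obstacle will be the bookkeeping around the iterated tensor product: identifying correctly which of the three factors each of $\lcup$, $\ucross$, and $\rcap$ acts on, and confirming that the crossing becomes right multiplication by $t_n$ rather than some other generator. Once the decomposition above is drawn carefully, the computation reduces to reading off the defining formula of $L_{n+1}$.
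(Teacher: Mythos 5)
Your proof is correct. The paper omits this computation, but your decomposition of the right curl as $(\id\otimes\rcap)\circ(\ucross\otimes\id)\circ(\id\otimes\lcup)$, with the cup and cap placed to the right of the main strand, is precisely the mirror image of the decomposition the paper uses for the left-curl-equals-zero calculation (where it factors the left curl as $\lcap\circ\ucross\circ\rcup$ with the loop on the left), so you are following the intended route.

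Two small remarks on presentation, neither affecting correctness. First, in the middle of your composition you write $\ucross\otimes\id$ as acting on $(n+1)_n\otimes_{H_n}(n)_{n-1}(n)$; since the crossing involves the two upward strands, the cleaner parenthesization is $(n+1)_{n-1}\otimes_{H_{n-1}}(n)$ with $\ucross$ on the first factor and $\id$ on the second --- you do say this in prose, so it is only the displayed bimodule grouping that is slightly off. Second, you invoke the cyclicity of Proposition~\ref{prop:cyclicity} to justify the decomposition, but that is not really where cyclicity is needed: the cup--crossing--cap composite \emph{is} the right curl read off via a Morse decomposition of the diagram, and cyclicity would only enter if one wanted to compare two different height functions. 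In any case the index shift is handled correctly (you use $\lcup:(n)\to(n)_{n-1}(n)$ and $\ucross$ as right multiplication by $t_n$, rather than $t_{n+1}$, reflecting that the rightmost region adjacent to the crossing is $n-1$), and the resulting sum $\sum_{i=1}^n q^{i-n}t_i\cdots t_{n-1}t_n t_{n-1}\cdots t_i$ is exactly $L_{n+1}$.
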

\begin{proof}
This straightforward calculation will be omitted.
\end{proof}

\begin{comment}
\comments{This proof can be found in the notes ``2010-07-18 - Bubble moves.pdf''.}
\end{comment}

This interpretation of Jucys-Murphy elements as dots in our graphical calculus allows us to give a purely graphical proof of the following well-known relations (see, for example, \cite[Proposition~3.26]{Mat99}).

\begin{corollary}
We have
\begin{enumerate}
  \item $t_l L_{n+1} = L_{n+1} t_l$ for all $1 \le l \le n-1$,
  \item $L_{n+1} t_n = t_n L_n + (q-1) L_{n+1} +q$, and
  \item $t_n L_{n+1} = L_n t_n + (q-1) L_{n+1} +q$.
\end{enumerate}
\end{corollary}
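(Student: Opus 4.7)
The plan is to translate each relation into an equality of 2-morphisms in $\cH(q)$ via the 2-functor $\bA : \fH(q) \to \fA$ of Definition~\ref{def:2-rep-Hecke}, and then either apply Lemma~\ref{lem:dot-crossing-moves} or invoke planar isotopy in the graphical calculus. The key input is Proposition~\ref{prop:JM}: a dot on an upward strand whose rightmost region is labeled $k$ corresponds under $\bA$ to right-multiplication by $L_{k+1}$, and the definition of $\ucross$ tells us that an upward crossing whose rightmost region is labeled $k$ corresponds to right-multiplication by $t_{k+1}$.

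For relations (2) and (3), I would apply Lemma~\ref{lem:dot-crossing-moves} to a two-strand upward diagram whose rightmost region is labeled $n-1$; with this labeling, a dot on the right strand represents $L_n$, a dot on the left strand represents $L_{n+1}$, and the upward crossing represents $t_n$. Under $\bA$, the first equation of Lemma~\ref{lem:dot-crossing-moves} (dot at top-left versus dot at bottom-right of the crossing, with the two correction terms) becomes the equality of endomorphisms $z \mapsto z t_n L_{n+1}$ and $z \mapsto z L_n t_n + (q-1) z L_{n+1} + qz$ of the collapsed bimodule $(n+1)_{n-1} \cong H_{n+1}$. Since right-multiplication on $H_{n+1}$ is faithful, this gives relation (3); the second equation of Lemma~\ref{lem:dot-crossing-moves} gives relation (2) by exactly the same argument.

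Relation (1) will follow from planar isotopy in $\cH(q)$. For a fixed $1 \le l \le n-1$ I would use an $(n+1)$-strand upward diagram whose rightmost region is labeled $0$: under $\bA$ this diagram corresponds to the bimodule $(n+1)_n (n)_{n-1} \cdots (1)_0 \cong H_{n+1}$, the crossing at positions $l, l+1$ from the right becomes right-multiplication by $t_l$, and a dot on the leftmost strand becomes right-multiplication by $L_{n+1}$. Since $l+1 \le n$, the crossing involves only strands strictly to the right of the leftmost strand, so the dot and the crossing are disjoint features of the diagram; the two diagrams obtained by placing the dot above or below the crossing are therefore related by a rel-boundary planar isotopy and thus equal in $\cH(q)$. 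Applying $\bA$ yields $t_l L_{n+1} = L_{n+1} t_l$.

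The main subtlety in the whole argument is the region-label bookkeeping: one has to verify carefully that, after collapsing an iterated chain of induction bimodules, a dot placed on a particular strand of a multi-strand diagram still represents right-multiplication by the correct Jucys-Murphy element in $H_{n+1}$. Once this local-to-global identification is in hand, relations (2) and (3) reduce immediately to the two identities of Lemma~\ref{lem:dot-crossing-moves}, and relation (1) reduces to planar isotopy.
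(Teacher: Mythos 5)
Your proposal is correct and follows the paper's intended approach: the paper's proof is the single line that the corollary follows from Proposition~\ref{prop:JM} and Lemma~\ref{lem:dot-crossing-moves}, and your proposal supplies exactly the omitted bookkeeping. In particular, you correctly observe that relations (b) and (c) are the two equations of Lemma~\ref{lem:dot-crossing-moves} read through $\bA$ with rightmost region $n-1$, while relation (a) is the rel-boundary planar isotopy of a dot and a crossing on disjoint strands, which is built into the graphical calculus even though the paper does not state it explicitly.
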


\begin{proof}
This follows immediately from Proposition~\ref{prop:JM} and Lemma~\ref{lem:dot-crossing-moves}.
\end{proof}

\begin{comment}

\begin{lemma}
We have the following relations.
\begin{enumerate}
  \item $L_{n+1}^a t_n = t_n L_n^d + \sum_{j=0}^{d-1} \left( (q-1) L_n^j
  L_{n+1}^{d-j} + q L_n^j L_{n+1}^{d-j-1} \right)$.
  \item $t_n L_{n+1}^d = L_n^d t_n + \sum_{j=0}^{d-1} \left( (q-1) L_n^j
  L_{n+1}^{d-j} + q L_n^j L_{n+1}^{d-j-1} \right)$.
\end{enumerate}
\end{lemma}

\begin{proof}
One can prove these relations directly.  Alternatively, they follow
immediately from our graphical relations of sliding dots past
crossings (the summation indices are slightly modified in our
expressions here, but the formulas are equivalent).
\end{proof}

\end{comment}

%%%%%%%%%%%%%%%%%%%%%%%%%%%%%%%%%%%%%%%%%%%%%%%%%%%%%%%%%%%%%%%%%%%%
\section{Action on modules for finite general linear groups}
\label{sec:GL-action}
%%%%%%%%%%%%%%%%%%%%%%%%%%%%%%%%%%%%%%%%%%%%%%%%%%%%%%%%%%%%%%%%%%%%

In this section, we will describe how our graphical category acts on the category of modules for finite general linear groups.

%%%%%%%%%%%%%%%%%%%%%%%%%%%%%%%%%%%%%%%%%%%%%%%%%%%%%%%%%%%%%%%%%%%%
\subsection{Representations of finite general linear groups} \label{sec:reps-GL}
%%%%%%%%%%%%%%%%%%%%%%%%%%%%%%%%%%%%%%%%%%%%%%%%%%%%%%%%%%%%%%%%%%%%

Let $p$ be a prime number and let $\F_q$ denote a finite field with
$q=p^m$ elements.  We will write $GL_n$ for the group of invertible
$n \times n$ matrices with entries in $\F_q$.  We embed $GL_n$ into
$GL_{n+1}$ in the upper left, so that the image of $GL_n$ in
$GL_{n+1}$ consists of those invertible matrices whose last row is
$(0,\hdots,0,1)$ and whose last column is $(0,\hdots,0,1)^t$.
This embedding of groups induces an embedding of $\kk$-algebras
$\kk[GL_n] \hookrightarrow \kk[GL_{n+1}]$.  Unless explicitly
mentioned otherwise, all of the embeddings in this section will be
induced from this embedding of algebras.  For notational simplicity,
we will write $A_n = \kk[GL_n]$.

Let $U_{n,n+1}\subseteq GL_{n+1}$ denote the subgroup of upper
triangular invertible matrices whose upper left $n \times n$ block is the
$n \times n$ identity matrix and whose last column is arbitrary. For
example, $U_{2,3}$ is the subgroup of matrices of the form
\[
  \left(
  \begin{array}{ccc}
  1&0&a\\
  0&1&b\\
  0&0&c\\
  \end{array}\right), \ \ \ c\neq 0.
\]
Note that $U_{n,n+1}$ is a normal subgroup of $P_{n,n+1} = GL_n
\cdot U_{n,n+1}$.  Define an idempotent $v_{n+1}\in A_{n+1}$
by
\[
  v_{n+1} = \frac{1}{|U_{n,n+1}|} \sum_{u\in U_{n,n+1}} u.
\]
Note that the element $v_{n+1}$ satisfies
\begin{itemize}
\item $v_{n+1}^2 = v_{n+1}$,
\item for all $x\in \kk[U_{n,n+1}]$, $xv_{n+1} = v_{n+1}x = v_{n+1}$, and
\item for all $y \in A_n$, $yv_{n+1} = v_{n+1}y$.
\end{itemize}

We may consider $A_{n+1}v_{n+1}$ as an
$(A_{n+1},A_n)$-bimodule.  Similarly we may consider
$v_{n+1}A_{n+1}$ as a $(A_n,A_{n+1})$-bimodule. We will use
notation for these bimodules and their tensor products similar to the
notation used for bimodules over Hecke algebras.  In particular,
\begin{itemize}
  \item $(n)$ denotes $A_n$, considered as an $(A_n,A_n)$-bimodule,
  \item $(n)_{n-1}$ denotes $A_nv_n$, considered as a
  $(A_n,A_{n-1})$-bimodule,
  \item $_{n-1}(n)$ denotes $v_nA_n$, considered as an $(A_{n-1},A_n)$-bimodule,
  \item $_{n-1}(n)_{n-1}$ denotes $v_nA_nv_n$, considered as an $(A_{n-1},A_{n-1})$ bimodule, and
  \item tensor products of these bimodules are denoted by juxtaposition.
\end{itemize}

\begin{comment}

\comments{Philosophical question: Why can't we just have normal induction and restriction?  Why do we need to multiply by the $v_n$?  Tony: It's a good question- I don't understand the answer completely.  But it should have at least something to do with wanting to consider ``highest weight" representation theory.  If we just did ordinary induction, then a module over $GL_n$ for which the Borel $B_n$ acts with eigenvalue $1$ wouldn't go to a module over $GL_{n+1}$ with the same property for its Borel $B_{n+1}$.  If we were working with Lie algebras instead of Lie groups this would be like saying that ordinary induction doesn't take highest weight reps to highest weight reps, but parabolic induction does.}

\end{comment}

The bimodule $(n)_{n-1} = A_nv_n$ gives rise to an induction functor
\[
  \ind: A_{n-1}\mbox{-mod} \to A_n\mbox{-mod}, \ \ M \mapsto
  A_nv_n\otimes_{A_{n-1}} M.
\]
Similarly, the bimodule $_{n-1}(n) = v_nA_n$ gives rise to a restriction functor
\[
  \res: A_n\mbox{-mod}  \to A_{n-1}\mbox{-mod}, \ \ N \mapsto
  v_nA_n\otimes_{A_n} N.
\]
More generally, an $(A_n,A_m)$-bimodule $X$ gives rise to a functor from the category of left
$A_m$-modules to the category of left $A_n$-modules: this functor takes an $A_m$-module $M$ to the $A_n$-module $X\otimes_{A_m} M$.  Composition of functors corresponds to tensor product of bimodules, and natural transformations correspond to bimodule maps.  Therefore, in order to define natural transformations of compositions of the functors $\ind$ and $\res$, we will define bimodule maps between tensor products of the bimodules $(n)_{n-1}$ and $_{n-1}(n)$.

%%%%%%%%%%%%%%%%%%%%%%%%
\subsection{Adjunctions} \label{sec:GL-adjunctions}
%%%%%%%%%%%%%%%%%%%%%%%%

\begin{enumerate}
\item
Let $\rcap$ denote the bimodule map
\[
  \rcap: (n+1)_n(n+1) \to (n+1),\quad
  x v_{n+1}\otimes v_{n+1}y \mapsto xv_{n+1}y.
\]

\item
Let $\rcup$ denote the bimodule map
\[
  \rcup : (n) \hookrightarrow {_n}(n+1)_n,\quad z \mapsto v_{n+1}zv_{n+1}.
\]

\item
Let $\lcap$ denote the bimodule map
\[
  \lcap: {_n}(n+1)_n \to (n)
\]
given as follows.  For a group element $g\in GL_{n+1}$, we set
\[
  \lcap(v_{n+1}gv_{n+1}) =
  \begin{cases}
  v_{n+1}gv_{n+1}, & g\in P_{n,n+1}, \\
  0, & g\notin P_{n,n+1}.
  \end{cases}
\]
Extending by $\kk$-linearity, this defines a $(A_n,A_n)$ bimodule
map
\[
  v_{n+1}A_{n+1}v_{n+1} \to v_{n+1}\kk[P_{n,n+1}]v_{n+1} \cong A_n,
\]
as desired.  The isomorphism of the last line follows immediately from the properties of the idempotent $v_{n+1}$ listed above.

\item
Let $\lcup$ denote the bimodule map
\[
  \lcup: (n+1) \to (n+1)_n(n+1)
\]
determined as follows.  Let $GL_{n+1} = \coprod_{i=1}^s
P_{n,n+1}g_i$ be a decomposition of $GL_{n+1}$ into left $P_{n,n+1}$
cosets.  The element
\[
  \sum_{i=1}^s g_i^{-1} v_{n+1} \otimes v_{n+1}g_i \in (n+1)_n(n+1)
\]
does not depend on the choice of coset representatives
$\{g_i\}_{i=1}^s$.  Moreover, this element is a Casimir element, so
that
\[
  a \left( \sum_{i=1}^s g_i^{-1} v_{n+1} \otimes v_{n+1}g_i \right) =
  \left( \sum_{i=1}^s g_i^{-1} v_{n+1} \otimes v_{n+1}g_i \right) a
\]
for all $a\in A_{n+1}$.  Thus there is a natural bimodule map $(n+1)
\to (n+1)_n(n+1)$ determined by
\[
  1_{n+1} \mapsto \sum_{i=1}^s g_i^{-1} v_{n+1} \otimes v_{n+1}g_i.
\]
\end{enumerate}

\begin{proposition}\label{prop:cyclic-GL}
The adjunction maps $\lcap$, $\lcup$, $\rcap$, $\rcup$ above make
$(\res,\ind)$ into a cyclic, biadjoint pair.
\end{proposition}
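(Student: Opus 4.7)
The plan is to treat the setup as a special case of the parabolic induction/restriction framework. The induction and restriction functors defined here—passing between $A_n = \kk[GL_n]$ and $A_{n+1}$ via the bimodules $(n+1)_n = A_{n+1} v_{n+1}$ and ${_n(n+1)} = v_{n+1} A_{n+1}$—realize a specific instance of parabolic induction and restriction for $GL_{n+1}$ with respect to the maximal parabolic $P_{n,n+1} = GL_n \cdot U_{n,n+1}$. The four adjunction maps $\rcap, \rcup, \lcap, \lcup$ defined in this section match the canonical adjunction data in that parabolic setting. Consequently, both biadjointness and cyclicity should follow from the general result on cyclic biadjointness of parabolic induction and restriction for finite groups, which will be established in Section~\ref{sec:parabolic}.

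For biadjointness in isolation, the four snake identities can be verified directly. The two snake identities involving the right adjunction $(\rcup, \rcap)$ follow almost tautologically from the idempotent property $v_{n+1}^2 = v_{n+1}$ and the commutation relation $v_{n+1} y = y v_{n+1}$ for $y \in A_n$: for instance, on $(n+1)_n$ the composition produces $x v_{n+1} \mapsto x v_{n+1} \otimes v_{n+1} \mapsto x v_{n+1}$, and the snake on ${_n(n+1)}$ is symmetric. For the two snake identities involving the left adjunction $(\lcup, \lcap)$, the key computation runs as follows: starting with the generator $v_{n+1} \in {_n(n+1)}$ and applying $\id \otimes \lcup$ yields $\sum_i v_{n+1} g_i^{-1} v_{n+1} \otimes v_{n+1} g_i$ in ${_n(n+1)_n(n+1)}$; then applying $\lcap \otimes \id$ annihilates every summand with $g_i \notin P_{n,n+1}$. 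Taking $g_1 = 1$ as a coset representative, only the $i=1$ term survives and produces $v_{n+1}$ back in ${_n(n+1)}$ after invoking the identification $v_{n+1} \kk[P_{n,n+1}] v_{n+1} \cong A_n$.

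The main obstacle is cyclicity, which does not follow from the snake identities alone: it requires that every rotation of a 2-morphism by the adjunction data yield the same result, a compatibility between left and right adjunction maps that is not local. The plan is therefore to postpone this to Section~\ref{sec:parabolic}, where cyclic biadjointness will be proved for parabolic induction and restriction between a general finite group and a Levi factor of a parabolic subgroup, and then specialize back to the inclusion $GL_n \hookrightarrow P_{n,n+1} \subset GL_{n+1}$ to obtain the present proposition.
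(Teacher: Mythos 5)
Your proposal takes essentially the same route as the paper: it defers the full proof (and especially cyclicity) to the general theorem on cyclic biadjointness of parabolic induction and restriction for finite groups, then specializes to $G = GL_{n+1}$, $L = GL_n$, $U = U_{n,n+1}$ with trivial $\theta$, exactly as the paper's Corollary~\ref{cor:GL-cyclicity} does. Your additional direct verification of the four snake identities is a correct bonus, but the core strategy matches the paper's.
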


\begin{proof}
We prove this in Section~\ref{sec:parabolic}, where we work in the more general setting of parabolic induction and restriction for arbitrary finite groups (see Corollary~\ref{cor:GL-cyclicity}).
\end{proof}

%%%%%%%%%%%%%%%%%%%%%%
\subsection{Crossings} \label{sec:GL-crossings}
%%%%%%%%%%%%%%%%%%%%%%

Let $s_n \in GL_{n+1}$ denote the symmetric group element
\[
  s_n= \left(
  \begin{array}{ccc}
  I_{n-1}&0&0\\
  0&0&1\\
  0&1&0\\
  \end{array}\right),
\]
where $I_{n-1}$ is the $(n-1) \times (n-1)$ identity matrix.  Let $t_n =
qv_{n+1}s_nv_{n+1} \in A_{n+1}$.

Let $(n+2)_n$ denote the bimodule $(n+2)_n = (n+2)_{n+1}(n+1)_n =
A_{n+2}v_{n+2}v_{n+1}$.  We define a bimodule map
\[
  \begin{tikzpicture}[>=stealth,baseline=10pt]
    \draw[->] (0,0) to (1,1);
    \draw[->] (1,0) to (0,1);
    \draw (1.1,.5) node {$n$};
  \end{tikzpicture}
  \quad : \quad (n+2)_n \to (n+2)_n,\quad zv_{n+2}v_{n+1} \mapsto
  z t_{n+1}v_{n+2}v_{n+1}.
\]
Since $A_n$ commutes with $v_{n+2}$, $v_{n+1}$ and $s_{n+1}$,
it follows that the above is a well-defined map of $(A_{n+2},A_n)$-bimodules.  By Proposition~\ref{prop:cyclic-GL}, this 2-morphism is cyclic.  Therefore, any two isotopic diagrams
involving this crossing as well as cups and caps are equal.  We
define left, right, and downward crossings to be equal to any
composition of cups and caps and an upward crossing yielding a
bimodule map between the appropriate bimodules.  By the above, any
two such definitions are equivalent.  For instance, we can define
the left, right and downward crossings as in \eqref{eq:leftcross-def}--\eqref{eq:downcross-def}.

\begin{lemma}
We have
\begin{align*}
  \begin{tikzpicture}[>=stealth,baseline=12pt]
    \draw[<-] (0,0) to (1,1);
    \draw[<-] (1,0) to (0,1);
    \draw (-.1,.5) node {$n$};
  \end{tikzpicture}
  \quad &: \quad _n(n+2) \to {_n}(n+2),\quad v_{n+1}v_{n+2}z \mapsto
 v_{n+1}v_{n+2} t_{n+1} z, \\
  \begin{tikzpicture}[>=stealth,baseline=12pt]
    \draw[->] (0,0) to (1,1);
    \draw[<-] (1,0) to (0,1);
    \draw (1.1,.5) node {$n$};
  \end{tikzpicture}
  \quad &: \quad (n)_{n-1}(n) \hookrightarrow {_n(n+1)_n},\quad wv_n \otimes v_nz
  \mapsto v_{n+1}w t_n zv_{n+1}, \\
  \begin{tikzpicture}[>=stealth,baseline=12pt]
    \draw[<-] (0,0) to (1,1);
    \draw[->] (1,0) to (0,1);
    \draw (1.1,.5) node {$n$};
  \end{tikzpicture}
  \quad &: \quad {_n(n+1)_n} \twoheadrightarrow{(n)_{n-1}(n)},\quad
  v_{n+1} \mapsto 0,\quad v_{n+1} t_n v_{n+1} \mapsto q v_n \otimes v_n.
\end{align*}
\end{lemma}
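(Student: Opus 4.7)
The plan is to follow the same strategy as in the Hecke-algebra setting (the last lemma of Section~\ref{sec:Hecke-action}). By Proposition~\ref{prop:cyclic-GL}, the adjunction data $(\rcap,\rcup,\lcap,\lcup)$ is cyclic, so the downward, rightward, and leftward crossings defined by \eqref{eq:leftcross-def}--\eqref{eq:downcross-def} do not depend on which isotopic presentation one chooses. For each of the three formulas I will pick a convenient presentation and compute directly, using the explicit descriptions of $\rcap,\rcup,\lcap,\lcup$ from Section~\ref{sec:GL-adjunctions} and of the upward crossing from Section~\ref{sec:GL-crossings}.

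For the downward crossing, I would push a generator $v_{n+1}v_{n+2}z$ of ${}_n(n+2)$ through the zig-zag composition \eqref{eq:downcross-def}: the two right-oriented cups merely sandwich elements with the idempotents $v_k$, the middle upward crossing contributes right multiplication by $t_{n+1}$, and the two left-oriented caps project back onto the parabolic subalgebra, leaving left multiplication by $t_{n+1}$ as claimed. The rightward crossing is obtained analogously from \eqref{eq:rightcross-def}: starting from $wv_n\otimes v_nz$, the single $\rcup$ produces the $v_{n+1}$-sandwich, the upward crossing inserts $t_n$, and the single $\lcap$ lands in ${}_n(n+1)_n$, yielding $v_{n+1}wt_nzv_{n+1}$.

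The main obstacle is the formula for the leftward crossing, which is the only one whose image depends nontrivially on the choice of source generator and which requires the Casimir-type map $\lcup$. I would begin by observing that the $(A_n,A_n)$-bimodule $v_{n+1}A_{n+1}v_{n+1}$ is generated by the two elements $v_{n+1}$ and $v_{n+1}t_nv_{n+1}$: this follows from the Bruhat decomposition $GL_{n+1}=P_{n,n+1}\sqcup P_{n,n+1}s_nP_{n,n+1}$ together with the fact that $v_{n+1}$ is an idempotent commuting with $A_n$. It therefore suffices to compute the image of each of these two generators under the composition
\[
  {}_n(n+1)_n \xrightarrow{\id\otimes\lcup} {}_n(n+1)_{n-1}(n) \xrightarrow{\ucross\otimes\id} {}_n(n+1)_{n-1}(n) \xrightarrow{\lcap\otimes\id} (n)_{n-1}(n)
\]
(after the natural identification of ${}_n(n+1)_n$ with ${}_n(n+1)_n(n)$). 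Expanding $\lcup$ at level $n$ as the explicit Casimir sum over left cosets of $P_{n-1,n}$ in $GL_n$ produces a double sum; the support condition on $\lcap$, namely that it vanishes off $P_{n,n+1}$, kills almost every term. For the input $v_{n+1}$, whatever contributions remain are then pushed out of $P_{n,n+1}$ by the right multiplication by $s_{n+1}$ coming from $\ucross$, so the image is $0$. For the input $v_{n+1}t_nv_{n+1}$, exactly one term survives, and tracking the scalar factor $q$ from the definition of $t_n$ produces the claimed answer $qv_n\otimes v_n$. Organizing the coset sum so that the surviving terms can be identified is the technical crux, but once this is done the identities follow immediately by the same pattern as in the Hecke-algebra computation.
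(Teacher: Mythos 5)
Your approach matches the paper's (which simply declares these computations straightforward): choose a convenient isotopic presentation of each crossing using cyclicity (Proposition~\ref{prop:cyclic-GL}), trace a bimodule generator through the composite, and for the leftward crossing reduce to the two $(A_n,A_n)$-bimodule generators $v_{n+1}$ and $v_{n+1}t_nv_{n+1}$ of $_n(n+1)_n$ (what the paper loosely calls ``$1_n$ and $t_n$''), which you correctly justify via the Bruhat decomposition $GL_{n+1}=P_{n,n+1}\sqcup P_{n,n+1}s_nP_{n,n+1}$. One slip worth fixing: in the composite defining the leftward crossing, the internal upward crossing sits at level $n-1$ and so contributes right multiplication by $t_n=qv_{n+1}s_nv_{n+1}$, not by $s_{n+1}$ (which lives in $GL_{n+2}$, not $GL_{n+1}$); it is the factor $s_n\notin P_{n,n+1}$ that makes $\lcap$ kill the resulting terms when the input is $v_{n+1}$.
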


\begin{proof}
All three statements are straightforward computations.  In the third computation, it is useful to note that $_n(n+1)_n$ has a bimodule basis consisting
of $1_n$ and $t_n$; thus any bimodule map from $_n(n+1)_n$ is
completely determined by the image of these two elements.  We omit the details of these three computations.
\end{proof}

%%%%%%%%%%%%%%%%%%%%%%%%%%%%%%%%%%%%%%%%%%%%%%%%%%%%%%%%%%%%%
\subsection{Another representation of the graphical category} \label{sec:GL-Fock-rep}
%%%%%%%%%%%%%%%%%%%%%%%%%%%%%%%%%%%%%%%%%%%%%%%%%%%%%%%%%%%%%

The following proposition should be compared to Proposition \ref{prop:relations}.

\begin{proposition}\label{prop:action2}
The relations \eqref{eq:local-relation-basic-Hecke}--\eqref{eq:cc-circle-and-left-curl} hold when these diagrams are interpreted as maps of bimodules.
\end{proposition}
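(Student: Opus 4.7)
The plan is to follow the template of Proposition~\ref{prop:relations}, which handled the analogous statement in the Hecke setting. The argument splits into two halves: the upward-strand relations \eqref{eq:local-relation-basic-Hecke}--\eqref{eq:local-relation-braid}, which are Hecke algebra relations; and the mixed relations \eqref{eq:local-relation-up-down-double-crossing}--\eqref{eq:cc-circle-and-left-curl}, which encode a Mackey-style bimodule decomposition.

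For the first half, recall that the upward crossing acts as right multiplication by $t_{n+1} = q v_{n+2} s_{n+1} v_{n+2}$. I would verify directly that inside $v_{n+2} A_{n+2} v_{n+2}$ (and more generally in the appropriate idempotent-truncated group algebras) one has $t_i^2 = qv_\bullet + (q-1)t_i$ and $t_i t_{i+1} t_i = t_{i+1} t_i t_{i+1}$. This is Iwahori's theorem in type $A$, but it can also be obtained from the sharp Bruhat decomposition of $GL_{n+2}$, using only the facts that $GL_n$ normalizes $U_{n,n+1}$, that $v_{n+1}$ is an averaging idempotent, and a short computation expanding $v_{n+2} s_{n+1} v_{n+2} s_{n+1} v_{n+2}$ in terms of $(B,B)$-cosets. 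Relations \eqref{eq:local-relation-basic-Hecke} and \eqref{eq:local-relation-braid} are then immediate from the definition of the crossing.

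For the second half, the key input is the bimodule decomposition
\[
    {}_n(n+1)_n \;=\; v_{n+1} A_{n+1} v_{n+1} \;\cong\; (n) \oplus \bigl((n)_{n-1}(n)\bigr),
\]
the finite-group analogue of Lemma~\ref{lem:key-bimodule-decomp}. To establish this, I would use the parabolic double coset decomposition $GL_{n+1} = P_{n,n+1} \sqcup P_{n,n+1}\, s_n\, P_{n,n+1}$, which follows from $S_n\backslash S_{n+1}/S_n = \{e, s_n\}$. This splits $v_{n+1} A_{n+1} v_{n+1}$ as a direct sum of two $(A_n,A_n)$-bimodules. The first, $v_{n+1}\kk[P_{n,n+1}]v_{n+1}$, is canonically isomorphic to $(n)$ because $GL_n$ normalizes $U_{n,n+1}$ so that $v_{n+1}$ is central for the $GL_n$-action. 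The second, $v_{n+1}\kk[P_{n,n+1} s_n P_{n,n+1}]v_{n+1}$, is identified with $(n)_{n-1}(n)$ via the right crossing map $wv_n \otimes v_n z \mapsto v_{n+1} w t_n z v_{n+1}$; a dimension count (comparing cardinalities of double cosets in $GL_{n+1}$ with the free rank of $\kk[GL_n]$ over $\kk[GL_{n-1}]$) shows this is an isomorphism rather than merely a surjection.

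With this decomposition and the explicit formulas of Section~\ref{sec:GL-adjunctions}--\ref{sec:GL-crossings} in hand, each remaining relation becomes a finite computation parallel to the one in Proposition~\ref{prop:relations}. The counterclockwise circle equals the identity because $\lcap \circ \rcup = \id_{(n)}$ by construction; the left curl vanishes because it factors through the restriction of $\lcap$ to $v_{n+1}\kk[P_{n,n+1} s_n P_{n,n+1}]v_{n+1}$, which is zero by the defining case distinction on $\lcap$; and the two equations in \eqref{eq:local-relation-up-down-double-crossing} follow by evaluating both sides on the bimodule generators $1_{n+1}$ and $t_n$. The main obstacle, and the only place real care is needed, is \eqref{eq:local-relation-up-down-double-crossing} because it involves $\lcup$, whose definition is an averaged sum over left $P_{n,n+1}$-coset representatives $\{g_i\}$; one must check that $\sum_i g_i^{-1} v_{n+1} \otimes v_{n+1} g_i$ pairs correctly with the downward cap under the crossing, which amounts to matching a Casimir element against the double coset decomposition above.
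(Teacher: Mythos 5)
Your proposal is correct and, while much more detailed, is essentially the argument the paper has in mind: the paper's proof simply says "direct computation from the definitions," and the accompanying remarks point to Iwahori's theorem for the upward-strand relations, while the cap/cup/curl relations encode the parabolic Mackey decomposition $v_{n+1}A_{n+1}v_{n+1} \cong A_n \oplus (A_nv_n \otimes_{A_{n-1}} v_nA_n)$, exactly as you derive it from the double-coset decomposition $GL_{n+1} = P_{n,n+1} \sqcup P_{n,n+1}s_nP_{n,n+1}$. You have in effect supplied the $GL_n$ analogue of Lemma~\ref{lem:key-bimodule-decomp} and then run the same checks as in Proposition~\ref{prop:relations}.
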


\begin{proof}
This follows by direct computation from the definitions and results of Sections~\ref{sec:reps-GL}--\ref{sec:GL-crossings}.
\end{proof}

Some comments about the above proposition are in order.  The
relations involving only upward pointing strands follow from the
classical work of Iwahori \cite{Iwa64}. The analogues of our upward pointing
strands were introduced in Chuang-Rouquier \cite{ChuRou08} in the context of $\mathfrak{sl}_2$
categorifications in the modular representation theory of $GL_n$.
The relations amongst upward pointing braid-like diagrams --- that is, diagrams with no local minima or maxima --- are essentially contained in \cite{JoyStr95}, which studies the braided monoidal structure on the category of characteristic zero representations of all $GL_n$.

Let $B_n\subseteq GL_n$ denote the Borel subgroup of upper triangular matrices.
The relations involving only upward pointing strands imply that
there is a canonical morphism from the Hecke algebra $H_n$ to
$\mbox{End}_{A_n}(\mbox{Ind}_{\kk[B_n]}^{A_n} \mathbf{1})$, the endomorphism
algebra of the induction to $A_n$ of the trivial $\kk[B_n]$-module.

\begin{definition} \label{def:GL-2-cat}
Let $\fB$ be the 2-category defined as follows.
\begin{itemize}
  \item $\Ob \fB = \N \cup \{\nabla\}$.

  \item The 1-morphisms from $n$ to $m$ for $n,m \in \N$, are functors from $A_n$-mod to $A_m$-mod that are direct summands of compositions of induction and restriction functors.  The only 1-morphism from $\nabla$ to $\nabla$ is the identity. For $n \in \N$, there are no 1-morphisms from $n$ to $\nabla$ or from $\nabla$ to $n$.

  \item The 2-morphisms are natural transformations of functors.
\end{itemize}
\end{definition}
It follows from Proposition~\ref{prop:action2} that we can define a 2-functor $\bB : \fH(q) \to \fB$ as in Definition~\ref{def:2-rep-Hecke} (with $H_n$ replaced by $A_n$).

The representations of $\fH(q)$ given here and in Section~\ref{sec:BFS-cat} are closely related.  Thus our categorical Heisenberg actions provide another perspective on the appearance of the Hecke algebra in the
representation theory of $GL_n$.  Let us explain the relationship between the two $\fH(q)$ representations in a bit more detail.  Let $\mathcal{C}_n \subseteq A_n\mbox{-mod}$
denote the full subcategory of $A_n\mbox{-mod}$ consisting of direct summands of modules of the form $\ind^r M$, $r \in \N$, $M$ an $A_0$-module.  Equivalently, the objects of $\mathcal{C}_n$ are the \emph{unipotent} $A_n$-modules, that is, the modules which occur as a direct summand of the induction of a trivial $\kk[B_n]$-module to $A_n$.  Let $\mathfrak{C}$ be the 2-category defined as in Definition~\ref{def:GL-2-cat}, but with $A_n$-mod replaced by $\mathcal{C}_n$.  It follows that the category $\mathfrak{C}$ is a representation of $\fH(q)$.    Define the idempotent
\[
  b_n = \frac{1}{|B_n|} \sum_{b\in  B_n} b = v_n v_{n-1} \cdots v_1
\]
in the Borel $B_n$.  The subalgebra $b_nA_nb_n\subseteq A_n$ is the sum of the \emph{unipotent blocks} of $A_n$.  Note that any $M \in \mathcal{C}_n$ contains a vector fixed by $\kk[B_n]$.  Thus, the inclusion of $M$ into a free module $A_n^{\oplus k}$, $k \in \N$, factors through $(A_nb_n)^{\oplus k}$.  Since $\Hom_{A_n}(A_nb_n,A_nb_n) = b_nA_nb_n$, we have a functor
\[
  \Hom_{A_n}(A_nb_n, -): \mathcal{C}_n \to b_nA_nb_n\mbox{-mod}.
\]
It is straightforward to check that this functor is an equivalence of categories, with the inverse functor given by tensoring with the $(A_n,b_nA_nb_n)$-bimodule $A_nb_n$.

Iwahori's theorem \cite{Iwa64} implies that for each $n$ there is an isomorphism
\[
  b_nA_nb_n \cong H_n.
\]
These isomorphisms for all $n$ are compatible with the embeddings
$b_nA_nb_n \subseteq b_{n+1}A_{n+1}b_{n+1}$ and $H_n\subseteq H_{n+1}$, and thus with the
actions of the 1-morphisms
$Q_+$ and $Q_-$.  It is then easy to see that the isomorphisms $b_nA_nb_n \cong H_n$ are compatible with the defining cup, cap, and crossing 2-morphisms.  Thus we have the following proposition.
\begin{proposition}\label{prop:equiv}
The equivalence of categories
\[
  \textstyle
  \bigoplus_n \Hom_{A_n}(A_nb_n, -): \bigoplus_n \mathcal{C}_n \to \bigoplus_n H_n\text{-$\mathrm{mod}$}
\]
induces an equivalence $\mathfrak{C} \to \fA$ of $\fH(q)$ representations.  In other words,
\[
  \xymatrix{ \fH(q) \ar[r] \ar[dr] & \mathfrak{C} \ar[d] \\
  & \fA}
\]
is a commutative diagram (up to isomorphism) with the functor $\mathfrak{C} \to \fA$ being an equivalence of 2-categories.
\end{proposition}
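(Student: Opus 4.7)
The plan is to lift $\Phi := \bigoplus_n \Phi_n$ from an equivalence of $1$-categories to a $2$-equivalence of $\fH(q)$-representations by verifying two compatibilities: first, that $\Phi$ intertwines the $1$-morphism images of $Q_+$ and $Q_-$ (that is, induction and restriction); and second, that it intertwines the generating $2$-morphisms (cups, caps, and upward crossings). The essential bridge is Iwahori's algebra isomorphism $\iota_n : H_n \xrightarrow{\sim} b_n A_n b_n$. Under $\iota$, the inclusion $H_n \hookrightarrow H_{n+1}$ corresponds to the inclusion $b_n A_n b_n \hookrightarrow b_{n+1} A_{n+1} b_{n+1}$ given by $x \mapsto v_{n+1} x v_{n+1}$; this uses the identities $b_{n+1} = v_{n+1} b_n$ and $v_{n+1} y = y v_{n+1}$ for $y \in A_n$, both immediate from the definitions of $b_n$ and $v_{n+1}$.

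First, I would verify $\Phi_{n+1} \circ \ind^{GL} \cong \ind^H \circ \Phi_n$, and the analogous statement for $\res$. Since $\Phi_n$ is canonically identified with the functor of left multiplication by $b_n$, the composite $\Phi_{n+1} \circ \ind^{GL} \circ \Phi_n^{-1}$ is tensoring with the $(H_{n+1}, H_n)$-bimodule
\[
  b_{n+1} \cdot A_{n+1} v_{n+1} \cdot A_n b_n \;=\; b_{n+1} A_{n+1} v_{n+1} b_n \;=\; b_{n+1} A_{n+1} b_{n+1},
\]
which under $\iota$ is exactly the Hecke induction bimodule $(n+1)_n^H = H_{n+1}$. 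A dual calculation on the restriction bimodule $v_{n+1} A_{n+1}$ gives the matching of $Q_-$.

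Second, I would match the generating $2$-morphisms. By the cyclicity statements of Propositions~\ref{prop:cyclicity} and~\ref{prop:cyclic-GL}, it suffices to check the right cup, right cap, and upward crossing; the left cup and cap are then forced by the biadjunction axioms, and the downward and mixed crossings are determined by \eqref{eq:leftcross-def}--\eqref{eq:downcross-def}. The map $\rcap$ is multiplication on both sides, so it matches on the nose. For $\rcup^{GL}\colon z \mapsto v_{n+1} z v_{n+1}$, cutting by $b_n$ on both sides yields $b_n z b_n \mapsto v_{n+1}(b_n z b_n) v_{n+1}$, which is precisely $\iota$ applied to the Hecke inclusion $\rcup^H\colon H_n \hookrightarrow H_{n+1}$. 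For the upward crossing, both sides are right multiplication by the corresponding $t_{n+1}$, and Iwahori's theorem \cite{Iwa64} identifies the Hecke generator $t_i$ with $q v_{i+1} s_i v_{i+1} \in b_n A_n b_n$.

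The main obstacle is really the bimodule computation of the previous paragraph together with the verification that $\iota$ carries $t_i$ to $q v_{i+1} s_i v_{i+1}$; this last fact is exactly what makes relation~\eqref{eq:local-relation-basic-Hecke} hold in the GL picture and has already been used implicitly in the proof of Proposition~\ref{prop:action2}. Given these two ingredients, every remaining compatibility is a formal consequence of cyclicity and the naturality of the equivalence, and the resulting $2$-functor $\mathfrak{C} \to \fA$ is an equivalence because each $\Phi_n$ already is.
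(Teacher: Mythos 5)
Your proof follows essentially the same route as the paper: Iwahori's isomorphism $H_n\cong b_nA_nb_n$, compatibility of these isomorphisms with the inclusions (hence with the induction and restriction bimodules), and compatibility with the generating cup, cap, and crossing 2-morphisms, filling in the bimodule calculation $b_{n+1}A_{n+1}v_{n+1}A_nb_n = b_{n+1}A_{n+1}b_{n+1}$ and the generator checks that the paper summarizes as ``easy to see.'' One small overstatement: the biadjunction axioms do not by themselves determine the left cup and cap from the right ones (the left adjunction data are only fixed up to an automorphism of the left adjoint), so $\lcap$ and $\lcup$ still require a direct --- though routine --- check matching $\Proj_P$ against the Hecke projection and the Casimir element against $\sum_i q^{i-(n+1)} t_i\cdots t_n\otimes t_n\cdots t_i$.
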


\begin{remark}
Proposition~\ref{prop:equiv} implies that the functor $\fH(q) \to \mathfrak{C}$ is another categorification of bosonic Fock space.  The functor $\bB$ to the entire category $\fB$ is a categorification of an infinite sum of bosonic Fock spaces.
\end{remark}

%%%%%%%%%%%%%%%%%%%%%%%%%%%%%%%%%%%%%%%%%%%%%%%%%%%%%%%%%%%%%%%%%%%%%
%
\section{Parabolic induction and restriction for finite groups} \label{sec:parabolic}
%
%%%%%%%%%%%%%%%%%%%%%%%%%%%%%%%%%%%%%%%%%%%%%%%%%%%%%%%%%%%%%%%%%%%%%

The purpose of this section is to describe the cyclic biadjointness of parabolic induction and restriction functors which show up naturally in the representation theory of finite groups.  We recommend the survey \cite{Kho10b} for an introduction to cyclic biadjoint functors.

%%%%%%%%%%%%%%%%%%%%%%%%%%%%%%%%%%%%%%%%%%%%%%%%%%%%%%%%%%%%%%%%%%%%%%
\subsection{Bimodules from the group algebra of a semi-direct product}\label{sec:pardef}
%%%%%%%%%%%%%%%%%%%%%%%%%%%%%%%%%%%%%%%%%%%%%%%%%%%%%%%%%%%%%%%%%%%%%%

Recall that $\kk$ is a field of characteristic zero.  Let $L$ and $U$ be subgroups of a finite group $G$ such that
\begin{itemize}
  \item $L$ normalizes $U$, i.e. $U$ is a normal subgroup of $P = LU$,

  \item $L\cap U = \{1\}$.
\end{itemize}
Let $\theta : U \to \C^*$ be a multiplicative character of $U$ normalized by $L$, so that
\[
  \theta (m u m^{-1}) = \theta (u) \quad \text{for all }  m \in L, \ u\in U.
\]
Let $v_\theta \in \kk[P]$ be defined by
\[
  v_\theta = \frac{1}{|U|} \sum_{u\in U} \theta(u)^{-1} u.
\]

\begin{lemma} \label{lem:v-theta}
The element $v_\theta$ satisfies the following:
\begin{itemize}
  \item $v_\theta v_\theta = v_\theta$,
  \item for $u\in U$, $uv_\theta = v_\theta u = \theta(u) v_\theta$,
  \item for $m\in L$, $mv_\theta = v_\theta m$.
\end{itemize}
\end{lemma}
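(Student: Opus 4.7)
Each of the three identities is a direct computation from the definition of $v_\theta$, and the plan is to verify them in turn by manipulating the defining sum. The only subtle point is to keep track of which hypothesis on $U$, $L$, and $\theta$ is being invoked at each step.

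For the idempotency $v_\theta^2 = v_\theta$, I would expand
\[
  v_\theta^2 = \frac{1}{|U|^2} \sum_{u, u' \in U} \theta(u)^{-1}\theta(u')^{-1} u u'
\]
and then reindex by setting $u'' = u u'$, so that $u' = u^{-1} u''$ and, by multiplicativity of $\theta$, $\theta(u')^{-1} = \theta(u)\,\theta(u'')^{-1}$. The $\theta(u)$ factors cancel against $\theta(u)^{-1}$, and summing over $u \in U$ contributes a factor of $|U|$, giving $v_\theta$.

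For the second identity, the same reindexing trick applies: in
\[
  u v_\theta = \frac{1}{|U|} \sum_{u' \in U} \theta(u')^{-1} u u',
\]
substitute $u'' = u u'$ and use $\theta(u^{-1} u'')^{-1} = \theta(u)\theta(u'')^{-1}$ to pull out a factor of $\theta(u)$ and recognize the remaining sum as $v_\theta$. The computation for $v_\theta u$ is entirely analogous.

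For the third identity, I would commute $m \in L$ past each group element $u \in U$ via $m u = (m u m^{-1})\, m$. The hypothesis that $L$ normalizes $U$ ensures $m u m^{-1} \in U$, so the map $u \mapsto m u m^{-1}$ is a bijection of $U$; the hypothesis that $\theta$ is normalized by $L$ guarantees $\theta(m u m^{-1}) = \theta(u)$, so reindexing the sum yields $v_\theta m$. No real obstacle is anticipated; the entire lemma is a bookkeeping exercise in applying the three hypotheses.
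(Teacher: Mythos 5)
Your proposal is correct and is exactly the kind of direct computation the paper has in mind; the paper simply states that "all three facts follow from direct computations which are omitted," and your reindexing arguments supply those omitted details.
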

\begin{proof}
All three facts follow from direct computations which are omitted.
\end{proof}

The space $\kk[P]\vt$ has the natural structure of a $(\kk[P],\kk[P])$-bimodule, and, by restriction, the structure of a $(\kk[L],\kk[L])$-bimodule.

\begin{corollary}
$\kk[P]\vt \cong \kk[L]$ as a $(\kk[L],\kk[L])$-bimodule.
\end{corollary}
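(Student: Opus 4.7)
The plan is to define an explicit bimodule map $\phi : \kk[L] \to \kk[P]\vt$ by $m \mapsto m\vt$ (extended $\kk$-linearly) and show it is a bimodule isomorphism. The $(\kk[L],\kk[L])$-bilinearity of $\phi$ reduces to two checks: left $\kk[L]$-linearity is immediate from the definition, while right $\kk[L]$-linearity follows from the third bullet of Lemma~\ref{lem:v-theta}, which says $\vt$ commutes with $\kk[L]$. Explicitly, for $m,m'\in L$, $\phi(mm') = mm'\vt = m\vt m' = \phi(m)\, m'$.

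For surjectivity, I would use the hypotheses $P = LU$ and $L \cap U = \{1\}$ to conclude that the multiplication map $L \times U \to P$ is a bijection, so $\kk[P]$ has $\kk$-basis $\{mu \mid m \in L, u \in U\}$. Consequently $\kk[P]\vt$ is spanned by elements of the form $mu\vt$, and the second bullet of Lemma~\ref{lem:v-theta} gives $mu\vt = \theta(u) m\vt \in \im(\phi)$. For injectivity, I would observe that for distinct $m \in L$ the element $m\vt = \tfrac{1}{|U|} \sum_{u \in U} \theta(u)^{-1} (mu)$ is supported, with respect to the basis above, on the coset $mU$; since the cosets $\{mU\}_{m \in L}$ are pairwise disjoint (again using $L \cap U = \{1\}$), the elements $\{m\vt\}_{m \in L}$ are $\kk$-linearly independent.

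I don't anticipate any real obstacle: the argument is essentially the observation that $\kk[P] \cong \kk[L] \otimes_\kk \kk[U]$ as a right $\kk[U]$-module, combined with the fact that right multiplication by $\vt$ collapses the $\kk[U]$-factor onto the one-dimensional $\theta$-isotypic line $\kk \vt$. The only thing to be careful about is bookkeeping around the left/right actions when translating the three properties of $\vt$ from Lemma~\ref{lem:v-theta} into the bimodule statement.
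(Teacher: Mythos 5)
Your proof is correct and takes essentially the same approach as the paper: the paper also defines the bimodule map $\kk[L] \to \kk[P]\vt$ by $1 \mapsto \vt$ (equivalently $m \mapsto m\vt$) and cites Lemma~\ref{lem:v-theta}, simply leaving implicit the bilinearity, surjectivity, and injectivity checks that you spell out.
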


\begin{proof}
It follows from Lemma~\ref{lem:v-theta} that the map
\[
  \kk[L] \to \kk[P]\vt, \ \ 1\mapsto \vt,
\]
is a $(\kk[L],\kk[L])$-bimodule isomorphism.
\end{proof}

%%%%%%%%%%%%%%%%%%%%%%%%%%%%%%%%%%%%%%%%%%%%%%%%%%%%%%%%%%%
\subsection{Parabolic induction and restriction functors}
%%%%%%%%%%%%%%%%%%%%%%%%%%%%%%%%%%%%%%%%%%%%%%%%%%%%%%%%%%%

We define functors
\[
  \ind_{U,\theta} : \kk[L]\mbox{-mod} \to \kk[G]\mbox{-mod}
\]
and
\[
  \res_{U,\theta} : \kk[G]\mbox{-mod} \to \kk[L]\mbox{-mod}
\]
depending on the subgroup $U$ and the character $\theta$, as follows.

\begin{definition}\label{def:induction}
The \emph{parabolic induction} functor
\[
  \ind_{U,\theta} : \kk[L]\mbox{-mod} \to \kk[G]\mbox{-mod}
\]
is defined by tensoring with the $(\kk[G],\kk[L])$-bimodule
$\kk[G]\otimes_{\kk[P]} \kk[P]\vt \cong \kk[G]\vt$:
\[
  \ind_{U,\theta} (M) := \big( \kk[G]\otimes_{\kk[P]} \kk[P]\vt \big) \otimes_{\kk[L]} M.
\]
\end{definition}

\begin{definition}\label{def:restriction}
The \emph{parabolic restriction} functor
\[
  \res_{U,\theta} : \kk[G]\mbox{-mod} \to \kk[L]\mbox{-mod}
\]
is defined by tensoring with the $(\kk[L],\kk[G])$-bimodule $\kk[P]\vt \otimes_{\kk[P]}\kk[G]$:
\[
  \res_{U,\theta} (M') = \big( \kk[P]\vt \otimes_{\kk[P]} \kk[G] \big) \otimes_{\kk[G]} M'.
\]
\end{definition}

Note that the functors $\ind_{U,\theta}$, $\res_{U,\theta}$, depend on both the subgroup $U$ and on the character $\theta$.  Some basic properties of the functors $\ind_{U,\theta}$ and $\res_{U,\theta}$ are listed below.  We also refer the reader to \cite{Zel81} for further details:
\begin{enumerate}
  \item $\ind_{U,\theta}$ and $\res_{U,\theta}$ are additive.
  \item Let $N$ and $V$ be subgroups of $L$ and let $\theta'$ be a character of $V$ such that the functors $\ind_{V,\theta'}: \kk[N]\mbox{-mod} \rightarrow \kk[L]\mbox{-mod}$ and $\res_{V,\theta'} : \kk[L]\mbox{-mod} \rightarrow \kk[N]\mbox{-mod}$ are defined.  Define a character $\theta^0$ of $U^0 = UV$ by
    \[
      \theta^0(uv) = \theta(u)\theta'(v), \ u\in U, \ v\in V.
    \]
    Then there are isomorphisms of functors
    \[
      \ind_{U,\theta}\circ \ind_{V,\theta'} \cong \ind_{U^0,\theta^0}, \ \
      \res_{V,\theta'}\circ \res_{U,\theta} \cong \res_{U^0,\theta^0}.
    \]
  \item  Let $H$ be another finite group.  For $M \in \kk[H]\mbox{-mod}$, there are functors
    \[
      T^l_M: \kk[L]\mbox{-mod}\rightarrow \kk[L\times H]\mbox{-mod},\ \ T_M^l(M') = M\otimes_{\kk} M'
    \]
    and
    \[
      T^r_M : \kk[L]\mbox{-mod} \rightarrow \kk[H\times L]\mbox{-mod},\ \ T_M^r(M') = M' \otimes_\kk M
    \]
    given by outer tensor product with $M$ on the left and on the right.
    The parabolic induction functor
    \[
      \ind_{U,\theta} : \kk[L]\mbox{-mod} \to \kk[G]\mbox{-mod}
    \]
    induces a parabolic induction functor
    \[
      \ind_{U\times 1,\theta\times 1}:  \kk[L\times H]\mbox{-mod} \to \kk[G\times H]\mbox{-mod}.
    \]
    Then there are isomorphisms of functors
    \[
      T^r_M\circ \ind_{U,\theta} \cong \ind_{U\times 1,\theta\times 1} \circ T^r_M.
    \]
    In other words, parabolic induction commutes with $T^r_M$ (and also with $T^l_M$).  Similarly,
    parabolic restriction commutes with $T^r_M$ and $T^l_M$.
    \end{enumerate}

%%%%%%%%%%%%%%%%%%%%%%%%%%%%%%%%%%%%%%%%%%%%%%%%%%%%%%%%%%%%%%%%%%%%%%%%%%%%%%%%%
\subsection{Cyclic biadjointness for parabolic induction and restriction}
%%%%%%%%%%%%%%%%%%%%%%%%%%%%%%%%%%%%%%%%%%%%%%%%%%%%%%%%%%%%%%%%%%%%%%%%%%%%%%%%%

We now define natural transformations $\rcap,\rcup,\lcap,\lcup$ which will serve as
biadjunction morphisms.
\begin{enumerate}
  \item Let
    \[
      \rcap: \ind_{U,\theta} \circ \res_{U,\theta} \to \id_{\kk[G]},
    \]
    where $\id_{\kk[G]}$ is the identity functor on the category of $\kk[G]$-modules,
    be the natural transformation given as follows.  The left hand side is given as a functor by tensoring with the $(\kk[G],\kk[G])$-bimodule
    \[
     \kk[G]\otimes_{\kk[P]} \kk[P]\vt \otimes_{k[L]} \kk[P]\vt \otimes_{k[P]} \kk[G]
     \cong \kk[G]\otimes_{\kk[P]} \kk[P]\vt \otimes_{k[P]} \kk[G].
    \]
    Thus the bimodule map defined on a $\kk$-basis by
    \[
     g\otimes b\vt \otimes h \mapsto gb\vt h \in \kk[G]
    \]
    gives a natural transformation to $\id_{\kk[G]}$.

  \item Let
    \[
     \rcup : \id_{\kk[L]} \to \res_{U,\theta} \circ \ind_{U,\theta}
    \]
    be the natural transformation given by the bimodule map
    \[
     \kk[L] \to \kk[P]\vt \otimes_{\kk[P]} \kk[G]\otimes_{\kk[P]} \kk[P]\vt,
    \]
    determined by
    \[
      1 \mapsto \vt\otimes 1 \otimes \vt.
    \]

  \item Let
    \[
     \lcap: \res_{U,\theta} \circ \ind_{U,\theta} \to \id_{\kk[L]}
    \]
    be the natural transformation given by the bimodule map
    \[
      \kk[P]\vt \otimes_{\kk[P]} \kk[G]\otimes_{\kk[P]} \kk[P]\vt \to \kk[L],
    \]
    defined as follows.  Let $\Proj_P: \kk[G] \to \kk[P]$ be the $\kk$-linear map defined, for $g\in G$, by
    \[
      \Proj_P(g) = \begin{cases}
      g, \ g\in P,\\
      0, \text{ otherwise.}
      \end{cases}
    \]
    Then define $\lcap$ as the composition
    \begin{gather*}
      \lcap: \kk[P]\vt \otimes_{\kk[P]} \kk[G] \otimes_{\kk[P]} \kk[P]\vt \xrightarrow{1\otimes \Proj_P \otimes 1} \kk[P]\vt \otimes_{\kk[P]} \kk[P] \otimes_{\kk[P]} \kk[P]\vt \\
      \xrightarrow{\mu} \kk[P]\vt\simeq \kk[L],
    \end{gather*}
    where $\mu$ is the multiplication map $b\vt\otimes p \otimes b'\vt \mapsto b\vt p b'\vt = bpb'\vt$.

  \item
    Let
    \[
      \lcup :  \id_{\kk[G]}\to \ind_{U,\theta} \circ \res_{U,\theta}
    \]
    be the natural transformation given by the bimodule map
    \[
      \lcup : \kk[G] \to  \kk[G]\otimes_{\kk[P]} \kk[P]\vt \otimes_{\kk[P]} \kk[G]
    \]
    defined as follows.  Let $G = \coprod_{i=1}^m Pg_i$ be a decomposition of $G$ into left $P$-cosets. The element
    \[
      \sum_{i=1}^m g_i^{-1} \otimes \vt \otimes g_i \in \kk[G]\otimes_{\kk[P]} \kk[P]\vt \otimes_{\kk[P]} \kk[G]
    \]
    does not depend on the choice of representatives $\{g_i\}_{i=1}^m$.  Indeed, if $g_i' = p_ig_i$ for $1 \le i \le m$, then
    \[
      \sum_{i=1}^m (g'_i)^{-1} \otimes \vt \otimes g'_i = \sum_{i=1}^m g_i^{-1}p_i^{-1} \otimes \vt \otimes p_ig_i=\sum_{i=1}^m g_i^{-1} \otimes \vt \otimes g_i
    \]
    (since elements of $P$ move across the tensor products and past $\vt$). Thus we may define a bimodule map $\lcup$ by declaring that
    \[
      1 \mapsto \sum_{i=1}^m g_i^{-1} \otimes \vt \otimes g_i.
    \]
    Then, for $g\in G$,
    \[
      \lcup(g) = \sum_{i=1}^m g_i^{-1} \otimes \vt \otimes g_ig = \sum_{i=1}^m gg_i^{-1} \otimes \vt \otimes g_i.
    \]
    The last equality is needed to ensure that we have a bimodule map, and is proven as follows.  Suppose that $g_ig = p_ig_i'$ for some $p_i\in P$ and some bijection  $i\mapsto i'$ of $\{1,\hdots, m\}$.  Then
    \begin{gather*}
      \sum_{i=1}^m g_i^{-1} \otimes \vt \otimes g_ig = \sum_{i=1}^m g_i^{-1} \otimes \vt \otimes p_ig'_i = \sum_{i=1}^m g_i^{-1}p_i \otimes \vt \otimes g'_i \\
      =\sum_{i=1}^m g(g'_i)^{-1} \otimes \vt \otimes g'_i
      =\sum_{i=1}^m gg_i^{-1} \otimes \vt \otimes g_i,
    \end{gather*}
    as desired.
\end{enumerate}

\begin{proposition}
The natural transformations $\rcap,\rcup,\lcap,\lcup$ make $\ind_{U,\theta}$ and
$\res_{U,\theta}$ into a biadjoint pair.
\end{proposition}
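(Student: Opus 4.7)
The plan is to verify the four triangle (zigzag) identities, namely the two for $\ind_{U,\theta} \dashv \res_{U,\theta}$ (with unit $\rcup$ and counit $\rcap$) and the two for $\res_{U,\theta} \dashv \ind_{U,\theta}$ (with unit $\lcup$ and counit $\lcap$). I would carry out all four checks at the level of bimodule homomorphisms, using that $\ind_{U,\theta}$ corresponds to the $(\kk[G],\kk[L])$-bimodule $\kk[G]\vt$, $\res_{U,\theta}$ to the $(\kk[L],\kk[G])$-bimodule $\vt\kk[G]$, and that the two relevant compositions correspond to the $(\kk[G],\kk[G])$-bimodule $\kk[G]\vt \otimes_{\kk[L]} \vt\kk[G]$ (for $\ind \circ \res$) and the $(\kk[L],\kk[L])$-bimodule $\vt\kk[G]\vt \cong \vt\kk[G] \otimes_{\kk[G]} \kk[G]\vt$ (for $\res \circ \ind$).

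The first adjunction is the routine half. Under the identification $\vt\kk[G]\vt \cong \kk[L]$, the map $\rcup$ sends $1 \mapsto \vt$ and $\rcap$ sends $a\vt \otimes \vt b \mapsto a\vt b$. Both zigzag identities then reduce, upon tracing an element through the composition, to the idempotent property $\vt \cdot \vt = \vt$ from Lemma~\ref{lem:v-theta}.

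The second adjunction requires more care. Fix a decomposition $G = \coprod_{i=1}^m P g_i$. For the identity on $\res_{U,\theta}$, I would start with $\vt g \in \vt\kk[G]$; applying $\lcup$ on the right yields $\sum_i \vt g g_i^{-1} \vt \otimes \vt g_i$, and applying $\lcap$ on the first two factors kills every summand except the unique index $i_0$ for which $g g_{i_0}^{-1} \in P$. Writing $g g_{i_0}^{-1} = m u$ with $m\in L$, $u\in U$, and applying the identity $\vt m u \vt = \theta(u) m \vt$ (which follows from Lemma~\ref{lem:v-theta}), the composition collapses to $\theta(u) m \vt \cdot \vt g_{i_0} = \vt m u g_{i_0} = \vt g$. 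The zigzag on $\ind_{U,\theta}$ is analogous: starting from $g\vt$ and applying $\lcup$ on the left yields $\sum_i g g_i^{-1} \vt \otimes \vt g_i \otimes \vt$, and $\lcap$ on the last two tensor factors picks out only the unique $i$ with $g_i \in P$ (which we may take to be $g_i = e$), so the sum collapses to $g\vt$.

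The main obstacle is simply the bookkeeping in the second adjunction: one must correctly identify which coset representative survives each application of $\lcap$, and track the scalar $\theta(u)$ as $\vt$ is moved across elements of $L$ and $U$. Once the identity $\vt m u \vt = \theta(u) m \vt$ is applied in the appropriate place, both zigzags collapse to the identity as required.
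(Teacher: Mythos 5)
Your proposal is correct and takes essentially the same approach as the paper: both verify the four zigzag identities by tracing a generating element through the composition of bimodule maps, with the two $\rcup$/$\rcap$ identities reducing to $\vt^2 = \vt$ and the two $\lcup$/$\lcap$ identities reducing to the observation that $\lcap$ annihilates all but the single summand indexed by the coset representative lying in $P$. The only cosmetic difference is that you phrase things in terms of $\kk[G]\vt$ and $\vt\kk[G]$ and explicitly track the scalar $\theta(u)$ via $\vt m u \vt = \theta(u)m\vt$, whereas the paper works with $\kk[G]\otimes_{\kk[P]}\kk[P]\vt$ and leaves that rearrangement implicit in equalities like $b\vt g g_j^{-1}\vt\otimes g_j = b\vt\otimes g$.
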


\begin{proof}
We must show four adjunction relations.
\begin{enumerate}
  \item $(\rcap \otimes \id_{\ind_{U,\theta}})(\id_{\ind_{U,\theta}}\otimes \rcup) = \id_{\ind_{U,\theta}}.$\\
    To show this we must show that the composition of bimodule maps
    \begin{gather*}
      \kk[G]\otimes_{\kk[P]}\kk[P]\vt \xrightarrow{\id\otimes\rcup} \kk[G]\otimes_{\kk[P]}\kk[P]\vt\otimes_{\kk[L]}\kk[P]\vt\otimes_{\kk[P]} \kk[G] \otimes_{\kk[P]} \kk[P]\vt \\
      \xrightarrow{\rcap \otimes \id} \kk[G]\otimes_{\kk[P]} \kk[P]\vt
    \end{gather*}
    is the identity.  We check that for $g\in G$ and $b\in P$,
    \[
      g\otimes b\vt \mapsto (g\otimes b\vt)\otimes(\vt\otimes 1 \otimes \vt) \mapsto gb\vt \otimes \vt = g\otimes b\vt,
    \]
    as desired.

    \item $(\id_{\res_{U,\theta}} \otimes \rcap)(\rcup \otimes \id_{\res_{U,\theta}}) = \id_{\res_{U,\theta}}$.\\

    The composition of bimodule maps
    \begin{gather*}
      \kk[P]\vt \otimes_{\kk[P]} \kk[G] \xrightarrow{\rcup\otimes \id} \kk[P]\vt \otimes_{\kk[P]} \kk[G] \otimes_{\kk[P]}
      \kk[P]v_\theta\otimes_{\kk[L]} \kk[P]\vt \otimes_{\kk[P]} \kk[G] \\
      \xrightarrow{\id\otimes \rcap} \kk[P]\vt \otimes_{\kk[P]} \kk[G]
    \end{gather*}
    is given, for $b\in P$ and $g\in G$, by
    \[
      b\vt\otimes g \mapsto b\vt\otimes 1\otimes \vt \otimes g \mapsto b\vt\otimes \vt g = b\vt\otimes g,
    \]
    as desired.

    \item $(\lcap \otimes \id_{\res_{U,\theta}})(\id_{\res_{U,\theta}}\otimes \lcup) = \id_{\res_{U,\theta}}$.\\

      The composition of bimodule maps
      \begin{gather*}
        \kk[P]\vt \otimes_{\kk[P]}\kk[G] \xrightarrow{\id \otimes \lcup} \kk[P]\vt \otimes_{\kk[P]}\kk[G] \otimes_{\kk[P]} \kk[P]\vt \otimes_{\kk[P]} \kk[G] \\
        \xrightarrow{\lcap\otimes \id} \kk[L]\otimes_{\kk[L]}\kk[P]\vt \otimes_{k[P]} \kk[G] \cong \kk[P]\vt \otimes_{k[P]} \kk[G]
      \end{gather*}
      is given, for $b\in P$, $g\in G$, by
      \[
        b\vt\otimes g \mapsto \sum_{i=1}^m b\vt \otimes gg_i^{-1}\otimes \vt \otimes g_i \mapsto b\vt gg_j^{-1}\vt \otimes g_j = b\vt \otimes g,
      \]
      where $j\in \{1,\hdots, m\}$ is the unique index such that $gg_j^{-1}\in P$ (the other summands are killed by $\lcap$).  Thus the composition is the identity, as desired.\\

  \item $(\id_{\ind_{U,\theta}} \otimes \lcap)(\lcup \otimes \id_{\ind_{U,\theta}}) = \id_{\ind_{U,\theta}}$.\\

    The composition of bimodule maps
    \begin{gather*}
      \kk[G]\otimes_{\kk[P]} \kk[P]\vt \xrightarrow{\lcup\otimes \id} \kk[G]\otimes_{\kk[P]} \kk[P]\vt \otimes_{\kk[P]} \otimes \kk[G]\otimes_{\kk[P]} \kk[P]\vt \\
      \xrightarrow{\id\otimes\lcap}  \kk[G]\otimes_{\kk[P]} \kk[P]\vt
    \end{gather*}
    is given, for $g\in G$ and $b\in P$, by
    \[
      g\otimes b\vt \mapsto \sum_{i=1}^m g_i^{-1} \otimes \vt \otimes g_i g \otimes b\vt \mapsto g_j^{-1} \otimes \vt g_j g b \vt = g\otimes b\vt,
    \]
    where $j\in \{1,\hdots, m\}$ is the unique index such that $g_j g \in P$. Thus the composition is the identity, as desired.
\end{enumerate}

This completes the proof of biadjointness.
\end{proof}

%%%%%%%%%%%%%%%%%%%%%%%%%%%%%%%%%%%
\subsection{Bimodule endomorphisms}
%%%%%%%%%%%%%%%%%%%%%%%%%%%%%%%%%%%

We begin by discussing the endomorphism algebra of the functors $\res_{U,\theta}$ and $\ind_{U,\theta}$.
Consider the $(\kk[L],\kk[G])$-bimodule $\kk[P]\vt\otimes_{\kk[P]} \kk[G]$.
Let $(\kk[P]\vt\otimes_{\kk[P]} \kk[G])^P$ denote the subspace of $P$-invariants under the conjugation action of $P$:
\[
  (\kk[P]\vt\otimes_{\kk[P]} \kk[G])^P
  = \{ x\in \kk[P]\vt\otimes_{\kk[P]} \kk[G] \mid mx=xm \text{ for all } m\in P\}.
\]
Any $x\in (\kk[P]\vt\otimes_{\kk[P]} \kk[G])^P$ gives rise to a $(\kk[L],\kk[G])$-bimodule map
\[
  {'x}:\kk[P]\vt\otimes_{\kk[P]} \kk[G]\to \kk[P]\vt\otimes_{\kk[P]} \kk[G],
\]
determined by setting
\[
  {'x}(\vt\otimes 1) = x,
\]
so that, for $b\in L$, $g\in G$,
\[
  {'x}(b\vt \otimes g) = bxg.
\]
Therefore, any $x\in (\kk[P]\vt\otimes_{\kk[P]} \kk[G])^P$ may be considered as a natural transformation
\[
  \res_{U,\theta} \to \res_{U,\theta}.
\]
We may define a multiplication on $(\kk[P]\vt\otimes_{\kk[P]} \kk[G])^P$ by
\[
  \left(\sum_{g\in G} \lambda_g (b_g\vt\otimes g)\right)\left(\sum_{h\in G} \mu_h (d_h\vt\otimes h)\right) = \sum_{(g,h)\in G\times G} \lambda_g\mu_h (d_hb_g\vt \otimes  gh).
\]
(Note the order of the terms in the product on the right.)  It follows from Lemma \ref{lem:natres} that this multiplication makes $(\kk[P]\vt\otimes_{\kk[P]} \kk[G])^P$ into a $\kk$-algebra, with unit element $\vt\otimes 1$.

\begin{lemma}\label{lem:natres}
The assignment $a \mapsto {'a}$ induces an isomorphism of $\kk$-algebras
\[
  (\kk[P]\vt\otimes_{\kk[P]} \kk[G])^P \simeq \Hom(\res_{U,\theta},\res_{U,\theta}),
\]
where $\Hom(\res_{U,\theta},\res_{U,\theta})$ denotes the $\kk$-algebra of natural transformations of the functor $\res_{U,\theta}$.
\end{lemma}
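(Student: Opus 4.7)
The plan is to proceed in three stages: first establish that $M := \kk[P]\vt\otimes_{\kk[P]}\kk[G]$ is a cyclic $(\kk[L],\kk[G])$-bimodule generated by $\vt\otimes 1$; then identify the $\kk$-vector space of bimodule endomorphisms with $M^P$; finally verify the algebra structure and pass to the language of natural transformations.

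For the first stage, I would observe that since $u\vt=\theta(u)\vt$ for $u\in U$ and $P=LU$ with $L\cap U=\{1\}$, one has $\kk[P]\vt=\kk[L]\vt$ as a left $\kk[L]$-module. Consequently any element of $M$ has the form $l\vt\otimes g=l\,(\vt\otimes 1)\,g$ for some $l\in L$, $g\in G$, so $\vt\otimes 1$ generates $M$ as an $(\kk[L],\kk[G])$-bimodule. Hence every bimodule endomorphism $f$ of $M$ is uniquely determined by $y:=f(\vt\otimes 1)$, and one has the set-theoretic inverse $f\mapsto f(\vt\otimes 1)$ to the assignment $a\mapsto{}'a$.

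The crux is to check that the prescription $'y(b\vt\otimes g)=byg$ extends to a well-defined bimodule map on $M$ precisely when $y\in M^P$. The relations defining $M$ arise from the tensor product over $\kk[P]$, and, using Lemma~\ref{lem:v-theta} to rewrite $\vt\cdot lu=\theta(u)\,l\vt$, they are generated by $\theta(u)\,l(\vt\otimes 1)=(\vt\otimes 1)\cdot lu$ for $l\in L$ and $u\in U$. Thus $y$ defines a bimodule map iff $\theta(u)\,ly=y\,lu$ for all such $l,u$. Specializing $l=1$ gives $uy=\theta(u)y$, and specializing $u=1$ gives $ly=yl$ for $l\in L$; conversely these two conditions, combined with the key observation that $u\cdot(l\vt)=l(l^{-1}ul)\vt=\theta(l^{-1}ul)\,l\vt=\theta(u)\,l\vt$ (using that $\theta$ is normalized by $L$), show that for any $y=\sum_g\lambda_g b_g\vt\otimes g\in M$ the identity $uy=\theta(u)y$ is equivalent to $uy=yu$ (viewing $u\in P\subseteq G$ acting on the right). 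Combining, the defining condition is exactly $my=ym$ for all $m\in P$, i.e., $y\in M^P$. The hardest step here is correctly matching the two different incarnations of the $P$-action (the left one through $\kk[P]\vt$ and the right one through $\kk[P]\subseteq\kk[G]$), which hinges on the $L$-normalization of $\theta$.

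For the algebra structure, I would compute ${}'a\circ{}'b$ directly: writing $a=\sum_g\lambda_g b_g\vt\otimes g$ and $b=\sum_h\mu_h d_h\vt\otimes h$, one gets
\[
  ({}'a\circ{}'b)(\vt\otimes 1)={}'a\Big(\sum_h\mu_h d_h\vt\otimes h\Big)=\sum_h\mu_h d_h\, a\, h=\sum_{g,h}\lambda_g\mu_h\,d_hb_g\vt\otimes gh,
\]
which matches the prescribed product $ab$; since both sides of the claimed isomorphism are determined by their value on $\vt\otimes 1$, this yields $({}'a)\circ({}'b)={}'(ab)$ and verifies multiplicativity, with the unit $\vt\otimes 1$ going to $\id_M$. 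Finally, the standard correspondence between natural endotransformations of the functor of tensoring with $M$ and bimodule endomorphisms of $M$ identifies $\End_{(\kk[L],\kk[G])}(M)$ with $\Hom(\res_{U,\theta},\res_{U,\theta})$, completing the isomorphism of $\kk$-algebras.
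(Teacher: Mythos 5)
Your proof is correct and follows essentially the same route as the paper: identify natural transformations of $\res_{U,\theta}$ with $(\kk[L],\kk[G])$-bimodule endomorphisms of $\kk[P]\vt\otimes_{\kk[P]}\kk[G]$, use that this bimodule is cyclically generated by $\vt\otimes 1$, check that the prescription $\vt\otimes 1\mapsto a$ extends to a bimodule map exactly when $a\in M^P$, and verify multiplicativity by evaluating on the generator. The one place you add value is in spelling out the surjectivity/well-definedness step — in particular the observation that $uy=\theta(u)y$ holds automatically for every $y\in M$ (via $\kk[P]\vt=\kk[L]\vt$ and the $L$-normalization of $\theta$), which is exactly what makes the right-module condition $yu=\theta(u)y$ coincide with $P$-invariance — whereas the paper leaves this as an ``easy to check.''
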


\begin{proof}
The $\kk$-algebra $\Hom(\res_{U,\theta},\res_{U,\theta})$ is isomorphic to the space of $(\kk[L],\kk[G])$-bimodule maps
\[
  \{f: \kk[P]\vt\otimes_{\kk[P]} \kk[G] \rightarrow \kk[P]\vt\otimes_{\kk[P]} \kk[G]\}.
\]
First we show that the map $a \mapsto {'a}$ intertwines the multiplication on the two sides.  To see this, let
$a = \sum_{g\in G} \lambda_g (b_g\vt\otimes g)$ and $c = \sum_{h\in G} \mu_h (d_h\vt\otimes h)$ be elements of $(\kk[P]\vt\otimes_{\kk[P]} \kk[G])^P$.  Then
\[
  {'(ac)}(\vt\otimes 1) = \sum_{(g,h)\in G\times G} \lambda_g\mu_h (d_hb_g\otimes gh).
\]
We also have
\[
  ({'a} {'c}) (\vt\otimes 1) = {'a}\left(\sum_{h\in G} \mu_h(d_h\vt\otimes h)\right) =
  \sum_{h\in G} \mu_h d_h {'a}(\vt\otimes 1) h =
  \sum_{(g,h)\in G\times G} \lambda_g\mu_h (d_hb_g \otimes gh),
\]
so that
\[
  {'(ac)}(\vt\otimes 1) = ({'a} {'c}) (\vt\otimes 1),\text{ and therefore } {'a}{'c} = {'(ac)}.
\]
To see that the map $a\mapsto {'a}$ is injective,
let $a,c\in (\kk[P]\vt\otimes_{\kk[P]} \kk[G])^P$ such that ${'a} = {'c}$ as bimodule maps.  Then
\[
  a=a(\vt\otimes 1) = {'a}(\vt\otimes 1) = {'c}(\vt\otimes 1) = c(\vt\otimes1)= c.
\]
Thus the map is injective.  It also now follows that $(\kk[P]\vt\otimes_{\kk[P]} \kk[G])^P$ is an algebra, since it is a subalgebra of $\Hom(\res_{U,\theta},\res_{U,\theta})$.

To see surjectivity, suppose that
\[
  f : \kk[P]\vt\otimes_{\kk[P]} \kk[G]\to \kk[P]\vt\otimes_{\kk[P]} \kk[G]
\]
is a bimodule map, and set $a = f(\vt\otimes 1)$.  Then is it easy to check that
\[
  a\in (\kk[P]\vt\otimes_{\kk[P]} \kk[G])^P
\]
and that $f = {'a}$ as bimodule maps.
\end{proof}

Thus the $\kk$-algebra $(\kk[P]\vt\otimes_{\kk[P]} \kk[G])^P$ is isomorphic to the algebra of natural transformations of $\res_{U,\theta}$.  Similarly, we may consider $( \kk[G]\otimes_{\kk[P]}\kk[P]\vt)^P$ as an algebra, with multiplication
\[
  \left(\sum_{g\in G} \lambda_g (g\otimes b_g\vt)\right)\left(\sum_{h\in G} \mu_h (h\otimes d_h\vt)\right) = \sum_{(g,h)\in G\times G} \lambda_g\mu_h (hg \otimes  b_gd_h\vt).
\]

\begin{lemma}
The map
\[
  \tau: (\kk[P]\vt\otimes_{\kk[P]} \kk[G])^P_\mathrm{op} \to (\kk[G]\otimes_{\kk[P]} \kk[P]\vt)^P,
\]
\[
  \sum_{g\in G} \lambda_g (b_g\vt\otimes g) \mapsto \sum_{g\in G} \lambda_g (g\otimes b_g\vt)
\]
is an isomorphism of algebras.
\end{lemma}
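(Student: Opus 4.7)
The plan is to identify the two algebras with the endomorphism algebras $\End(\res_{U,\theta})$ and $\End(\ind_{U,\theta})$ of the biadjoint pair of functors from Section~\ref{sec:pardef}, and to recognize $\tau$ as the canonical anti-isomorphism produced by this biadjoint structure.

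First I would prove the evident induction analog of Lemma~\ref{lem:natres}: the assignment sending $a = \sum_g \mu_g(g \otimes d_g\vt) \in (\kk[G] \otimes_{\kk[P]} \kk[P]\vt)^P$ to the $(\kk[G], \kk[L])$-bimodule endomorphism of $\kk[G]\vt$ determined by $1 \otimes \vt \mapsto a$ is an isomorphism of $\kk$-algebras $(\kk[G] \otimes_{\kk[P]} \kk[P]\vt)^P \cong \End(\ind_{U,\theta})$. The multiplication rule on $(\kk[G] \otimes_{\kk[P]} \kk[P]\vt)^P$, with the $hg$ and $b_g d_h\vt$ orderings in the setup, has been defined precisely so that it matches composition of bimodule endomorphisms; the verification parallels the proof of Lemma~\ref{lem:natres}.

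Next I would invoke the standard categorical fact that a biadjoint pair $(F, G)$ induces a canonical algebra anti-isomorphism $\End(F)^{\mathrm{op}} \cong \End(G)$ by string-bending: a natural transformation $\eta : F \Rightarrow F$ is sent to the composite $G \Rightarrow GFG \xrightarrow{G\eta G} GFG \Rightarrow G$ built from the unit and counit of the adjunction. Applied to the biadjoint pair $(\res_{U,\theta}, \ind_{U,\theta})$ and composed with Lemma~\ref{lem:natres} and the isomorphism of the first step, this yields a canonical algebra anti-isomorphism $(\kk[P]\vt \otimes_{\kk[P]} \kk[G])^P \to (\kk[G] \otimes_{\kk[P]} \kk[P]\vt)^P$.

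Finally I would verify by direct computation that this abstract anti-isomorphism, evaluated on the spanning element $b\vt \otimes g$, yields precisely $g \otimes b\vt$, thereby identifying it with $\tau$. This amounts to tracing $b\vt \otimes g$ through the bending procedure, using the explicit formulas for $\lcup$ (a Casimir-type sum over left $P$-cosets of $G$) and $\rcap$ (multiplication), and observing that all but one term of the Casimir sum is annihilated upon collapse. Once this identification is in hand, both bijectivity and the algebra anti-homomorphism property of $\tau$ are automatic. I expect the main obstacle to be precisely this last verification: the explicit matching requires careful bookkeeping through a chain of bimodule maps in which the $P$-invariance of $b\vt \otimes g$ is used in an essential way to collapse the Casimir sum to a single term. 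An alternative, more direct route is possible---one checks the anti-homomorphism property on spanning elements using the multiplication formulas in the setup (a one-line computation), and verifies well-definedness and bijectivity by expanding invariants in a $P$-coset basis, noting that the naive swap is \emph{not} well-defined on the ambient bimodules and that $P$-invariance is essential---but the biadjunction viewpoint makes the conceptual content of the isomorphism transparent.
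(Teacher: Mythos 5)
The paper's proof of this lemma is simply "a straightforward check, omitted," and the intended argument is exactly your alternative route: check the anti-homomorphism property directly on the multiplication formulas (a one-line comparison of $\tau(ca)$ with $\tau(a)\tau(c)$), with the subtlety concentrated in the well-definedness of $\tau$ on $P$-invariants. Your primary route via mates is a genuinely different and conceptually appealing approach, but your description contains a real error. The ``string-bending'' map $\ind \Rightarrow \ind\res\ind \Rightarrow \ind\res\ind \Rightarrow \ind$ is the mate of a \emph{single} adjunction, and hence must use a matching unit--counit pair: either $(\rcup,\rcap)$, which realize $\ind \dashv \res$, or $(\lcup,\lcap)$, which realize $\res \dashv \ind$. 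Your recipe pairs $\lcup$ with $\rcap$, which come from the two different adjunctions; the resulting composite $(\rcap\ind)\circ(\ind\,{}'a\,\ind)\circ(\lcup\ind)$ is not a mate but a partial trace, and working it out on $\vt \in \kk[G]\vt$ gives $\sum_i g_i^{-1}\overline{a}\,g_i\vt$ (a sum over coset representatives), not $\overline{\tau(a)}\vt$. Moreover, the statement that a \emph{biadjoint} pair yields one canonical anti-isomorphism is only true if the two single-adjunction mates agree, which is precisely cyclicity --- and cyclicity (Theorem~\ref{thm:cyclicbiadjointness}) is proved in the paper \emph{after} and \emph{using} this lemma, so invoking it here would be circular.

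If you fix the pairing and use, say, $(\rcup,\rcap)$ throughout, the approach does go through, but the ``main obstacle'' is heavier than you suggest: tracing $\vt$ through the bending gives $\sum_g \lambda_g \vt g \vt$, and identifying this with $\overline{\tau(a)} = \sum_g\lambda_g g\vt$ requires using that $\overline{a}$ lies in $\kk[G]^P$ (hence commutes with $\vt$), so that $\overline{a} = \vt\overline{a}\vt$ --- it is not a matter of ``all but one term of the Casimir sum'' dying; no Casimir sum even appears on this side. You would also need to establish the induction analogue of Lemma~\ref{lem:natres} first (it is Lemma~\ref{lem:opp}, stated later, but its proof does not depend on the present lemma, so there is no circularity there). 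In short: the mate-theoretic route can be made to work, but it is considerably longer than the direct computation, and as written the choice of cups and caps is incorrect and would either give the wrong map or silently presuppose cyclicity.
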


\begin{proof}
The proof of this statement is a straightforward check, which is omitted.  Note that the subscript ``op" denotes the opposite algebra.
\end{proof}

Thus, in complete analogy with Lemma \ref{lem:natres}, we have the following.

\begin{lemma}\label{lem:opp}
The algebra $( \kk[G]\otimes_{\kk[P]}\kk[P]\vt)^P$ is isomorphic to the algebra of natural transformations of the functor $\ind_{U,\theta}$, with the element $a \in (\kk[G]\otimes_{\kk[P]}\kk[P]\vt)^P$ giving a bimodule map via $a'(1\otimes \vt)= a. $
\end{lemma}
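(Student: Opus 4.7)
The plan is to mirror the argument for Lemma~\ref{lem:natres}, with the two tensor factors swapped and the multiplication suitably reversed (which is exactly the content of the antiisomorphism $\tau$ preceding the lemma). I will construct an explicit map
\[
  \Phi : (\kk[G]\otimes_{\kk[P]}\kk[P]\vt)^P \to \Hom(\ind_{U,\theta},\ind_{U,\theta})
\]
sending $a \mapsto a'$, where $a'$ is the $(\kk[G],\kk[L])$-bimodule endomorphism of $\kk[G]\otimes_{\kk[P]}\kk[P]\vt$ determined by $a'(1\otimes \vt)=a$, i.e.\ $a'(g\otimes b\vt)=g\cdot a\cdot b$ for $g\in G$, $b\in P$. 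Then I will check, in order: (i) well-definedness over the $\kk[P]$-balanced tensor product, (ii) the algebra homomorphism property, (iii) injectivity, and (iv) surjectivity.

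For (i), the key point is that the $P$-invariance condition $ma=am$ for $m\in P$ is precisely what is needed so that the formula $a'(g\otimes b\vt)=gab$ respects the relation $gm\otimes b\vt = g\otimes mb\vt$ in $\kk[G]\otimes_{\kk[P]}\kk[P]\vt$; bilinearity is then immediate. For (ii), writing $a=\sum_g \lambda_g(g\otimes b_g\vt)$ and $c=\sum_h \mu_h(h\otimes d_h\vt)$, I will compute
\[
  (a'\circ c')(1\otimes \vt) = a'(c) = \sum_h \mu_h\, h\!\cdot\! a\!\cdot\! d_h = \sum_{g,h}\lambda_g\mu_h\,(hg\otimes b_gd_h\vt),
\]
which agrees with the product $ac$ defined just before the lemma; so $a'c' = (ac)'$. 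For (iii), the identity $a'(1\otimes \vt)=a$ recovers $a$ from $a'$. For (iv), given any bimodule endomorphism $f$, set $a:=f(1\otimes \vt)$; the equality $m\cdot f(1\otimes\vt)=f(m\otimes\vt)=f(1\otimes m\vt)=f(1\otimes\vt)\cdot m$ for $m\in P$ (using the $\kk[P]$-balanced tensor) shows $a$ lies in the $P$-invariants, and then $f=a'$ since both are bimodule maps agreeing on the generator $1\otimes \vt$.

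The main point to watch, and the only real subtlety, is the order of multiplication: since $a'$ places $a$ \emph{between} a left $G$-factor and a right $L$-factor, composing $a'$ after $c'$ produces the combination $hg\otimes b_g d_h\vt$ rather than $gh\otimes d_h b_g\vt$. This is precisely why the multiplication on $(\kk[G]\otimes_{\kk[P]}\kk[P]\vt)^P$ was defined with the factors in reversed order compared to $(\kk[P]\vt\otimes_{\kk[P]}\kk[G])^P$, and is what makes $\Phi$ a covariant algebra homomorphism. Once the bookkeeping on left versus right actions is handled correctly, the remaining verifications are direct and follow the template of Lemma~\ref{lem:natres} essentially line for line; there is no new technical obstacle beyond keeping the conventions consistent.
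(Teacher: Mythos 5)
Your proposal is correct and follows exactly the route the paper intends: the paper's proof of Lemma~\ref{lem:opp} simply states that it is ``essentially the same as the proof of Lemma~\ref{lem:natres},'' and your argument fills in precisely those mirrored details (well-definedness via $P$-invariance, the multiplication check yielding $a'c'=(ac)'$, injectivity from $a'(1\otimes\vt)=a$, and surjectivity by evaluating on the generator), with the reversed order of factors correctly accounted for.
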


\begin{proof}
The proof is essentially the same as the proof of Lemma \ref{lem:natres}.
\end{proof}

%%%%%%%%%%%%%%%%%%%%%%
\subsection{Cyclicity}\label{sec:cyclicity}
%%%%%%%%%%%%%%%%%%%%%%

Given an element $a\in  (\kk[P]\vt\otimes_{\kk[P]} \kk[G])^P$, we may also consider $a$ as an element of the opposite algebra
\[
	a\in  (\kk[P]\vt\otimes_{\kk[P]} \kk[G])^P_\mathrm{op}
	\xrightarrow{\tau}
	(\kk[G]\otimes_{\kk[P]} \kk[P]\vt)^P.
\]
Thus the element $a$ of the opposite algebra is identified with the element $\tau(a)$ in
the algebra $(\kk[G]\otimes_{\kk[P]} \kk[P]\vt)^P$.
From now on, we will identify the bimodule maps $'a$ and $a'$ with the corresponding natural transformations of $\res_{U,\theta}$ and $\ind_{U,\theta}$.  So we have
\[
  'a: \res_{U,\theta} \to \res_{U,\theta},\quad \text{and} \quad
  a': \ind_{U,\theta} \to \ind_{U,\theta}.
\]

\begin{proposition}\label{prop:cyclic}
For any $a\in  (\kk[P]\vt\otimes_{\kk[P]} \kk[G])^P$, there are the following equalities of natural transformations:
\begin{enumerate}
  \item $\rcap(a'\otimes \id_{\res_{U,\theta}}) = \rcap(\id_{\ind_{U,\theta}} \otimes {'a})$,

  \item $('a \otimes \id_{\ind_{U,\theta}})\rcup = (\id_{\res_{U,\theta}} \otimes a')\rcup$,

  \item $\lcap('a \otimes \id_{\ind_{U,\theta}}) = \lcap (\id_{\res_{U,\theta}} \otimes a')$,

  \item $(a'\otimes \id_{\res_{U,\theta}})\lcup = (\id_{\ind_{U,\theta}} \otimes 'a)\lcup$.
\end{enumerate}
\end{proposition}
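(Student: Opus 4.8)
The plan is to unwind each of the four equalities into an identity of bimodule homomorphisms and then verify it on cyclic generators. Write $a=\sum_{g}\lambda_g\,(b_g\vt\otimes g)\in(\kk[P]\vt\otimes_{\kk[P]}\kk[G])^P$; by Lemma~\ref{lem:v-theta} we have $\kk[P]\vt=\kk[L]\vt$, so we may and do take each $b_g\in\kk[L]$, and then $\tau(a)=\sum_g\lambda_g\,(g\otimes b_g\vt)$. Recall from Lemmas~\ref{lem:natres} and~\ref{lem:opp} that $'a$ is the bimodule endomorphism of $\kk[P]\vt\otimes_{\kk[P]}\kk[G]$ with $'a(\vt\otimes 1)=a$, and that $a'$ is the bimodule endomorphism of $\kk[G]\otimes_{\kk[P]}\kk[P]\vt$ with $a'(1\otimes\vt)=\tau(a)$. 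These two bimodules are cyclic, generated by $\vt\otimes 1$ and $1\otimes\vt$ respectively; hence every tensor product of induction and restriction bimodules appearing in the statement is cyclic, generated by the evident word in $1$'s and $\vt$'s, and it suffices to check each equality on that generator.

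First I would dispatch (1) and (2), which involve the ``simple'' adjunction maps $\rcap$ (multiplication) and $\rcup$ (sending $1$ to $\vt\otimes 1\otimes\vt$). Evaluating $\rcap(a'\otimes\id_{\res_{U,\theta}})$ and $\rcap(\id_{\ind_{U,\theta}}\otimes{'a})$ on the generator $1\otimes\vt\otimes\vt\otimes 1$ and simplifying with the identities of Lemma~\ref{lem:v-theta} ($\vt^2=\vt$ and $\vt b=b\vt$ for $b\in\kk[L]$), equality (1) becomes the assertion that $\sum_g\lambda_g\,g\,b_g\vt=\sum_g\lambda_g\,b_g\vt\,g$ in $\kk[G]$; likewise (2), evaluated on $1\in\kk[L]$, becomes $\sum_g\lambda_g\,(b_g\vt\otimes g\otimes\vt)=\sum_g\lambda_g\,(\vt\otimes g\otimes b_g\vt)$ in $\kk[P]\vt\otimes_{\kk[P]}\kk[G]\otimes_{\kk[P]}\kk[P]\vt$. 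Each of these I would obtain by writing $a$ in the normal form $\sum_i m_i\vt\otimes g_i$, with $m_i\in\kk[L]$ and $\{g_i\}$ a transversal for $P\backslash G$, and applying the defining relations $p\,a=a\,p$ ($p\in P$) of $(\kk[P]\vt\otimes_{\kk[P]}\kk[G])^P$ together with the fact that $\theta$ is normalized by $L$; equivalently, one may read them off from the already-established fact that $\tau$ does not depend on the chosen presentation of $a$. For (3) I would argue in the same way, now inserting the formula for $\lcap$ (the map $1\otimes\Proj_P\otimes 1$ followed by multiplication) and evaluating on $\vt\otimes 1\otimes\vt$.

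Case (4) will be the most laborious, because $\lcup$ is given by a coset sum: fixing a transversal $G=\coprod_i P g_i$, one has $\lcup(1)=\sum_i g_i^{-1}\otimes\vt\otimes g_i$. Evaluating both sides of (4) on $1\in\kk[G]$ and simplifying reduces it to the identity
\[
  \sum_i\sum_g\lambda_g\,\bigl(g_i^{-1}g\otimes b_g\vt\otimes g_i\bigr)
  =\sum_i\sum_g\lambda_g\,\bigl(g_i^{-1}\otimes b_g\vt\otimes g\,g_i\bigr)
\]
in $\kk[G]\otimes_{\kk[P]}\kk[P]\vt\otimes_{\kk[P]}\kk[G]$, which follows from the independence of $\sum_i g_i^{-1}\otimes\vt\otimes g_i$ on the choice of representatives (already used to construct $\lcup$) together with the $P$-invariance of $\tau(a)$, after reindexing the double sum by the permutation of right cosets induced by right multiplication. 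I expect this reindexing step --- and, throughout, keeping the isomorphism $\tau$ with its opposite-algebra twist consistent --- to be the only real obstacle; no new ingredient beyond the defining $P$-invariance of elements of $(\kk[P]\vt\otimes_{\kk[P]}\kk[G])^P$ and the elementary identities of Lemma~\ref{lem:v-theta} is needed.

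Finally, I note that the work can be roughly halved: the anti-automorphism $g\mapsto g^{-1}$ of $\kk[G]$ exchanges the $(U,\theta)$- and $(U,\theta^{-1})$-versions of this construction and interchanges left with right modules, hence relates the $\rcap,\rcup$-pair with the $\lcap,\lcup$-pair; thus (3) and (4) can alternatively be deduced from (1) and (2) (or from the biadjointness proved above via the zig-zag identities). Once Proposition~\ref{prop:cyclic} is established, its specializations give the deferred cyclicity statements, Propositions~\ref{prop:cyclicity} and~\ref{prop:cyclic-GL}.
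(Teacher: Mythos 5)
Your overall strategy --- unwind each equality into a bimodule identity and verify on generators --- is the same as the paper's. Parts~(1), (2), and (4) are handled correctly: there the source bimodules ($\kk[G]\otimes_{\kk[P]}\kk[P]\vt\otimes_{\kk[P]}\kk[G]$, $\kk[L]$, and $\kk[G]$ respectively) really are cyclic over the relevant pair of group algebras, and your reductions match the paper's computations (the paper simply absorbs the $b_g$'s into the group element, writing $a=\sum_g\lambda_g(\vt\otimes g)$, which makes the check for~(2) immediate).

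There is, however, a genuine gap in part~(3), stemming from the blanket claim that \emph{every} tensor product of induction and restriction bimodules appearing in the statement is cyclic, generated by the evident word in $1$'s and $\vt$'s. This fails for the bimodule underlying $\res_{U,\theta}\circ\ind_{U,\theta}$, namely $\kk[P]\vt\otimes_{\kk[P]}\kk[G]\otimes_{\kk[P]}\kk[P]\vt\cong\vt\kk[G]\vt$ as a $(\kk[L],\kk[L])$-bimodule: the sub-bimodule generated by $\vt\otimes 1\otimes\vt$ is only $\kk[L]\vt$, so elements $\vt\otimes g\otimes\vt$ with $g\notin P$ are not reached. Consequently, evaluating~(3) only on $\vt\otimes 1\otimes\vt$ gives $\Proj_P(\overline{a})\vt$ on both sides trivially and misses the actual content of the statement. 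To close the gap you must evaluate on $\vt\otimes g\otimes\vt$ for arbitrary $g\in G$: the left side becomes $\Proj_P(\overline{a}g)\vt$ and the right side $\Proj_P(g\overline{a})\vt$, and one then uses that $\Proj_P(\overline{a}g)=\Proj_P(g\overline{a})$ for $\overline{a}\in\kk[G]^P$ and $g\in G$ (a short direct check from the coefficient condition $\mu_{p^{-1}k}=\mu_{kp^{-1}}$, $p\in P$, $k\in G$). Finally, be cautious about the parenthetical suggestion to deduce~(3) and~(4) ``from the biadjointness proved above via the zig-zag identities'': cyclicity is strictly stronger than biadjointness and does not follow from the zig-zag relations alone.
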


\begin{proof}
For $a\in (\kk[P]\vt\otimes_{\kk[P]} \kk[G])^P$, denote by
$\overline{a}$ the image of $a$ under the natural multiplication map in $\kk[G]$,
\[
  (\kk[P]\vt\otimes_{\kk[P]} \kk[G])^P\to \kk[G]^P, \ \ a\mapsto \overline{a}.
\]
Similarly, for $a\in (\kk[G]\otimes_{\kk[P]} \kk[P]\vt)^P$, we will also denote by $\overline{a}$ the image of $a$ in
$\kk[G]$ under the multiplication map
\[
  (\kk[G]\otimes_{\kk[P]} \kk[P]\vt)^P \to \kk[G]^P, \ \ a\mapsto \overline{a}.
\]
Note that
\[
  \overline{{'a}(\vt\otimes 1)} = \overline{a'(1\otimes \vt)} = \overline{a} \in \kk[G]^P,
\]
so this abuse of notation is mostly harmless.

\begin{enumerate}
  \item The left and right hand sides are given by bimodule maps
    \[
      \kk[G]\otimes_{\kk[P]}\kk[P]\vt\otimes_{\kk[P]}\kk[G] \to \kk[G].
    \]
    For $g\in G$, $b\in L$, the left hand side is the map
    \[
      g\otimes b\vt \otimes h \mapsto (g ab) \otimes h \mapsto g\overline{a}b h\in \kk[G],
    \]
    while the right hand side is the map
    \[
      g\otimes b\vt \otimes h \mapsto g\otimes (bah) \mapsto g b\overline{a} h.
    \]
    But $b\overline{a} = \overline{a}b$, since $b\in P$ and $\overline{a} \in  \kk[G]^P$. Hence the two sides agree.

    \item The left and right hand sides are given by bimodule maps
      \[
        \kk[L] \to \kk[P]\vt\otimes_{\kk[P]}\kk[G]\otimes_{\kk[G]} \kk[G] \otimes_{k[P]} \kk[P]\vt.
      \]
      Write $a\in (\kk[P]\vt\otimes_{\kk[P]} \kk[G])^P$ as $a = \sum_{g\in G}  \lambda_g ( \vt \otimes g)$, $\lambda_g\in \kk$, so that the left hand side is the map which sends
      \[
        1 \mapsto \vt\otimes 1\otimes 1\otimes \vt \mapsto a\otimes 1\otimes \vt = \sum_{g\in G} \lambda_g (\vt \otimes g \otimes 1\otimes \vt),
      \]
      while the right hand side is given by
      \[
        1 \mapsto \vt \otimes 1\otimes 1 \otimes \vt \mapsto \vt \otimes 1\otimes a = \sum_{g\in G} \lambda_g (\vt \otimes 1 \otimes g \otimes \vt),
      \]
      where now
      \[
        a = \sum_{g\in G} \lambda_g (g\otimes \vt) \in (\kk[G]\otimes_{\kk[P]} \kk[P]\vt)^P
      \]
      induces the natural transformation $a'$.  Since $g$ moves across the middle tensor product $\otimes_{\kk[G]}$, the left and right hand sides agree.

  \item The left and right hand sides are given by bimodule maps
    \[
      \kk[P]\vt\otimes_{\kk[P]} \kk[G] \otimes_{k[P]} \kk[P]\vt \to \kk[P]\vt \simeq \kk[L] .
    \]
    The left hand side is given by the bimodule map
    \[
      \vt \otimes g \otimes \vt \mapsto ag\otimes \vt \mapsto \Proj_P(\overline{a}g)\vt.
    \]
    Similarly, the right hand side is given by the bimodule map
    \[
      \vt \otimes g \otimes \vt \mapsto \vt\otimes ga\mapsto \Proj_P(g\overline{a})\vt.
    \]
    The agreement of the two sides is given by the equality $\Proj_P(g\overline{a})=\Proj_P(\overline{a}g)$, for $\overline{a}\in \kk[G]^P$, $g\in G$, which is straightforward to check.

    \item Both sides are bimodule maps
      \[
        \kk[G] \to \kk[G]\otimes_{\kk[P]} \kk[P]\vt \otimes_{\kk[P]} \kk[G].
      \]
      Writing $a = \sum_{g\in G}\lambda_g (\vt\otimes g)$, the bimodule map on the left hand side is determined by the image
      \[
        1 \mapsto \sum_{i=1}^m g_i^{-1} \otimes \vt \otimes g_i \mapsto \sum_{g\in G} \sum_{i=1}^m  \lambda_g (g_i^{-1}g \otimes \vt \otimes g_i).
      \]
      Since the element $\sum_{i=1}^m g_i^{-1} \otimes \vt \otimes g_i$ is independent of the choice of left $P$ coset representatives $g_i$, the left hand side can also be written as
      \[
        1\mapsto \sum_{g\in G} \sum_{i=1}^m\lambda_g (g_{i'}^{-1}p_i^{-1}g \otimes \vt \otimes p_ig_{i'}),
      \]
      for any $p_i\in P$ and bijections $i\mapsto i'$ of $\{1,\hdots,m\}$, one bijection for each $g\in G$.

      The right hand side, on the other hand, is given by
      \[
        1\mapsto   \sum_{i=1}^m g_i^{-1} \otimes \vt \otimes g_i \mapsto \sum_{g\in G} \sum_{i=1}^m  \lambda_g (g_i^{-1} \otimes \vt \otimes g g_i).
      \]
      Now, for each fixed $g$, we may write $gg_i = p_i g_{i'}$, for some $p_i\in P$ and some bijection $i\mapsto i'$ of $\{1,\hdots,m\}$.  Then $g_i^{-1} = g_{i'}^{-1}p_i^{-1}g$, and the right hand side may be written as
      \[
        1\mapsto \sum_{g\in G} \sum_{i=1}^m  \lambda_g (g_{i'}^{-1}p_i^{-1}g \otimes \vt \otimes p_ig_{i'}),
      \]
      which agrees with the left hand side.
\end{enumerate}
\end{proof}

Thus we have the following theorem.
\begin{theorem}\label{thm:cyclicbiadjointness}
The natural transformations $\rcap,\rcup,\lcap,\lcup$ make
$\ind_{U,\theta}$ and $\res_{U,\theta}$ into a cyclic biadjoint pair.
\end{theorem}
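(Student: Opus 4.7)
The plan is to deduce the theorem from Proposition \ref{prop:cyclic} together with the parametrizations of $2$-endomorphisms provided by Lemmas \ref{lem:natres} and \ref{lem:opp}. Biadjointness was already established in the proposition above, so only cyclicity of the data $(\rcap, \rcup, \lcap, \lcup)$ remains to be verified. Following the framework of \cite{Kho10b}, cyclicity amounts to showing that every $2$-endomorphism of $\ind_{U,\theta}$ has the same mate in $\End(\res_{U,\theta})$ under the right biadjunction $(\rcap,\rcup)$ as under the left biadjunction $(\lcap,\lcup)$, and symmetrically for $2$-endomorphisms of $\res_{U,\theta}$.

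First I would use Lemma \ref{lem:opp} to write an arbitrary $2$-endomorphism of $\ind_{U,\theta}$ as $a'$ for a unique $a \in (\kk[G] \otimes_{\kk[P]} \kk[P]\vt)^P$, and then use the anti-isomorphism $\tau$ together with Lemma \ref{lem:natres} to interpret the same element as specifying the $2$-endomorphism ${'a}$ of $\res_{U,\theta}$. Next I would show that both mates of $a'$ equal ${'a}$. For the right biadjunction, identity (a) of Proposition \ref{prop:cyclic} allows one to rewrite $(\res_{U,\theta}\cdot\rcap)\circ(\res_{U,\theta}\cdot a'\cdot\res_{U,\theta})$ as $(\res_{U,\theta}\cdot\rcap)\circ(\res_{U,\theta}\cdot\ind_{U,\theta}\cdot{'a})$; composing with $\rcup\cdot\res_{U,\theta}$ on the right and applying naturality of $\rcup$ together with the triangle identity $(\res_{U,\theta}\cdot\rcap)\circ(\rcup\cdot\res_{U,\theta}) = \id_{\res_{U,\theta}}$ for $\ind_{U,\theta} \dashv \res_{U,\theta}$ produces ${'a}$. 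The left biadjunction is handled in parallel using identity (c), naturality of $\lcup$, and the triangle identity $(\lcap\cdot\res_{U,\theta})\circ(\res_{U,\theta}\cdot\lcup) = \id_{\res_{U,\theta}}$ for $\res_{U,\theta} \dashv \ind_{U,\theta}$. Hence the two mates coincide. The dual cyclicity statement for $2$-endomorphisms of $\res_{U,\theta}$ is proved analogously using identities (b) and (d) in place of (a) and (c).

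The main obstacle is essentially formal bookkeeping: carefully matching each of the four equalities of Proposition \ref{prop:cyclic} with the correct triangle identity, and tracking the naturality of $\rcup$ and $\lcup$ against the $2$-morphisms ${'a}$ and $a'$. No new representation-theoretic input is required beyond what has already been assembled. The general theorem then specializes, by appropriate choice of $G$, $L$, $U$, and $\theta$, to the cyclicity statements for parabolic induction in the finite general linear group context of Section \ref{sec:GL-action} (Corollary \ref{cor:GL-cyclicity}) and in the Hecke algebra context of Section \ref{sec:Hecke-action} (Corollary \ref{cor:Hecke-cyclicity}).
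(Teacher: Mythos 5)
Your proposal is correct and takes essentially the same approach as the paper: the theorem is drawn out of Proposition~\ref{prop:cyclic}, with the left and right mates of an endomorphism of $\ind_{U,\theta}$ shown to coincide (both equal $'a$) by combining relations (a), (c) with the triangle identities and the parametrization of $2$-endomorphisms from Lemmas~\ref{lem:natres} and~\ref{lem:opp}. The paper leaves this final derivation implicit ("Thus we have the following theorem"), whereas you spell out the mates/interchange/triangle bookkeeping; the mathematical content is identical.
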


\begin{corollary} \label{cor:GL-cyclicity}
The functors $\ind$ and $\res$ from Section~\ref{sec:GL-adjunctions} are a cyclic biadjoint pair.
\end{corollary}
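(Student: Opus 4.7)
The plan is to obtain Corollary~\ref{cor:GL-cyclicity} as a direct specialization of Theorem~\ref{thm:cyclicbiadjointness}. First I would identify the setup of Section~\ref{sec:GL-adjunctions} as the particular instance of the general parabolic framework of Section~\ref{sec:pardef} in which $G = GL_{n+1}$, $L = GL_n$, $U = U_{n,n+1}$ (so that $P = LU = P_{n,n+1}$), and $\theta\equiv 1$ is the trivial character of $U$. The hypotheses of Section~\ref{sec:pardef} are immediate from the matrix descriptions in Section~\ref{sec:reps-GL}: $L = GL_n$ normalizes $U_{n,n+1}$ (as already noted), $L \cap U = \{1\}$ by inspection of the matrix forms, and the trivial character is vacuously normalized by $L$. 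Under this specialization, the element $\vt$ of Section~\ref{sec:pardef} becomes exactly the idempotent $v_{n+1}$ of Section~\ref{sec:reps-GL}.

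Next, I would check that the bimodules and adjunction natural transformations in Section~\ref{sec:GL-adjunctions} agree with those of Section~\ref{sec:parabolic} under the canonical isomorphisms
\[
  \kk[G]\otimes_{\kk[P]}\kk[P]v_{n+1} \;\cong\; \kk[G]v_{n+1}, \qquad \kk[P]v_{n+1}\otimes_{\kk[P]}\kk[G] \;\cong\; v_{n+1}\kk[G],
\]
so that the bimodules defining $\ind_{U,\theta}, \res_{U,\theta}$ in Definitions~\ref{def:induction} and~\ref{def:restriction} match the bimodules $(n+1)_n$ and ${}_n(n+1)$ of Section~\ref{sec:GL-adjunctions}. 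Under this identification, the four adjunction morphisms match term-by-term: $\rcap$ is multiplication in both cases; $\rcup$ is determined by $1\mapsto v_{n+1}\otimes v_{n+1}$ on generators; the map $\lcap$ of Section~\ref{sec:GL-adjunctions}, which kills group elements outside $P_{n,n+1}$, coincides with the map defined via $\Proj_P$ in Section~\ref{sec:parabolic}; and the two definitions of $\lcup$ as a Casimir-type sum $\sum g_i^{-1}\otimes v_{n+1}\otimes g_i$ over a system of left $P$-coset representatives agree on the generator $1$.

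Once the two setups are matched, Theorem~\ref{thm:cyclicbiadjointness} applies directly and yields cyclic biadjointness for $(\res,\ind)$ in the finite general linear group setting. The only real work is the bookkeeping in the second step, and the only substantive observation is that the compressed bimodule presentations of Section~\ref{sec:GL-adjunctions} are canonically isomorphic to the triple-tensor-product presentations of Section~\ref{sec:parabolic}; this is immediate from standard properties of induced modules and of the idempotent $v_{n+1}$ recorded in Lemma~\ref{lem:v-theta}. There is no substantial obstacle — the statement is essentially a translation of the general result into the $GL_n$ language.
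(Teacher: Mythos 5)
Your proposal is correct and takes exactly the same route as the paper: the paper's proof is a one-line specialization of Theorem~\ref{thm:cyclicbiadjointness} with $G = GL_n$, $L = GL_{n-1}$, $U = U_{n-1,n}$, and $\theta$ trivial. The extra verification you sketch (matching the compressed bimodule presentations and the four adjunction maps under the canonical isomorphisms) is the same bookkeeping the paper leaves implicit.
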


\begin{proof}
This follows immediately from Theorem~\ref{thm:cyclicbiadjointness} after setting $G = GL_n$, $L=GL_{n-1}$, $U = U_{n-1,n}$, and taking $\theta$ to be the trivial character.
\end{proof}

\begin{corollary} \label{cor:Hecke-cyclicity}
The adjunction data of Section~\ref{sec:Hecke-adjunctions} is cyclic when $q$ is an indeterminate or a prime power.
\end{corollary}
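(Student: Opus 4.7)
The plan is to reduce cyclicity of the Hecke algebra adjunction to Corollary~\ref{cor:GL-cyclicity}, which establishes cyclicity for parabolic induction and restriction for $GL_n(\mathbb{F}_q)$, and then to extend to the indeterminate case by a specialization argument.

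For the case when $q$ is a prime power, I would apply Iwahori's theorem, recalled in Section~\ref{sec:GL-Fock-rep}, which gives an isomorphism of $\kk$-algebras $b_nA_nb_n \cong H_n$ compatible with the embeddings $b_nA_nb_n \subseteq b_{n+1}A_{n+1}b_{n+1}$ and $H_n\subseteq H_{n+1}$. Under this isomorphism, the parabolic induction bimodule $b_{n+1}A_{n+1}v_{n+1}b_n$ identifies with the Hecke induction bimodule $(n+1)_n$, and similarly for restriction. The key step is to verify that the four bimodule maps $\rcap,\rcup,\lcap,\lcup$ of Section~\ref{sec:Hecke-adjunctions} agree, under Iwahori's identification, with the maps of Section~\ref{sec:GL-adjunctions} applied between the unipotent blocks. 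For $\rcap$ and $\rcup$ this is immediate from the formulas. For $\lcap$ and $\lcup$ it amounts to a direct computation: parametrizing the left $P_{n,n+1}$-cosets of $GL_{n+1}(\mathbb{F}_q)$ by Bruhat-type representatives indexed by $i=1,\dots,n+1$, one finds that the GL Casimir element $\sum_i g_i^{-1}v_{n+1}\otimes v_{n+1}g_i$, once multiplied on both sides by $b_{n+1}$ and pushed through Iwahori, produces exactly the element $\sum_{i=1}^{n+1} q^{i-(n+1)} t_i\cdots t_n \otimes t_n \cdots t_i$ appearing in the definition of the Hecke $\lcup$. Once this compatibility is established, each Hecke cyclicity equation is the restriction to unipotent blocks of the corresponding GL cyclicity equation, so the result follows from Theorem~\ref{thm:cyclicbiadjointness}.

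For $q$ an indeterminate I would use a Zariski density argument. Every cyclicity relation, expressed in a basis of bimodule maps, reduces to a finite collection of equalities of Laurent polynomials in $q$. Since the Hecke adjunction data are defined over $\Z[q,q^{-1}]$, these polynomials actually lie in $\Z[q,q^{-1}]$. By the prime-power case, applied after the base change $\Z[q,q^{-1}] \to \C$ sending $q$ to an arbitrary prime power $q_0$, each such polynomial identity holds at infinitely many integer values, hence holds identically in $\Z[q,q^{-1}]$, and therefore in $\kk[q,q^{-1}]$ after extension of scalars to an arbitrary ring $\kk$.

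The main obstacle I anticipate is the explicit identification in the prime-power case of the two $\lcup$ maps, that is, choosing left $P_{n,n+1}$-coset representatives for $GL_{n+1}(\mathbb{F}_q)$ so that the resulting Casimir element, sandwiched between $b_{n+1}$'s, produces the Hecke formula with the correct weights $q^{i-(n+1)}$. This is a classical but nontrivial combinatorial computation involving the Bruhat decomposition of $GL_{n+1}(\mathbb{F}_q)$ and the sizes of the Schubert cells associated with the permutations sending $n+1$ to $i$; the powers of $q$ appearing in these cell counts are precisely what is needed to match the Hecke formula. The remainder of the argument is essentially formal.
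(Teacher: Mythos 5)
Your proposal is correct and follows essentially the same route as the paper: transport cyclicity from Corollary~\ref{cor:GL-cyclicity} to the Hecke setting via the Iwahori-theorem equivalence $\bigoplus_n \Hom_{A_n}(A_nb_n, -) : \bigoplus_n b_nA_nb_n\text{-mod} \to \bigoplus_n H_n\text{-mod}$ of Section~\ref{sec:GL-Fock-rep}, then deduce the indeterminate case by specialization at infinitely many prime powers. The paper outsources the compatibility of the cup/cap/crossing data under this equivalence to Proposition~\ref{prop:equiv}, whereas you sketch that verification directly; otherwise the arguments coincide.
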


\begin{proof}
Corollary~\ref{cor:GL-cyclicity}, together with the equivalence
\[ \textstyle
  \bigoplus_n \Hom_{A_n}(A_nb_n, -) : \bigoplus_n b_nA_nb_n\mbox{-mod} \to \bigoplus_n H_n\mbox{-mod}
\]
of Section~\ref{sec:GL-Fock-rep}, gives the result for $q$ a prime power.  Now suppose that $q$ is an indeterminate.  Proving the adjunction data is cyclic amounts to proving four equalities of bimodule maps as in Proposition~\ref{prop:cyclic}.  After choosing a $\kk[q,q^{-1}]$-basis, such maps can be represented by matrices with entries in $\kk[q,q^{-1}]$.  Since we have the desired equalities when $q$ is specialized to any prime power, the equalities follow for $q$ an indeterminate (here we use the fact that if two elements of $\kk[q,q^{-1}]$ are equal at an infinite number of values of $q$, they are equal).
\end{proof}

%%%%%%%%%%%%%%%%%%%%%%%%%%%%%%%%%%%%%%%%%%%%%%%%%%%%%%%%%%%%%%%%%%%%%
%%%%%%%%%%%%%%%%%%%%%%%%%%%%%%%%%%%%%%%%%%%%%%%%%%%%%%%%%%%%%%%%%%%%%

\bibliographystyle{abbrv}
\bibliography{biblist}

\end{document}